\documentclass[10pt]{article}
\usepackage[utf8]{inputenc}   
\usepackage[left=2.2cm,right=2.2cm,top=2.0cm,bottom=2.2cm]{geometry}
\usepackage[T1]{fontenc}                
\usepackage[english]{babel}              
\usepackage{graphicx}
\usepackage{amsmath,amsfonts,amssymb,amsthm}  
\usepackage{xspace}
\usepackage{geometry} 
\usepackage[overload]{empheq}
\usepackage{dsfont} 
\usepackage{stmaryrd}
\usepackage{tikz} 
\usepackage{mathrsfs}
\usepackage{enumerate}
\usepackage{subfig}
\usepackage{float}
\usepackage{titletoc}


\usepackage{hyperref}
\hypersetup{
colorlinks = true,
urlcolor = blue,
linkcolor = blue,
citecolor = blue,
}

\newtheorem{theorem}{Theorem}
\newtheorem{definition}{Definition} 
\newtheorem{proposition}{Proposition}
\newtheorem{lemma}{Lemma}

\newtheorem{assump:model}{assump:model}

\usepackage{amsmath,amssymb,euscript ,yfonts,psfrag,latexsym,dsfont,graphicx,bbm,color,amstext,wasysym,epsfig,subfig,parskip,textcomp}

\usepackage{natbib}
\bibliographystyle{plainnat}
\bibpunct{(}{)}{;}{a}{,}{,}
\usepackage{hyperref}
\hypersetup{
colorlinks = true,
urlcolor = blue,
linkcolor = blue,
citecolor = blue,
}

\newtheorem{assumption}{Assumption}
\DeclareMathOperator*{\argmax}{arg\,max}

\DeclareMathOperator*{\argmin}{arg\,min}

\title{Generative model for optimal  density estimation\\ on unknown manifold}

\setcounter{tocdepth}{2}
\usepackage{authblk}
\author{Arthur St\'ephanovitch}
\affil{\small D\'epartement de Math\'ematiques et Applications\\
       Ecole Normale Sup\'erieure, Université PSL, CNRS\\
       F-75005 Paris, France,\\ \texttt{stephanovitch@dma.ens.fr}}
\date{}
\begin{document}

\maketitle
\begin{abstract}
We propose a generative model that achieves minimax-optimal convergence rates for estimating probability distributions supported on unknown low-dimensional manifolds. Building on Fefferman's solution to the geometric Whitney problem, our estimator is itself supported on a submanifold that matches the regularity of the data's support. This geometric adaptation enables the estimator to be simultaneously minimax-optimal for all \( \gamma \)-Hölder Integral Probability Metrics (IPMs) with \( \gamma \geq 1 \). We validate our approach through experiments on synthetic and real datasets, demonstrating competitive or superior performance compared to Wasserstein GAN and score-based generative models. 
\end{abstract}

\tableofcontents
\section{Introduction}

\subsection{From the geometric Whitney problem to statistical estimation}

The problem of reconstructing a manifold from discrete data has a rich history, connecting geometry, analysis, and learning theory \citep{lassas2001determining,aamari2017nonasymptotic,boissonnat2018delaunay}. In particular, the recent work of~\cite{fefferman2015reconstruction} formulated and solved the \emph{geometric Whitney problem}, providing conditions under which a point cloud can be approximated by a smooth submanifold with controlled geometry. Their work gave a constructive solution to manifold interpolation problems, leading to advances in both theory and algorithms.

In this paper, we push this geometric program further by addressing the statistical estimation of \emph{distributions} supported on unknown manifolds. Our goal is not only to reconstruct the underlying geometry but also to estimate a smooth probability measure on the manifold, achieving optimal statistical guarantees. Specifically, given an iid sample of a probability measure having a density with respect to an unknown manifold, we propose the first tractable estimator that:
\begin{itemize}
    \item (i) constructs a submanifold approximating the support of the data with the same regularity,
    \item (ii) defines a probability measure admitting a smooth density with respect to the volume measure of the estimated submanifold,
    \item (iii) achieves minimax-optimal convergence rates under Hölder Integral Probability Metrics \citep{IPMsMuller} for all Hölder indices $\gamma \geq 1$ simultaneously.
    \item (iv) enables efficient sampling from the estimated distribution.
\end{itemize}

Our method combines manifold reconstruction techniques inspired by~\cite{fefferman2015reconstruction} with tools from adversarial estimation and optimal transport, particularly Wasserstein GANs~\citep{arjovsky2017wasserstein}. By leveraging wavelet decompositions adapted to Besov regularity, we capture both the (unknown) local geometry of the manifold and the (known) smoothness of the density in a unified framework. 

Finally, we illustrate the effectiveness of our estimator through numerical experiments on synthetic and real datasets, demonstrating competitive or superior performance compared to WGANs and SGMs, especially in settings where the data exhibits nontrivial manifold structure.

\subsection{Related works}
\subsubsection{The geometric Whitney problem}

The \emph{geometric Whitney problem} asks: given a metric space \( (X,d_X) \), can we construct a smooth \( d \)-dimensional submanifold \( M \) that approximates \( X \) with precise control over the manifold’s geometry (curvature, injectivity radius) and the distance between \( M \) and \( X \)?

This problem is a geometric generalization of the classical Whitney extension problem, where the goal was to smoothly extend functions defined on arbitrary subsets of \( \mathbb{R}^n \).
In~\cite{fefferman2015reconstruction}, the authors provided a major breakthrough by:
\begin{itemize}
    \item Giving  conditions under which a metric space \( (X, d_X) \) can be approximated by a Riemannian manifold \( (M, g) \) of bounded geometry, both in Gromov--Hausdorff and quasi-isometric senses.
    \item Providing explicit reconstruction algorithms when $X$ is finite for building manifolds with controlled sectional curvature and injectivity radius.
\end{itemize}

Their work builds on classical results by  \citep{whitney1992functions,glaeser1958etude,brudnyi1997whitney,bierstone2003diff}, while pushing the theory into the global geometric setting.

In this paper, we take a different direction: instead of focusing on geometric reconstruction, we aim to push \cite{fefferman2015reconstruction}'s construction toward statistical estimation and generative modeling. Our goal is to build an estimator that not only reconstructs a smooth manifold approximating the data but also defines a probability measure with a smooth density supported on this manifold. The objective is to obtain a tractable estimator that achieves minimax-optimal convergence rates. In this way, we bridge the gap between geometric reconstruction and modern generative modeling, extending the geometric Whitney program to a statistical setting.

\subsubsection{Minimax estimators of densities on unknown manifold}
Estimating a probability distribution from samples is a fundamental problem in statistics and machine learning. Recent advances in generative modeling, particularly Generative Adversarial Networks (GANs)  \cite{goodfellow2014generative}  and Score-based Generative Models (SGMs) \cite{song2020score}, have demonstrated remarkable performance in high-dimensional settings. However, these methods often struggle when the data lie on a complex, low-dimensional manifold, as they do not produce estimators that preserve the same regularity structure as the underlying distribution. In particular, GANs face difficulties when generating data with complex topology, since their push-forward formulation can constrain the topology of the output distribution ~\citep{tanielian2020learning,stephanovitch2023wasserstein}.

SGMs also encounter limitations in this setting. When the data lie on or near a low-dimensional manifold, the score function becomes ill-behaved and diverges. This phenomenon complicates estimation and can lead to poor sample quality. Recent studies have highlighted these challenges, emphasizing the importance of careful architectural design and noise scheduling to accurate score estimation in geometrically complex settings~\citep{chen2023score,li2024good}.

Let us review the recent literature on minimax density estimation over unknown manifolds.

\begin{itemize}
    \item In~\cite{divol2022measure}, the author proposes the first minimax estimator for the Wassertein distance of densities supported on unknown manifolds. The estimator combines a local polynomial estimator from~\cite{aamari2017nonasymptotic} with a kernel density estimator. However, the local polynomial step is computationally intensive, making the overall procedure impractical in high-dimensional settings. Moreover, the estimator is not generative, as it does not provide a mechanism for sampling from the estimated distribution.

    \item In~\cite{tang2023minimax}, the authors construct an estimator that achieves the minimax rate for the Hölder Integral Probability Metric (IPM) $d_{\mathcal{H}^\gamma_1}$ for a fixed $\gamma > 0$. In particular, they highlight that when $\gamma$ is small, the distance $d_{\mathcal{H}^\gamma_1}$ becomes highly sensitive to misalignments between the supports of the measures, as opposed to discrepancies between the densities themselves. Their estimator involves manifold reconstruction, optimizing over the entire Hölder class $\mathcal{H}^\beta_1$, computing wavelet expansions of each discriminator $D \in \mathcal{H}^\gamma_1$, and evaluating all derivatives up to order $\gamma$. Despite its theoretical guarantees, the estimator is computationally infeasible in practice due to the complexity of these operations.

    \item Building on the Wasserstein GAN framework,~\cite{stephanovitch2023wasserstein} introduce the first tractable and fully generative estimator. However, this method is restricted to learning densities supported on manifolds with the topology of a $d$-dimensional torus, limiting its applicability to more realistic manifold structures.

    \item The recent works of \cite{azangulov2024convergence} and \cite{pmlr-v238-tang24a} establish that SGMs attain the minimax rate for Holder IPMs and the Wasserstein distance respectively. These results are particularly important given that SGMs have become the state-of-the-art in generative modeling. However, the practical implementation of the minimax estimator remains challenging: the score estimation procedure involves training a total of $O(\log(n))$ separate neural networks, leading to significant computational overhead. In addition, sampling from the model involves numerically solving a stochastic differential equation for each generated sample, further increasing the computational cost.
\end{itemize}

In this paper, our objective is to construct a tractable and fully generative estimator that is specifically adapted to the manifold structure of the data. This geometric adaptation enables the estimator to achieve minimax optimality simultaneously for all $\gamma$-Hölder IPMs with $\gamma \geq 1$. We propose an improvement over the classical Wasserstein GAN methodology, extending it to estimate probability measures supported on manifolds with arbitrary topology. Our approach builds upon this line of work by combining geometric reconstruction techniques with adversarial training to produce an estimator that is both computationally efficient and statistically optimal.

\section{Setting and main result}
In this section, we introduce the set of assumptions imposed on the data distribution and present the main result of the paper. We begin by defining the notion of regularity that we will work with.

\subsection{Hölder spaces, IPMs and notations}
\subsubsection{Hölder spaces}
Let us properly define the Hölder spaces. For $\eta >0$, $\mathcal{X},\mathcal{Y}$ two open subsets of Euclidean spaces and $f=(f_1,...,f_p)\in C^{\lfloor \eta \rfloor}(\mathcal{X},\mathcal{Y})$ the set of $\lfloor \eta \rfloor:=\max \{k\in \mathbb{N}_0 |\ k\leq \eta\}$  times differentiable functions (almost everywhere), denote $\partial^\nu f_i := \frac{\partial^{|\nu|}f_i}{\partial x_1^{\nu_1}...\partial x_d^{\nu_d}}$ the partial differential operator  for any multi-index $\nu = (\nu_1,...,\nu_d)\in \mathbb{N}_0^d$ with $|\nu|:=\nu_1+...+\nu_d\leq \lfloor \eta \rfloor$. Define
$$
\|f_i\|_{\eta-\lfloor \eta \rfloor}:=\sup \limits_{x\neq y}\ \frac{f_i(x)-f_i(y)}{\|x-y\|^{\eta - \lfloor \eta \rfloor}} 
$$
and let 
\begin{align*}
\mathcal{H}^{\eta}_K(\mathcal{X},\mathcal{Y}):=\Big\{& f  \in C^{\lfloor \eta \rfloor}(\mathcal{X},\mathcal{Y}) \ \big| \max \limits_{i} \sum \limits_{0\leq |\nu|\leq \lfloor \eta \rfloor} \|\partial^\nu f_i\|_{L^\infty(\mathcal{X},\mathcal{Y})} + \sum \limits_{|\nu|  = \lfloor \eta \rfloor} \|\partial^\nu f_i\|_{\eta-\lfloor \eta \rfloor}   \leq K\Big\}
\end{align*}
denote the ball of radius $K>0$ of the Hölder space $\mathcal{H}^{\eta}(\mathcal{X},\mathcal{Y})$, the set of functions $f:\mathcal{X}\rightarrow \mathcal{Y}$ of regularity $\eta$. Hölder smoothness will be used to characterize both the regularity of the data distribution and the class of metrics employed to compare probability measures. We also define the homogeneous Hölder space as
\begin{align*}
\dot{\mathcal{H}}^{\eta}_K(\mathcal{X},\mathcal{Y}):=\Big\{& f  \in C^{\lfloor \eta \rfloor}(\mathcal{X},\mathcal{Y}) \ \big| \max \limits_{i} \sum \limits_{1\leq |\nu|\leq \lfloor \eta \rfloor} \|\partial^\nu f_i\|_{L^\infty(\mathcal{X},\mathcal{Y})} + \sum \limits_{|\nu|  = \lfloor \eta \rfloor} \|\partial^\nu f_i\|_{\eta-\lfloor \eta \rfloor}   \leq K\Big\},
\end{align*}
which does not constrain the infinite norm of the functions.

\subsubsection{Integral Probability metrics}
In the context of generative modeling, many methods rely on Integral Probability Metrics (IPMs)~\citep{IPMsMuller} to assess the accuracy of the estimated distribution.
 An IPM is defined by selecting a class of functions $\mathcal{F}$ and measuring the distance between two probability measures $\mu$ and $\mu^\star$ as
\begin{equation}\label{eq:genralIPM}
    d_{\mathcal{F}}(\mu,\mu^\star) := \sup_{f \in \mathcal{F}} \left| \int f(x)\, d\mu(x) - \int f(x)\, d\mu^\star(x) \right|.
\end{equation}
Notable examples of IPMs include the total variation distance~\citep{verdu2014total} (where $\mathcal{F}$ is the class of functions taking values in $[-1,1]$), the Wasserstein distance~\citep{villani2009optimal} (where $\mathcal{F}$ is the class of 1-Lipschitz functions), and the Maximum Mean Discrepancy~\citep{smola2006maximum} (where $\mathcal{F}$ is the unit ball of a reproducing kernel Hilbert space). In this paper, we use the Hölder IPMs, which have been widely used in generative modeling~\citep{arjovsky2017wasserstein, chakraborty2024statistical} and geometric learning~\citep{tang2023minimax}.

\subsubsection{Additional notations}
We write $\langle \cdot, \cdot \rangle$ the dot product on $\mathbb{R}^p$, $\|x\|$ the Euclidean norm of a vector $x$. For $a,b\in \mathbb{R}$, $a\wedge b$ and $a\vee b$ denote the minimum and maximum value between $a$ and $b$ respectively. We write $\text{Lip}_1$ for the set of 1-Lipschitz functions. The support of a function $f$ is denoted by $supp(f)$. We  denote by $\text{Id}$ the identity application from a Euclidean space to itself.

For any map $f:\mathbb{R}^k\rightarrow \mathbb{R}^l$ we denote by $\nabla f$ the differential of $f$ and by $\|\nabla f(x)\|$ its operator norm at the point $x$. We denote $(\nabla f(x))^\top$ its transpose matrix and if $k\leq l$, $\lambda_{\min}((\nabla f(x) )^\top \nabla f(x))$ corresponds to the smallest eigenvalue of the matrix $(\nabla f(x) )^\top \nabla f(x)$. We write $\int_{\mathcal{M}}f(x)d\lambda_{\mathcal{M}}(x)$ for the integral of a function $f:\mathcal{M}\rightarrow \mathbb{R}$ with respect to the volume measure on $\mathcal{M}$ i.e. the $d$-dimensional Hausdorff measure. The Hausdorff distance between two submanifold $\mathcal{M},\mathcal{M}^\star$ is denoted by $\mathbb{H}(\mathcal{M},\mathcal{M}^\star)$. For a map $T:\mathbb{R}^d\rightarrow \mathbb{R}^d$, we write $T_{\# }\mu$ for the push forward of the measure $\mu$ by $T$.

The $k$-dimensional Lebesgue measure is denoted $\lambda^k$. For $\alpha_1,...\alpha_m\in \mathbb{R}_{>0}$, we write $ \textbf{Mult}(\{\alpha_1,...,\alpha_m\})$ for the multinomial law (or generalized Bernoulli) of parameters $(\alpha_i)_i$ i.e., if $\theta\sim \textbf{Mult}(\{\alpha_1,...,\alpha_m\})$ then for $i\in \{1,...,m\}$, $\theta=i$ with probability $\alpha_i/\sum_j \alpha_j$. The uniform multinomial $\textbf{Mult}(\{1/m,...,1/m\})$  is denoted $\mathcal{U}_m$. For any measurable set $\mathcal{X}\subset \mathbb{R}^d$ such that $\lambda^d(\mathcal{X})>0$, we write $\mathcal{U}(\mathcal{X})$ for the law of the uniform random variable on $\mathcal{X}$ i.e., if $U\sim \mathcal{U}(\mathcal{X})$ then the probability density of the law of $U$ is $\mathds{1}_{\mathcal{X}}/\lambda^d(\mathcal{X})$.

\subsection{Assumptions}

We focus on the problem of estimating probability measures that admit $\beta$-regular densities with $\beta > 1$ with respect to the volume measure of a closed (i.e., compact and without boundary) $d$-dimensional submanifold $\mathcal{M} \subset \mathbb{R}^p$, where $d \geq 2$ and $p > d$ are integers.

We begin by defining the notion of geometric regularity we impose on the underlying manifold. Let $\mathcal{M}$ be a closed $d$-dimensional submanifold embedded in $\mathbb{R}^p$, and fix $K > 1$. For any point $x \in \mathcal{M}$, define
\begin{equation}\label{eq:phix}
    U_x = B^p(x, K^{-1}) \cap \mathcal{M}, \quad \phi_x: U_x \rightarrow \mathcal{T}_x(\mathcal{M}), \quad \phi_x = \pi_{\mathcal{T}_x(\mathcal{M})} - x,
\end{equation}
where $B^p(x, K^{-1})$ denotes the Euclidean ball of radius $K^{-1}$ in $\mathbb{R}^p$ centered at $x$, and $\pi_{\mathcal{T}_x(\mathcal{M})}$ is the orthogonal projection onto the tangent space $\mathcal{T}_x(\mathcal{M})$ at $x$.

\begin{definition}[Manifold regularity condition]\label{def:manifoldcond}
    We say that a closed submanifold $\mathcal{M} \subset \mathbb{R}^p$ satisfies the $(\beta+1, K)$-manifold condition if:
    \begin{itemize}
        \item $\mathcal{M} \subset B^p(0, K)$, and
        \item For all $x \in \mathcal{M}$, the map $\phi_x: U_x \to \mathcal{T}_x(\mathcal{M})$ is a diffeomorphism whose inverse $\phi_x^{-1}$ belongs to the Hölder class $\mathcal{H}_K^{\beta+1}(\phi_x(U_x), U_x)$.
    \end{itemize}
\end{definition}

Definition~\ref{def:manifoldcond} is equivalent to requiring a lower bound on the \emph{reach} of a $(\beta+1)$-smooth submanifold~\citep{federer1959}. The reach $r_{\mathcal{M}}$ of a manifold $\mathcal{M}$ is the largest $\epsilon > 0$ such that the orthogonal projection $\pi_{\mathcal{M}}$ is well-defined on the tubular neighborhood
$
\mathcal{M}^\epsilon := \{x \in \mathbb{R}^p \mid d(x, \mathcal{M}) < \epsilon\},
$
where $d(x, \mathcal{M})$ denotes the Euclidean distance from $x$ to the manifold.
Next, we define the regularity condition imposed on the target density.

\begin{definition}[Density regularity condition]\label{def:densitycond}
    A measure $\mu$ supported on a submanifold $\mathcal{M}$ satisfies the $(\beta, K)$-density condition if:
    \begin{itemize}
        \item[i)] $\mu$ admits a density $f_\mu \in \mathcal{H}_K^\beta(\mathcal{M}, \mathbb{R})$ with respect to the volume measure on $\mathcal{M}$,
        \item[ii)] $f_\mu$ is bounded below by $K^{-1}$.
    \end{itemize}
\end{definition}
Assuming that the density is bounded below is a classical condition in the setting of density estimation on manifolds~\citep{divol2022measure, tang2023minimax}. In our context, this assumption is crucial for applying Caffarelli's regularity theory~\citep{villani2009optimal} to obtain smooth optimal transport maps between certain pushforward measures, as will be used in Proposition~\ref{prop:keydecomp}.

We can now formalize the statistical model under consideration.

\begin{assumption}[Statistical model]\label{assump:model}
There exists a submanifold $\mathcal{M}^\star$ satisfying the $(\beta+1, K)$-manifold condition such that the target measure $\mu^\star$ satisfies the $(\beta, K)$-density condition on $\mathcal{M}^\star$.
\end{assumption}

The statistical model described in Assumption~\ref{assump:model} follows the standard framework commonly adopted in the generative modeling literature~\citep{divol2022measure, tang2023minimax, azangulov2024convergence}. Our objective is to construct a tractable and generative estimator $\hat{\mu}$ of the target distribution $\mu^\star$ satisfying Assumption~\ref{assump:model}, based on an i.i.d.\ sample $X_1, \dots, X_n \sim \mu^\star$, such that $\hat{\mu}$ achieves the minimax convergence rates for the Hölder IPMs $d_{\mathcal{H}^\gamma_1}$, uniformly over all $\gamma \geq 1$.

\subsection{Main result}
Denoting $\mathcal{P}_\beta$ the set of probability measures on $\mathbb{R}^p$ satisfying Assumption \ref{assump:model}, the main result of the paper is the following.
\begin{theorem}[Informal version of Theorem \ref{theo:boundongamma}] There exists a tractable generative estimator $\hat{\mu}$ based on $n$-sample such that with probability at least $n^{-1}$, $\hat{\mu}$ satisfies Assumption \ref{assump:model}. Furthermore, for all $\gamma\geq 1$ we have that
\begin{align}\label{eq:optimdelesti}
    \sup_{\mu^\star \in \mathcal{P}_\beta}\ \mathbb{E}_{(X_i)_{i=1}^n\sim \mu^\star}[d_{\mathcal{H}^{\gamma}_1}(\hat{\mu},\mu^\star)] \underset{{\mathrm{polylog}(n)}}{\lesssim} \inf_{\hat{\theta}\in \Theta}\ \sup_{\mu^\star\in \mathcal{P}_\beta}\ \mathbb{E}_{(X_i)_{i=1}^n\sim \mu^\star}[d_{\mathcal{H}^{\gamma}_1}(\hat{\theta},\mu^\star)],
\end{align}
where $\Theta$ denotes the set of all possible estimators  of $\mu^\star$ based on $n$-sample.
\end{theorem}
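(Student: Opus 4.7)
The plan is to construct $\hat{\mu}$ in three layers that mirror items (i)--(iv) of the introduction, then to bound the IPM through a single decomposition that is essentially independent of $\gamma$. Stage one reconstructs a submanifold $\hat{\mathcal{M}}$ from the sample by a Fefferman-style procedure: estimate the tangent plane at each $X_i$ by local PCA on its $O((\log n/n)^{1/d})$-neighborhood, fit a polynomial graph of degree $\lfloor\beta+1\rfloor$ over that plane, and glue the local graphs via a partition of unity subordinate to an $\varepsilon$-net of the sample. Concentration of empirical covariances shows that, with probability at least $1-n^{-1}$, every tangent-plane estimate achieves the manifold minimax rate and $\hat{\mathcal{M}}$ satisfies the $(\beta+1,K)$-manifold condition after replacing $K$ with a constant multiple, which already settles half of the claim that $\hat{\mu}$ lies in $\mathcal{P}_\beta$.

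Stage two estimates a density on the \emph{known} submanifold $\hat{\mathcal{M}}$. I would transport the empirical measure onto $\hat{\mathcal{M}}$ by the nearest-point projection and run a thresholded wavelet density estimator of Besov regularity $\beta$ inside each chart $\phi_{\hat x}$ of Definition~\ref{def:manifoldcond}; the chart-wise estimates are stitched together by the same partition of unity and lower-clipped at $K^{-1}$ to produce a density $\hat f \in \mathcal{H}^\beta_K(\hat{\mathcal{M}},\mathbb{R})$. For item (iv), $\hat{\mu}$ is written as the push-forward of a uniform reference measure by a chart-wise optimal transport map, whose $\mathcal{H}^{\beta+1}$-regularity follows from Caffarelli's theory as packaged in Proposition~\ref{prop:keydecomp}.

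For the rate I would apply Proposition~\ref{prop:keydecomp} to split
\[
d_{\mathcal{H}^\gamma_1}(\hat{\mu},\mu^\star) \;\leq\; d_{\mathcal{H}^\gamma_1}\bigl(\hat{\mu},\,(\pi_{\hat{\mathcal{M}}})_\#\mu^\star\bigr) \,+\, d_{\mathcal{H}^\gamma_1}\bigl((\pi_{\hat{\mathcal{M}}})_\#\mu^\star,\,\mu^\star\bigr).
\]
The geometric term is bounded by $\mathbb{H}(\hat{\mathcal{M}},\mathcal{M}^\star)^{\gamma\wedge 1}$, since every $f\in\mathcal{H}^\gamma_1$ is $\gamma$-Hölder and $\pi_{\hat{\mathcal{M}}}$ moves mass by at most the Hausdorff distance. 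The density term, on the now-fixed smooth $\hat{\mathcal{M}}$, is analyzed by testing the wavelet decomposition of any $f\in\mathcal{H}^\gamma_1$ against the wavelet coefficients of $\hat{\mu}-(\pi_{\hat{\mathcal{M}}})_\#\mu^\star$; crucially, the estimator is fixed once and for all, while the scale-$j$ contribution of $f$ decays like $2^{-j\gamma}$, delivering the optimal rate for every $\gamma\geq 1$ simultaneously. Matching against the minimax lower bound of \cite{tang2023minimax} yields \eqref{eq:optimdelesti} up to a $\mathrm{polylog}(n)$ factor.

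The main obstacle is the \emph{self-consistency} clause that $\hat{\mu}$ itself must satisfy Assumption~\ref{assump:model} on a high-probability event. This forces the wavelet estimator to be paired with smoothing, lower-clipping, and a globally consistent cross-chart gluing on $\hat{\mathcal{M}}$, each of which can a priori destroy either the Hölder bounds on $\hat f$ or the manifold condition on $\hat{\mathcal{M}}$. The Caffarelli step is the most delicate, since the Monge--Ampère equation must be solved in charts whose metric depends on the \emph{estimated} manifold; the constants appearing in Caffarelli's theorem must themselves be controlled in terms of the chart regularity produced in Stage one. Once this regularity is shown to propagate, the remaining work reduces to classical wavelet and empirical-process estimates.
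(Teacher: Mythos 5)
Your proposal is structurally quite different from the paper's and contains a gap that blocks the rate claim for all $\gamma \geq 1$ simultaneously.

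\paragraph{Different route.} The paper does not estimate the manifold by local PCA and then run a plug-in wavelet density estimator. It instead trains a GAN: a collection of wavelet-parametrized local generators $\hat g_i = \hat g_i^1 \circ \Psi^{-1}\circ \hat g_i^2$ pushing forward a modified Gaussian, glued by the Fefferman-style operators $F^{\hat g,\hat\varphi}$, optimized adversarially against a wavelet-parametrized discriminator class subject to the consistency and regularity constraints~\eqref{eq:opticonstrain}. Proposition~\ref{prop:keydecomp} is used as an \emph{existence} result motivating the generator architecture (and its Caffarelli step produces the smooth second-factor $g_i^2$), not as a decomposition of an arbitrary plug-in estimator. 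Your Stage~1 (local PCA and polynomial graphs) is essentially the local-polynomial approach of \cite{aamari2017nonasymptotic,divol2022measure}, which the paper explicitly flags as computationally intensive and which motivates the adversarial construction here.

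\paragraph{The gap.} Your decomposition
\[
d_{\mathcal H^\gamma_1}(\hat\mu,\mu^\star)\le d_{\mathcal H^\gamma_1}\bigl(\hat\mu,(\pi_{\hat{\mathcal M}})_\#\mu^\star\bigr)+d_{\mathcal H^\gamma_1}\bigl((\pi_{\hat{\mathcal M}})_\#\mu^\star,\mu^\star\bigr)
\]
has a geometric term bounded by $\mathbb H(\hat{\mathcal M},\mathcal M^\star)^{\gamma\wedge 1}=\mathbb H(\hat{\mathcal M},\mathcal M^\star)$ for $\gamma\ge1$, which is constant in $\gamma$. The minimax rate for Hausdorff manifold estimation of a $C^{\beta+1}$ manifold is of order $(\log n/n)^{(\beta+1)/d}$, and the target rate $n^{-\frac{\beta+\gamma}{2\beta+d}}\vee n^{-1/2}$ decreases to $n^{-1/2}$ as $\gamma\to d/2$. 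To absorb the geometric term you therefore need $\mathbb H(\hat{\mathcal M},\mathcal M^\star)\lesssim n^{-1/2}$, i.e.\ $\beta+1\ge d/2$, which fails in the regime $d>2(\beta+1)$ allowed by Assumption~\ref{assump:model}. Your wavelet scale argument addresses the density term on the \emph{fixed} $\hat{\mathcal M}$, but the geometric term is a hard wall that your displacement-times-Lipschitz bound cannot penetrate: it implicitly treats $d_{\mathcal H^\gamma_1}$ as if it were $W_1$ and sees no gain from higher $\gamma$.

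\paragraph{What's missing.} The paper's mechanism is the interpolation inequality of Theorem~\ref{theo:theineqGAW}: once $\hat\mu$ and $\mu^\star$ both satisfy the manifold and density regularity conditions, the $\mathcal H^\gamma_1$-IPM is controlled by $d_{\mathcal H^{d/2}_1}(\hat\mu,\mu^\star)^{\frac{\beta+\gamma}{\beta+d/2}}$ up to polylog factors. The paper proves the rate $n^{-1/2}$ for $\gamma=d/2$ only (Theorem~\ref{theo:boundonddsur2}), establishes via Propositions~\ref{prop:isamanifold}--\ref{prop:densitychecked} that $\hat\mu$ satisfies Assumption~\ref{assump:model} on a high-probability event, and then a single application of the interpolation inequality transfers optimality to every $\gamma\ge1$ (Theorem~\ref{theo:boundongamma}). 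This is exactly the step your decomposition does not replicate. You correctly identify the self-consistency clause as delicate, but you treat it as a technical annoyance rather than the load-bearing ingredient: it is precisely what licenses the interpolation. Without invoking Theorem~\ref{theo:theineqGAW}, you would need to redo a careful scale-by-scale analysis that exploits cancellation between $\hat{\mathcal M}$ and $\mathcal M^\star$ in the geometric term—which is what the interpolation inequality encodes in a packaged form, and which a crude Hausdorff bound throws away.
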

Theorem~\ref{theo:boundongamma} establishes that our estimator $\hat{\mu}$ achieves the minimax convergence rate for all Hölder IPMs $d_{\mathcal{H}^\gamma_1}$ with $\gamma \geq 1$, over the class of $\beta$-regular distributions supported on unknown manifolds. Importantly, this result is achieved by a tractable estimator that is both generative and geometrically aware, in the sense that it satisfies the same regularity assumptions as the ground truth distribution (Assumption~\ref{assump:model}). The minimax rate in~\eqref{eq:optimdelesti} has been shown to be equal to $\tilde{O}\left(n^{-\frac{\beta+\gamma}{2\beta + d}} \vee n^{-1/2}\right)$ up to logarithmic factors in~\cite{tang2023minimax}. In contrast to their estimator, which is tailored to a fixed value of $\gamma > 0$, our estimator simultaneously achieves the minimax rate for all $\gamma \geq 1$ without requiring adaptation to a specific regularity level. This property is fundamental to the construction of our estimator: it is specifically designed to achieve the minimax rate of $n^{-1/2}$ for the metric $d_{\mathcal{H}^{d/2}_1}$ while satisfying Assumption~\ref{assump:model}. This, in turn, allows us to invoke the interpolation inequalities from~\cite{stephanovitch2024ipm}, ensuring that the estimator also attains the optimal rate for the other Hölder IPMs.

\section{Construction of the estimator}
In this section, we present the complete construction of our estimator. We begin in Section~\ref{sec:proptarget} by identifying key properties satisfied by probability measures that fulfill Assumption~\ref{assump:model}, and show how these properties can be leveraged in the design of the estimator. In Section~\ref{sec:Feffe}, we describe how to adapt the manifold reconstruction procedure of~\cite{fefferman2015reconstruction} to optimally approximate the data distribution. Finally, we introduce a wavelet-based parametrization of Besov spaces in Section ~\ref{sec:paramBesov}, which is then used to construct the generator and discriminator classes in Section~\ref{sec:genedis}.

\subsection{Properties of the data distribution}\label{sec:proptarget}
Throughout, $C, C_2,C_\star...$ denote constants that can vary from line to line and that only depend on $p,d,\beta$ and $K$.
Writing $\gamma_d$ for the density of $d$-dimensional standard Gaussian, the construction of the estimator is based on the following decomposition.

\begin{proposition}\label{prop:keydecomp}
    Let $\mu^\star$ a probability measure satisfying Assumption \ref{assump:model} on a manifold $\mathcal{M}^\star$. Then, there exists  a collection of maps $(\phi_i)_{i=1,...,m}\in \mathcal{H}^{\beta +1}_C(\mathbb{R}^d,\mathbb{R}^p)^m$ and weights $(\alpha_i)_{i=1,...,m}\in [C^{-1},C]^m$ such that for all  $h\in \mathcal{H}^0(\mathbb{R}^p)$ we have 
    \begin{equation*}
        \int_{\mathcal{M}^\star} h(x)d\mu^\star(x)= \sum_{i=1}^m \alpha_i \int_{\mathbb{R}^d}h(\phi_i(y))d\gamma_d(y).
    \end{equation*}
\end{proposition}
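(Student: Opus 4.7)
The plan is to decompose $\mu^\star$ into a finite sum of local pieces, each supported in a single chart, and to represent each piece as the pushforward of $\gamma_d$ by a smooth map.

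First, exploiting the compactness of $\mathcal{M}^\star$ together with the uniform reach lower bound furnished by the $(\beta+1,K)$-manifold condition (Definition~\ref{def:manifoldcond}), I would cover $\mathcal{M}^\star$ by a finite family of chart neighborhoods $U_{x_1},\dots,U_{x_m}$ from~\eqref{eq:phix}, with $m$ depending only on $d,p,K$. Using a standard mollification of indicator functions of a nested refinement, I would then build a partition of unity $(\chi_i)_{i=1}^m$ subordinate to this cover, with each $\chi_i \in \mathcal{H}^{\beta+1}_C$. Setting $\alpha_i := \int \chi_i\, d\mu^\star$ and $\mu^\star_i := (\chi_i \mu^\star)/\alpha_i$, the lower bound $f_\mu \geq K^{-1}$ from Definition~\ref{def:densitycond} together with the uniform geometry of the $U_{x_i}$ guarantees $\alpha_i \in [C^{-1},C]$, so the task reduces to writing each $\mu^\star_i$ as $(\phi_i)_\#\gamma_d$ for some $\phi_i \in \mathcal{H}^{\beta+1}_C(\mathbb{R}^d,\mathbb{R}^p)$.

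Next, I would pull back each piece via the chart: setting $\nu_i := (\phi_{x_i})_\# \mu^\star_i$ gives a probability measure on $\mathcal{T}_{x_i}(\mathcal{M}^\star)\cong\mathbb{R}^d$ with compact support and a $\beta$-Hölder density (using $\phi_{x_i}^{-1}\in \mathcal{H}^{\beta+1}_K$ to change variables from the volume measure on $\mathcal{M}^\star$). To obtain $\phi_i$ it is then enough to find $T_i:\mathbb{R}^d\to\phi_{x_i}(U_{x_i})$ of class $\mathcal{H}^{\beta+1}_C$ with $(T_i)_\#\gamma_d = \nu_i$, and take $\phi_i := \phi_{x_i}^{-1}\circ T_i$; the required Hölder bound on $\phi_i$ then follows from the chain rule. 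The key step is the construction of $T_i$. I would factor it as $T_i = R_i \circ \sigma_i$, where $\sigma_i:\mathbb{R}^d\to D_i$ is an explicit smooth diffeomorphism onto a convex open set $D_i$ compactly contained in $\phi_{x_i}(U_{x_i})$ and containing $\text{supp}(\nu_i)$, pushing $\gamma_d$ forward to an auxiliary measure $\widetilde{\nu}_i$ with smooth density on $D_i$; and $R_i:D_i\to D_i$ is the Brenier optimal transport map from $\widetilde{\nu}_i$ to $\nu_i$. The smoothness of $R_i$, hence the Hölder control on $T_i$, would follow from Caffarelli's regularity theorem (the same tool invoked after Definition~\ref{def:densitycond}), provided both densities are $\beta$-Hölder and bounded above and below on $D_i$. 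Summing the local identities $\alpha_i \int h(\phi_i(y))\,d\gamma_d(y) = \int h\, d(\alpha_i \mu^\star_i)$ over $i$ would then yield the decomposition.

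The main obstacle lies precisely in meeting the hypotheses of Caffarelli's theorem simultaneously for source and target. The target density vanishes on $\partial\,\text{supp}(\nu_i)$ because of the partition-of-unity cutoff, and the source $\widetilde{\nu}_i$ obtained by pushing $\gamma_d$ through $\sigma_i$ will generally vanish at $\partial D_i$ as well. Matching these boundary behaviors, or equivalently co-designing $(\chi_i,\sigma_i,D_i)$ so that both densities stay uniformly bounded away from zero on $D_i$, is where the bulk of the technical work lies; one natural route is to use a \emph{plateau} cutoff $\chi_i$ that is identically constant on a ball slightly smaller than $D_i$ and transitions to zero only outside the effective support of $\mu^\star_i$, combined with a radial diffeomorphism $\sigma_i$ whose Jacobian balances the Gaussian weight so that $\widetilde{\nu}_i$ inherits a density bounded above and below on $D_i$.
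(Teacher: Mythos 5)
Your overall architecture matches the paper's: cover $\mathcal{M}^\star$ by charts, split $\mu^\star$ into local pieces via a partition of unity, flatten each piece by $\phi_{x_i}$, and represent it as a pushforward of $\gamma_d$ through a map whose smoothness comes from Caffarelli-type regularity of optimal transport. You also correctly single out the crux of the argument: getting the hypotheses of a Caffarelli theorem to hold simultaneously for source and target.

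However, the resolution you sketch at the end does not close that gap, and the way you propose to set things up cannot be made to work. If $\chi_i$ is a plateau cutoff that transitions to zero inside a convex domain $D_i$, then the flattened target density $\nu_i$ vanishes on an open annulus near $\partial D_i$, while you engineer $\sigma_i$ so that the source $\widetilde{\nu}_i$ is bounded \emph{below} on $D_i$. These two requirements are incompatible: classical Caffarelli regularity on a convex domain requires both densities bounded above \emph{and} below, and no smooth reweighting of the Gaussian by a Jacobian will make a density that is identically zero near $\partial D_i$ bounded below there. Shrinking $D_i$ to the plateau does not help either, because then the Brenier map no longer maps $D_i$ to $D_i$ and you lose mass.

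The paper's actual resolution is different and is the key technical device you are missing. Rather than a plateau cutoff, the weights $\chi_i$ are chosen with precisely the \emph{Gaussian} decay profile induced by the compactification $\Psi$ of~\eqref{eq:diffeoloin}: one sets
\begin{equation*}
\chi_i(x) = \det\bigl(\nabla\Psi(\phi_{x_i}(x))\bigr)\,\gamma_d\bigl(\Psi(\phi_{x_i}(x))\bigr)\,\mathds{1}_{\{\|\phi_{x_i}(x)\|\le\tau\}},
\end{equation*}
so that $\chi_i\circ\phi_{x_i}^{-1}$ is exactly the density of $\Psi^{-1}_{\#}\gamma_d$ on $B^d(0,\tau)$. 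The sum $\sum_j\chi_j$ is bounded below on $\mathcal{M}^\star$, so the normalized local density $\bar\zeta_i$ on $B^d(0,\tau)$ is the density of $\Psi^{-1}_{\#}\gamma_d$ times $\exp(z_i)$ for some $z_i\in\mathcal{H}^\beta_C$ bounded above and below. Pushing forward by $\Psi$ then yields $\xi_i = \gamma_d\cdot\exp(z_i\circ\Psi^{-1})/\alpha_i$ on all of $\mathbb{R}^d$, a log-H\"older perturbation of the standard Gaussian. The Caffarelli-type ingredient is then a result for optimal transport between $\gamma_d$ and such a log-perturbed Gaussian on the whole of $\mathbb{R}^d$ (Corollary 1 of \cite{stephanovitch2024smooth}), giving a transport map $g_i^2\in\mathcal{H}^{\beta+1}_C(\mathbb{R}^d,\mathbb{R}^d)$ with $(g_i^2)_{\#}\gamma_d = \xi_i$ and Jacobian bounded below. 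The chart map is $\phi_i = g_i^1\circ\Psi^{-1}\circ g_i^2$ with $g_i^1 = \phi_{x_i}^{-1}$. In short: instead of trying to make both densities bounded on a bounded convex domain, the proof chooses the cutoff so that both densities decay at exactly the Gaussian rate, undoes the compactification, and appeals to a Gaussian version of Caffarelli's theory on $\mathbb{R}^d$.
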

The proof of Proposition \ref{prop:keydecomp} can be found in Section \ref{sec:prop:keydecomp}. This result shows that the integration with respect to the data distribution $\mu^\star$ on the manifold $\mathcal{M}^\star$ can be decomposed into integrals over $\mathbb{R}^d$ via local charts. In particular, this decomposition captures both the $\beta$-regularity of the measure and its intrinsic dimension $d$. Building on this property, our estimator is constructed by learning a collection of maps $(\hat{g}_i)_{i=1,\dots,m} \in \mathcal{H}^{\beta+1}_{C}(\mathbb{R}^d, \mathbb{R}^p)^m$ and corresponding weights $(\hat{\alpha}_i)_{i=1,\dots,m} \in [C^{-1},C]^m$, such that the quantity
\begin{equation}\label{eq:aquantitytobound}
\sup_{h \in \mathcal{H}_1^\gamma(\mathbb{R}^p, \mathbb{R})}\left|\sum_{i=1}^m \alpha_i \int_{\mathbb{R}^d} h(\phi_i(y))  d\gamma_d(y) - \sum_{i=1}^m \hat{\alpha}_i \int_{\mathbb{R}^d} h(\hat{g}_i(y)) d\gamma_d(y)\right|
\end{equation}
achieves the minimax convergence rate.
To obtain such rates, we first establish a bound on the distance $d_{\mathcal{H}^{d/2}_1}$ and then apply an interpolation inequality to extend the result to all $d_{\mathcal{H}^{\gamma}_1}$, with $\gamma \geq 1$. The interpolation inequality is given in~\cite{stephanovitch2024ipm}, and is stated below.

\begin{theorem}\label{theo:theineqGAW}
 Let $\mathcal{M},\mathcal{M}^\star$ satisfying the $(\beta+1,K)$-manifold  condition and $\mu,\mu^\star$ two probability measures satisfying the  $(\beta,K)$-density condition on $\mathcal{M}$ and $\mathcal{M}^\star$ respectively. Then for all $1\leq \gamma\leq\eta$, we have
    \begin{align*}
        d_{\mathcal{H}^\gamma_1}(\mu,\mu^\star)\leq C\log\left(1+d_{\mathcal{H}^\eta_1}(\mu,\mu^\star)^{-1}\right)^{C_2} d_{\mathcal{H}^\eta_1}(\mu,\mu^\star)^{\frac{\beta+\gamma}{\beta+\eta}}. 
        \end{align*}
\end{theorem}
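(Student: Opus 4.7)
The plan is to prove the interpolation inequality by a mollification argument at a well-chosen scale, where the gain of $\beta$ in the exponent $(\beta+\gamma)/(\beta+\eta)$ compared to the naive interpolation exponent $\gamma/\eta$ comes from exploiting the $\beta$-regularity of the densities rather than only the $\gamma$-regularity of the test functions. Concretely, fix a smooth compactly supported kernel $\psi$ on $\mathbb{R}^p$ with sufficiently many vanishing moments, and for a scale $t>0$ set $\psi_t(\cdot)=t^{-p}\psi(\cdot/t)$. For $f\in \mathcal{H}^\gamma_1(\mathbb{R}^p,\mathbb{R})$, write $f=f_t+(f-f_t)$ where $f_t:=f*\psi_t$. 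Standard kernel estimates give $f_t\in \mathcal{H}^\eta_{Ct^{\gamma-\eta}}$, so
\[
\Bigl|\int f_t\,d(\mu-\mu^\star)\Bigr|\leq C\, t^{\gamma-\eta}\, d_{\mathcal{H}^\eta_1}(\mu,\mu^\star).
\]

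The crux is to show that
\[
\Bigl|\int (f-f_t)\,d(\mu-\mu^\star)\Bigr|\lesssim t^{\gamma+\beta}
\]
up to polylogarithmic corrections. The trivial bound $\|f-f_t\|_\infty\lesssim t^\gamma$ would only yield the exponent $\gamma/\eta$; the improvement to $t^{\gamma+\beta}$ must come from the density smoothness. The strategy is to work in the local charts provided by Definition~\ref{def:manifoldcond}: after pulling back via $\phi_x^{-1}$, each local piece of $\mu$ can be written as $f_\mu\circ\phi_x^{-1}\cdot J_x\,d\lambda^d$ where $J_x$ is the Jacobian of $\phi_x^{-1}$, and both factors are $\beta$-regular. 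Testing an oscillatory function of spatial scale $t$ against such a $\beta$-smooth density in $\mathbb{R}^d$ produces an extra factor $t^\beta$ via a Taylor expansion of the density up to order $\lfloor\beta\rfloor$, combined with the vanishing moments of $\psi$. Summing the resulting bounds through a partition of unity subordinated to a covering of $\mathcal{M}\cup\mathcal{M}^\star$ at scale $K^{-1}$ gives the claimed estimate. Balancing with the smoothed-part bound then leads to $t\sim d_{\mathcal{H}^\eta_1}(\mu,\mu^\star)^{1/(\eta+\beta)}$, producing the announced exponent $(\beta+\gamma)/(\beta+\eta)$.

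The main obstacle is twofold. First, the two measures live on potentially different manifolds $\mathcal{M},\mathcal{M}^\star$, so one must quantify how $\mathbb{H}(\mathcal{M},\mathcal{M}^\star)$ is controlled by $d_{\mathcal{H}^\eta_1}(\mu,\mu^\star)$ (since for $\eta\geq 1$ this IPM dominates the Wasserstein distance, which in turn dominates the Hausdorff distance when densities are bounded below by $K^{-1}$), and then align the chart-by-chart analyses using a common partition of unity. Handling the region at distance $\lesssim \mathbb{H}(\mathcal{M},\mathcal{M}^\star)$ from one manifold but not the other, where cancellation is imperfect, is the delicate geometric point. Second, the logarithmic factor $\log(1+d_{\mathcal{H}^\eta_1}^{-1})^{C_2}$ naturally arises from truncating the mollification at scales above the geometric resolution of the chart comparison, from a Littlewood--Paley-type summation over the finite range of relevant dyadic scales, and from the combinatorial cost of the partition of unity, whose cardinality scales like $t^{-d}$ and enters as a log once one has balanced $t$ with $d_{\mathcal{H}^\eta_1}^{1/(\eta+\beta)}$.
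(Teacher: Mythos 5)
This statement is not proven in the paper at all: the text immediately preceding it reads ``The interpolation inequality is given in~\cite{stephanovitch2024ipm}, and is stated below,'' so the result is imported as a black box. Your proposal must therefore be judged on its own terms, and as written it has two genuine gaps.

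First, the central estimate $\bigl|\int (f-f_t)\,d(\mu-\mu^\star)\bigr|\lesssim t^{\gamma+\beta}$ is asserted but not actually argued. The mechanism you invoke---vanishing moments of the ambient mollifier $\psi_t$ on $\mathbb{R}^p$ combined with a Taylor expansion of the chart density---does not transfer to the chart coordinates, because the pullback $(f-f_t)\circ\phi_x^{-1}$ involves a nonlinear map, and the restriction of an oscillatory pattern of scale $t$ in $\mathbb{R}^p$ to a curved $d$-dimensional submanifold is \emph{not} a scale-$t$ oscillation in $\mathbb{R}^d$ with vanishing moments against $du$. (Concretely, an ambient wavelet $\psi_{jlw}$ oscillating in a direction normal to $\mathcal{T}_x\mathcal{M}$ pulls back to a slowly varying function near $x$, so the naive moment argument fails.) One needs either an intrinsic mollification on $\mathcal{M}$ together with a careful comparison to the ambient Hölder class, or a duality argument in which the ``difference of measures'' side, not the test-function side, is smoothed. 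The paper's own Lemma~\ref{lemma:Landdhgamma} and Proposition~\ref{prop:qualityofapproxfketa} and the remark after Definition~\ref{def:densitycond} (invoking Caffarelli's regularity of optimal transport maps) strongly suggest the reference argues via a $\mathcal{H}^{\beta+1}$-regular transport map $T$ with $T_\#\mu=\mu^\star$, not via ambient convolution.

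Second, the geometric step has a logical error and a risk of circularity. You write that for $\eta\geq 1$ the IPM $d_{\mathcal{H}^\eta_1}$ dominates the Wasserstein distance; this is backwards. Since $\mathcal{H}^\eta_1\subset C\cdot\mathrm{Lip}_1$ for $\eta\geq 1$, one has $d_{\mathcal{H}^\eta_1}\leq C\,W_1$, not the reverse. To bound $\mathbb{H}(\mathcal{M},\mathcal{M}^\star)$ from above in terms of $d_{\mathcal{H}^\eta_1}$, you cannot route through $W_1$, since $W_1=d_{\mathcal{H}^1_1}$ (up to constants) is exactly the quantity the theorem is meant to control at $\gamma=1$: doing so would be circular. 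A direct argument is available (test against a bump of $\mathcal{H}^\eta$-norm~$1$ supported at scale $r$ near a point of $\mathcal{M}$ at distance $r$ from $\mathcal{M}^\star$ to get $d_{\mathcal{H}^\eta_1}\gtrsim r^{\eta+d}$, cf.\ the strategy in Proposition~\ref{prop:distanceofthees}), but you should state it rather than inverting the IPM/Wasserstein comparison. Your instinct that the exponents balance at $t\sim d_{\mathcal{H}^\eta_1}^{1/(\eta+\beta)}$ and that the logarithm comes from cutting off the scale range is correct, but the heart of the proof is missing.
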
 
To apply this inequality, both $\mu$ and $\mu^\star$ must satisfy the manifold and density regularity conditions from Assumption~\ref{assump:model}. Therefore, we must construct a global estimator $\hat{\mu}$ that inherits these properties by carefully gluing the local charts $(\hat{g}_i)_i$ used in~\eqref{eq:aquantitytobound}. Our approach is inspired by the manifold reconstruction method of~\cite{fefferman2015reconstruction} and the resulting estimator is trained with adversarial procedure analogous to the Wasserstein GAN.

\subsection{Adversarial manifold estimation: from Fefferman to GAN}\label{sec:Feffe}
\subsubsection{Fefferman's manifold reconstruction}
The manifold reconstruction procedure developed in~\cite{fefferman2015reconstruction} provides a constructive solution to the geometric Whitney problem: under minimal geometric assumptions, it recovers a smooth Riemannian manifold approximating a given metric space. Specifically, Theorem 1 shows that if a metric space $X$ is $\delta$-close to $\mathbb{R}^d$ at scale $r>0$ (i.e., locally Gromov--Hausdorff close) and is $\delta$-intrinsic (its metric approximates a geodesic distance), then there exists a smooth $d$-dimensional Riemannian manifold $M$ that is quasi-isometric to $X$ and has bounded sectional curvature and injectivity radius.
The reconstruction proceeds through the following steps:
\begin{enumerate}
    \item \textbf{Covering the space:} Select a maximal $r/100$-separated net $\{q_i\} \subset X$ and cover $X$ with metric balls $B_X(q_i, r)$.
    
    \item \textbf{Local parametrization:} For each $i$, construct a $\delta$-isometry $f_i: B_X(q_i, r) \to B^d_r \subset \mathbb{R}^d$, effectively approximating $X$ locally by Euclidean charts.
    
    \item \textbf{Embedding into Euclidean space:} Embed each Euclidean ball $B^d_{r/10}$ into a high-dimensional space $\mathbb{R}^p$ via smooth embeddings $F_i$, producing local patches $\Sigma_i = F_i(B^d_{r/10}) \subset \mathbb{R}^p$.
    
    \item \textbf{Gluing charts into a manifold:} Use a Whitney-type extension algorithm to glue the patches $\Sigma_i$ together into a smooth embedded submanifold $\mathcal{M} \subset \mathbb{R}^p$, ensuring compatibility and smoothness at overlaps.
    
    \item \textbf{Defining the metric:} Construct a Riemannian metric on $\mathcal{M}$ by smoothly interpolating the Euclidean metrics pulled back via the embeddings, using a partition of unity subordinate to the cover.
\end{enumerate}

This algorithm outputs a smooth manifold $M$ that approximates the original metric space $X$ both geometrically and topologically. In particular, when $X$ is a finite point cloud, it yields a smooth manifold that interpolates the data. However, for our purposes, this method has two key limitations: 
\begin{itemize}
    \item It does not provide a mechanism to generate new samples from the estimated manifold.
    \item It is not statistically optimal in the minimax sense, as the reconstruction does not adapt to the underlying regularity $\beta$ of the data distribution.
\end{itemize}

To address these limitations, we integrate this geometric reconstruction procedure with recent techniques from generative modeling.

\subsubsection{Generative modelling via Wasserstein GAN}
Generative Adversarial Networks (GANs) were first introduced in~\citep{goodfellow2014generative} as a framework for simultaneously estimating and sampling from complex target distributions. Since then, GANs have achieved remarkable success across a range of domains, including image generation~\citep{karras2021}, video synthesis~\citep{vondrick2016generating}, and text generation~\citep{SeqGANs}. The original formulation was later refined by~\cite{arjovsky2017wasserstein}, who introduced the Wasserstein GAN (WGAN), significantly improving training stability by replacing the Jensen–Shannon divergence with the Wasserstein distance.

A WGAN consists of two main components: a class of generator functions \( \mathcal{G} \) and a class of discriminators \( \mathcal{D} \). Given a simple base distribution \( \nu \) defined on a latent space \( \mathcal{Z} \), the generator \( g \in \mathcal{G} \), with \( g: \mathcal{Z} \rightarrow \mathbb{R}^p \), aims to approximate the target density \( \mu^\star \) by minimizing the $\mathcal{D}$-IPM over \( \mathcal{G} \):
\begin{equation}\label{eq:IPMgan}
    d_{\mathcal{D}}(\mu^\star, g_{\# }\nu) := \sup_{D \in \mathcal{D}} \left( \mathbb{E}_{X \sim \mu^\star}[D(X)] - \mathbb{E}_{Z \sim \nu}[D(g(Z))] \right).
\end{equation}

Each discriminator \( D \in \mathcal{D} \), where \( D: \mathbb{R}^p \rightarrow \mathbb{R} \), is trained to distinguish between real data drawn from \( \mu^\star \) and samples generated from the pushforward distribution \( g_{\#}\nu \). The generator, in turn, is updated to fool the discriminator, driving the generated distribution closer to the true one under the chosen IPM. In practice, having only acess to an iid sample $X_1,...,X_n$, the WGAN solves an empirical approximation of \eqref{eq:IPMgan}.  Specifically, for compact classes of generators $\mathcal{G}$ and discriminators $\mathcal{D}$, the WGAN estimator is defined as
\begin{equation}\label{eq:gan}
    \hat{g}\in \argmin_{g\in \mathcal{G}}\ \sup_{D\in  \mathcal{D}}\ \frac{1}{n} \sum_{i=1}^n D(g(U_i))-D(X_i),
\end{equation}
with  $U_i\sim \nu$, iid random variables. 

Assuming the existence of a \( C^{\beta+1} \) diffeomorphism \( g^\star \) from the \( d \)-dimensional torus \( \mathbb{T}^d \) onto its image in \( \mathbb{R}^p \) such that the target distribution satisfies \( \mu^\star = g^\star_{\#} U \), where \( U \) denotes the uniform measure on \( \mathbb{T}^d \),~\cite{stephanovitch2023wasserstein} show that the Wasserstein GAN estimator~\eqref{eq:gan} achieves the minimax convergence rates for estimating \( \mu^\star \). Unfortunately, in order to achieve minimax optimality, the WGAN estimator is constrained to generate distributions that share the same topology as the latent space. As a result, it cannot approximate measures supported on manifolds with arbitrary topology, limiting its applicability in more general geometric settings.

To overcome this topological limitation and obtain a more flexible estimator, our method leverages Proposition~\ref{prop:keydecomp} to construct a generalized GAN architecture with multiple generators, each serving as a local chart. These generators collectively approximate the target distribution in local coordinates. We then apply the manifold reconstruction technique of~\cite{fefferman2015reconstruction} to glue these charts together, thereby recovering a global manifold structure.

\subsubsection{Adversarial Manifold Reconstruction}\label{sec:amr}
Like in the Wasserstein GAN framework, our estimator leverages a generator–discriminator training scheme based on the decomposition provided in Proposition~\ref{prop:keydecomp}. The design of the generator class is guided by the following structural result. Let us take $\tau := \frac{1}{8K}$, which serves as a key radius determined by the regularity of  Assumption~\ref{assump:model} and a diffeomorphism $\Psi:B^d(0,\tau)\rightarrow \mathbb{R}^d$ defined by 
\begin{equation}\label{eq:diffeoloin}
    \Psi(u):= \frac{u}{\tau^2-\|u\|^2}.
\end{equation}
\begin{proposition}\label{prop:decompphi}
       Let $\mu^\star$ a probability measure satisfying Assumption \ref{assump:model} and $m\in \mathbb{N}$ given by Proposition \ref{prop:keydecomp}. Then, for each $i\in \{1,...,m\}$ there exists $g_i^1\in\mathcal{H}^{\beta+1}_{C}(B^d(0,2\tau),\mathbb{R}^p)$ satisfying $\forall u,v\in B^d(0,\tau)$ that
    $1-K\tau\leq \|\nabla g_i^1(u)\frac{v}{\|v\|}\|\leq 1+K\tau$ and there exists some diffeomorphisms $g_i^2\in\dot{\mathcal{H}}^{\beta+1}_{C}(\mathbb{R}^d,\mathbb{R}^d)$ satisfying $\forall x,y\in \mathbb{R}^d$ that
    $\|\nabla g_i^2(x)\frac{y}{\|y\|}\|\geq C^{-1}$, such that for $\phi_i$ given by Proposition \ref{prop:keydecomp} we have
    $$\phi_i = g_i^1\circ \Psi^{-1}\circ g_i^2.$$
\end{proposition}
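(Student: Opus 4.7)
The plan is to unpack the construction of the maps $\phi_i$ from Proposition~\ref{prop:keydecomp} and exhibit the claimed factorization explicitly. A natural construction of those $\phi_i$ starts from a finite cover of $\mathcal{M}^\star$ by the charts $U_{x_i}$ of Definition~\ref{def:manifoldcond}, together with a smooth partition of unity $\{\chi_i\}$ refining the cover finely enough that $\phi_{x_i}(\mathrm{supp}(\chi_i))\subset B^d(0,\tau)$ (after identifying $\mathcal{T}_{x_i}(\mathcal{M}^\star)$ with $\mathbb{R}^d$ via an orthonormal basis). The chart $\phi_{x_i}^{-1}$ supplies the near-isometric piece $g_i^1$, and a transport step sending $\gamma_d$ to the local pull-back of $\chi_i\mu^\star/\alpha_i$ supplies the remaining piece; the task is to recognise this transport step as the composition $\Psi^{-1}\circ g_i^2$.

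The construction of $g_i^1$ is short. Definition~\ref{def:manifoldcond} guarantees that $\phi_{x_i}^{-1}\in \mathcal{H}^{\beta+1}_K$ is defined on a subset of $\mathbb{R}^d$ containing a neighborhood of $0$ that strictly contains $B^d(0,2\tau)$, since $\tau=1/(8K)$. A standard Whitney-type Hölder extension then produces $g_i^1\in\mathcal{H}^{\beta+1}_C(B^d(0,2\tau),\mathbb{R}^p)$ agreeing with $\phi_{x_i}^{-1}$ on a neighborhood of $0$. At $u=0$ the differential $\nabla g_i^1(0)$ is the isometric inclusion $\mathcal{T}_{x_i}(\mathcal{M}^\star)\hookrightarrow\mathbb{R}^p$, so $\bigl\|\nabla g_i^1(0)v/\|v\|\bigr\|=1$ for $v\neq 0$. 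Combined with $\|\nabla^2 g_i^1\|_\infty\leq C$, a first-order Taylor bound yields the required $1-K\tau\leq \bigl\|\nabla g_i^1(u)v/\|v\|\bigr\|\leq 1+K\tau$ on $B^d(0,\tau)$, once constants are absorbed into $K$.

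The heart of the argument is the construction of $g_i^2$. Writing $\phi_i=g_i^1\circ T_i$ for some $T_i:\mathbb{R}^d\to B^d(0,\tau)$, the identity $\phi_i=g_i^1\circ\Psi^{-1}\circ g_i^2$ forces $g_i^2:=\Psi\circ T_i$. I would take $T_i$ to be the Brenier optimal transport map pushing $\gamma_d$ onto the local density $\tilde\nu_i:=(\phi_{x_i})_\#(\chi_i\mu^\star)/\alpha_i$ on $B^d(0,\tau)$; equivalently, $g_i^2$ is the Brenier transport from $\gamma_d$ onto $\rho_i:=\Psi_\#\tilde\nu_i$ on $\mathbb{R}^d$. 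Since $\Psi$ is a smooth diffeomorphism from $B^d(0,\tau)$ onto $\mathbb{R}^d$, $\rho_i$ is $\beta$-Hölder and bounded above and below on every compact set, with a decay at infinity matching the blow-up profile of $\nabla\Psi$ near $\partial B^d(0,\tau)$. Caffarelli's regularity theory (already invoked in the discussion after Definition~\ref{def:densitycond}) then yields $g_i^2\in \dot{\mathcal{H}}^{\beta+1}_C(\mathbb{R}^d,\mathbb{R}^d)$ together with the uniform Jacobian lower bound $\bigl\|\nabla g_i^2(x)v/\|v\|\bigr\|\geq C^{-1}$, while Brenier's theorem upgrades $g_i^2$ to a global $C^1$ diffeomorphism of $\mathbb{R}^d$.

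The main technical obstacle is obtaining these estimates \emph{uniformly} on all of $\mathbb{R}^d$: both the homogeneous Hölder control of $\nabla g_i^2$ and the lower eigenvalue bound on $(\nabla g_i^2)^\top\nabla g_i^2$ must persist at infinity, where $\rho_i$ degenerates according to the explicit rate inherited from $\Psi$. I would handle this either by rewriting the Monge--Amp\`ere equation in $\Psi$-coordinates, where it becomes a boundary-degenerate problem on the bounded convex domain $B^d(0,\tau)$ amenable to Caffarelli's boundary regularity, or by a weighted interior estimate with the explicit weight $|\det\nabla\Psi(u)|$ tracked through all steps. Once this is in place, the factorization $\phi_i=g_i^1\circ\Psi^{-1}\circ g_i^2$ follows directly from the definitions, and the remaining claims are routine chain-rule consequences of the smoothness of $\Psi$ and the near-isometric control of $g_i^1$.
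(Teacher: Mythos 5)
Your construction of $g_i^1$ is exactly the paper's, and your Taylor estimate for the Jacobian bounds on $B^d(0,\tau)$ is correct. The gap is in the construction of $g_i^2$, and it is not the technicality you flag at the end but an earlier modelling choice that you pass over.

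You take a generic smooth partition of unity with $\phi_{x_i}(\mathrm{supp}\,\chi_i)\subset B^d(0,\tau)$. For a standard bump-function partition this support is \emph{compactly contained} in $B^d(0,\tau)$, so $\rho_i=\Psi_\#\tilde\nu_i$ is compactly supported in $\mathbb{R}^d$; more generally, unless $\chi_i$ is engineered so that $\tilde\nu_i/(\det\nabla\Psi\cdot\gamma_d\circ\Psi)$ is bounded above and below, $\rho_i$ will have non-Gaussian tails. In either case, no map $g_i^2$ with $\lambda_{\min}((\nabla g_i^2)^\top\nabla g_i^2)\geq C^{-1}$ on all of $\mathbb{R}^d$ can push $\gamma_d$ onto $\rho_i$ — a global Jacobian lower bound forces $g_i^2$ to be a surjective bi-Lipschitz map, hence $\rho_i$ must be comparable to $\gamma_d$ at infinity, which your $\rho_i$ is not. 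This is not a ``boundary-degenerate'' estimate to be fixed by a cleverer Caffarelli argument; the conclusion is simply false for your choice of $\chi_i$. The paper's proof of Proposition~\ref{prop:keydecomp} sidesteps this by taking the specific partition $\chi_i(x)=\det(\nabla\Psi(\phi_{x_i}(x)))\,\gamma_d(\Psi(\phi_{x_i}(x)))\mathds{1}_{\{\|\phi_{x_i}(x)\|\leq\tau\}}$, normalised by $\sum_j\chi_j$. With that choice, the resulting local density $\bar\zeta_i$ factors as $\det(\nabla\Psi)\,\gamma_d\circ\Psi\cdot e^{z_i}$ with $z_i\in\mathcal{H}^\beta_C(B^d(0,\tau))$, so $\Psi_\#\bar\zeta_i$ has density $\gamma_d\cdot e^{z_i\circ\Psi^{-1}}$ — a bounded, $\beta$-Hölder log-perturbation of the Gaussian. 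The tuning of the partition of unity to $\Psi$ is the mechanism; without it the claimed global bounds on $g_i^2$ cannot hold.

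Two smaller points. First, the paper does \emph{not} use the Brenier map plus Caffarelli: it invokes Corollary~1 of \cite{stephanovitch2024smooth}, an off-the-shelf construction of a (generally non-optimal) transport map from $\gamma_d$ to $\gamma_d\cdot e^{z}$ with $z$ bounded and $\beta$-Hölder, which directly gives $g_i^2\in\dot{\mathcal{H}}^{\beta+1}_C(\mathbb{R}^d,\mathbb{R}^d)$ together with the uniform Jacobian lower bound. Extracting global $\dot{\mathcal{H}}^{\beta+1}$ control and uniform $\lambda_{\min}$ from Caffarelli's theory for the Brenier map on all of $\mathbb{R}^d$ is a nontrivial additional step that your proposal leaves entirely open. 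Second, your ``equivalently'' identifying ``Brenier map onto $\tilde\nu_i$'' with ``Brenier map onto $\rho_i=\Psi_\#\tilde\nu_i$'' is not correct: post-composition with the diffeomorphism $\Psi$ does not preserve the gradient-of-convex-potential structure, so at most one of the two maps you name can be Brenier. Since you only need \emph{some} transport with the stated properties, this is repairable, but as written the sentence is wrong.
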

The proof of Proposition \ref{prop:decompphi} can be found in Section \ref{sec:prop:decompphi}. This proposition shows that the transport maps \( (\phi_i )_i\), which satisfy
\[
\int_{\mathcal{M}^\star} h(x)\, d\mu^\star(x) = \sum_{i=1}^m \alpha_i \int_{\mathbb{R}^d} h(\phi_i(y))\, d\gamma_d(y),
\]
can be factored into three components with distinct roles:

\begin{itemize}
    \item \textbf{(Geometry)} The maps \( g_i^1 \) act as charts that reconstruct the local geometry of the manifold \( \mathcal{M} \); their Jacobian are constrained to ensure they do not distort volume and cannot adapt to density variations.
    \item \textbf{(Density)} The diffeomorphisms \( g_i^2 \) serve to modulate the density, allowing the composition \( \phi_i \) to locally match the target measure \( \mu \).
    \item \textbf{(Compactification)} The map \( \Psi^{-1} \) regularizes the domain by mapping \( \mathbb{R}^d \) into \( B^d(0,1) \) in a controlled fashion, which is essential for maintaining smoothness of the pushforward measures \( (\phi_i)_{\#} \gamma_d \).
\end{itemize}

This decomposition forms the backbone of our generative model architecture and guide the construction of our generator class.

\paragraph{Function classes} To properly define our estimator, we first introduce the classes of functions used in the adversarial training procedure. For $m\in \mathbb{N}$ given by Proposition \ref{prop:keydecomp},
the generator class is defined as
\begin{equation}\label{eq:G}
\mathcal{G} \subset \mathcal{H}^{\beta+1}(\mathbb{R}^d, \mathbb{R}^p)^m,
\end{equation}
to mimic the role of local weighted-charts $g_i^1\circ \Psi^{-1}\circ g_i^2$ of the unknown $(\beta+1)$-regular $d$-dimensional manifold $\mathcal{M}^\star$. To enable the gluing of the charts $g_i^1$ into a coherent global structure, we introduce a class of approximate inverse maps,
\begin{equation}\label{eq:phi}
\Phi \subset \mathcal{H}^{\beta+1}(\mathbb{R}^p, \mathbb{R}^d)^m.
\end{equation}
The weights associated with each generator range in
\begin{equation}\label{eq:A}
\mathcal{A} = [C^{-1},C]^m,
\end{equation}
with $C>0$ given by Proposition \ref{prop:keydecomp}.
The weight class $\mathcal{A}$ allows to  control the local contribution of each chart to the final reconstructed measure. Finally, the class of discriminators used for adversarial training is defined as
\begin{equation}\label{eq:D}
\mathcal{D} \subset \mathcal{H}^{d/2}(\mathbb{R}^p, \mathbb{R}),
\end{equation}
in order to match the critical smoothness $d/2$ required to achieve the rate $O(n^{-1/2})$.

\paragraph{Manifold reconstruction}
Let us now define our manifold construction based on generators $(g_i)_i\in \mathcal{G}$ and the approximate inverses $(\varphi_i)_i\in \Phi$ of the $(g_i^1)_i$. Let \( \Gamma \in \mathcal{H}^{\beta+1}_{C}([0,\infty), [0,1]) \) be a smooth cutoff function, and let \( \epsilon_\Gamma \in (0, \tau^2/4) \) be such that
\begin{equation}\label{eq:Gamma}
\Gamma\left([0, \tau(1 + (K + 2)\tau)]\right) = \{1\} \quad \text{and} \quad \Gamma\left([\tau(1 + (K + 2)\tau) + \epsilon_\Gamma, \infty)\right) = \{0\},
\end{equation}
recalling that $\tau=\frac{1}{8K}$.
The function $\Gamma$ is used to smoothly interpolate between local transformations and the identity map. For $i\in \{1,...,m\}$ and $x\in \mathbb{R}^p$, we define the local gluing operator \( F_i^{g,\varphi}: \mathbb{R}^p \rightarrow \mathbb{R}^p \) by
\begin{equation}\label{eq:Fi}
    F_i^{g,\varphi}(x) = \Gamma\left(\|x - g_i^1(0)\|\right) \cdot g_i^1 \circ \varphi_i(x) + \left(1 - \Gamma\left(\|x - g_i^1(0)\|\right)\right) \cdot x,
\end{equation}
and set the global gluing map as the composition
\begin{equation}\label{eq:Ffinal}
F^{g,\varphi} = F_m^{g,\varphi} \circ \cdots \circ F_1^{g,\varphi}.
\end{equation}
This construction is inspired by Section 5.2 of~\cite{fefferman2015reconstruction}, and is designed to ensure a smooth transition between the local charts \( g_i^1 \). It effectively merges the individual chart domains into a globally coherent manifold. We now design the adversarial training to both estimate $\mu^\star$ and constrain the maps \( \hat{g}_i^1 \in \mathcal{G} \) and \( \hat{\varphi}_i \in \Phi \) such that the reconstructed manifold
\[
\hat{\mathcal{M}} = \bigcup_{i=1}^m F^{\hat{g}, \hat{\varphi}} \circ \hat{g}_i^1\left(B^d(0, \tau)\right)
\]
satisfies the \( (\beta+1, K) \)-manifold condition of Definition \ref{def:manifoldcond}.

\paragraph{Adversarial training} Take $N\in \mathbb{N}_{>0}$, $Y_1,...,Y_N$ an iid sample of  $\mathcal{N}(0,\text{Id})$  and $\omega_1,...,\omega_N$ iid sample of the multinomial distribution $\textbf{Mult}(\{1/m,...,1/m\})$. Now,
 for $X_1,...,X_n$ the iid sample from the target measure $\mu^\star$ and $(g,\varphi,\alpha,D)\in \mathcal{G}\times \Phi \times \mathcal{A}\times \mathcal{D}$,
define
\begin{equation}\label{eq:Ln}
    L_{N,n}(g,\varphi,\alpha,D):=\frac{1}{N} \sum_{j=1}^N \alpha_{\omega_j}\  D\Big(F^{g,\varphi}\circ g_{\omega_j}(Y_j)\Big)-\frac{1}{n} \sum_{j=1}^n D(X_j),
\end{equation}
which generalizes the classical adversarial loss \eqref{eq:gan} used in the WGAN framework. In contrast to the standard setup where samples are generated from a single generator, here the generator index \( \omega_j \in \{1, \dots, m\} \) is chosen uniformly at random, and a synthetic sample is produced by applying the reconstruction map \( F^{g, \varphi} \) from \eqref{eq:Ffinal} to the chart output \( g_{\omega_j}(Y_j) \).
The term \( \alpha_{\omega_j} \) acts as a weight that modulates the contribution of each local chart, playing the role of \( \alpha_i \) introduced in Proposition~\ref{prop:keydecomp}.
In addition, taking $U_1,...,U_N$ an iid sample of $\mathcal{U}(B^d(0,2\tau))$ we define the consistency loss
\begin{equation}\label{eq:Cn}
    \mathcal{C}_N(g, \varphi) := \frac{1}{N} \sum_{j=1}^N \sum_{i,l=1}^m \left\| g_l^1(U_j) - g_i^1\circ \varphi_i \circ g_l^1(U_j) \right\|  \Gamma\big((\|g_l^1(U_j) - g_i^1(0)\|-\tau^2)\vee 0\big),
\end{equation}
which penalizes deviations from the condition \( \varphi_i \approx (g_i^1)^{-1} \).
We also introduce a regularization term
\begin{align}\label{eq:R}
    R(g) := &\sum_{z_1, z_2 \in \mathcal{Z}^1} \sum_{i=1}^m 
    \mathds{1}_{\left\{ 
        (1 + (K + \tfrac{1}{2})\tau) \|z_1 - z_2\| > \|g_i^1(z_1) - g_i^1(z_2)\| > 
        (1 - (K + \tfrac{1}{2})\tau) \|z_1 - z_2\| 
    \right\}},\\\nonumber
   & + \sum_{z_1, z_2 \in \mathcal{Z}^2} \sum_{i=1}^m 
    \mathds{1}_{\left\{\epsilon_\Gamma^{-1/2} \|z_1 - z_2\|> \|g_i^2(z_1) - g_i^2(z_2)\| > 
        \epsilon_\Gamma^{1/2} \|z_1 - z_2\| 
    \right\}}
\end{align}
where \( \mathcal{Z}^1 \) and \( \mathcal{Z}^2 \) are fixed \( \epsilon_\Gamma \)-covering of \( B^d(0, \tau) \) and \( B^d(0, 2\log(n)^{1/2}) \) respectively. The first term controls the local Lipschitz behavior of each chart-generator \( g_i^1 \) by encouraging its Jacobian to remain close to an isometry, effectively constraining the spectral radius and ensuring well-conditioned local geometry. Similarly, the second term controls the eigenvalues of the Jacobian of \( g_i^2 \) to ensure that the density of push-forward  $(g_i^2)_{\#}\gamma_d$ remains bounded above and below.

We now define our  intermediate estimators as the solution to the following constrained adversarial optimization problem:
\begin{equation}\label{eq:estimator}
    (\hat{g},\hat{\varphi},\hat{\alpha}):= \argmin_{(g,\varphi,\alpha)\in \mathcal{G}\times  \Phi\times  \mathcal{A}}\ \sup_{D\in \mathcal{D}} \ L_{N,n}(g,\varphi,\alpha,D),
\end{equation}
subject to the constraint
\begin{equation}\label{eq:opticonstrain}
\mathcal{C}_N(g, \varphi) + R(g) \leq \epsilon_\Gamma.
\end{equation}
The constraint \eqref{eq:opticonstrain} is designed to enforce geometric consistency  ensuring that the global reconstruction map $F^{\hat{g},\hat{\varphi}}$ effectively produces a manifold satisfying the desired assumptions.
Let $\gamma_d^n$ denote the probability measure with density being proportional to
\begin{align}\label{align:mdofiedgauss}
   \gamma_d^n(x)\propto \gamma_d(x)\Gamma\Big((\|x\|-\log(n)^{1/2})\mathds{1}_{\{\|x\|\geq  \log(n)^{1/2}\}}\Big),
\end{align}
with $\Gamma$ the smooth cutoff function defined in \eqref{eq:Gamma}. This measure is used to approximate the Gaussian $\gamma_d$ to an error $1/n$ while being supported on the ball $B^d(0,\log(n)^{1/2}+1)$  where the generators are well-behaved.

Using this reference measure, we define our final estimator as the pushforward distribution
\begin{equation}\label{eq:hatmu}
    \hat{\mu}:=(F^{\hat{g},\hat{\varphi}}\circ \hat{g})_{\# \hat{\alpha}\gamma_d^n},
\end{equation}
where the latter is defined by duality for any continuous test function $h\in \mathcal{H}^0(\mathbb{R}^p,\mathbb{R})$ by
\begin{equation}\label{eq:push}
\int h(x)d(F^{\hat{g},\hat{\varphi}}\circ \hat{g})_{\# \hat{\alpha}\gamma_d^n}(x):=\sum \limits_{i=1}^m \frac{\hat{\alpha}_i}{\sum_j \hat{\alpha}_j} \int_{\mathbb{R}^d}h\big(F^{\hat{g},\hat{\varphi}}\circ\hat{g}_i(y)\big)d\gamma_d^n(y).
\end{equation}
This construction enables an efficient sampling procedure from the estimator, which proceeds as follows:
\begin{enumerate}
    \item Sample $y\in B^d(0,\log(n)^{1/2}+1)$ according to the modified Gaussian distribution $\gamma_d^n$ \eqref{align:mdofiedgauss}.
    \item                  Sample $i\in \{1,...,m\}$ according to the multinomial distribution $\textbf{Mult}(\{\hat{\alpha}_1,...,\hat{\alpha}_m\})$.
    \item Compute the new sample $F^{\hat{g},\hat{\varphi}}\circ\hat{g}_i(y)$.
\end{enumerate}
In practice, sampling from $\gamma_d^n$ is straightforward: drawing $y\sim \mathcal{N}(0,\text{Id})$,  yields $y\in B^d(0,\log(n)^{1/2})$ with probability $1-O(n^{-1})$, so standard Gaussian samples can be used directly with negligible error. 
Compared to score-based generative models, which require numerically solving a stochastic differential equation for each sample, this sampling scheme is significantly more computationally efficient.

In the next section, we provide explicit constructions of the function classes \( \mathcal{G} \), \( \Phi \), \( \mathcal{A} \) and \( \mathcal{D} \), which are used to define the generator, inverse, weighting, and discriminator families respectively.

\subsection{Wavelet parametrization of adversarial classes}
\subsubsection{Parametrized approximation of Besov spaces}\label{sec:paramBesov}
To control the regularity of our function classes, we parametrize them via their wavelet decompositions. We begin by recalling the definition of Besov spaces, which provide a natural functional framework for measuring smoothness in terms of wavelet coefficients.

Let $\psi,\phi\in \mathcal{H}^{d/2+2}(\mathbb{R},\mathbb{R})$ be a compactly supported \emph{scaling} and \emph{wavelet} function respectively (see Daubechies wavelets \citep{daubechies1988orthonormal}). For ease of notation, the functions $\psi,\phi$ will be written $\psi_0,\psi_1$ respectively. Then for $j\in \mathbb{N},l \in \{1,...,2^p-1\}, w \in \mathbb{Z}^p$, the family of functions 
$$\psi_{0w}(x) = \prod \limits_{i=1}^p \psi_{0}(x_i-w_i) \ \text{ , } \ \psi_{jlw}(x) = 2^{jp/2}\prod \limits_{i=1}^p \psi_{l_i}(2^{j}x_i-w_i)
$$
form an orthonormal basis of $L^2(\mathbb{R}^p,\mathbb{R})$ (with $l_i$ the $i$-th digit of the base-2-decomposition of $l$).
Let $q_1,q_2\geq 1,s>0,b\geq 0$ be such that $d/2+2>s$. The Besov space $\mathcal{B}^{s,b}_{q_1,q_2}(\mathbb{R}^p,\mathbb{R})$ consists of functions $f$ that admit a wavelet expansion in $L^2$:
$$
f(x)=\sum \limits_{w\in \mathbb{Z}^p} \alpha_f(w)\psi_{0w}(x) + \sum \limits_{j=0}^\infty \sum \limits_{l=1}^{2^p-1}\sum \limits_{w\in \mathbb{Z}^p} \alpha_f(j,l,w)\psi_{jlw}(x)
$$
equipped with the norm
\begin{align*}
\|f\|_{\mathcal{B}^{s,b}_{q_1,q_2}}= &\Biggl(\left(\sum \limits_{w\in \mathbb{Z}^p} |\alpha_f(w)|^{q_1}\right)^{q_2/q_1}\\
& +\sum \limits_{j=0}^\infty 2^{jq_2(s+p/2-p/q_1)}(1+j)^{bq_2} \sum \limits_{l=1}^{2^p-1} \Big(\sum \limits_{w\in \mathbb{Z}^p} |\alpha_f(j,l,w)|^{q_1}\Big)^{q_2/q_1}\Biggl)^{1/q_2}.
\end{align*}
with the usual modification for $q_1,q_2=\infty$.
Note that for $b=0$, $\mathcal{B}^{s,0}_{q_1,q_2}$ coincides with the classical Besov space $\mathcal{B}^{s}_{q_1,q_2}$ \citep{gine_nickl_2015}.
For simplicity of notation, we write
$$
f(x)= \sum \limits_{j=0}^\infty \sum \limits_{l=1}^{2^p}\sum \limits_{w\in \mathbb{Z}^p} \alpha_f(j,l,w)\psi_{jlw}(x)
$$
with the convention that $\psi_{02^pw}=\psi_{0w}$ and for all $j\geq 1$, $\psi_{j2^pw}=0$. The Besov spaces can be generalized for any $s\in \mathbb{R}$ as a subspace of the space of tempered distribution $\mathcal{S}^{'}(\mathbb{R}^p)$. Indeed for $f \in \mathcal{S}^{'}(\mathbb{R}^p)$, writing $\alpha_f(j,l,w)=\langle f,\psi_{jlw}\rangle$, the Besov space for $s\in \mathbb{R}$ is defined as
$$\mathcal{B}^{s,b}_{q_1,q_2}=\{f\in \mathcal{S}^{'}(\mathbb{R}^p) | \|f\|_{\mathcal{B}^{s,b}_{q_1,q_2}}<\infty\}.$$

The same way, we write $\mathcal{B}^{s,b}_{q_1,q_2}(K)=\{f\in \mathcal{S}^{'}(\mathbb{R}^p) | \|f\|_{\mathcal{B}^{s,b}_{q_1,q_2}}\leq K\}.$
 In the following, we will use intensively the connection between Hölder and Besov spaces.
\begin{lemma}\label{lemma:inclusions} (Proposition 4.3.23 \cite{gine_nickl_2015}, (4.63)  \cite{haroske2006envelopes}) If $\eta>0$ is a non integer, then for all $\alpha\geq 0$
$$\mathcal{H}^{\eta,\alpha}(\mathbb{R}^p,\mathbb{R})=\mathcal{B}^{\eta,\alpha}_{\infty,\infty}(\mathbb{R}^p,\mathbb{R})$$
with equivalent norms. If $\eta\geq 0$ is an integer, then $\forall \epsilon>0$
$$\mathcal{B}^{\eta,1+\epsilon}_{\infty,\infty}(\mathbb{R}^p,\mathbb{R}) \xhookrightarrow{} \mathcal{H}^\eta(\mathbb{R}^p,\mathbb{R})\xhookrightarrow{} \mathcal{B}^\eta_{\infty,\infty}(\mathbb{R}^p,\mathbb{R}),$$
where we write $A\xhookrightarrow{} B$ if the function space $A$ compactly injects in the function space $B$.
\end{lemma}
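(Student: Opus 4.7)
The statement is quoted from \cite{gine_nickl_2015} and \cite{haroske2006envelopes}, so in practice the proof amounts to invoking those references; nevertheless, the plan for re-deriving it is standard and worth spelling out. The backbone is the wavelet characterization: for compactly supported Daubechies wavelets of smoothness strictly greater than $\eta$, membership in $\mathcal{B}^{s,b}_{\infty,\infty}$ is equivalent to a pointwise bound $|\alpha_f(j,l,w)|\lesssim 2^{-j(s+p/2)}(1+j)^{-b}$ on the wavelet coefficients, uniformly in $l,w$. I would treat the non-integer and integer cases separately because the gap between Hölder and Besov appears only in the integer case, and in both cases one direction is proved by estimating $\langle f,\psi_{jlw}\rangle$ from smoothness of $f$, while the other is proved by estimating $f(x)-f(y)$ from summability of the wavelet series.

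For the non-integer case I would argue as follows. Given $f\in\mathcal{H}^\eta$, Taylor-expand $f$ at a point inside the support of $\psi_{jlw}$ to order $\lfloor\eta\rfloor$, use the vanishing moments of $\psi_{jlw}$ to kill the polynomial part, and bound the remainder by $\|f\|_{\mathcal{H}^\eta}\cdot 2^{-j(\eta-\lfloor\eta\rfloor)}$ times volume/normalization factors; this gives the coefficient bound $|\alpha_f(j,l,w)|\lesssim \|f\|_{\mathcal{H}^\eta}2^{-j(\eta+p/2)}$, which translates into the Besov norm estimate (with the log-weight $(1+j)^{\alpha}$ following the same argument applied to the anisotropic $\mathcal{H}^{\eta,\alpha}$ scale). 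Conversely, starting from coefficient decay, split the wavelet series as $\sum_{2^j\le h^{-1}}+\sum_{2^j>h^{-1}}$ where $h=\|x-y\|$; use the $\mathcal{H}^{d/2+2}$ regularity of $\psi$ on the low-frequency part (mean-value estimate gaining a factor $h\cdot 2^j$) and the $L^\infty$ bound on the high-frequency part. Summing two geometric series in $j$ yields $|f(x)-f(y)|\lesssim h^{\eta-\lfloor\eta\rfloor}$, and differentiating term by term $\lfloor\eta\rfloor$ times (legal since $\eta<d/2+2$) handles the lower-order derivatives.

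For the integer case the same arithmetic produces $|f(x)-f(y)|\lesssim h^\eta\sum_{2^j\le h^{-1}}1\sim h^\eta\log(1/h)$, which is the Zygmund obstruction: coefficient decay $2^{-j(\eta+p/2)}$ only yields the Zygmund class, a strict superset of $\mathcal{H}^\eta$. Inserting the extra factor $(1+j)^{-(1+\epsilon)}$ in the coefficient bound — which is exactly what $\mathcal{B}^{\eta,1+\epsilon}_{\infty,\infty}$ encodes — makes the sum $\sum_j(1+j)^{-(1+\epsilon)}$ convergent and kills the logarithm, giving the first (compact) embedding; compactness follows from the tail $\sum_{j\ge J}(1+j)^{-(1+\epsilon)}\to 0$. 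The second embedding $\mathcal{H}^\eta\hookrightarrow\mathcal{B}^\eta_{\infty,\infty}$ is again by the Taylor-plus-vanishing-moments argument as in the non-integer case, and its compactness comes from the fact that Hölder balls are compact in $C^0$ by Arzelà–Ascoli, combined with the wavelet tail estimate.

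The only subtle step, and the one I would expect to spend real effort on, is the logarithmic factor bookkeeping in the anisotropic Besov scale $\mathcal{B}^{\eta,\alpha}_{\infty,\infty}$: one must verify that the two conversions (smoothness $\Rightarrow$ coefficient bound, and back) are sharp enough to preserve the exponent $\alpha$ on the $\log$ weight, which is what~\cite{haroske2006envelopes} handles. Beyond that, everything is a direct quotation of Propositions~4.3.23 and the surrounding material in~\cite{gine_nickl_2015}, so I would simply cite these references and move on.
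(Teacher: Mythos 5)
The paper offers no proof of this lemma; it is stated purely as a quotation of Proposition 4.3.23 of \cite{gine_nickl_2015} and (4.63) of \cite{haroske2006envelopes}. You correctly recognize this and, in effect, take the same route: cite the references. The supplementary sketch you add is faithful to the standard wavelet argument that those references use — Taylor expansion plus vanishing moments to pass from smoothness to coefficient decay, dyadic splitting of the wavelet series at scale $2^{j}\sim\|x-y\|^{-1}$ to go back, the Zygmund obstruction at integer $\eta$ giving the extra $\log(1/h)$, and the role of the $(1+j)^{-(1+\epsilon)}$ weight in restoring summability. That part is accurate in outline and would survive being made rigorous.

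One step of your sketch does not hold as stated, and it is worth flagging even though it merely mirrors a looseness in the paper's own phrasing. You justify compactness of $\mathcal{H}^\eta\hookrightarrow\mathcal{B}^\eta_{\infty,\infty}$ by Arzel\`a--Ascoli ``since Hölder balls are compact in $C^0$.'' On $\mathbb{R}^p$ this is false: without a uniform decay or compact-support constraint, a bounded Hölder ball is not totally bounded in $C^0$ (translate a fixed bump to infinity). Arzel\`a--Ascoli gives compactness only on compact domains, and even then the argument must be run through the wavelet coefficients, not through uniform convergence of the functions themselves. The second embedding is really a continuous (norm) embedding with no regularity gap, and its ``compactness'' in the sense the paper intends needs either a restriction to a fixed compact set (which is how the lemma is actually used elsewhere in the paper, where all functions are supported in $B^k(0,R)$) or a more careful formulation. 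Since the paper simply cites the references and does not itself prove compactness, this is a soft point in the lemma statement rather than a defect unique to your sketch; but if you were to flesh the sketch into a proof, the Arzel\`a--Ascoli line is the step that would need repair.
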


Let us now describe how to approximate the Besov spaces. For $k\in \mathbb{N}_{>0}$, $\eta,\delta,R>0$, and $C_\eta>0$ such that $\mathcal{H}^\eta_1(\mathbb{R}^k,\mathbb{R})\subset \mathcal{B}^{\eta}_{\infty,\infty}(\mathbb{R}^k,\mathbb{R},C_\eta)$, define
\begin{equation}\label{eq:fdeltaper}
\mathcal{F}_{\delta,R}^{k,\eta}=\{f\in \mathcal{B}^{\eta}_{\infty,\infty}(\mathbb{R}^k,\mathbb{R},C_\eta K) | \ \langle f_i,\psi_{j,l,w}^k\rangle_{L^2}=0,  \forall j\geq \log_2(\delta^{-1}),supp(f)\subset B^k(0,R)\}
\end{equation}
the set of functions in $\mathcal{B}^{\eta}_{\infty,\infty}$ that do not have frequencies higher than $\log_2(\delta^{-1})$ and are supported in $B^k(0,R)$. The constant $C_\eta$ is chosen such that $\mathcal{H}_1^{\eta}(\mathbb{R}^k,\mathbb{R})\subset \mathcal{B}_{\infty,\infty}^{\eta}(\mathbb{R}^k,\mathbb{R},C_\eta)$. In particular, any function in $\mathcal{B}_{\infty,\infty}^{\eta}(\mathbb{R}^k,\mathbb{R},C_\eta)$ and supported in $B^k(0,2K)$, can be approximated in $\|\cdot\|_{B^{0}_{\infty,\infty}}$-norm by a function in $\mathcal{F}_\delta^{k,\eta}$ up to the error $C\delta^\eta$. 

We now describe how to approximate the function class \( \mathcal{F}_{\delta,R}^{k,\eta} \) in practice. Let \( \hat{\phi}:\mathbb{R} \rightarrow \mathbb{R} \) be an approximation of the scaling function \( \phi \), such that 
\begin{equation}\label{eq:approxipsipaper2}
\|\phi-\hat{\phi}\|_{\mathcal{H}^{\lfloor d/2 \rfloor +2}} \leq n^{-1/2}.
\end{equation}
Such function can be efficiently computed using the classical "cascade algorithm" \citep{daubechies1991two} or neural networks \citep{stephanovitch2023wasserstein}. 
Define the approximation of $\mathcal{F}_\delta^{k,\eta}$ using the approximated wavelet $\hat{\psi}$ built from $\hat{\phi}$ (see Section A.2 of \cite{stephanovitch2023wasserstein} for the construction) as the following class of functions:
\begin{align}\label{eq:fdeltaperapprox}
&\hat{\mathcal{F}}_{\delta,R}^{k,\eta}=\Big\{\sum \limits_{j=0}^{\log_2(\delta^{-1})} \sum \limits_{l=1}^{2^k} \sum \limits_{w\in \{-R2^j,...,R2^j\}^k} \hat{\alpha}(j,l,w) \hat{\psi}^k_{jlw} \text{ with } |\hat{\alpha}(j,l,w)|\leq C_\eta K(1+\|z\|)2^{-j(\eta+k/2)}\Big\},
\end{align}
The following proposition shows that \( \hat{\mathcal{F}}_\delta^{k,\eta} \) is compactly embedded in Hölder spaces.
\begin{proposition}[Proposition E.3 in~\cite{stephanovitch2023wasserstein}]\label{prop:reguofG}
    For $k,R\in \mathbb{N}_{>0}$, $\eta\in (0,d/2]$ and $\delta\in(0,1)$, we have 
    $$\hat{\mathcal{F}}_{\delta,R}^{k,\eta}\subset\mathcal{H}^{\eta}_{CR\log(\delta^{-1})^{2\mathds{1}_{\{\eta=\lfloor \eta \rfloor\}}}}(\mathbb{R}^k,\mathbb{R}).$$
    Furthermore, for all $\gamma \in (0,\eta)$, we have
     $$\hat{\mathcal{F}}_{\delta,R}^{k,\eta}\subset\mathcal{H}^{\gamma}_{C_\gamma R}(\mathbb{R}^k,\mathbb{R}).$$
\end{proposition}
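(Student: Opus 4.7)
The strategy is to identify $\hat{\mathcal{F}}_{\delta,R}^{k,\eta}$ with a ball in an appropriate Besov space and then invoke the Besov--Hölder embeddings of Lemma~\ref{lemma:inclusions}. Given $f\in \hat{\mathcal{F}}_{\delta,R}^{k,\eta}$, I first swap each approximate wavelet $\hat{\psi}^k_{jlw}$ for the true Daubechies wavelet $\psi^k_{jlw}$, writing $f = \tilde{f} + e$. The Daubechies wavelet characterization of Besov spaces then yields
\[
\|\tilde{f}\|_{\mathcal{B}^\eta_{\infty,\infty}} \asymp \sup_{j,l,w} 2^{j(\eta+k/2)} |\hat{\alpha}(j,l,w)| \leq CKR,
\]
where the factor $R$ comes from the spatial restriction on the index $w$ in the coefficient bound of \eqref{eq:fdeltaperapprox}. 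The residual $e$ is negligible: by \eqref{eq:approxipsipaper2} and the usual scale-$j$ rescaling one has $\|\hat{\psi}^k_{jlw} - \psi^k_{jlw}\|_{\mathcal{H}^\eta}\lesssim 2^{j(\eta+k/2)} n^{-1/2}$, and combining with the coefficient bound $\lesssim R\, 2^{-j(\eta+k/2)}$ over the $O(R^k\delta^{-k}\log(\delta^{-1}))$ active indices gives $\|e\|_{\mathcal{H}^\eta} \lesssim n^{-1/2}\,\mathrm{poly}(R,\delta^{-1})$, which is absorbed into the stated constants under the choices of $\delta$ and $R$ made downstream.

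For the first inclusion, if $\eta\notin\mathbb{N}$ Lemma~\ref{lemma:inclusions} gives the equivalence $\mathcal{B}^\eta_{\infty,\infty}=\mathcal{H}^\eta$, so $\|\tilde{f}\|_{\mathcal{H}^\eta}\leq CR$ with no log factor. If $\eta\in\mathbb{N}$, this equivalence fails and I use instead the finer embedding $\mathcal{B}^{\eta,2}_{\infty,\infty}\hookrightarrow\mathcal{H}^\eta$ (the $\epsilon=1$ case of Lemma~\ref{lemma:inclusions}). Since only scales $j\leq\log_2(\delta^{-1})$ are active, the extra $(1+j)^{2}$ weight contributes at most $(1+\log_2(\delta^{-1}))^{2}$, yielding $\|\tilde{f}\|_{\mathcal{H}^\eta}\leq CR\log(\delta^{-1})^{2}$, which matches the stated constant.

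For the second inclusion (all $\gamma\in(0,\eta)$), I avoid the log factor entirely by fixing a non-integer intermediate smoothness $\gamma'\in(\gamma,\eta)$. Applying the previous argument at level $\gamma'$ (non-integer, hence no log penalty) gives $\|f\|_{\mathcal{H}^{\gamma'}}\leq C_{\gamma'}R$, and the classical embedding $\mathcal{H}^{\gamma'}\hookrightarrow\mathcal{H}^\gamma$, valid here because all functions in the class share the bounded support $B^k(0,R)$, then yields $\|f\|_{\mathcal{H}^\gamma}\leq C_\gamma R$.

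The main obstacle is the integer case in the first inclusion: $\mathcal{B}^\eta_{\infty,\infty}$ strictly contains $\mathcal{H}^\eta$, and one must pay a logarithmic penalty by passing through the enhanced Besov scale $\mathcal{B}^{\eta,1+\epsilon}_{\infty,\infty}$. The specific exponent $2$ corresponds to the clean choice $\epsilon=1$; a smaller $\epsilon$ would lower the exponent at the price of larger constants, but this makes no difference downstream where $\delta$ is chosen as a polynomial in $n$.
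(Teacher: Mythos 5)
The paper does not supply its own proof of this proposition—it is cited as Proposition~E.3 of \cite{stephanovitch2023wasserstein}—so I evaluate your argument on its own terms. Your structural route (replace each approximate wavelet $\hat\psi^k_{jlw}$ by the true Daubechies wavelet $\psi^k_{jlw}$, bound the Besov norm of the main term by the coefficient constraint, then transfer to Hölder via Lemma~\ref{lemma:inclusions}, using $\mathcal{B}^{\eta,2}_{\infty,\infty}\hookrightarrow\mathcal{H}^\eta$ at integer $\eta$ and passing through a non-integer $\gamma'\in(\gamma,\eta)$ for the second inclusion) is sound and in the spirit of the wavelet machinery developed in this paper's appendix.

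The gap is in the treatment of the error $e=\sum_{j,l,w}\hat\alpha(j,l,w)\bigl(\hat\psi^k_{jlw}-\psi^k_{jlw}\bigr)$. You bound $\|e\|_{\mathcal{H}^\eta}$ by multiplying a per-term estimate $Rn^{-1/2}$ by the total number of active indices $O(R^k\delta^{-k}\log\delta^{-1})$, obtaining $n^{-1/2}\cdot\mathrm{poly}(R,\delta^{-1})$, and then declare it absorbed ``under the downstream choices.'' That is not good enough: the proposition claims an unconditional inclusion for every $\delta\in(0,1)$, and even for this paper's choices it fails --- for the discriminator class one has $k=p$ and $\delta_n=n^{-1/(2\beta+d)}$, so $\delta_n^{-p}n^{-1/2}=n^{p/(2\beta+d)-1/2}$, which blows up whenever $p>\beta+d/2$. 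The crude triangle-inequality count ignores spatial locality: at any fixed $x$ and scale $j$ only $O(1)$ of the $\hat\psi^k_{jlw}$ are nonzero at $x$ (assuming $\hat\psi$ inherits compact support, as in the cascade construction and as is implicitly used in Lemma~\ref{lemma:bouundcoefwav} via the condition $\mathrm{supp}(\hat\psi_{jlw})\cap\mathrm{supp}(\psi_{j'l'w'})\neq\varnothing$), so $\sum_w|\hat\alpha(j,l,w)|\,|\partial^\nu(\hat\psi^k_{jlw}-\psi^k_{jlw})(x)|\lesssim Rn^{-1/2}2^{j(|\nu|-\eta)}$ and summing $j\leq\log_2\delta^{-1}$ gives $\|e\|_{\mathcal{H}^\eta}\lesssim Rn^{-1/2}\log(\delta^{-1})^2$, which does absorb. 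Equivalently, one can compute the true wavelet coefficients of $e$ and use the off-diagonal decay of $\langle\hat\psi^k_{jlw},\psi^k_{j'l'w'}\rangle$ --- precisely the computation of Lemma~\ref{lemma:bouundcoefwav} --- to get $\|e\|_{\mathcal{B}^{\eta}_{\infty,\infty}}\lesssim Rn^{-1/2}$ with no $\delta$-dependence at all. A smaller remark: your appeal to ``bounded support $B^k(0,R)$'' for $\mathcal{H}^{\gamma'}\hookrightarrow\mathcal{H}^\gamma$ is both unnecessary (the inclusion holds on all of $\mathbb{R}^k$ by interpolating the top-order difference quotient between its $L^\infty$ and $(\gamma'-\lfloor\gamma'\rfloor)$-Hölder bounds) and incorrect as stated, since unlike $\mathcal{F}_{\delta,R}^{k,\eta}$ the approximated class $\hat{\mathcal{F}}_{\delta,R}^{k,\eta}$ does not enforce compact support on its members.
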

The additional logarithmic factor when \(\eta\) is an integer arises from the fact that, unlike the case \(\eta \notin \mathbb{N}\), the Besov space \(B^\eta_{\infty,\infty}\) does not coincide with the Hölder space \(\mathcal{H}^\eta\) as shown in Lemma \ref{lemma:inclusions}. Proposition \ref{prop:reguofG} is crucial, as it guarantees that the functions used to build our estimator lie in the appropriate regularity class, thereby ensuring that the output manifold can satisfy the required \( (\beta+1, K) \)-manifold condition.
Furthermore, we can quantify the precision of approximation of the class $\hat{\mathcal{F}}_{\delta,R}^{k,\eta}$ writing $\alpha_f(j,l,w)=\langle f, \psi^k_{jlw}\rangle$ for any $f\in L^2(\mathbb{R}^k)$.
\begin{proposition}\label{prop:qualityofapproxfketa} Let $k,R\in \mathbb{N}_{>0}$, $\eta\in (0,d/2]$ and $\delta\in(0,1)$. Then,
    for all $f\in \mathcal{H}^{\eta}_1(\mathbb{R}^k,\mathbb{R})$, the function $\overline{f}\in \hat{\mathcal{F}}_{\delta,R}^{k,\eta}$ defined by
    $$\overline{f}:=\sum \limits_{j=0}^{\log_2(\delta^{-1})} \sum \limits_{l=1}^{2^k} \sum \limits_{z\in \{-K2^j,...,K2^j\}^k} \alpha_f(j,l,w) \hat{\psi}^k_{jlw}$$
    satisfies for all $\gamma \in (-\infty,\eta)$, $b \geq 0$ and $q_1,q_2\in [1,\infty]$ that 
    $$  \|f-\overline{f}\|_{\mathcal{B}^{\gamma,b}_{q_1,q_2}(B^k(0,R))}\leq C_{\gamma,b}R^{\frac{k}{q_1}} \big(n^{-1/2}+\log(\delta^{-1})^b\delta^{\eta-\gamma}\big).$$
\end{proposition}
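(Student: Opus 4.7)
\textbf{Proof proposal for Proposition \ref{prop:qualityofapproxfketa}.}
The plan is to split the error $f-\overline{f}$ into three independent contributions and control each one separately in the $\mathcal{B}^{\gamma,b}_{q_1,q_2}(B^k(0,R))$-norm. Writing $j_0=\log_2(\delta^{-1})$, $W_{j,R}=\{-R2^j,\dots,R2^j\}^k$, and expanding $f=\sum_{j,l,w}\alpha_f(j,l,w)\psi^k_{jlw}$, I would decompose
\[
f-\overline{f} \;=\; E_{\mathrm{freq}} \;+\; E_{\mathrm{spat}} \;+\; E_{\mathrm{wav}},
\]
where $E_{\mathrm{freq}}=\sum_{j>j_0}\sum_{l,w}\alpha_f(j,l,w)\psi^k_{jlw}$ carries the high-frequency tail, $E_{\mathrm{spat}}=\sum_{j\leq j_0}\sum_l\sum_{w\notin W_{j,R}}\alpha_f(j,l,w)\psi^k_{jlw}$ the spatial tail, and $E_{\mathrm{wav}}=\sum_{j\leq j_0}\sum_l\sum_{w\in W_{j,R}}\alpha_f(j,l,w)(\psi^k_{jlw}-\hat{\psi}^k_{jlw})$ the wavelet-approximation defect.

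For $E_{\mathrm{freq}}$, the hypothesis $f\in\mathcal{H}^\eta_1\subset\mathcal{B}^{\eta}_{\infty,\infty}(C_\eta)$ together with Lemma~\ref{lemma:inclusions} gives $|\alpha_f(j,l,w)|\leq C 2^{-j(\eta+k/2)}$. When measuring the norm on $B^k(0,R)$, only the wavelets whose support meets $B^k(0,R)$ contribute, which leaves $O(R^k 2^{jk})$ indices $w$ per scale. Plugging into the definition of the Besov norm yields
\[
\|E_{\mathrm{freq}}\|_{\mathcal{B}^{\gamma,b}_{q_1,q_2}(B^k(0,R))}
\;\lesssim\; R^{k/q_1}\Bigl(\sum_{j>j_0}2^{jq_2(\gamma-\eta)}(1+j)^{bq_2}\Bigr)^{1/q_2}
\;\lesssim\; R^{k/q_1}\log(\delta^{-1})^{b}\,\delta^{\eta-\gamma},
\]
which is exactly the second term in the target bound. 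The contribution of $E_{\mathrm{spat}}$ is absorbed into the same estimate: for $j\leq j_0$, the wavelets indexed by $w\notin W_{j,R}$ that still intersect $B^k(0,R)$ are confined to a boundary layer of width $O(1)$ in $w$, hence $O(R^{k-1}2^{j(k-1)})$ terms, which after summation contributes a strictly smaller quantity than $E_{\mathrm{freq}}$.

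The wavelet-defect term $E_{\mathrm{wav}}$ is where I would invest the most care, and it is the main obstacle. By construction $\hat{\psi}^k_{jlw}-\psi^k_{jlw}$ is built from $\hat\phi-\phi$ by the tensor/dilation procedure that defines the wavelet basis, and \eqref{eq:approxipsipaper2} guarantees $\|\phi-\hat\phi\|_{\mathcal{H}^{\lfloor d/2\rfloor+2}}\leq n^{-1/2}$. One then needs a per-scale bound of the type $\|\psi^k_{jlw}-\hat{\psi}^k_{jlw}\|_{\mathcal{B}^{\gamma,b}_{q_1,q_2}}\leq C 2^{j\rho}n^{-1/2}$ for a polynomial factor $\rho$ in $j$, which is available because $\lfloor d/2\rfloor+2>\gamma$ for all $\gamma\leq\eta\leq d/2$ used in the statement. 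Summing over the at most $O(j_0\cdot R^k 2^{j_0 k})$ triplets $(j,l,w)$ with $j\leq j_0$, $w\in W_{j,R}$, and using the coefficient bound $|\alpha_f(j,l,w)|\leq C 2^{-j(\eta+k/2)}$, the geometric decay outweighs the polynomial growth in $2^j$ and the whole sum is bounded by $CR^{k/q_1}n^{-1/2}$, giving the first term in the target inequality. Combining the three bounds yields the proposition.

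The main difficulty, as indicated, lies in showing that the replacement of $\psi$ by $\hat\psi$ costs only $n^{-1/2}$ \emph{uniformly in the negative smoothness index $\gamma$}: this requires that the approximation $\hat\phi\approx\phi$ holds in a Hölder class strong enough to dominate every scale $j\leq j_0$ appearing in the Besov summation, which is precisely why~\eqref{eq:approxipsipaper2} is stated at regularity $\lfloor d/2\rfloor+2$ rather than at $\eta$.
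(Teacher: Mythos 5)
Your decomposition into $E_{\mathrm{freq}}$, $E_{\mathrm{spat}}$, and $E_{\mathrm{wav}}$ is a reasonable starting point, and your treatment of the first two terms is essentially in the right direction. The gap is in $E_{\mathrm{wav}}$, and it is a genuine one: applying the triangle inequality for the $\mathcal{B}^{\gamma,b}_{q_1,q_2}$-norm term by term over $(j,l,w)$ is too lossy to produce the claimed bound. The per-wavelet estimate you invoke necessarily scales as $\|\psi^k_{jlw}-\hat{\psi}^k_{jlw}\|_{\mathcal{B}^{\gamma,b}_{q_1,q_2}}\asymp 2^{j(\gamma+k/2-k/q_1)}(1+j)^b n^{-1/2}$ (this is the same scaling as $\|\psi_{jlw}\|_{\mathcal{B}^{\gamma,b}_{q_1,q_2}}$, since $\hat\psi$ is a small perturbation of $\psi$ at a fixed scale). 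Combining this with $|\alpha_f(j,l,w)|\lesssim 2^{-j(\eta+k/2)}$ and the $O(R^k2^{jk})$ values of $w$ per scale, the triangle inequality yields
$$
\sum_{j\leq j_0} R^k\,2^{j(k+\gamma-\eta-k/q_1)}(1+j)^b\,n^{-1/2}.
$$
The exponent $k+\gamma-\eta-k/q_1$ is strictly positive whenever $\gamma>\eta-k(1-1/q_1)$, which covers the entire range $\gamma\in[\eta-k(1-1/q_1),\eta)$; in that regime the sum is $\asymp R^k\delta^{-(k+\gamma-\eta-k/q_1)}\log(\delta^{-1})^b n^{-1/2}$ and blows up with $\delta^{-1}$ rather than being $O(n^{-1/2})$. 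Even when the sum does converge, it produces the prefactor $R^{k}$ instead of the target $R^{k/q_1}$. Both losses stem from the same source: the Besov norm is an $\ell^{q_2}(\ell^{q_1})$ norm over indices, and the triangle inequality replaces it by an $\ell^1(\ell^1)$ sum, which is sharp only at $q_1=q_2=1$. So the claim that "the geometric decay outweighs the polynomial growth in $2^j$" is false for $q_1>1$ and $\gamma$ close to $\eta$.

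The paper avoids this by working at the level of wavelet \emph{coefficients} rather than norms. Lemma~\ref{lemma:bouundcoefwav} establishes the pointwise bound $|\alpha_{\overline{f}}(j',l',z')-\alpha_f(j',l',z')\mathds{1}_{\{j'\leq\log_2(\delta^{-1})\}}|\leq Cn^{-1/2}2^{-j'(\eta+k/2)}$: the coefficients of the error $f-\overline{f}$ have the \emph{same} decay rate as those of $f$, scaled by $n^{-1/2}$. This requires fixing the output index $(j',l',w')$, expanding $\alpha_{\overline{f}}(j',l',w')=\sum_{j,l,w}\alpha_f(j,l,w)\langle\hat\psi_{jlw},\psi_{j'l'w'}\rangle$, and exploiting that the cross-products $\langle\hat\psi_{jlw},\psi_{j'l'w'}\rangle$ decay bilaterally in $|j-j'|$ (using $\|\hat\phi-\phi\|_{\mathcal{H}^{\lfloor d/2\rfloor+2}}\leq n^{-1/2}$ for $j\leq j'$ and the $L^2$-overlap of supports for $j>j'$). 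The sum then concentrates near $j=j'$, which is what produces the clean $2^{-j'(\eta+k/2)}$ decay. Once that coefficient estimate is in hand, the Besov norm is computed directly from the definition and the sums over $j,l,w$ reproduce exactly the $R^{k/q_1}$ prefactor and the two regimes $n^{-1/2}$ (for $j\leq\log_2(\delta^{-1})$) and $\log(\delta^{-1})^b\delta^{\eta-\gamma}$ (for $j>\log_2(\delta^{-1})$). You would need to replace your norm-level triangle inequality on $E_{\mathrm{wav}}$ with such a coefficient-level estimate to close the argument.
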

The Proof of Proposition \ref{prop:qualityofapproxfketa} can be found in Section \ref{sec:prop:qualityofapproxfketa}. This Proposition shows in particular that—up to the \( n^{-1/2} \) term arising from the approximation of the scaling function in~\eqref{eq:approxipsipaper2}—the class \( \hat{\mathcal{F}}_{\delta,R}^{k,\eta} \) retains the same approximation power as the original class \( \mathcal{F}_{\delta,R}^{k,\eta} \).

\subsubsection{Generator and Discriminator classes}\label{sec:genedis}
Using the low-frequency approximation classes \( \hat{\mathcal{F}}_\delta^{k,\eta} \) introduced in the previous section, we now define the functional classes used in the construction of the estimator \( \hat{\mu} \) in~\eqref{eq:hatmu}. Let \( n \) be the number of samples drawn from the target distribution \( \mu^\star \), and \( N \) the number of fake samples used in the adversarial loss~\eqref{eq:Ln}. We define the approximation scales
\[
\delta_n := n^{-\frac{1}{2\beta + d}}, \quad \delta_N := N^{-\frac{1}{2\beta + d}}.
\]
The class of generator functions is then defined as
\begin{equation}\label{eq:classofgene}
    \mathcal{G} = \left\{\big(g_i^1\circ \Psi^{-1}\circ g_i^2\big)_{i=1}^m\ \Big|\ g_i^1\in \left(\hat{\mathcal{F}}_{\delta_N,2K+C}^{d,\beta+1}\right)^{p},\ g_i^2\in\left(\hat{\mathcal{F}}_{\delta_N,4\log(n)^{1/2}}^{d,\beta+1}\right)^{d}\right\}.
\end{equation}
This generator class is designed to approximate the decomposition of the maps $\phi_i$ given by Proposition \ref{prop:decompphi}. Similarly, the class of inverse functions  is  defined as
\begin{equation}\label{eq:classeofinv}
     \Phi:= \left\{(\varphi_i)_{i=1}^m \big|\  \varphi_i \in \left(\hat{\mathcal{F}}_{\delta_N,K}^{p,\beta+1}\right)^{d}\right\}
\end{equation}
and the discriminator class is defined as
\begin{equation}\label{eq:classesofdis}
     \mathcal{D}:= \hat{\mathcal{F}}_{\delta_n,K}^{p,d/2}.
\end{equation}
The choice of this specific function class for the discriminator is particularly convenient, as it allows one to control the gradient norm of the discriminator functions in a principled way—something that is often difficult to enforce in practice. More details on the implementation of these classes of functions can be found in Section \ref{sec:numeric}.

\section{Proof of the minimax optimality }
In this section, we outline the main steps of the proof of the minimax optimality of our estimator. We begin in Section~\ref{sec:reguofes} by establishing the regularity properties of estimators that achieve a small value of the loss function. Then, in Section~\ref{sec:statisticalbound}, we present the statistical guarantees for our estimator, showing that it attains the optimal convergence rates for Hölder IPMs.
\subsection{Regularity properties of the estimator}\label{sec:reguofes}
\subsubsection{Local regularity}
We begin by establishing that the charts used for the estimator construction possess \( \mathcal{H}^{\beta+1} \)-regularity, with a  bound on the norm depending on whether \( \beta \) is an integer or not.

\begin{proposition}\label{prop:regularityfandg}
Let \( (g, \varphi) \in \mathcal{G} \times \Phi \). Then for all \( i, j_1, \ldots, j_k \in \{1, \ldots, m\} \), with \( k \in \{0, \ldots, m\} \), and any \( \eta < \beta+1 \), we have
\[
F^{g,\varphi}_{j_k} \circ \cdots \circ F^{g,\varphi}_{j_1} \circ g_i \in \mathcal{H}^{\eta}_{C_\eta}(\mathbb{R}^d, \mathbb{R}^p).
\]
Moreover:
\begin{itemize}
    \item If \( \beta \notin \mathbb{N} \), then the composition lies in \( \mathcal{H}^{\beta+1}_C(\mathbb{R}^d, \mathbb{R}^p) \),
    \item If \( \beta \in \mathbb{N} \), it belongs to \( \mathcal{H}^{\beta+1}_{C\log(n)^2}(\mathbb{R}^d, \mathbb{R}^p) \).
\end{itemize}
\end{proposition}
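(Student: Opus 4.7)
The iterated map $F^{g,\varphi}_{j_k}\circ\cdots\circ F^{g,\varphi}_{j_1}\circ g_i$ is assembled by a bounded number---at most $m+1$, with $m=O(1)$ from Proposition \ref{prop:keydecomp}---of composition, product and sum operations applied to five primitive ingredients: the learned pieces $g_i^1,g_i^2,\varphi_i$ from the wavelet classes (\ref{eq:classofgene})--(\ref{eq:classeofinv}), the fixed analytic diffeomorphism $\Psi^{-1}$ of (\ref{eq:diffeoloin}), and the fixed smooth cutoff $\Gamma$. The plan is to bound the Hölder norm of each primitive via Proposition \ref{prop:reguofG}, propagate these bounds through the chain using the standard Fa\`a di Bruno and Leibniz estimates in Hölder scales, and book-keep logarithmic factors so as to land on the announced thresholds.

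\textbf{Step 1: regularity of the primitives.} Proposition \ref{prop:reguofG} applied to (\ref{eq:classofgene})--(\ref{eq:classeofinv}), together with $\log(\delta_N^{-1})\lesssim \log n$, yields that every coordinate of $g_i^1$ and $\varphi_i$ lies in $\mathcal{H}^{\beta+1}_{C\log(n)^{2\mathds{1}\{\beta\in\mathbb{N}\}}}$ (their radii $2K+C$ and $K$ are $O(1)$), while every coordinate of $g_i^2$ lies in $\mathcal{H}^{\beta+1}_{C\log(n)^{1/2}\log(n)^{2\mathds{1}\{\beta\in\mathbb{N}\}}}$ (its radius $4\log(n)^{1/2}$ enters linearly). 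For any $\eta<\beta+1$ the second part of the same proposition provides $\mathcal{H}^\eta_{C_\eta}$ bounds, any $\log(n)^{1/2}$ factor being absorbed by interpolation between the critical Hölder bound and the $L^\infty$ bound. For the fixed pieces, $\Psi^{-1}\in \mathcal{H}^{\beta+1}_C$ because its closed form sends $\mathbb{R}^d$ into $B^d(0,\tau)$ with derivatives globally bounded and decaying polynomially at infinity. The spatial factor $x\mapsto\Gamma(\|x-g_i^1(0)\|)$ is in $\mathcal{H}^{\beta+1}_C$ on $\mathbb{R}^p$: since $\Gamma\equiv 1$ on $[0,\tau(1+(K+2)\tau)]$ by (\ref{eq:Gamma}), the kink of the Euclidean norm at $x=g_i^1(0)$ is masked by the flatness of $\Gamma$ near $0$.

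\textbf{Step 2: propagation.} I use the classical Hölder calculus: if $f\in\mathcal{H}^\eta_A$ and $g\in\mathcal{H}^\eta_B$ with compatible domains, then $f\circ g\in\mathcal{H}^\eta_{P_\eta(A,B)}$ and $fg\in\mathcal{H}^\eta_{Q_\eta(A,B)}$ for universal polynomials $P_\eta,Q_\eta$. Applying these sequentially: $\Psi^{-1}\circ g_i^2\in\mathcal{H}^{\beta+1}_{C\log(n)^{2\mathds{1}\{\beta\in\mathbb{N}\}}}$ (the polynomial decay of $\nabla^k\Psi^{-1}$ on the range of $g_i^2$, which is bounded by $O(\log(n)^{1/2})$, absorbs the $\log(n)^{1/2}$ growth of $g_i^2$); then $g_i=g_i^1\circ\Psi^{-1}\circ g_i^2$ inherits the same bound; then $F_j^{g,\varphi}(x)=\Gamma(\|x-g_j^1(0)\|)(g_j^1\circ\varphi_j)(x)+(1-\Gamma(\|x-g_j^1(0)\|))x$ does too by product and sum; finally composing at most $m$ such maps preserves the bound since the chain has constant length. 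The case split is tracked by the $\mathds{1}\{\beta\in\mathbb{N}\}$ exponent throughout, matching the announced $C$ (non-integer) or $C\log(n)^2$ (integer) bound after absorbing the constant exponent in $C$. The same machinery at $\eta<\beta+1$ gives $\mathcal{H}^\eta_{C_\eta}$.

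\textbf{Main obstacle.} Two technicalities drive the proof: (a) the compensation between the $\log(n)^{1/2}$ growth of $g_i^2$ and the polynomial decay of $\nabla^k\Psi^{-1}$ on the range of $g_i^2$, required to prevent a $\log n$ leak into the constant in the non-integer case---handled by explicit computation with the formula $\Psi(u)=u/(\tau^2-\|u\|^2)$; and (b) the verification that Fa\`a di Bruno bookkeeping does not inflate the log exponent above $O(1)$ across the chain---handled by observing that each top-order term contains exactly one top-derivative factor (carrying a $\log n$) multiplied by lower-order factors that are uniformly bounded by constants via the second part of Proposition \ref{prop:reguofG}.
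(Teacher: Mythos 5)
Your plan — invoke Proposition \ref{prop:reguofG} for the primitives and then propagate through the chain of sums, products, and compositions — is the same route the paper takes, but the paper's proof is much more terse and, crucially, is actually proving a different claim than the one you set out to prove. The paper's proof works on the domain $B^d(0,2\tau)$, and every downstream use of Proposition~\ref{prop:regularityfandg} (in Lemma~\ref{lemma:secondr}, Lemma~\ref{lemma:thirdr}, Proposition~\ref{prop:diffeofrondg}, Proposition~\ref{prop:isamanifold}, Proposition~\ref{prop:densitychecked}, Proposition~\ref{prop:distanceofthees}) applies it to compositions with $g_i^1$ alone, never with $g_i=g_i^1\circ\Psi^{-1}\circ g_i^2$. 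In that reading the generator primitives all lie in wavelet classes of $O(1)$ radius, and the result is an immediate consequence of Proposition~\ref{prop:reguofG} and stability of Hölder norms under finitely many compositions and products. Your Step~1 observations about $\Psi^{-1}$ and the cutoff $\Gamma(\|\cdot - g_i^1(0)\|)$ (the kink being masked by the flatness of $\Gamma$ near $0$) are correct and a useful supplement to the paper's one-liner.

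The genuine gap is in your obstacle~(a). You read the statement literally as including the factor $\Psi^{-1}\circ g_i^2$, and then argue that the $\log(n)^{1/2}$ arising from the radius of $g_i^2$'s class is cancelled by the polynomial decay of $\nabla^a\Psi^{-1}$ on the range of $g_i^2$. But that cancellation requires $\|g_i^2(y)\|$ to be \emph{large exactly where} $\nabla g_i^2(y)$ is large. The pointwise bound one can extract from the coefficient constraint in \eqref{eq:fdeltaperapprox} gives $\|\partial^\nu g_i^2(y)\|\lesssim 1+\|y\|$, and also $\|g_i^2(y)\|\lesssim 1+\|y\|$, but there is no matching \emph{lower} bound on $\|g_i^2(y)\|$ for a generic $g\in\mathcal{G}$: nothing prevents the wavelet coefficients from conspiring so that $g_i^2(y)$ is small while $\nabla g_i^2(y)\sim 1+\|y\|$. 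Thus a typical Faà di Bruno term $\nabla^a\Psi^{-1}(g_i^2(y))\prod_i\partial^{\nu_i}g_i^2(y)$ need not be bounded uniformly in $n$. Such a lower bound \emph{is} available once one imposes $R(g)=0$ — that is exactly what Lemma~\ref{lemma:thirdr} provides — and indeed the paper handles the regularity of $\Psi^{-1}\circ g_i^2$ through $\zeta_j^g$ in Lemma~\ref{lemma:boundetaj} only under $R(g)=0$, not in Proposition~\ref{prop:regularityfandg}. So either the statement is meant to read $g_i^1$ (in which case your obstacle~(a) is moot and the rest of your argument is fine), or the statement needs the hypothesis $R(g)=0$ for your absorption step to close. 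As written, the absorption step is asserted rather than proved, and there are admissible $g\in\mathcal{G}$ for which it fails.
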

The proof of Proposition~\ref{prop:regularityfandg}, provided in Section~\ref{sec:prop:regularityfandg}, follows directly from the regularity properties of the low-frequencies approximating classes \( \hat{\mathcal{F}}_{\delta,R}^{k,\eta} \) established in Proposition~\ref{prop:reguofG}. The key observation is that the compositions of the gluing maps \( F^{g,\varphi}_{j_k} \circ \cdots \circ F^{g,\varphi}_{j_1} \) with the generator \( g_i \) inherit the smoothness of their components. The distinction between the case where \( \beta \) is an integer or not arises from the fact that Besov spaces embed continuously into Hölder spaces only when the regularity parameter is not an integer. Otherwise, the embedding holds up to a logarithmic factor, which accounts for the \( \log(n)^2 \) term in the regularity bound.
  
\subsubsection{Global regularity}
To analyze the global geometry of the estimator, we consider the infinite-sample versions of the empirical losses used during training.
For \( (g, \varphi,\alpha,D) \in \mathcal{G} \times \Phi \times \mathcal{A}\times \mathcal{D} \), let us define:
$$L(g,\varphi,\alpha,D)=\frac{1}{m}\sum_{i=1}^m \alpha_i \mathbb{E}_{Y\sim \gamma_d^n}[ D(F^{g,\varphi}\circ g_{i}(Y))]-\mathbb{E}_{X\sim \mu^\star}[ D(X)],$$
$$
\mathcal{C}(g,\varphi)=\sum_{i,j=1}^m\mathbb{E}_{U\sim \mathcal{U}(B^d(0,2\tau))}\big[\|g_{j}(U)-g_i\circ \varphi_i\circ g_{j}(U)\|\Gamma\big((\|g_j^1(U) - g_i^1(0)\|-\tau^2)\vee 0\big)\big].
$$
Assuming that the quantities \( L(g, \varphi, z, D) \), \( \mathcal{C}(g, \varphi) \), and \( R(g) \) are sufficiently small, the next result shows that the support of the estimator forms a $(\beta+1)$-smooth submanifold.

 \begin{proposition}
\label{prop:isamanifold}Let $\mu^\star$ a probability measure satisfying Assumption \ref{assump:model}. There exists $C_1,C_2>0$ such that for $\epsilon_\Gamma \in (C_1^{-2},C_1^{-1})$ and $n\geq C_2$, if $(g,\varphi,\alpha)\in \mathcal{G}\times  \Phi\times \mathcal{A}$, satisfies $$\sup_{D\in \mathcal{D}} \ L(g,\varphi,\alpha,D)+\mathcal{C}(g,\varphi)+R(g)\leq C_2^{-1},$$ then the set $$\mathcal{M}=\bigcup_{i=1}^m F^{g,\varphi}\circ g_{i}(B^d(0,\log(n)^{1/2}+1))$$ satisfies the $(\beta+1,C)$-manifold condition if $\beta$ is not an integer and satisfies the $(\beta+1,C\log(n)^2)$-manifold condition if $\beta$ is  an integer.    
 \end{proposition}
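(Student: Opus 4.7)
The plan is to verify directly the two items of Definition \ref{def:manifoldcond} for the set $\mathcal{M}$, using each of the three smallness hypotheses for a distinct purpose: the bound $R(g) \leq \epsilon_\Gamma$ controls the Jacobians of the individual charts $g_i^1$ and $g_i^2$; the bound $\mathcal{C}(g,\varphi) \leq \epsilon_\Gamma$ controls the consistency of the inverse maps $\varphi_i$ with the generators $g_i^1$ on their overlaps; and the bound on $\sup_D L$ ensures that the global configuration is geometrically close to $\mu^\star$, so that the chart system covers the support coherently and no spurious branches appear.

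The first step is to upgrade the discrete-net inequalities defining $R(g)$ and $\mathcal{C}(g,\varphi)$ to pointwise bounds. Since $\mathcal{Z}^1, \mathcal{Z}^2$ are $\epsilon_\Gamma$-nets and by Proposition \ref{prop:reguofG} the $g_i^1$, $g_i^2$, $\varphi_i$ are uniformly Hölder with controlled norms, a straightforward interpolation argument yields that on $B^d(0,\tau)$ the Jacobian of each $g_i^1$ has singular values sandwiched close to $1$; on $B^d(0, 2\log(n)^{1/2})$ each $g_i^2$ has singular values in a fixed bounded interval away from $0$ and $\infty$; and $g_i^1\circ\varphi_i \approx \mathrm{Id}$ pointwise in the region where $\Gamma(\|\cdot - g_i^1(0)\|)$ is active. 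Combined with Proposition \ref{prop:regularityfandg}, which gives $\mathcal{H}^{\beta+1}_C$ (or $\mathcal{H}^{\beta+1}_{C\log(n)^2}$ when $\beta \in \mathbb{N}$) regularity of each composition $F^{g,\varphi}\circ g_i$, the Hölder-category inverse function theorem then shows that $F^{g,\varphi}\circ g_i$ is a local $\mathcal{H}^{\beta+1}$ diffeomorphism on $B^d(0,\tau)$ with the required norm bound on its inverse.

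Given these ingredients, the local parametrization $\phi_x$ required in Definition \ref{def:manifoldcond} is obtained as $\pi_{\mathcal{T}_x(\mathcal{M})} - x$ composed with an appropriate inverse chart, and it inherits the $\mathcal{H}^{\beta+1}$ regularity of the chart. Compatibility between two charts $i\neq j$ with overlapping images is forced by the small $\mathcal{C}$-loss: it makes $g_i^1\circ\varphi_i$ act as the identity on $g_j^1(B^d(0,\tau))$ inside the $\Gamma$-active region, so that $\mathcal{M}$ is covered by compatible $\mathcal{H}^{\beta+1}$ charts and is therefore a genuine embedded submanifold of the correct regularity. The containment $\mathcal{M} \subset B^p(0,K)$ follows from the support restrictions built into $\mathcal{G}$ in \eqref{eq:classofgene} together with the near-identity behavior of $F^{g,\varphi}$ outside $\Gamma$-active regions.

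The main obstacle is the global control of the iterated composition $F^{g,\varphi} = F_m^{g,\varphi}\circ\cdots\circ F_1^{g,\varphi}$. Each $F_j^{g,\varphi}$ differs from the identity by only $O(\mathcal{C})$ inside its active region, but a priori these small errors could compound when composing $m$ of them, and the active regions of later $F_j^{g,\varphi}$ could shift uncontrollably under earlier ones. The argument must mirror the Fefferman-style patchwork of \cite{fefferman2015reconstruction}: exploiting that $\Gamma$ is supported only near $g_i^1(0)$, that by $R$ the Jacobians are close to isometries, and that the centers $\{g_i^1(0)\}_i$ are well-separated at scale $\tau$, one shows that each $F_j^{g,\varphi}$ modifies only a narrow tube around $g_j^1(0)$ and that the modifications of different $F_j^{g,\varphi}$ are essentially disjoint, so errors do not accumulate. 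The small $L$-bound enters at this stage to rule out degenerate configurations in which the chart centers fail to effectively cover the target manifold: by Proposition \ref{prop:keydecomp} applied to $\mu^\star$, closeness of the pushforward distribution in $d_{\mathcal{D}}$ forces the realized chart centers $\{g_i^1(0)\}$ to lie near the true manifold $\mathcal{M}^\star$, which in turn guarantees the separation and covering properties needed for the Fefferman-style argument.
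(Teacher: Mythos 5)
Your plan correctly assigns roles to the three losses, and the first part — upgrading the discrete-net bounds in $R$ and $\mathcal{C}$ to pointwise Jacobian control and $g_i^1\circ\varphi_i\approx\mathrm{Id}$, then invoking Proposition~\ref{prop:reguofG}/\ref{prop:regularityfandg} and a Hölder inverse function theorem — matches the paper's use of Lemmas~\ref{lemma:secondr}, \ref{lemma:thirdr}, and Proposition~\ref{prop:diffeofrondg}.

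The gap is in how you treat the "main obstacle," and it is not a superficial difference. You propose to show that the active regions of the $F_j^{g,\varphi}$ are essentially disjoint so that the small perturbations do not compound. This is the wrong picture: the $\Gamma$-active balls $B^p(g_j^1(0),\tau(1+(K+2)\tau)+\epsilon_\Gamma)$ overlap substantially by construction (they must cover the whole manifold), and the paper's resolution is not disjointness but \emph{compatibility}: inside the active region of $g_j^1(0)$, the map $F_j^{g,\varphi}$ acts as $g_j^1\circ\varphi_j$, which by the $\mathcal{C}$-bound is a near-identity projection onto the image of $g_j^1$. The structural fact this buys — and which your sketch does not identify — is that near any point $x=F^{g,\varphi}\circ g_i(y)\in\mathcal{M}$, the whole set $\mathcal{M}\cap B^p(x,\epsilon)$ lies in the image of a \emph{single truncated} composition $F_m^{g,\varphi}\circ\cdots\circ F_{j+1}^{g,\varphi}\circ g_j^1$ over $B^d(0,\tau)$, because the gluing step $F_j$ has already collapsed every overlapping chart's contribution onto $g_j^1$'s image, rendering the earlier $F_1,\ldots,F_{j-1}$ irrelevant in that neighborhood. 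This truncated map is then shown to be a $\mathcal{H}^{\beta+1}$ diffeomorphism on $B^d(0,\tau)$ via Propositions~\ref{prop:diffeofrondg} and~\ref{prop:regularityfandg}, and the manifold condition follows by inverting the projection $\pi_{\mathcal{T}_x(\mathcal{M})}$ on that single chart.

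The second thing you under-specify is the role of the $L$-bound. Your intuition that it forces the chart centers toward $\mathcal{M}^\star$ is in the right direction (and corresponds to Proposition~\ref{prop:distanceofthees}), but what the proof actually needs is the stronger \emph{interior covering} property of Lemma~\ref{lemma:firstr}: for every $x=g_i(y)$ there is a $j$ with $\|g_i(y)-g_j^1(0)\|\leq\tau(1+(K+1)\tau)$ \emph{and} $\|\varphi_j(g_i(y))\|\leq\tau-\epsilon_\Gamma$. The second condition — that $\varphi_j$ pulls $x$ strictly into the interior of $B^d(0,\tau)$ — is what guarantees the truncated chart $F_m\circ\cdots\circ F_{j+1}\circ g_j^1$ parametrizes a full neighborhood of $x$ in $\mathcal{M}$ rather than just touching it at the boundary, and it is obtained by a Wasserstein lower-bound contradiction argument that is not present in your plan. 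Without identifying this lemma, the argument that the single truncated chart covers $\mathcal{M}\cap B^p(x,\epsilon)$ does not go through.

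As a lesser point, your claim that "errors do not accumulate" under composition of $m$ maps is unnecessary: $m$ is a fixed constant depending only on $K,\tau$, so the linear accumulation in Lemma~\ref{lemma:secondrbis} is harmless — this is addressed by tracking constants, not by a disjointness mechanism.
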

The proof of Proposition~\ref{prop:isamanifold} can be found in Section~\ref{sec:prop:isamanifold}. This result is central to bridging the variational formulation of the estimator with its geometric properties. It guarantees that, when the loss is sufficiently small, the support of the estimator forms a \( (\beta+1) \)-regular manifold. As a consequence, the estimator faithfully captures both the topology and the regularity of the true data-generating manifold.

As in~\cite{fefferman2015reconstruction}, the manifold reconstruction techniques developed in Section~\ref{sec:amr} do not apply to arbitrary sets of maps. However, by enforcing the geometric structure through the losses \( L, \mathcal{C}, R \), we guarantee that the reconstructed support remains close to the true manifold, thereby yielding a globally consistent reconstruction. In addition, we verify that the estimator defines a probability distribution with a smooth density with respect to the volume measure on its support.

\begin{proposition}\label{prop:densitychecked}
 Let $\mu^\star$ a probability measure satisfying Assumption \ref{assump:model}. Then, there exists $C_1,C_2>0$ such that for $\epsilon_\Gamma \in (C_1^{-2},C_1^{-1})$ and $n\geq C_2$, if $(g,\varphi,\alpha)\in \mathcal{G}\times  \Phi\times \mathcal{A}$ satisfies $$\sup_{D\in \mathcal{D}} \ L(g,\varphi,\alpha,D)+\mathcal{C}(g,\varphi)+R(g)\leq C_2^{-1} ,$$ then for  $$\mathcal{M}=\bigcup_{i=1}^m F^{g,\varphi}\circ g_{i}(B^d(0,\log(n)^{1/2}+1)),$$
 the probability measure $(F^{g,\varphi}\circ g)_{\# \alpha\gamma_d^n}$ from \eqref{eq:push} satisfies the $(\beta+1,C)$-density condition  on $\mathcal{M}$ if $\beta$ is not an integer and satisfies the $(\beta+1,C\log(n)^{C_2})$-density condition if $\beta$ is  an integer.
\end{proposition}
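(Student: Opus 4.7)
The plan is to verify both parts of Definition \ref{def:densitycond} for $\hat{\mu} := (F^{g,\varphi}\circ g)_{\#\alpha\gamma_d^n}$ on the submanifold $\mathcal{M}$ produced by Proposition \ref{prop:isamanifold}. By linearity of \eqref{eq:push}, the density of $\hat{\mu}$ with respect to the volume measure $\lambda_{\mathcal{M}}$ splits as a weighted sum of the densities of the components $(F^{g,\varphi}\circ g_i)_{\#}\gamma_d^n$, so the analysis reduces to understanding each chart separately and reconciling their overlaps.

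First I would apply the area formula to express the contribution of the $i$-th chart at $x\in\mathcal{M}$ as $\gamma_d^n(y)/J_i(y)$, where $y$ ranges over preimages of $x$ under $F^{g,\varphi}\circ g_i$ and $J_i(y) := \sqrt{\det((\nabla(F^{g,\varphi}\circ g_i))^\top \nabla(F^{g,\varphi}\circ g_i))(y)}$. For this to be well-defined we need local injectivity and a lower bound on $J_i$. This is exactly what the regularization $R(g)\leq\epsilon_\Gamma$ provides when combined with the decomposition $g_i = g_i^1 \circ \Psi^{-1}\circ g_i^2$ from \eqref{eq:classofgene}: the first sum in \eqref{eq:R} forces the Jacobian of $g_i^1$ to stay close to an isometry, while the second forces the eigenvalues of $\nabla g_i^2$ to lie in $[\epsilon_\Gamma^{1/2},\epsilon_\Gamma^{-1/2}]$, and $\Psi$ has a smooth, bounded Jacobian on the image of $g_i^2$ restricted to $B^d(0,\log(n)^{1/2}+1)$.

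For the regularity, Proposition \ref{prop:regularityfandg} yields $F^{g,\varphi}\circ g_i \in \mathcal{H}^{\beta+1}_C$ (with a $\log(n)^{C_2}$ factor when $\beta\in\mathbb{N}$), so $J_i$ and $1/J_i$ inherit the corresponding Hölder regularity. Combined with the $C^\infty$ smoothness of $\gamma_d^n$ on $B^d(0,\log(n)^{1/2}+1)$ with derivative bounds controlled by the cutoff $\Gamma$, the per-chart density is Hölder of the claimed order. Summing over charts requires that the overlaps are consistent: the consistency loss $\mathcal{C}(g,\varphi)\leq\epsilon_\Gamma$ forces $\varphi_i\approx(g_i^1)^{-1}$ on the regions where $F_i^{g,\varphi}$ from \eqref{eq:Fi} acts nontrivially, so that the gluing produces one and the same point $x\in\mathcal{M}$ from different chart representations, and the sum $\sum_i \frac{\alpha_i}{\sum_j\alpha_j}\gamma_d^n(y_i)/J_i(y_i)$ defines a coherent global density.

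For the lower bound, the smallness of $\sup_{D\in\mathcal{D}}L(g,\varphi,\alpha,D)$ combined with Proposition \ref{prop:keydecomp} implies that $\mathcal{M}$ is Hausdorff-close to $\mathcal{M}^\star$ and that the chart centers $\{g_i^1(0)\}_{i=1}^m$ mirror the covering structure of the $(\phi_i)_i$ from Proposition \ref{prop:keydecomp}. Consequently, every $x\in\mathcal{M}$ admits at least one preimage $y_i$ under some $F^{g,\varphi}\circ g_i$ lying in a bounded region of $\mathbb{R}^d$ (far from the tails of $\gamma_d^n$), on which $\gamma_d^n(y_i)$ is bounded below by an $n$-independent constant. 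Dividing by the upper bound on $J_i$ coming from $R(g)$ then yields the required lower bound $K^{-1}$ on the density.

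The main obstacle will be converting the three variational bounds $L$, $\mathcal{C}$, $R$ into the geometric statement that the atlas $(F^{g,\varphi}\circ g_i)_i$ is globally injective and the preimages of a given $x\in\mathcal{M}$ are identifiable. This requires refining the Fefferman-type gluing argument already used in Proposition \ref{prop:isamanifold}, now tracking the Jacobians through the operators $F_i^{g,\varphi}$ in \eqref{eq:Fi}; the extra $\log(n)^{C_2}$ factor in the integer case is expected to enter here, propagating the Besov-vs-Hölder gap from Proposition \ref{prop:regularityfandg} through the composition with the smooth cutoff $\Gamma$.
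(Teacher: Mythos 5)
Your sketch captures the correct high-level structure (push-forward density via the area formula, Hölder regularity from Proposition \ref{prop:regularityfandg}, lower bound from Hausdorff-closeness), but it glosses over the one genuinely delicate point in the paper's proof and does not reproduce it.

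The paper does not work directly with the Jacobian of the full map $F^{g,\varphi}\circ g_i$ on $\mathbb{R}^d$. It factors the push-forward into two steps: first it takes $\zeta_j$, the density of $(\Psi^{-1}\circ g_j^2)_\#\gamma_d^n$ on $B^d(0,\tau)$, and then it writes
\[
f_{\hat\mu}(x)=\sum_{j=1}^m \frac{\alpha_j}{\sum_l\alpha_l}\,\zeta_j\!\big(T_j^{-1}(x)\big)\,\det\!\big((\nabla T_j)^\top\nabla T_j\big)^{-1/2}\mathds{1}_{\{x\in T_j(B^d(0,\tau))\}},
\quad T_j:=F^{g,\varphi}\circ g_j^1,
\]
where $T_j$ is a well-conditioned diffeomorphism on $B^d(0,\tau)$ (Proposition \ref{prop:diffeofrondg}). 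The reason this split is essential is that $\nabla\Psi^{-1}$ degenerates at the boundary $\partial B^d(0,\tau)$: in your formulation, $J_i(y)\to 0$ as $\|y\|\to\infty$, and you must show that this degeneration cancels exactly against the tail decay of $\gamma_d^n$, to the point that the resulting quotient $\gamma_d^n/J_i$ not only stays bounded but remains in $\mathcal{H}^\beta$ after the push-forward. This is the content of Lemma \ref{lemma:boundetaj}, which establishes $\zeta_j^g\in\mathcal{H}^\beta_{C\log(n)^{C_2\mathds{1}_{\beta=\lfloor\beta\rfloor}}}$ together with the two-sided Gaussian-tail estimate $C^{-1}e^{-C(\tau-\|u\|)^{-2}}\le\zeta_j^g(u)\le Ce^{-C^{-1}(\tau-\|u\|)^{-2}}$. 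Your phrase ``the $C^\infty$ smoothness of $\gamma_d^n$ ... with derivative bounds controlled by the cutoff $\Gamma$'' does not address this: the cutoff controls the tail of $\gamma_d^n$ on $\mathbb{R}^d$, but the hard part is what happens after the compactification $\Psi^{-1}$. Without an explicit counterpart of Lemma \ref{lemma:boundetaj}, the Hölder bound on $f_{\hat\mu}$ does not follow.

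For the lower bound, your constructive argument (``every $x$ admits a preimage in a bounded region, hence the Gaussian is bounded below'') is plausible but is not what the paper does, and it skips the issue of showing that the relevant preimage lies sufficiently far from $\partial B^d(0,\tau)$ so that $\zeta_j(T_j^{-1}(x))$ is bounded below. The paper instead argues by contradiction: if $f_{\hat\mu}(w)\le\theta$ for some $w\in\mathcal{M}$, then since $\mathcal{M}$ is Hausdorff-close to $\mathcal{M}^\star$ (Proposition \ref{prop:distanceofthees}) and both are $(2,C)$-regular, a test function concentrated near $w$ gives $W_1(\hat\mu,\mu^\star)\gtrsim A^{-d}(1-C\theta)$; Corollary 20 of \cite{stephanovitch2024ipm} and Lemma \ref{lemma:Landdhgamma} then convert this into a lower bound on $\sup_D L$, contradicting the hypothesis. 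I recommend you adopt this contradiction argument, since it avoids having to locate a good preimage explicitly and reuses the Wasserstein machinery already set up for Proposition \ref{prop:isamanifold}.
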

The proof of Proposition \ref{prop:densitychecked} can be found in Section \ref{sec:prop:densitychecked}. This result guarantees not only the geometric correctness of the learned support, as ensured by Proposition~\ref{prop:isamanifold}, but also the density regularity of the estimator. In particular, it ensures that the pushforward density formed via the weighted sum of local charts, remains bounded below and $\beta$-smooth. In the next section, we show that with high probability, the estimator \( (\hat{g}, \hat{\varphi}, \hat{\alpha}) \in \mathcal{G} \times \Phi \times \mathcal{A} \) satisfies
\[
\sup_{D \in \mathcal{D}} L(\hat{g}, \hat{\varphi}, \hat{\alpha}, D) + \mathcal{C}(\hat{g}, \hat{\varphi}) + R(\hat{g}) \leq C_2^{-1},
\]
so that Propositions~\ref{prop:isamanifold} and~\ref{prop:densitychecked} apply to the final estimator \( \hat{\mu} \).

\subsection{Statistical bounds}\label{sec:statisticalbound}
\subsubsection{Well-posedness of the estimator}
We begin by specifying the hyperparameters involved in the construction of the estimator. Based on the generator and discriminator classes introduced in~\eqref{eq:classofgene}, \eqref{eq:classeofinv} and~\eqref{eq:classesofdis}, respectively, our final estimator takes the form
\begin{equation}\label{eq:finalest}
   \hat{\mu} =  (F^{\hat{g},\hat{\varphi}}\circ \hat{g})_{\# \hat{\alpha}\gamma_d^n},
\end{equation}
with $(F^{\hat{g},\hat{\varphi}}\circ \hat{g})_{\# \hat{\alpha}\gamma_d^n}$ defined in \eqref{eq:hatmu} and the gluing parameter \( \epsilon_\Gamma\) is set to \( C_1^{-1} \), with \( C_1 \) the constant from Proposition~\ref{prop:densitychecked}. 

The number of generated samples \( N \) is chosen depending on the smoothness parameter \( \beta \): we take \( N = n \) when \( \beta+1 \geq d/2 \), and \( N = n^{\frac{2\beta + d}{4\beta+2}} \) when \( \beta+1 < d/2 \). This adjustment compensates for a loss of smoothness in the composition \( D \circ g \): although the discriminator \( D \) may be highly regular, its composition with the generator \( g \) inherits the lower regularity of \( g \). When the generators are not smooth enough, using more generated samples helps reduce the variance term associated with the composite class \( \mathcal{D} \circ \mathcal{G} \). We now verify that the estimator \( \hat{\mu} \) is well defined, i.e., the optimization domain defined under the constraint \eqref{eq:opticonstrain} is non-empty.

\begin{lemma}\label{lemma:genenonempty}
    There exists $C>0$ such that if $N\geq C$, then there exists $(g,\varphi)\in \mathcal{G}\times  \Phi$ satisfying
    $$\mathcal{C}(g,\varphi) + \mathcal{C}_N(g,\varphi) +R(g)\leq \epsilon_\Gamma.$$
\end{lemma}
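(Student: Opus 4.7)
The plan is to construct an explicit candidate $(g,\varphi)\in\mathcal{G}\times\Phi$ directly from the reference decomposition of $\mu^\star$ and to estimate the three ingredients $\mathcal{C}$, $\mathcal{C}_N$ and $R$ separately. The candidate is obtained by wavelet--truncating the maps produced by Propositions \ref{prop:keydecomp} and \ref{prop:decompphi}, after first prolonging them to $\mathbb{R}^p$ where necessary.

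\textbf{Step 1 (Reference maps).} Apply Proposition \ref{prop:keydecomp} to $\mu^\star$ to obtain weights $(\alpha_i^\star)_i$ and maps $(\phi_i^\star)_i$, then apply Proposition \ref{prop:decompphi} to factor each $\phi_i^\star=g_i^{1,\star}\circ\Psi^{-1}\circ g_i^{2,\star}$, where $g_i^{1,\star}\in\mathcal{H}^{\beta+1}_C(B^d(0,2\tau),\mathbb{R}^p)$ satisfies the near-isometry bound $\|\nabla g_i^{1,\star}(u)v/\|v\|\|\in[1-K\tau,1+K\tau]$ on $B^d(0,\tau)$, and $g_i^{2,\star}\in\dot{\mathcal{H}}^{\beta+1}_C(\mathbb{R}^d,\mathbb{R}^d)$ satisfies $\|\nabla g_i^{2,\star}(x)y/\|y\|\|\geq C^{-1}$.

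\textbf{Step 2 (Approximate inverses).} The map $(g_i^{1,\star})^{-1}$ is a priori defined only on $g_i^{1,\star}(B^d(0,2\tau))\subset\mathcal{M}^\star$. Compose it with the orthogonal projection $\pi_{\mathcal{M}^\star}$ on the tubular neighborhood $(\mathcal{M}^\star)^{r_{\mathcal{M}^\star}/2}$ — which is $\mathcal{H}^{\beta}$-smooth because $\mathcal{M}^\star$ satisfies the $(\beta+1,K)$-manifold condition — to produce a map $\varphi_i^\star$ defined on this neighborhood and lying in $\mathcal{H}^{\beta+1}_{C}$. Multiply by a smooth cutoff supported in $B^p(0,K)$ and equal to $1$ on a fixed enlargement of $g_i^{1,\star}(B^d(0,\tau))$; this preserves the Hölder bound up to a constant.

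\textbf{Step 3 (Wavelet truncation).} Truncate $g_i^{1,\star}$, $g_i^{2,\star}$ and $\varphi_i^\star$ (after compactly supported cutoffs adapted to $B^d(0,2K+C)$, $B^d(0,4\log(n)^{1/2})$ and $B^p(0,K)$ respectively) using the wavelet projection of Proposition \ref{prop:qualityofapproxfketa} at scale $\delta_N=N^{-1/(2\beta+d)}$. This delivers $\hat{g}_i^1\in(\hat{\mathcal{F}}_{\delta_N,2K+C}^{d,\beta+1})^p$, $\hat{g}_i^2\in(\hat{\mathcal{F}}_{\delta_N,4\log(n)^{1/2}}^{d,\beta+1})^d$ and $\hat{\varphi}_i\in(\hat{\mathcal{F}}_{\delta_N,K}^{p,\beta+1})^d$ with
\[
\|\hat{g}_i^1-g_i^{1,\star}\|_{C^1}+\|\hat{g}_i^2-g_i^{2,\star}\|_{C^1}+\|\hat{\varphi}_i-\varphi_i^\star\|_{C^1}\leq C(n^{-1/2}+\log(\delta_N^{-1})\delta_N^{\beta}),
\]
by taking $\gamma=1$ in Proposition \ref{prop:qualityofapproxfketa}. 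Set $g=(\hat{g}_i^1\circ\Psi^{-1}\circ\hat{g}_i^2)_i$ and $\varphi=(\hat{\varphi}_i)_i$.

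\textbf{Step 4 (Verifying the three bounds).} For $R(g)$: since the reference maps satisfy the near-isometry / bounded-eigenvalue conditions with strict margin (namely $K\tau$ vs $(K+\tfrac12)\tau$, and $C^{-1}$ vs $\epsilon_\Gamma^{1/2}$ once $\epsilon_\Gamma$ is small enough), the $C^1$-closeness from Step 3 implies the corresponding margins hold for $\hat g_i^1,\hat g_i^2$ uniformly on the finite coverings $\mathcal{Z}^1,\mathcal{Z}^2$, so each indicator in \eqref{eq:R} vanishes and $R(g)=0$, for $N$ larger than a constant depending on $\epsilon_\Gamma,K,\beta$. For $\mathcal{C}(g,\varphi)$: on the support of $\Gamma((\|g_l^1(U)-g_i^1(0)\|-\tau^2)\vee 0)$, the argument $g_l^1(U)$ lies in a neighborhood where $\varphi_i^\star\circ g_i^{1,\star}=\mathrm{Id}$ by construction of $\varphi_i^\star$, so $\|g_l^1(U)-g_i^1\circ\varphi_i\circ g_l^1(U)\|$ reduces to an approximation error controlled by the $C^0$-bounds of Step 3, giving $\mathcal{C}(g,\varphi)=O(\log(\delta_N^{-1})\delta_N^{\beta})$. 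For $\mathcal{C}_N$: it is an empirical average of the same bounded integrand, so by Hoeffding's inequality $\mathcal{C}_N(g,\varphi)\leq \mathcal{C}(g,\varphi)+O(N^{-1/2})$ with probability tending to one (and deterministically we may simply invoke $\mathbb{E}\mathcal{C}_N=\mathcal{C}$ together with a standard realization argument, or rephrase the lemma in expectation).

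\textbf{Main obstacle.} The genuinely delicate point is Step 2: constructing $\varphi_i^\star$ so that (i) it coincides with $(g_i^{1,\star})^{-1}$ on a neighborhood of $g_i^{1,\star}(B^d(0,\tau))$ large enough to swallow the support of $\Gamma(\|\cdot-g_i^1(0)\|)$ for all small $C^1$-perturbations $g_i^1$ of $g_i^{1,\star}$, (ii) it has $\mathcal{H}^{\beta+1}$-norm bounded by a constant independent of $n$ (so that the wavelet class $\hat{\mathcal{F}}_{\delta_N,K}^{p,\beta+1}$, whose amplitude constant is universal, can host it), and (iii) its support fits inside $B^p(0,K)$. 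The construction via tubular projection plus cutoff handles all three simultaneously, but the regularity of the projection $\pi_{\mathcal{M}^\star}$ near $\partial(\mathcal{M}^\star)^{r_{\mathcal{M}^\star}/2}$ — and the need to glue smoothly with the cutoff — is where one must be careful; all other estimates are then straightforward propagation of the approximation error from Proposition \ref{prop:qualityofapproxfketa}.
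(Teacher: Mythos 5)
Your Steps 1, 3, and 4 largely track the paper's argument (wavelet-truncate the reference maps from Propositions \ref{prop:keydecomp} and \ref{prop:decompphi}, propagate the $C^1$ error, use the exact identity $\mathcal{C}(g^\star,\varphi^\star)=0$ together with the strict margins in the Jacobian bounds). But Step 2 contains a genuine gap: you build $\varphi_i^\star = (g_i^{1,\star})^{-1}\circ\pi_{\mathcal{M}^\star}$ from the \emph{nearest-point} projection onto the manifold, which—as you yourself note—is only $\mathcal{H}^\beta$ on the tubular neighborhood (projection onto a $C^{\beta+1}$ manifold loses one derivative). Composing a $\mathcal{H}^{\beta+1}$ map with a $\mathcal{H}^\beta$ map gives only $\mathcal{H}^\beta$, so your conclusion that $\varphi_i^\star\in\mathcal{H}^{\beta+1}_C$ does not follow. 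This matters because the class $\Phi$ is built from $\hat{\mathcal{F}}_{\delta_N,K}^{p,\beta+1}$, whose coefficients are required to decay like $2^{-j(\beta+1+p/2)}$; a function that is only $\mathcal{H}^\beta$ has coefficients decaying like $2^{-j(\beta+p/2)}$, so the truncated $\hat\varphi_i$ would not lie in $\Phi$ (and Proposition \ref{prop:qualityofapproxfketa} would also deliver a weaker error rate, since its hypothesis is $f\in\mathcal{H}^\eta_1$ with $\eta=\beta+1$). You flag Step 2 as the ``main obstacle'' but locate the difficulty in the boundary behaviour of the projection, whereas the real obstruction is the one-derivative loss in the bulk.

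The paper avoids this by using the \emph{linear} orthogonal projection $\pi_i$ onto the tangent space $\mathcal{T}_{g_i^1(0)}(\mathcal{M}^\star)$, which is $C^\infty$. Setting $\Phi_i := (\pi_i\circ g_i^{1,\star})^{-1}$ on $\pi_i\circ g_i^{1,\star}(B^d(0,2\tau))$ — a map between subsets of $\mathbb{R}^d$, $\mathcal{H}^{\beta+1}$-smooth because $\pi_i\circ g_i^{1,\star}$ has lower-bounded Jacobian — and extending it by the Whitney extension theorem (Stein, Theorem 4, Chapter 6) to $\overline{\Phi}_i\in\mathcal{H}^{\beta+1}_C(\mathbb{R}^d,\mathbb{R}^d)$, the reference inverse $\varphi_i^\star = \overline{\Phi}_i\circ\pi_i$ retains full $\mathcal{H}^{\beta+1}$ regularity and still satisfies $\varphi_i^\star\circ g_i^{1,\star}=\mathrm{Id}$ on the relevant domain. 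A secondary, more cosmetic issue: your invocation of Hoeffding for $\mathcal{C}_N$ is unnecessary and introduces a probabilistic caveat the statement does not have. Since your $C^0$ bound on $g_l^1-g_i^1\circ\varphi_i\circ g_l^1$ holds uniformly on the support of the cutoff, each summand of $\mathcal{C}_N$ is pointwise small, hence $\mathcal{C}_N\le\mathcal{C}+O(\delta_N^{\beta+1}\log N)$ deterministically; rephrasing the lemma in expectation would break the way it is invoked downstream.
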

The proof of Lemma \ref{lemma:genenonempty} can be found in Section \ref{sec:lemma:genenonempty}. This ensures that the minimization problem~\eqref{eq:estimator} is well-posed for sufficiently large sample sizes.

\subsubsection{Bias-variance trade-off}

We now introduce several key quantities that appear in the  bias-variance decomposition of the estimation error. We begin by defining the approximation error associated with the generator class:
\begin{equation}\label{eq:deltaG}
    \Delta_{\mathcal{G}\times \Phi \times \mathcal{A}} := \inf_{(g,\varphi,\alpha)\in \mathcal{G}\times  \Phi\times \mathcal{A}} d_{\mathcal{H}_1^{d/2}}((F^{g,\varphi}\circ g)_{\# \alpha \gamma_d^n},\mu^\star),
\end{equation}
which measures how well the generative classes can approximate the target distribution \( \mu^\star \) for the \( d_{\mathcal{H}_1^{d/2}} \) metric. Likewise,
we define the (localized) approximation error of the discriminator class:
\begin{equation}\label{eq:deltaD}
    \Delta_{\mathcal{D}_{\hat{\mu}}} := d_{\mathcal{H}_1^{d/2}}(\hat{\mu},\mu^\star)-d_{\mathcal{D}}(\hat{\mu},\mu^\star),
\end{equation}
which quantifies the discrepancy between the full \( \mathcal{H}_1^{d/2} \)-IPM and the restricted IPM over \( \mathcal{D} \) evaluated at the estimator \( \hat{\mu} \).
To control the statistical error, we will also use covering numbers. For \( \theta > 0 \), we define a minimal \( \theta \)-covering of a function class \( \mathcal{F} \) as
\begin{equation}\label{eq:coveringnum}
|\mathcal{F}_\theta|:=\argmin \{|A|\ |\ \forall f \in \mathcal{F},\exists f_\theta \in A, \|f-f_\theta\|_\infty\leq \theta\}.
\end{equation}

Finally, for convenience, we define the class of gluing maps induced by the generator and inverse functions as
\begin{equation}\label{eq:fgphi}
    \mathcal{F}^{\mathcal{G},\Phi}:=\{F^{g,\varphi}|g\in \mathcal{G}, \varphi \in \Phi\}.
\end{equation}
Using these quantities, we establish a bias-variance trade-off for the expected estimation error of \( \hat{\mu} \) in the following result.
\begin{theorem}\label{theo:biaisvariancedec} Let 
$N\geq n$ large enough such that their exists $(g,\varphi)\in  \mathcal{G}\times  \Phi$ satisfying the constraint \eqref{eq:estimator}. Supposing that $\mu^\star$ satisfies Assumptions  \ref{assump:model}, the estimators $\hat{\mu}=(F^{\hat{g},\hat{\varphi}}\circ \hat{g})_{\# \hat{\alpha}\gamma_d^n}$ \eqref{eq:estimator} satisfies
   \begin{align*}
 \mathbb{E}_{X_1,...,X_n\sim \mu^\star}[d_{\mathcal{H}^{d/2}_1}(\hat{\mu},\mu^\star)]\leq & \mathbb{E}_{X_1,...,X_n\sim \mu^\star}[\Delta_{\mathcal{D}_{\hat{\mu}}}+\Delta_{\mathcal{G}\times \Phi \times \mathcal{A}}]\\
  &+ \frac{C\log(N)^{C_2}}{\sqrt{N}} \min \limits_{\delta\in [N^{-1},1]}  \left( \delta\log\Big(|(\mathcal{D}\circ\mathcal{F}^{\mathcal{G},\Phi}\circ \mathcal{G})_{1/N}||\mathcal{A}_{1/N}|\Big)^{1/2}+\delta^{1-\frac{d}{d\wedge(2(\beta+1))}}\right)\\
 & +\frac{C\log(n)^{C_2}}{\sqrt{n}}\left( n^{-\frac{d/2}{2\beta+d}}\log\Big(|(\mathcal{D}\circ \phi)_{1/n}|\Big)^{1/2}+1\right),
\end{align*}
for a certain $\phi\in \mathcal{H}^{\beta+1}_C(B^d(0,\tau),\mathbb{R}^p)$.
\end{theorem}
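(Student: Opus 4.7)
The approach is a standard GAN-style bias--variance decomposition, where the key novelty is to exploit the local-chart representation of Proposition~\ref{prop:keydecomp} to sharpen the concentration of the real-sample term. First I would isolate the discriminator bias by writing
\[
d_{\mathcal{H}^{d/2}_1}(\hat{\mu},\mu^\star)=\Delta_{\mathcal{D}_{\hat{\mu}}}+d_{\mathcal{D}}(\hat{\mu},\mu^\star),
\]
and identify $d_{\mathcal{D}}(\hat{\mu},\mu^\star)$ with the population saddle value $\sup_{D\in\mathcal{D}}L(\hat{g},\hat{\varphi},\hat{\alpha},D)$ modulo the global normalisation $\bar{\alpha}:=m^{-1}\sum_i\hat{\alpha}_i$. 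Since $\hat{\alpha}_i\in[C^{-1},C]$ and testing the loss against a smooth bump approximating $\mathds{1}_{B^p(0,K)}$ (which lies in $\mathcal{D}$) yields $|\bar{\alpha}-1|\lesssim \sup_D L_{N,n}$, this factor is negligible and can be absorbed into the remaining terms.

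Next I would run the usual optimality sandwich. Pick a feasible $(\tilde{g},\tilde{\varphi},\tilde{\alpha})\in\mathcal{G}\times\Phi\times\mathcal{A}$ that nearly achieves $\Delta_{\mathcal{G}\times\Phi\times\mathcal{A}}$; Lemma~\ref{lemma:genenonempty} guarantees the feasible set is non-empty, and a compactness argument provides a near-minimiser. From $\sup_D L_{N,n}(\hat{g},\hat{\varphi},\hat{\alpha},D)\leq\sup_D L_{N,n}(\tilde{g},\tilde{\varphi},\tilde{\alpha},D)$, adding and subtracting $L$ on both sides yields
\[
\sup_D L(\hat{g},\hat{\varphi},\hat{\alpha},D)\leq 2\,\Xi_{N,n}+\Delta_{\mathcal{G}\times\Phi\times\mathcal{A}},\qquad \Xi_{N,n}:=\sup_{(g,\varphi,\alpha,D)}|L-L_{N,n}|(g,\varphi,\alpha,D),
\]
so the entire estimation error reduces to $\Delta_{\mathcal{D}_{\hat{\mu}}}+\Delta_{\mathcal{G}\times\Phi\times\mathcal{A}}+2\mathbb{E}[\Xi_{N,n}]$.

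The bulk of the work is bounding $\mathbb{E}[\Xi_{N,n}]$. The difference $L-L_{N,n}$ splits additively into a \emph{fake-sample} deviation over the composed functional $(\omega,y)\mapsto \alpha_\omega D(F^{g,\varphi}\circ g_\omega(y))$, and a \emph{real-sample} deviation over $D\in\mathcal{D}$ evaluated at iid $X_j\sim\mu^\star$. For the fake-sample part I would apply a Bernstein/Talagrand chaining argument at truncation scale $\delta$: the uniform concentration piece contributes $\delta\sqrt{\log(|(\mathcal{D}\circ\mathcal{F}^{\mathcal{G},\Phi}\circ\mathcal{G})_{1/N}||\mathcal{A}_{1/N}|)}$, while the residual entropy-integral tail contributes $\delta^{1-d/(d\wedge 2(\beta+1))}$, reflecting the effective Hölder regularity $\min(d/2,\beta+1)$ of the composed class in intrinsic dimension $d$; minimising over $\delta\in[N^{-1},1]$ and dividing by $\sqrt{N}$ yields the first variance term. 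For the real-sample part I would invoke Proposition~\ref{prop:keydecomp} to write $\mu^\star=\sum_i\alpha_i(\phi_i)_\#\gamma_d$, so that the $L^2(\mu^\star)$-covering of $\mathcal{D}$ is dominated by that of $\{D\circ\phi_i:D\in\mathcal{D}\}$ for a worst-case chart $\phi\in\mathcal{H}^{\beta+1}_C(B^d(0,\tau),\mathbb{R}^p)$. Because this composition is a Hölder class in the intrinsic dimension $d$ rather than $p$, a Talagrand-type localised inequality produces the second variance term.

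The principal obstacle is the chaining for the composed class $\mathcal{D}\circ\mathcal{F}^{\mathcal{G},\Phi}\circ\mathcal{G}$: its effective smoothness $\min(d/2,\beta+1)$ is sub-critical when $\beta+1<d/2$, so Dudley's integral diverges and one must use peeling with a carefully chosen truncation, which is exactly what produces the exponent $1-d/(d\wedge 2(\beta+1))$ and motivates the asymmetric choice $N>n$ in the under-smoothed regime. A secondary but pervasive subtlety is the bookkeeping of the $\log(n)^{C_2}$ factors produced by the Hölder-to-Besov embedding of Proposition~\ref{prop:reguofG} when $\beta$ is an integer, by the truncated Gaussian $\gamma_d^n$, and by the wavelet approximation~\eqref{eq:approxipsipaper2}; all of these are routine but need to be collected with care.
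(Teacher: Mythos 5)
Your proposal follows essentially the same route as the paper: the optimality sandwich and bias--variance split are the paper's Lemma~\ref{lemma:decomptheo}, the Dudley chaining at a truncation scale $\delta$ with intrinsic-dimension covering via the chart $\phi$ of Proposition~\ref{prop:keydecomp} is the content of Lemma~\ref{lemma:keyboundclass} and Propositions~\ref{prop:statisticalerrorformu} and~\ref{prop:statisticalerror}, and the final choice $\delta_2=n^{-d/(2(2\beta+d))}$ matches the paper. Your remark about controlling the normalisation $\bar\alpha=m^{-1}\sum_i\hat\alpha_i$ via a bump test is a detail the paper absorbs silently, but it does not change the structure of the argument.
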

The proof of Theorem \ref{theo:biaisvariancedec} can be found Section \ref{sec:theo:biaisvariancedec}. Theorem~\ref{theo:biaisvariancedec} provides a non-asymptotic bias-variance decomposition for the expected estimation error of the proposed estimator. Its structure closely resembles classical results in the GAN literature (see Theorem~1 in~\cite{puchkin2024rates} or Theorem 5.4 in \cite{stephanovitch2023wasserstein}), where the error is split into an approximation component and a statistical estimation component governed by the complexity of the function classes.

Importantly, the complexity terms involving the discriminator class \( \mathcal{D} \) do not depend on the ambient dimension \( p \). This is due to the fact that discriminators are always composed with functions mapping from \( \mathbb{R}^d \), ensuring that only the intrinsic dimension \( d \) intervenes in the statistical complexity. This key property is formalized in the following result.

\begin{lemma}[Lemma 5.5 in \cite{stephanovitch2023wasserstein}]\label{lemma:coveringd}
    Let $g\in \text{Lip}_K(B^d(0,1),B^p(0,1))$, $\eta>0$ and $\delta,\epsilon\in (0,1)$. Then, there exists an $\epsilon$-covering $(\mathcal{N}_g)_\epsilon$ of the class $\mathcal{N}_g=\{D\circ g\ |\ D\in \hat{\mathcal{F}}^{p,\eta}_\delta\}$, such that 
    $$\log(|(\mathcal{N}_g)_\epsilon|)\leq C\delta^{-d}\log(\delta^{-1})\log(\epsilon^{-1}).$$
\end{lemma}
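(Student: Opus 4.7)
}
The proof rests on a dimensional observation: although the wavelet basis $\{\hat\psi^p_{jlw}\}$ is indexed by $w\in \mathbb Z^p$, the value of $D\circ g$ on $B^d(0,1)$ only depends on those wavelet coefficients whose support reaches the image $g(B^d(0,1))$, which is a $d$-dimensional (not $p$-dimensional) subset of $B^p(0,1)$. My plan is to exploit this to (i) restrict to a small set of ``active'' wavelets at each scale, (ii) discretize only those coefficients, and (iii) sum the resulting covering cost over scales.

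\emph{Step 1: counting active wavelets.} For each scale $j\in \{0,\dots,\log_2(\delta^{-1})\}$ and each directional index $l\in\{1,\dots,2^p\}$, declare $(j,l,w)$ active if $\mathrm{supp}(\hat\psi^p_{jlw})\cap g(B^d(0,1))\neq\emptyset$. Because $g$ is $K$-Lipschitz on the $d$-dimensional ball $B^d(0,1)$, its image admits a cover by $C(K2^j)^d$ Euclidean balls of radius $2^{-j}$, and each wavelet $\hat\psi^p_{jlw}$ has support of diameter $O(2^{-j})$ around $2^{-j}w$. A standard packing argument (using that $p$ is a fixed constant and that the shifts of one wavelet form a bounded-multiplicity cover of $\mathbb{R}^p$) then bounds the number $\mathcal{A}_j$ of active indices at scale $j$ by $C\cdot 2^{jd}$. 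Summing, $\sum_{j\leq \log_2(\delta^{-1})}|\mathcal{A}_j|\leq C\delta^{-d}$.

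\emph{Step 2: coefficient discretization.} For any $D\in \hat{\mathcal F}^{p,\eta}_\delta$, only the active coefficients affect $D\circ g$ on $B^d(0,1)$, so it suffices to cover the truncated expansion. At scale $j$, the coefficient bound $|\hat\alpha(j,l,w)|\leq C_\eta K(1+\|w\|)2^{-j(\eta+p/2)}$ and $\|\hat\psi^p_{jlw}\|_\infty\lesssim 2^{jp/2}$ give a pointwise contribution $\lesssim (1+\|w\|)2^{-j\eta}$; moreover $\|w\|\lesssim 2^j$ for active wavelets. I discretize each active coefficient on a uniform grid of spacing $\theta_j$, chosen so that the total $L^\infty$ error on $B^d(0,1)$ is at most $\epsilon$, namely $\theta_j:=c\,\epsilon\cdot 2^{-jp/2}\cdot|\mathcal{A}_j|^{-1}\cdot (\log_2(\delta^{-1})+1)^{-1}$. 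The resulting grid for coefficient $(j,l,w)$ has cardinality $\log\!\big((1+\|w\|)2^{-j(\eta+p/2)}/\theta_j\big)=O(j+\log(\epsilon^{-1}))$.

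\emph{Step 3: assembling the covering.} Summing the log-cardinalities over active indices,
\begin{equation*}
\log|(\mathcal N_g)_\epsilon|\;\leq\; \sum_{j=0}^{\log_2(\delta^{-1})}|\mathcal{A}_j|\big(j+\log(\epsilon^{-1})\big)\;\leq\; C\sum_{j=0}^{\log_2(\delta^{-1})}2^{jd}\big(j+\log(\epsilon^{-1})\big)\;\leq\; C\delta^{-d}\log(\delta^{-1})\log(\epsilon^{-1}),
\end{equation*}
which is the claimed bound. The main technical point — and the only place where the dimensional reduction from $p$ to $d$ actually enters — is Step~1, the upper bound $|\mathcal{A}_j|\leq C2^{jd}$; once this is in hand, the rest is a standard wavelet truncation/quantization argument. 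The delicate book-keeping is the choice of $\theta_j$, which must be small enough to control the approximation error summed across scales yet large enough that the cost $\log(1/\theta_j)=O(j+\log(\epsilon^{-1}))$ does not destroy the geometric-series behaviour of $\sum 2^{jd}$.
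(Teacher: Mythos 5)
The paper does not contain a proof of this statement: it is imported verbatim as Lemma~5.5 of \cite{stephanovitch2023wasserstein}, so there is no in-paper argument to compare against. That said, your proposal captures what is essentially the canonical (and almost certainly the cited paper's) mechanism, and I see no gap in the logic. The crucial dimensional-reduction step — that $g(B^d(0,1))$ can be covered by $O((K2^j)^d)$ balls of radius $2^{-j}$, so only $O(2^{jd})$ of the $O(2^{jp})$ wavelets $\hat\psi^p_{jlw}$ at scale $j$ have support meeting the image, and the remaining coefficients can be zeroed without changing $D\circ g$ on $B^d(0,1)$ — is exactly what turns the naive $\delta^{-p}$ count into $\delta^{-d}$. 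The quantization-per-coefficient step and the geometric summation over scales are then routine and you execute them correctly.

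A few small points worth tidying. First, in Step~2 you write that the grid "has cardinality $\log(\cdots)$"; you mean log-cardinality. Second, the factor $|\mathcal A_j|^{-1}$ in your choice of $\theta_j$ is not needed: at any fixed point $g(x)$ only $O(1)$ translates $\hat\psi^p_{jlw}$ at scale $j$ are nonzero (bounded overlap), so $\theta_j\sim\epsilon\,2^{-jp/2}/\log(\delta^{-1})$ already controls the total sup-norm error; keeping the extra $|\mathcal A_j|^{-1}$ only enlarges $\log(1/\theta_j)$ by an additive $O(j)$, which the geometric sum absorbs, so it is harmless but slightly obscures the argument. Third, the active-wavelet count tacitly uses that the approximated wavelets $\hat\psi^p_{jlw}$ are (nearly) compactly supported with support diameter $O(2^{-j})$; this is consistent with how the paper uses them elsewhere (e.g.\ in the proof of Proposition~\ref{prop:qualityofapproxfketa}) but is worth flagging as a hypothesis inherited from the construction of $\hat\phi$, since without localization the entire dimensional-reduction step collapses. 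Finally, the $(1+\|z\|)$ factor in the definition \eqref{eq:fdeltaperapprox} is a notational loose end in the paper (the variable $z$ does not appear among the summation indices); whether you read it as $O(1)$ or as $O(2^j)$ only shifts $\log(1/\theta_j)$ by $O(j)$, so your conclusion is unaffected.
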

This bound confirms that the covering number of the composed class depends only on the intrinsic dimension $d$ rather than on the ambient dimension
$p$. In particular, taking $\delta = \delta_n = n^{-\frac{1}{2\beta+d}}$, we deduce that $\log( |(\mathcal{D}\circ \phi)_{1/n}|)\leq C \log(n)^2 n^{\frac{d}{2\beta+d}}$, which matches the scaling of the covering number of the function class $\mathcal{F}^{d,\eta}_{\delta_n}$. We now turn to bounding the approximation error of the generator class.

\begin{proposition}\label{prop:deltaG}  Let $\mu^\star$ a probability measure satisfying Assumption \ref{assump:model}. Then, for $\delta_N=N^{-\frac{1}{2\beta+d}}$ and $\gamma>0$ and $\mathcal{G}\times \Phi \times \mathcal{A}$ we have that
    $$\mathbb{E}\left[\inf_{(g,\varphi,\alpha)\in \mathcal{G}\times  \Phi\times \mathcal{A}} d_{\mathcal{H}_1^{\gamma}}((F^{g,\varphi}\circ g)_{\# \alpha \gamma_d^n},\mu^\star)\right]\leq  C\log(n)^{C_2}(\delta_N^{(\beta + \gamma)\wedge (2\beta+1)}+n^{-1/2}).$$
\end{proposition}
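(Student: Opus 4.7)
\textbf{Proof proposal for Proposition \ref{prop:deltaG}.}

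The plan is to construct an explicit candidate $(\bar g,\bar\varphi,\bar\alpha)\in \mathcal{G}\times\Phi\times\mathcal{A}$ by wavelet-compressing the transport maps supplied by Propositions~\ref{prop:keydecomp} and~\ref{prop:decompphi}, and then bound the resulting IPM error by a duality argument in Besov spaces. More precisely, Proposition~\ref{prop:keydecomp} gives weights $(\alpha_i)_i$ and maps $(\phi_i)_i\in \mathcal{H}^{\beta+1}_C(\mathbb{R}^d,\mathbb{R}^p)^m$ such that $\mu^\star=\sum_i\alpha_i(\phi_i)_{\#}\gamma_d$, and Proposition~\ref{prop:decompphi} factorizes each $\phi_i=g_i^1\circ\Psi^{-1}\circ g_i^2$ with $g_i^1$ almost an isometry on $B^d(0,\tau)$ and $g_i^2$ bi-Lipschitz. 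I would then pick $\bar\alpha=\alpha$, and define $\bar g_i^1$, $\bar g_i^2$ as the projections of $g_i^1$, $g_i^2$ onto the wavelet classes $\hat{\mathcal{F}}^{d,\beta+1}_{\delta_N,2K+C}$, $\hat{\mathcal{F}}^{d,\beta+1}_{\delta_N,4\log(n)^{1/2}}$ given by Proposition~\ref{prop:qualityofapproxfketa}, which yields $\|g_i^k-\bar g_i^k\|_{\mathcal{B}^{0}_{\infty,\infty}}\lesssim n^{-1/2}+\delta_N^{\beta+1}$ and, more generally, $\|g_i^k-\bar g_i^k\|_{\mathcal{B}^{-\gamma,b}_{\infty,\infty}}\lesssim \log(n)^{C_2}\delta_N^{\beta+1+\gamma}$ for $\gamma\leq\beta+1$.

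Next I need to construct the approximate inverses $\bar\varphi_i\in\Phi$. Since $g_i^1$ is close to an isometry on $B^d(0,\tau)$ it admits a smooth left inverse $(g_i^1)^{-1}$ on a tubular neighbourhood of $g_i^1(B^d(0,\tau))$, which can be extended to $\mathbb{R}^p$ as an $\mathcal{H}^{\beta+1}$ map by a standard Whitney-type extension; compressing the extension at scale $\delta_N$ and using $\|\bar g_i^1-g_i^1\|_\infty\lesssim\delta_N^{\beta+1}$ yields a $\bar\varphi_i\in\hat{\mathcal{F}}^{p,\beta+1}_{\delta_N,K}$ satisfying $\|\bar\varphi_i\circ\bar g_i^1-\mathrm{Id}\|_\infty\lesssim\log(n)^{C_2}\delta_N^{\beta+1}$. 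This estimate, together with the regularity bounds on the $\bar g_i^k$, is enough to guarantee (after choosing $N$ large) that both the consistency constraint $\mathcal{C}_N(\bar g,\bar\varphi)$ and the regularization $R(\bar g)$ are below $\epsilon_\Gamma$, so that $(\bar g,\bar\varphi,\bar\alpha)$ is admissible, and that on each image $\bar g_i^1(B^d(0,\tau))$ the global gluing map $F^{\bar g,\bar\varphi}$ acts essentially as the identity up to an $\mathcal{O}(\log(n)^{C_2}\delta_N^{\beta+1})$ perturbation in $\mathcal{H}^{\beta+1}$.

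It remains to compare the candidate push-forward $\bar\mu:=(F^{\bar g,\bar\varphi}\circ\bar g)_{\#\bar\alpha\gamma_d^n}$ to $\mu^\star=\sum_i\alpha_i(\phi_i)_{\#}\gamma_d$. Swapping $\gamma_d$ for $\gamma_d^n$ costs $\mathcal{O}(n^{-1})$ in total variation and hence in every $d_{\mathcal{H}^\gamma_1}$, contributing the $n^{-1/2}$ term in the bound. For the bulk error, I would integrate a test $h\in\mathcal{H}^\gamma_1$ against $(F^{\bar g,\bar\varphi}\circ\bar g_i)_{\#}\gamma_d^n-(\phi_i)_{\#}\gamma_d^n$, perform a change of variables to express the difference as an integral of a product of two factors on $\mathbb{R}^d$: the displacement $h\circ F^{\bar g,\bar\varphi}\circ\bar g_i-h\circ\phi_i$, and the Jacobian/density weight $\gamma_d^n\cdot|\det\nabla\phi_i^\top\nabla\phi_i|^{-1/2}$. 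Using Proposition~\ref{prop:decompphi} to write the density as a $\mathcal{B}^{\beta}$ function, the wavelet approximation bound of Proposition~\ref{prop:qualityofapproxfketa} and the standard duality $\mathcal{B}^{\gamma}_{\infty,\infty}\leftrightarrow \mathcal{B}^{-\gamma}_{1,1}$, I expect to obtain a rate $\log(n)^{C_2}\delta_N^{\beta+\gamma}$ when $\gamma\leq\beta+1$; the saturation at $\delta_N^{2\beta+1}$ for larger $\gamma$ comes from pairing the $\mathcal{H}^\beta$ density with the $\mathcal{H}^{\beta+1}$ chart, after which no additional smoothness of $h$ can be exploited.

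The main obstacle I anticipate is the simultaneous satisfaction of the geometric constraints $\mathcal{C}_N(\bar g,\bar\varphi)+R(\bar g)\leq\epsilon_\Gamma$ and the IPM approximation rate: naively compressing each chart might, for $\beta$ integer, inflate the Hölder norm by $\log(n)^{C_2}$ just enough to violate the $R$-constraint via the cover $\mathcal{Z}^1,\mathcal{Z}^2$. Handling this requires exploiting the near-isometric normalization of $g_i^1$ provided by Proposition~\ref{prop:decompphi} together with the fact that Besov/Hölder approximation preserves the gradient close to the identity up to $\delta_N^\beta$; the $\log$ factors are then absorbed into the overall $\log(n)^{C_2}$ prefactor of the bound. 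Once admissibility is secured, the IPM duality argument is essentially routine.
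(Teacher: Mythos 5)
Your proposal follows essentially the same route as the paper's proof: wavelet-truncate the charts from Propositions~\ref{prop:keydecomp}--\ref{prop:decompphi} at scale $\delta_N$ to build the candidate triple (the same construction used in Lemma~\ref{lemma:genenonempty}, which also ensures admissibility under the constraint), then bound the IPM error by a fundamental-theorem-of-calculus expansion paired with Besov/wavelet duality, producing the exponent $(\gamma-1)\wedge\beta+(\beta+1)=(\beta+\gamma)\wedge(2\beta+1)$. The one small imprecision is the duality norm: pairing against $\mathcal{H}^\gamma_1\subset\mathcal{B}^{\gamma}_{\infty,\infty}$ requires measuring the chart errors in $\mathcal{B}^{-\gamma'}_{1,1}$ rather than $\mathcal{B}^{-\gamma',b}_{\infty,\infty}$, which is precisely what the paper's display \eqref{align:mlkiuhbvghjh} and Proposition~\ref{prop:qualityofapproxfketa} supply.
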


The proof of Proposition~\ref{prop:deltaG} can be found in Section~\ref{sec:prop:deltaG}. This result shows that the  approximation error of the generator class behaves like \( \delta_N^{\beta + \gamma} \), where \( \delta_N \) is the frequency parameter of the class \( \mathcal{G} \), \( \beta \) denotes the regularity of the target distribution \( \mu^\star \), and \( \gamma \) the regularity of the IPM under consideration. In particular, the approximation improves as the regularity of both the target and the metric increases, as soon as \( \gamma \leq \beta+1 \). For \( \gamma \geq \beta + 1 \), the error no longer decreases with \( \gamma \), as it becomes limited by the $\mathcal{H}^{\beta+1}$-regularity of the supports of the measures. Let us now give a bound on the approximation error of the discriminator class.

\begin{proposition}\label{prop:DeltaD} Let $\mu^\star$ a probability measure satisfying Assumption \ref{assump:model}, $\gamma\in [1,d/2]$ and $\mathcal{F}\subset \mathcal{H}^1(\mathbb{R}^p,\mathbb{R})$. Then, supposing $n\in \mathbb{N}_{>0}$ large enough, we have with probability at least $n^{-1}$ that the estimators $\hat{\mu}=(F^{\hat{g},\hat{\varphi}}\circ \hat{g})_{\# \hat{\alpha}\gamma_d^n}$ from \eqref{eq:estimator} satisfies
$$d_{\mathcal{H}_1^{\gamma}}(\hat{\mu},\mu^\star)-d_{\mathcal{F}}(\hat{\mu},\mu^\star)\leq C\inf_{D\in \mathcal{F}}\|\nabla h_{\hat{\mu}}-\nabla D\|_\infty\log\left(1+d_{\mathcal{H}^{\gamma}_1}(\hat{\mu},\mu^\star)^{-1}\right)^{C_2} d_{\mathcal{H}^{\gamma}_1}(\hat{\mu},\mu^\star)^{\frac{\beta+1}{\beta+\gamma}},$$
for 
$$h_{\hat{\mu}} \in \argmax_{h\in \mathcal{H}^{\gamma}_1} \int h(x)d\hat{\mu}(x)-\int h(x)d\mu^\star(x).$$
\end{proposition}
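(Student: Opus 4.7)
The plan is to combine a direct dualization argument with the Hölder interpolation inequality of Theorem~\ref{theo:theineqGAW}. A prerequisite is that $\hat{\mu}$ itself satisfies Assumption~\ref{assump:model}, so the first step is to isolate the high-probability event on which this holds. By construction, $(\hat{g},\hat{\varphi},\hat{\alpha})$ satisfies the empirical constraint $\mathcal{C}_N(\hat{g},\hat{\varphi})+R(\hat{g})\leq\epsilon_\Gamma$ and is an argmin of $\sup_{D\in\mathcal{D}}L_{N,n}$, while Lemma~\ref{lemma:genenonempty} guarantees the feasible set is non-empty. A uniform concentration argument over the wavelet-parametrized classes $\mathcal{G},\Phi,\mathcal{A},\mathcal{D}$, whose covering numbers are controlled via Proposition~\ref{prop:reguofG} and Lemma~\ref{lemma:coveringd} exactly as in the proof of Theorem~\ref{theo:biaisvariancedec}, upgrades these empirical quantities to their population analogues $L+\mathcal{C}+R\leq C_2^{-1}$. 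Propositions~\ref{prop:isamanifold} and~\ref{prop:densitychecked} then certify that $\hat{\mu}$ satisfies the $(\beta+1,K')$-manifold and $(\beta,K')$-density conditions, with $K'=C$ if $\beta\notin\mathbb{N}$ and $K'=C\log(n)^{C_2}$ otherwise.

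Next I would dualize. For any $D\in\mathcal{F}$, since $\hat{\mu}$ and $\mu^\star$ are both probability measures,
\begin{align*}
d_{\mathcal{H}^\gamma_1}(\hat{\mu},\mu^\star)-d_{\mathcal{F}}(\hat{\mu},\mu^\star)\leq \int h_{\hat{\mu}}\,d(\hat{\mu}-\mu^\star)-\int D\,d(\hat{\mu}-\mu^\star)=\int(h_{\hat{\mu}}-D)\,d(\hat{\mu}-\mu^\star).
\end{align*}
This integral is unchanged if one subtracts a constant $c$ or multiplies the integrand by a smooth cutoff $\chi$ equal to $1$ on $B^p(0,K)$ and supported in $B^p(0,2K)$, since both measures are supported in $B^p(0,K)$. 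Setting $L:=\|\nabla h_{\hat{\mu}}-\nabla D\|_\infty$ and centering $h_{\hat{\mu}}-D$ at any point of $B^p(0,K)$ yields a function with $L^\infty$ norm on $B^p(0,2K)$ bounded by $CL$; multiplying by $\chi$ then gives $\tilde{f}$ satisfying $\|\tilde{f}\|_\infty+\|\nabla\tilde{f}\|_\infty\leq C'L$, so that $\tilde{f}/(C'L)\in\mathcal{H}^1_1$ for constants $C,C'$ depending only on $p$ and $K$. Therefore
\begin{align*}
\int(h_{\hat{\mu}}-D)\,d(\hat{\mu}-\mu^\star)=\int\tilde{f}\,d(\hat{\mu}-\mu^\star)\leq C'L\,d_{\mathcal{H}^1_1}(\hat{\mu},\mu^\star).
\end{align*}

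Finally, on the event of the first step both $\mu^\star$ and $\hat{\mu}$ satisfy the hypotheses of Theorem~\ref{theo:theineqGAW}, which applied with smoothness parameters $1$ and $\gamma\geq 1$ yields
\begin{align*}
d_{\mathcal{H}^1_1}(\hat{\mu},\mu^\star)\leq C\log\bigl(1+d_{\mathcal{H}^\gamma_1}(\hat{\mu},\mu^\star)^{-1}\bigr)^{C_2}d_{\mathcal{H}^\gamma_1}(\hat{\mu},\mu^\star)^{\frac{\beta+1}{\beta+\gamma}}.
\end{align*}
Combining with the previous display and taking the infimum over $D\in\mathcal{F}$ delivers the announced bound. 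The main difficulty lies in the first step: transferring the in-sample constraint and adversarial optimality into population bounds uniformly in $(g,\varphi,\alpha,D)$ with a budget that secures failure probability $O(n^{-1})$. This requires controlling the adversarial deviation on the composite class $\mathcal{D}\circ\mathcal{F}^{\mathcal{G},\Phi}\circ\mathcal{G}$, and properly handling the logarithmic inflation of the Hölder norms in the integer $\beta$ regime flagged by Proposition~\ref{prop:regularityfandg}; the dualization and interpolation steps that follow are then essentially algebraic.
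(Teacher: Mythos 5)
Your proof is correct and follows essentially the same route as the paper's: isolate the high-probability event on which $\hat{\mu}$ inherits Assumption~\ref{assump:model} (via Lemma~\ref{lemma:firstbadbound}, Proposition~\ref{prop:statisticalerrorC}, and Propositions~\ref{prop:isamanifold}--\ref{prop:densitychecked}), factor $\|\nabla h_{\hat\mu}-\nabla D\|_\infty$ out of the IPM gap to reduce to a $d_{\mathcal{H}^1_1}$ distance, and close with the interpolation inequality of Theorem~\ref{theo:theineqGAW} applied between smoothness $1$ and $\gamma$. The only difference is cosmetic: the paper extracts the gradient-norm factor by integrating along an optimal transport plan and invoking Kantorovich duality ($\int\|x-y\|\,d\pi_{\hat\mu}=\sup_{f\in\mathrm{Lip}_1}\mathbb{E}[f(Y)-f(X)]$), whereas you center and truncate $h_{\hat\mu}-D$ to place the rescaled function directly in $\mathcal{H}^1_1$—these are equivalent.
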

The proof of Proposition \ref{prop:DeltaD} can be found in Section \ref{sec:prop:DeltaD}. This result shows that for a function class $\mathcal{F}$ approximating $\mathcal{H}_1^{\gamma}$, the difference between the IPMs of the two classes naturally depends both on the precision of the approximation of the class $\mathcal{F}$ and on the $d_{\mathcal{H}^{\gamma}_1}$ distance between $\hat{\mu}$ and $\mu^\star$. In particular, in the case $\mathcal{F}=\hat{\mathcal{F}}^{p,\gamma}_{\delta_n}$ and $d_{\mathcal{H}^{\gamma}_1}(\hat{\mu},\mu^\star)=n^{-\frac{\beta+\gamma}{2\beta+d}}$, we obtain 
$$d_{\mathcal{H}_1^{\gamma}}(\hat{\mu},\mu^\star)-d_{\mathcal{F}}(\hat{\mu},\mu^\star)\leq C\log(n)^{C_2} \delta_n^{\beta+\gamma},$$
which matches the bound given by Proposition \ref{prop:deltaG} in the case $\gamma\leq \beta+1$ up to logarithm factors.

\subsubsection{Minimax bounds}
Building on the precedent results, we now establish that the estimator \( \hat{\mu} \) achieves the minimax optimal convergence rate for the \( d_{\mathcal{H}^{d/2}_1} \) distance.

\begin{theorem}\label{theo:boundonddsur2}  Let $\mu^\star$ a probability measure satisfying Assumption \ref{assump:model}. Then, the estimator $\hat{\mu}=(F^{\hat{g},\hat{\varphi}}\circ \hat{g})_{\# \hat{\alpha}\gamma_d^n}$ from \eqref{eq:finalest} satisfies
   $$\mathbb{E}_{X_1,...,X_n\sim \mu^\star}[d_{\mathcal{H}^{d/2}_1}(\hat{\mu},\mu^\star)]\leq C \log(n)^{C_2} n^{-1/2}.$$
\end{theorem}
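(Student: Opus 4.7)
The strategy is to invoke the bias-variance decomposition of Theorem \ref{theo:biaisvariancedec} at $\gamma = d/2$ and show each of its three components is $O(\log(n)^{C_2} n^{-1/2})$. For the generator approximation error, Proposition \ref{prop:deltaG} at $\gamma = d/2$ gives $C\log(n)^{C_2}(\delta_N^{(\beta+d/2)\wedge(2\beta+1)} + n^{-1/2})$, and the choice of $N$ from Section \ref{sec:statisticalbound} is calibrated precisely so that $\delta_N^{(\beta+d/2)\wedge(2\beta+1)} = n^{-1/2}$: when $\beta+1 \geq d/2$, $N = n$ yields $\delta_N^{\beta+d/2} = n^{-1/2}$, while when $\beta+1 < d/2$, $N = n^{(2\beta+d)/(4\beta+2)}$ yields $\delta_N^{2\beta+1} = n^{-1/2}$. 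Hence $\mathbb{E}[\Delta_{\mathcal{G}\times\Phi\times\mathcal{A}}] \leq C\log(n)^{C_2} n^{-1/2}$.

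Next I would control the two variance contributions of Theorem \ref{theo:biaisvariancedec}. For the $1/\sqrt{n}$ term, Lemma \ref{lemma:coveringd} at scale $\delta_n$ and precision $\epsilon = 1/n$ gives $\log|(\mathcal{D}\circ\phi)_{1/n}| \leq C\log(n)^2 n^{d/(2\beta+d)}$, so that $n^{-d/(2(2\beta+d))}\log|(\mathcal{D}\circ\phi)_{1/n}|^{1/2} \leq C\log(n)$ and this contribution is $O(\log(n)^{C_2}/\sqrt{n})$. For the $1/\sqrt{N}$ term, I would establish an analogue of Lemma \ref{lemma:coveringd} for the composite class $\mathcal{D}\circ\mathcal{F}^{\mathcal{G},\Phi}\circ\mathcal{G}$: the key observation is that although $\mathcal{F}^{\mathcal{G},\Phi}$ and $\Phi$ are parametrized on $\mathbb{R}^p$, all relevant evaluations occur along the $d$-dimensional images of the generators, so the covering inherits the intrinsic-dimension scaling $\log|(\mathcal{D}\circ\mathcal{F}^{\mathcal{G},\Phi}\circ\mathcal{G})_{1/N}| \leq C\log(N)^{C_2} N^{d/(2\beta+d)}$. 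Combined with $\log|\mathcal{A}_{1/N}| \leq Cm\log(N)$ and an optimal choice of $\delta$ in the minimum, this term evaluates to $O(\log(N)^{C_2}/\sqrt{N}) = O(\log(n)^{C_2} n^{-1/2})$ in both regimes of $N$.

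The most delicate step is bounding $\mathbb{E}[\Delta_{\mathcal{D}_{\hat{\mu}}}]$ via Proposition \ref{prop:DeltaD} with $\gamma = d/2$ and $\mathcal{F} = \mathcal{D}$. Using Proposition \ref{prop:qualityofapproxfketa} together with the log-embedding of Lemma \ref{lemma:inclusions} to approximate $h_{\hat{\mu}} \in \mathcal{H}^{d/2}_1$ in $\mathcal{B}^{1,1+\epsilon}_{\infty,\infty}$-norm yields $\inf_{D\in\mathcal{D}}\|\nabla h_{\hat{\mu}}-\nabla D\|_\infty \leq C\log(n)^{C_2}(\delta_n^{d/2-1}\vee 1)$, so on an event of probability at least $1-n^{-1}$,
\[
\Delta_{\mathcal{D}_{\hat{\mu}}} \leq C\log(n)^{C_2}(\delta_n^{d/2-1}\vee 1)\, d_{\mathcal{H}^{d/2}_1}(\hat{\mu},\mu^\star)^{(\beta+1)/(\beta+d/2)}.
\]
Writing $\epsilon_n := \mathbb{E}[d_{\mathcal{H}^{d/2}_1}(\hat{\mu},\mu^\star)]$ and applying Jensen's inequality (the exponent is in $(0,1]$ for $d\geq 2$), combining with the previous bounds yields a self-referential bound $\epsilon_n \leq C\log(n)^{C_2}(n^{-1/2} + (\delta_n^{d/2-1}\vee 1)\,\epsilon_n^{(\beta+1)/(\beta+d/2)})$. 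A direct calculation shows $((d/2-1)\vee 0)/(2\beta+d) + (\beta+1)/(2(\beta+d/2)) = 1/2$, so the ansatz $\epsilon_n = C'\log(n)^{C_2'} n^{-1/2}$ is self-consistent; for $d \geq 3$ the self-referential exponent $(\beta+1)/(\beta+d/2) < 1$ and a standard iterative bootstrap closes the inequality, while the low-probability complement contributes $O(n^{-1})$ via the deterministic bound on $d_{\mathcal{H}^{d/2}_1}$.

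The main obstacles are twofold: establishing the composite covering estimate with complexity governed by the intrinsic dimension $d$ rather than $p$ (despite $\Phi$ and $\mathcal{F}^{\mathcal{G},\Phi}$ being parametrized on $\mathbb{R}^p$, one must carefully track the effective domain of evaluation of these maps through the gluing construction); and closing the bootstrap in the borderline case $d = 2$, where the self-referential exponent equals $1$ and $\delta_n^{d/2-1}$ does not shrink, so a separate argument (for instance exploiting that $d_{\mathcal{H}^{d/2}_1}$ equals the Wasserstein-$1$ distance in that case and using sharper sup-norm approximations of $h_{\hat{\mu}}$) appears necessary.
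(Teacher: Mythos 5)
Your proposal follows essentially the same route as the paper's proof: the bias--variance decomposition of Theorem~\ref{theo:biaisvariancedec}, with $\mathbb{E}[\Delta_{\mathcal{G}\times\Phi\times\mathcal{A}}]$ handled by Proposition~\ref{prop:deltaG} and the calibration of $N$ so that $\delta_N^{(\beta+d/2)\wedge(2\beta+1)}=n^{-1/2}$; the variance terms bounded by covering estimates tracking intrinsic dimension $d$ (the paper achieves this via \eqref{eq:coveringprecise}--\eqref{align:boundcoverdcircf}, first covering $\Phi\circ\mathcal{G}$ as maps from $\mathbb{R}^d$ and then invoking Lemma~\ref{lemma:coveringd}, which is the precise mechanism behind your informal ``evaluations along $d$-dimensional images'' heuristic); and $\mathbb{E}[\Delta_{\mathcal{D}_{\hat{\mu}}}]$ controlled via Proposition~\ref{prop:DeltaD}, leading to the self-referential inequality
$\mathbb{E}[d_{\mathcal{H}^{d/2}_1}(\hat{\mu},\mu^\star)]\le C\log(n)^{C_2}\bigl(n^{-1/2}+n^{-\frac{d/2-1}{2\beta+d}}\,\mathbb{E}[d_{\mathcal{H}^{d/2}_1}(\hat{\mu},\mu^\star)]^{\frac{\beta+1}{\beta+d/2}}\bigr)$
which the paper closes by a single Jensen step and algebraic division (your ``bootstrap'' is the same computation). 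One small notational slip: you wrote $\delta_n^{d/2-1}\vee 1$, which for $d\ge 2$ is always $1$ and would make the inequality vacuous; your subsequent exponent computation $\frac{(d/2-1)\vee 0}{2\beta+d}+\frac{\beta+1}{2(\beta+d/2)}=\tfrac12$ shows you really meant $\delta_n^{(d/2-1)\vee 0}$, i.e.\ $\delta_n^{d/2-1}$ for $d\ge 2$, which is what the paper uses.

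You flag $d=2$ as requiring a separate argument, and you are right to be suspicious: the paper's own dichotomy (``if $n^{-\frac{d/2-1}{2\beta+d}}\mathbb{E}[d^{\ldots}]\le n^{-1/2}$ we are done; otherwise Jensen'') degenerates at $d=2$, where the prefactor $n^{-\frac{d/2-1}{2\beta+d}}$ is $1$, the exponent $\frac{\beta+1}{\beta+d/2}$ equals $1$, and the post-Jensen inequality $\mathbb{E}[d]^{\frac{d/2-1}{\beta+d/2}}\le C\log(n)^{C_2}n^{-\frac{d/2-1}{2\beta+d}}$ collapses to $1\le C\log(n)^{C_2}$. The paper does not supply the extra argument you correctly anticipate would be needed there; apart from that borderline case, your proposal and the paper's proof coincide.
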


The proof of Theorem \ref{theo:boundonddsur2} can be found in Section \ref{sec:theo:boundonddsur2}. As a direct consequence of this result, we obtain that the estimator \( \hat{\mu} \) achieves the minimax optimal convergence rate (up to logarithmic factors) for all \( d_{\mathcal{H}^{\gamma}_1} \) distances with \( \gamma \geq d/2 \). Moreover, leveraging the fact that with high probability the estimator itself satisfies Assumption \ref{assump:model}, applying the interpolation inequality from Theorem~\ref{theo:theineqGAW}, we conclude that \( \hat{\mu} \) also attains the optimal rate for all \( \gamma \geq 1 \).

\begin{theorem}\label{theo:boundongamma} Let $\mu^\star$ a probability measure satisfying Assumption \ref{assump:model} and $n\in \mathbb{N}$ large enough. Then, with probability at least $n^{-1}$, the  estimator $\hat{\mu}=(F^{\hat{g},\hat{\varphi}}\circ \hat{g})_{\# \hat{\alpha}\gamma_d^n}$ from \eqref{eq:finalest} satisfies Assumption \ref{assump:model} with constant parameters $(\beta+1,C\log(n)^{C_2\mathds{1}_{\beta=\lfloor \beta \rfloor}})$. Furthermore, we have for all $\gamma\geq 1$
   $$\mathbb{E}_{X_1,...,X_n\sim \mu^\star}[d_{\mathcal{H}^{\gamma}_1}(\hat{\mu},\mu^\star)]\leq C \log(n)^{C_2} \left(n^{-\frac{\beta+\gamma}{2\beta+d}}\vee n^{-1/2}\right).$$
\end{theorem}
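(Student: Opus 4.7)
The plan is to deduce Theorem \ref{theo:boundongamma} from the already-proved rate at the critical smoothness $d/2$ (Theorem \ref{theo:boundonddsur2}) together with the interpolation inequality of Theorem \ref{theo:theineqGAW}. Since Theorem \ref{theo:theineqGAW} requires \emph{both} measures to be $(\beta,K)$-density regular on a $(\beta{+}1,K)$-regular submanifold, the crux of the argument is a preliminary high-probability verification that $\hat{\mu}$ itself satisfies Assumption \ref{assump:model}.

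\textbf{Regularity of $\hat{\mu}$.} First I would show that on an event of probability at least $1-n^{-1}$ the triple $(\hat{g},\hat{\varphi},\hat{\alpha})$ satisfies the population-level hypothesis of Propositions \ref{prop:isamanifold} and \ref{prop:densitychecked}, namely $\sup_{D\in\mathcal{D}} L(\hat{g},\hat{\varphi},\hat{\alpha},D)+\mathcal{C}(\hat{g},\hat{\varphi})+R(\hat{g})\leq C_2^{-1}$. The regularizer obeys $R(\hat{g})\leq \epsilon_\Gamma$ deterministically via the optimization constraint \eqref{eq:opticonstrain}. To move from $\mathcal{C}_N$ to $\mathcal{C}$, I would apply a uniform Hoeffding/empirical-process bound over $\mathcal{G}\times\Phi$ using the covering-number estimates already invoked in Theorem \ref{theo:biaisvariancedec}, yielding $|\mathcal{C}_N-\mathcal{C}|=O(\log(N)^{C_2}N^{-1/2})$ uniformly. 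For the adversarial term, combining the minimizing property of $(\hat{g},\hat{\varphi},\hat{\alpha})$ with the feasible competitor produced by Lemma \ref{lemma:genenonempty}, the approximation bound of Proposition \ref{prop:deltaG}, and the uniform deviation inequality over $\mathcal{D}\circ\mathcal{F}^{\mathcal{G},\Phi}\circ\mathcal{G}$ supplied by Lemma \ref{lemma:coveringd}, one obtains $\sup_D L\leq C\log(n)^{C_2}n^{-1/2}$ with probability $1-n^{-1}$. For $n$ large each of the three quantities sits below $C_2^{-1}$, so Propositions \ref{prop:isamanifold} and \ref{prop:densitychecked} produce the manifold and density regularity of $\hat{\mu}$ with the claimed constants $(\beta+1, C\log(n)^{C_2\mathds{1}_{\beta=\lfloor \beta\rfloor}})$.

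\textbf{Expected rate for $\gamma\geq d/2$.} On a bounded domain the elementary inclusion $\mathcal{H}^\gamma_1\subset \mathcal{H}^{d/2}_C$, obtained by interpolating the supremum of the $\lfloor d/2\rfloor$-th derivative between two scales, yields $d_{\mathcal{H}^\gamma_1}\leq C\,d_{\mathcal{H}^{d/2}_1}$. Hence Theorem \ref{theo:boundonddsur2} directly gives $\mathbb{E}[d_{\mathcal{H}^\gamma_1}(\hat{\mu},\mu^\star)]\leq C\log(n)^{C_2}n^{-1/2}$, which matches the $n^{-1/2}$ branch of the claimed bound.

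\textbf{Expected rate for $1\leq \gamma<d/2$.} On the good event I apply Theorem \ref{theo:theineqGAW} with $\eta=d/2$ to obtain
\begin{equation*}
d_{\mathcal{H}^\gamma_1}(\hat{\mu},\mu^\star)\leq C\log\bigl(1+d_{\mathcal{H}^{d/2}_1}(\hat{\mu},\mu^\star)^{-1}\bigr)^{C_2}d_{\mathcal{H}^{d/2}_1}(\hat{\mu},\mu^\star)^{\frac{\beta+\gamma}{\beta+d/2}}.
\end{equation*}
Since $\tfrac{\beta+\gamma}{\beta+d/2}<1$, Jensen's inequality combined with Theorem \ref{theo:boundonddsur2} delivers the expected rate $\log(n)^{C_2}n^{-(\beta+\gamma)/(2\beta+d)}$ on this event. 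On the complementary event, of probability at most $n^{-1}$, the trivial bound $d_{\mathcal{H}^\gamma_1}\leq C$ contributes $O(n^{-1})$, dominated by $n^{-(\beta+\gamma)/(2\beta+d)}\vee n^{-1/2}$ for all $\gamma\geq 1$. The two cases assemble to the announced bound.

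\textbf{Main obstacle.} The delicate step is the high-probability regularity of $\hat{\mu}$. A Markov bound from Theorem \ref{theo:boundonddsur2} alone would only furnish failure probability $n^{-1/2}$, so one must instead rely on uniform exponential concentration for the empirical processes governing $L_{N,n}$ and $\mathcal{C}_N$, with complexity controlled by the compositional covering bounds of Lemma \ref{lemma:coveringd}. Once the population constraint is secured and Propositions \ref{prop:isamanifold}-\ref{prop:densitychecked} applied, invoking Theorem \ref{theo:theineqGAW} is routine.
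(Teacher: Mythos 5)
Your proposal is correct and follows essentially the same route as the paper's own proof: establish the high-probability regularity of $\hat{\mu}$ via the suboptimal concentration argument (the paper packages this as Lemma~\ref{lemma:firstbadbound}, then invokes Propositions~\ref{prop:isamanifold}--\ref{prop:densitychecked}), split on $\gamma \gtrless d/2$, apply Theorem~\ref{theo:theineqGAW} with $\eta=d/2$ together with Theorem~\ref{theo:boundonddsur2} and Jensen's inequality, and absorb the bad event of probability $n^{-1}$. The only cosmetic difference is that the paper separates the case $d_{\mathcal{H}^{d/2}_1}(\hat{\mu},\mu^\star)^{\gamma/(\beta+d/2)}\leq 1/n$ to control the logarithmic factor before applying Jensen, which you elide but which causes no substantive issue.
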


The proof of Theorem \ref{theo:boundongamma} can be found in Section \ref{sec:theo:boundongamma}. Since the rate \( O\left(n^{-\frac{\beta+\gamma}{2\beta+d}} \vee n^{-1/2}\right) \) has been proven to be minimax optimal in~\cite{tang2023minimax}, Theorem~\ref{theo:boundongamma} establishes the statistical optimality up to logarithmic factors of our estimator \( \hat{\mu} \) for the \( \gamma \)-Hölder IPMs across all \( \gamma \geq 1 \).

\section{Numerical experiments}\label{sec:numeric}
In this section, we detail how to implement a numerical scheme to approximate the estimator $\hat{\mu}:=(F^{\hat{g},\hat{\varphi}}\circ \hat{g})_{\# \hat{\alpha}\gamma_d^n}$, from \eqref{eq:hatmu}. The idea is to learn the optimal generators using stochastic gradient descent over a parametrization of the generators and discriminators classes.
We introduce a series of simplifications to the estimator in order to improve its computational efficiency. While these modifications  relax some of the technical components required for minimax optimality, they preserve the core structure and objectives of the estimator. In particular, the simplified version remains faithful to the geometric design of the model and its ability to capture complex manifold-supported distributions.

The main simplifications are as follows:

\begin{itemize}
    \item[i)] Instead of computing the two estimators $\hat{g}_i^1:B^d(0,\tau)\rightarrow \mathbb{R}^p$ and $\Psi^{-1}\circ \hat{g}_i^2:\mathbb{R}^d\rightarrow B^d(0,\tau)$, we directly compute a single estimator $\hat{g}_i:\mathbb{R}^d\rightarrow \mathbb{R}^p$.
    
    \item[ii)] The gluing of charts \( \hat{g}_i \) via the reconstruction maps \( F^{\hat{g}, \hat{\varphi}} \) is not performed at every step of the gradient descent, but only at the end, to reduce computational overhead.
    
    \item[iii)] Rather than using the approximation \( \hat{\phi} \) of the Daubechies scaling function \( \phi \), we employ a simpler function \( \theta \) as the mother function to construct the wavelet-like basis functions \( \hat{\psi}_{jlw} \).
    
    \item[iv)] For computational efficiency, we do not use the full wavelet dictionary \( \{ \hat{\psi}_{jlw}^p \} \) to construct discriminators. Instead, we allow the function classes to learn which wavelet components are most relevant.
\end{itemize}

These simplifications preserve the core geometric and generative structure of the estimator while significantly improving its computational speed and scalability. Detailed implementation choices and their justifications are provided in the next section.

\subsection{Improving the speed of computation}\label{sec:speedofcomp}
In this section, we provide the details of our implementation strategy and explain the motivations behind the simplifications~i)--iv) introduced earlier. These choices are guided by the goal of reducing computational complexity while preserving the core generative and geometric structure of the estimator.
\paragraph{i) Simplifying the class of generators} The generator class in the theoretical construction is derived from Proposition~\ref{prop:decompphi}, which ensures that the estimator possesses a density with \(\mathcal{H}^\beta\) regularity on its support. This is achieved by decomposing each generator as \( g_i = g_i^1 \circ \Psi^{-1} \circ g_i^2 \), with constraints on both components to control the Jacobian and guarantee invertibility. However, enforcing this structure is computationally expensive, as it requires estimating two distinct maps per generator and controlling their spectral properties. To improve efficiency in practice, we simplify the generator class by directly learning unconstrained maps \( g_i: \mathbb{R}^d \to \mathbb{R}^p \), without explicitly enforcing the factorization or invertibility.

The resulting practical generator class is defined as:
\begin{equation}\label{eq:classofgenepractice}
    \mathcal{G} = \left\{\big(g_i\big)_{i=1}^m\ \Big|\ g_i^1\in \left(\hat{\mathcal{F}}_{\delta_N,4\log(n)^{1/2}}^{d,\beta+1}\right)^{p}\right\}.
\end{equation}

The limitation of using this class is that we can no longer guarantee that the estimator admits a \( \beta \)-Hölder density with respect to its support, even though the support itself remains of regularity \( (\beta+1) \). This loss of control arises because, although the transport maps $g_i$ are \( (\beta+1) \)-Hölder continuous, the regularity of the pushforward densities \( (g_i)_{\#} \gamma_d \) depends on the inverse maps \( g_i^{-1} \), for which we have no direct smoothness bounds. 
Another benefit of the simplified construction is that it eliminates the need to compute the gradient constraint term \( R(g) \) defined in~\eqref{eq:R}. 
Importantly, across all numerical experiments we conducted, the simplified estimator consistently matched the empirical performance of the more theoretically structured version, confirming its practical effectiveness.
\paragraph{ii) Constructing the manifold only in the final steps}
We observed that delaying the estimation of the inverse maps \( \hat{\varphi}_i \) until the later stages of training does not degrade the overall performance. As a result, we begin by estimating only the generators \( \hat{g}_i \), ignoring the gluing maps during the initial optimization phase. Specifically, we first compute
\begin{equation}\label{eq:estimatorapp2}
    \hat{g}:= \argmin_{g\in \mathcal{G}}\ \sup_{D\in \mathcal{D}} 
    \frac{1}{N} \sum_{j=1}^N D( g_{T_j}(Y_j))-\frac{1}{n} \sum_{j=1}^n D(X_j),
\end{equation}
via gradient descent over parametrized versions of \( \mathcal{G} \) and \( \mathcal{D} \) (details provided below). Once a good initialization for the generator has been obtained, we proceed to estimate the full model in~\eqref{eq:Ln} by resuming gradient descent, initializing both the generators and discriminators with the solution from~\eqref{eq:estimatorapp2}. This two-phase training approach significantly accelerates computation, as the gradients required for optimizing~\eqref{eq:estimatorapp2} are less expensive to compute than those involving the manifold reconstruction maps in~\eqref{eq:Ln}.

\paragraph{iii) Simplifying the wavelet.}
The Daubechies wavelet, while theoretically well-suited for representing functions in Besov and Hölder spaces, can be highly oscillatory. This oscillatory behavior makes it challenging to obtain accurate estimators for the generator class \( \mathcal{G} \) and discriminator class \( \mathcal{D} \) without requiring a large number of frequency components.
In practice, we have observed that better results can be achieved by replacing the Daubechies wavelet with a smoother, less oscillatory function. For this reason, we adopt the following Gaussian-shaped scaling function:
\begin{equation}\label{eq:scalinginpractice}
    \theta(x) := \exp(-x^2),
\end{equation}
which serves as a smooth surrogate for the Haar scaling function \( \phi_{\mathcal{H}}(x) = \mathds{1}_{\{x \in [-1,1]\}} \). This choice retains localization in space while avoiding the rapid oscillations of classical wavelets, leading to improved numerical stability and performance.

\paragraph{iv) Reducing the number of wavelets} For the discriminator, it is not necessary to control its behavior across the entire ambient domain \( B^{p}(0, K) \), since we are primarily concerned with its values on the supports of the target distribution and the estimator. Instead of using the full grid of wavelet basis functions \( \hat{\psi}_{j,l,w} \) indexed by
\[
j \in \{0, \dots, \log(\delta_n^{-1})\}, \quad l \in \{1, \dots, 2^p - 1\}, \quad w \in \{-K2^j, \dots, K2^j\}^p,
\]
we propose a data-driven alternative in which the network learns a reduced set of wavelet-like components via gradient descent. To this end, we define the following class of functions, based on the smooth scaling function \( \theta \) introduced in~\eqref{eq:scalinginpractice}: 
\begin{align}\label{eq:practical classes}
\mathcal{R}_{L_1,L_2}^{k}=\Big\{\sum \limits_{j=1}^{L_1}  \sum \limits_{i=1}^{L_2} \hat{\alpha}(j,i) \theta(\langle A_{ij},\cdot-b_{ij}\rangle) \text{ with } A_{ij},b_{ij}\in \mathbb{R}^{k} \text{ and } \|A_{ij}\|\leq 2^{j},|\hat{\alpha}_{ij}|\leq 2^{-j}\Big\}.
\end{align}
Here, \( L_1 \) represents the number of frequency scales, and \( L_2 \) the number of wavelets per scale. The parameters \( A_{ij}, b_{ij} \), and the weights \( \hat{\alpha}(j,i) \) are learned jointly via gradient descent. This formulation significantly reduces the number of parameters compared to the full wavelet class \( \hat{\mathcal{F}}_\delta^{k,\eta} \) defined in~\eqref{eq:fdeltaperapprox}, while retaining sufficient flexibility to discriminate between the distributions.

Since the support of the data lies on a \( d \)-dimensional manifold, it suffices to scale the number of wavelets accordingly. In particular, we can take \( L_2 = O(n^{\frac{d}{2\beta + d}}) \), as is done for the generator class, rather than the larger \( O(n^{\frac{p}{2\beta + d}}) \) that would be required in the full ambient space.

\subsection{Experiments}
We evaluate the performance of our estimator by comparing it against the following benchmark generative models:
\begin{itemize}
    \item \textbf{Wasserstein GAN (WGAN)} \citep{arjovsky2017wasserstein}, which serves as the foundation for our method. Our estimator builds upon WGAN to better handle more complex data topologies.
    \item \textbf{Score-based Generative Models (SGMs)} \citep{song2020score}, widely regarded as state-of-the-art generative models. SGMs have demonstrated superior performance compared to GANs, particularly on highly complex datasets \citep{dhariwal2021diffusion}.
\end{itemize}

\subsubsection{Non uniform density on the sphere}
We begin with a simple yet non-trivial example where all methods are expected to perform well. Specifically, we consider the unit \( 2 \)-dimensional sphere \( \mathbb{S}^2 \subset \mathbb{R}^3 \), and aim to estimate a probability measure \( \mu^\star \) defined on it. The density of \( \mu^\star \) with respect to the uniform surface measure \( \text{Vol}_{\mathbb{S}^2} \) is given by
\[
\frac{d\mu^\star}{d\text{Vol}_{\mathbb{S}^2}}(x, y, z) = \frac{1 + \delta(x)}{C_{\mu^\star}},
\]
where $C_{\mu^\star}$ is a normalizing constant and $\delta:[-1,1]\rightarrow [0,1]$ is a smooth cut-off function such that $\delta([-1,0])=\{0\}$ and $\delta([0.1,1])=\{1\}$. This setting introduces a mild asymmetry in the density while retaining a simple geometric structure, making it a good testbed for evaluating basic representational capabilities of each method.

We train all methods using \( n = 2000 \) samples drawn from \( \mu^\star \). To evaluate performance, we compute an approximate Wasserstein distance between each learned estimator and the ground truth distribution \( \mu^\star \), using a reference set of 10,000 points. Each experiment is repeated across three independent trials, and we report the average Wasserstein distance below. The architectures of the neural networks are chosen relatively small to ensure efficiency, while still offering sufficient capacity to accurately approximate the target distribution.

\textbf{WGAN:} The generator is a 2-layers LeakyReLU neural network with 128 neurons each. The Discriminator is a 3-layers LeakyReLU neural network with 128 neurons each. The Wasserstein distance between the final estimator and the target measure is \textbf{0.061}.

\textbf{SGM:} The score network is a 3-layers LeakyReLU neural network with 128 neurons each. The Wasserstein distance between the final estimator and the target measure is \textbf{0.082}.

\textbf{Our estimator:} The model uses two generators, each modeled as \( \left(\mathcal{R}^2_{3,500}\right)^3 \), and a discriminator  of the form $\mathcal{R}^3_{3,2000}$ with   $\mathcal{R}^k_{L_1,L_2}$ defined in \eqref{eq:practical classes}. The Wasserstein distance between the final estimator and the target measure is \textbf{0.056}.

 \begin{figure}[H]
    \centering
    \begin{tabular}{ccc}
        \includegraphics[width=0.32\textwidth]{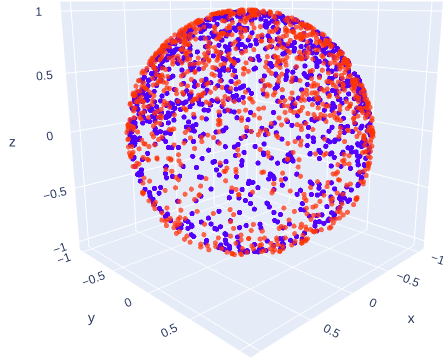} &
        \includegraphics[width=0.32\textwidth]{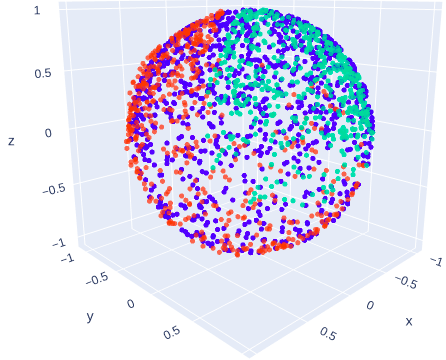} &
        \includegraphics[width=0.32\textwidth]{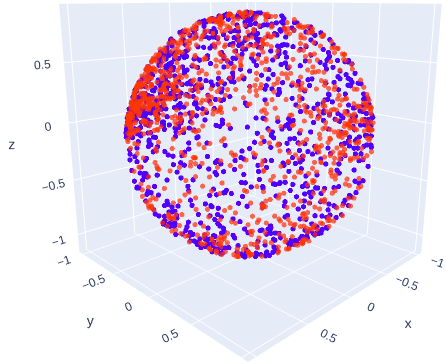} \\
        \texttt{$\quad \ $ WGAN} & \texttt{$\quad \ $ Our estimator} & \texttt{$\quad \ $ SGM} \\
    \end{tabular}
    \caption{Comparison of the estimators for a non-uniform density on the sphere. The data are represented in dark blue and the other points are generated by the estimators. In particular, our estimator generates both red and light blue points as it uses two different generators.}
    \label{fig:sawtooth}
\end{figure}

All three methods achieve excellent final performance on this task. However, tuning the noise schedule for the SGM proved to be particularly challenging, which may partially account for its slightly lower accuracy compared to our estimator. Additionally, it is worth noting that both the generator and discriminator architectures used in our method rely on significantly fewer parameters than those employed in the WGAN and SGM models.

\subsubsection{Uniform density on the torus}
To evaluate how the estimators perform on data with nontrivial topology, we consider the problem of estimating the uniform distribution on a \( 2 \)-dimensional torus embedded in \( \mathbb{R}^3 \) with a minor radius of 1 and a major radius of 2.5. We train all methods using \( n = 3000 \) samples from the uniform measure on the torus. To assess performance, we approximate the Wasserstein distance between each estimator and the true distribution \( \mu^\star \), using 10,000 reference points. Results are averaged over 3 independent trials.

\textbf{WGAN:} The generator is a 3-layers LeakyReLU neural network with 128 neurons each. The Discriminator is a 3-layers LeakyReLU neural network with 256 neurons each. The Wasserstein distance between the final estimator and the target measure is \textbf{0.35}.

\textbf{SGM:} The score network is a 3-layers LeakyReLU neural network with 256 neurons each. The Wasserstein distance between the final estimator and the target measure is \textbf{0.22}.

\textbf{Our estimator:} The two generators are of the form $\left(\mathcal{R}^2_{3,1500}\right)^3$ and the discriminator is of the form $\mathcal{R}^3_{3,3000}$ with   $\mathcal{R}^k_{L_1,L_2}$ defined in \eqref{eq:practical classes}. The Wasserstein distance between the final estimator and the target measure is \textbf{0.17}.

 \begin{figure}[H]
    \centering
    \begin{tabular}{ccc}
        \includegraphics[width=0.32\textwidth]{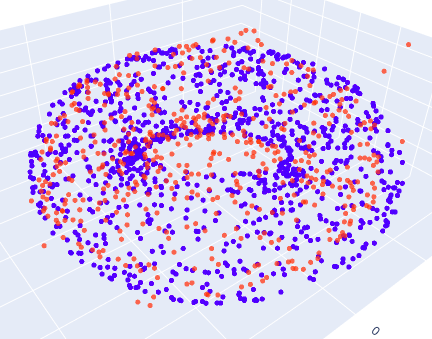} &
        \includegraphics[width=0.32\textwidth]{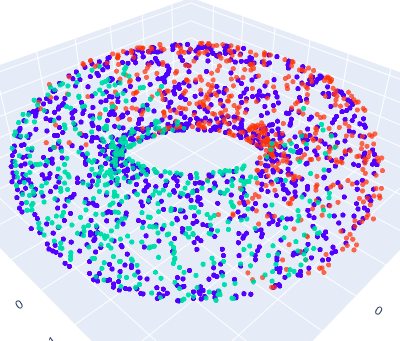} &
        \includegraphics[width=0.32\textwidth]{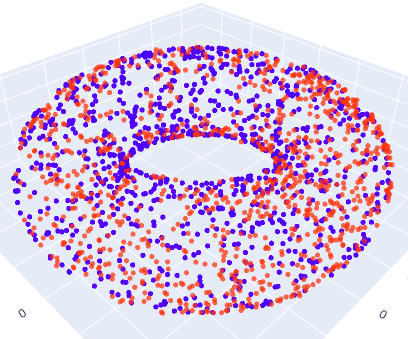} \\
        \texttt{$\quad \ $ WGAN} & \texttt{$\quad \ $ Our estimator} & \texttt{$\quad \ $ SGM} \\
    \end{tabular}
    \caption{Comparison of the estimators for the uniform measure on the torus. The data are represented in dark blue and the other points are generated by the estimators.}
    \label{fig:sawtooth2}
\end{figure}

We observe that the WGAN struggles to capture the underlying topology of the torus, underscoring its limitations in representing complex nontrivial geometries. In contrast, our estimator achieves a Wasserstein error comparable to that observed in the sphere (the data radius is 2.5 times larger). For the SGM, we encountered the same difficulty as in the sphere setting: tuning the noise schedule was challenging and likely contributed to the method's reduced performance. Although the SGM effectively recovers the support of the distribution, it exhibits difficulty in spreading the probability mass uniformly, leading to suboptimal approximation of the uniform measure.

\subsubsection{MNIST dataset}
We now evaluate our estimator on the MNIST dataset, which consists of images of handwritten digits. Our objective is to assess how well the estimator adapts to data that do not lie on a strict manifold but still exhibit a lower intrinsic dimension. To achieve strong performance, we adapt our estimator to the specific structure of image data. The following modifications are implemented to tailor the estimator to this setting:
\paragraph{Using a convolutional layer in the discriminator}  
Convolutional layers have been shown to achieve significantly better performance than fully connected neural networks \citep{krizhevsky2012imagenet,shalev2020computational}. Motivated by this, we define the discriminator class as the composition of the class $\mathcal{R}_{L_1,L_2}^{k}$ from \eqref{eq:practical classes} with a single convolutional layer. This design reduces the number of parameters compared to using only the model \eqref{eq:practical classes} while maintaining strong feature extraction capabilities.

\paragraph{Constraining the generators to certain classes}  
We have observed that during the training, some generators attempt to produce all digit classes. This leads to poor sample quality as each generator has relatively few parameters. To better constrain each generator to specific digit classes, we modify the discriminator to output values in \(\mathbb{R}^m\) instead of \(\mathbb{R}\) when they are $m$ generators. In the loss computation, we use \(D_i(g_i)\) instead of \(D(g_i)\) for each generator \(i \in \{1, \dots, s\}\), ensuring that each generator is evaluated independently based on its assigned class. To summarize, we partition the set $\{0,1,...,9\}$ into $m$ classes $(C_i)_{i=1,...,m}$ and instead of the loss function \eqref{eq:estimatorapp2}, we use \begin{equation}
    \hat{g}:= \argmin_{g_1,...,g_m\in \mathcal{R}_{L_1,L_2}^{d}}\ \sup_{D\in (\mathcal{R}_{L_1^D,L_2^D}^{p})^m\circ\ \mathcal{C}_{L_1^C,L_2^C}} \
    \frac{1}{n} \sum_{j=1}^n D_{T_j}( g_{T_j}(Y_j))- D_{T_j}(X_{T_j}),
\end{equation}
with $Y_j\sim \gamma_d$, $T_j\sim \mathcal{U}(\{1,...,m\})$, $X_{T_j}$ that follows the uniform law over the class of images of the digit in $C_{T_j}$ and $\mathcal{C}_{L_1^C,L_2^C}$ the convolutional layer with $L_1^C$ output channels,  and kernels of sizes $L_2^C$. We take the number of classes $m=3$ with
$C_1 = \{1,7,2\}$, $C_2=\{0,8,3,6\}$, $C_2 = \{4,5,9\}$ and the latent dimension $d=10$. 
 
 \begin{figure}[H]
    \centering
        \includegraphics[width=0.62\textwidth]{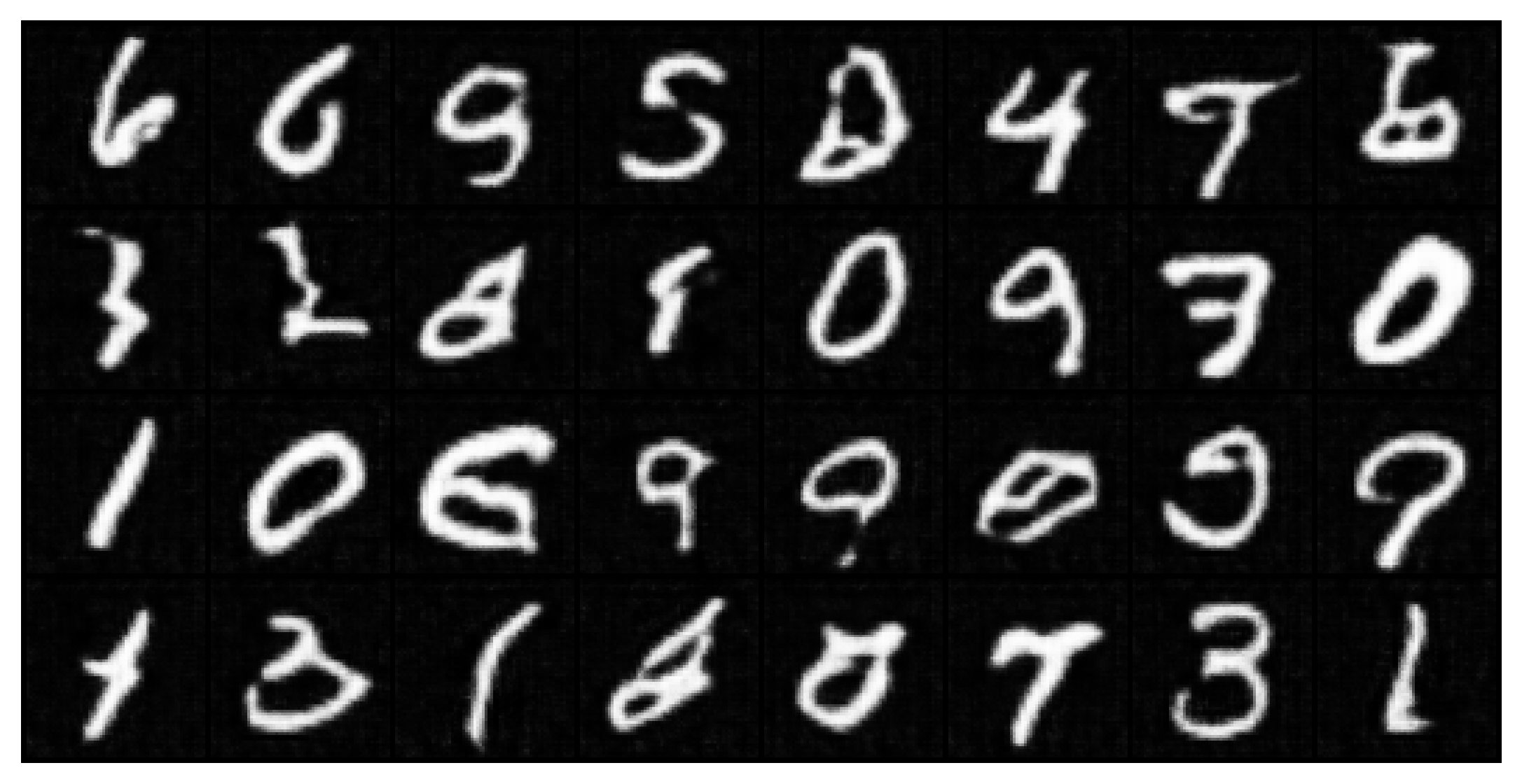} 
    \caption{Sample generated by our estimator after being trained on the MNIST dataset}
    \label{fig:sawtooth3}
\end{figure}

We observe that our estimator achieves good performance on MNIST, comparable to results previously reported for WGANs~\citep{cheng2020analysis} and diffusion models~\citep{wang2025application}. Notably, the use of multiple generators enables us to operate with a significantly smaller latent dimension \( d = 10 \) as opposed to the typical choice of \( d \approx 100 \) in standard GAN architectures. This reduction in dimensionality highlights the efficiency and representational flexibility of our approach.

\bibliography{bib}
\appendix
\vspace{0.4cm}

 
\section{Properties of the estimator}
In this Section we gather the proof of the results on the regularity of the estimators.

\subsection{Technical lemmas}
We first provide some technical results allowing to show that the measure \eqref{eq:hatmu} has a density with respect to a submanifold.
For \( (g, \varphi,\alpha,D) \in \mathcal{G} \times \Phi \times \mathcal{A}\times \mathcal{D} \), define the limit when $N,n\rightarrow \infty$ of the losses  $$L(g,\varphi,\alpha,D)=\frac{1}{m}\sum_{i=1}^m \alpha_i \mathbb{E}_{Y\sim \gamma_d^n}[ D(F^{g,\varphi}\circ g_{i}(Y))]-\mathbb{E}_{X\sim \mu^\star}[ D(X)],$$
$$
\mathcal{C}(g,\varphi)=\sum_{i,j=1}^m\mathbb{E}_{U\sim \mathcal{U}(B^d(0,2\tau))}[\|g_{j}^1(U)-g_i^1\circ \varphi_i\circ g_{j}^1(U)\|\Gamma\big((\|g_j^1(U) - g_i^1(0)\|-\tau^2)\vee 0\big)].
$$

\subsubsection{Properties arising from a bound on $\mathcal{C}(g,\varphi)$ and $R(g)$}
Let us first show that a bound on $\mathcal{C}(g,\varphi)$ gives a bound on $\|g_i^1-g_{j}^1\circ\varphi_j\circ g_{i}^1\|_\infty$ for $i,j\in \{1,...,m\}$.
\begin{lemma}\label{lemma:secondr} There exists $C_\star>0$ such that for all $\delta\in (0,C_\star^{-1})$, $(g,\varphi)\in \mathcal{G}\times  \Phi$, if $\mathcal{C}(g,\varphi)\leq C_\star^{-1}\delta^{d+1}$, then  
    for all $u\in B^d(0,2\tau)$ and $i,j\in \{1,...,m\}$, we have
    $$\|g_i^1(u)-g_{j}^1\circ\varphi_j\circ g_{i}^1(u)\|\mathds{1}_{\{\|g_{i}^1(u)-g_j^1(0)\|\leq \tau(1+(K+3)\tau)-\delta\}}\vee\|g_i^1(u)-F^{g,\varphi}_{j}\circ g_{i}^1(u)\|\leq \delta.$$
\end{lemma}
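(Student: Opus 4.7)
The plan is to argue by contradiction: a pointwise failure at some $u_0$ will be upgraded via Lipschitz regularity to failure on a ball of radius proportional to $\delta$, after which a volume argument contradicts the integral bound on $\mathcal{C}(g,\varphi)$.

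First, I collect the regularity I need. By Proposition~\ref{prop:reguofG} applied with any $\gamma<\beta+1$, each $g_i^1\in (\hat{\mathcal{F}}_{\delta_N,2K+C}^{d,\beta+1})^p$ and each $\varphi_j\in (\hat{\mathcal{F}}_{\delta_N,K}^{p,\beta+1})^d$ is Lipschitz with some constant $L=L(K,d,\beta)$ (no $\log n$ factor, since only the $g_i^1$ and $\varphi_j$ pieces appear here, not $g_i^2$). Consequently, for fixed $i,j$, the map
$$\phi_{ij}(u):=g_i^1(u)-g_j^1\circ\varphi_j\circ g_i^1(u),\qquad u\in B^d(0,2\tau),$$
is $L'$-Lipschitz for some $L'=L'(K,d,\beta)$. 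Moreover, the weight $w_{ij}(u):=\Gamma((\|g_i^1(u)-g_j^1(0)\|-\tau^2)\vee 0)$ appearing in the $(j,i)$-term of $\mathcal{C}(g,\varphi)$ equals $1$ whenever $\|g_i^1(u)-g_j^1(0)\|\le \tau(1+(K+3)\tau)$, by the defining properties of $\Gamma$ in~\eqref{eq:Gamma}.

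Next, suppose for contradiction that there exist $u_0\in B^d(0,2\tau)$ and indices $i,j$ with $\|\phi_{ij}(u_0)\|>\delta$ and $\|g_i^1(u_0)-g_j^1(0)\|\le \tau(1+(K+3)\tau)-\delta$. Setting $r:=\delta/(2L\vee 2L')$, the Lipschitz bounds ensure that for every $u\in B(u_0,r)\cap B^d(0,2\tau)$ one has $\|\phi_{ij}(u)\|\ge \delta/2$ and $\|g_i^1(u)-g_j^1(0)\|\le \tau(1+(K+3)\tau)-\delta/2$, so $w_{ij}(u)=1$. A standard ball-intersection argument gives $\lambda^d(B(u_0,r)\cap B^d(0,2\tau))\ge c_d r^d$ for a dimensional constant $c_d>0$, hence
$$\mathcal{C}(g,\varphi)\ \ge\ \mathbb{E}_U\big[\|\phi_{ij}(U)\|\,w_{ij}(U)\big]\ \ge\ \frac{c_d\,(\delta/2)\,r^d}{\lambda^d(B^d(0,2\tau))}\ \ge\ c\,\delta^{d+1}$$
for some $c=c(K,d,\beta)>0$. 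Choosing $C_\star\ge 1/c$ contradicts $\mathcal{C}(g,\varphi)\le C_\star^{-1}\delta^{d+1}$, which establishes the first inequality.

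For the second inequality, the definition~\eqref{eq:Fi} yields the identity
$$g_i^1(u)-F^{g,\varphi}_j(g_i^1(u))=\Gamma(\|g_i^1(u)-g_j^1(0)\|)\big(g_i^1(u)-g_j^1\circ\varphi_j(g_i^1(u))\big),$$
so the bound is trivial when $\Gamma(\|g_i^1(u)-g_j^1(0)\|)=0$. Otherwise $\|g_i^1(u)-g_j^1(0)\|<\tau(1+(K+2)\tau)+\epsilon_\Gamma$, and since $\epsilon_\Gamma<\tau^2/4$ and $\delta<C_\star^{-1}\le 3\tau^2/4$, this is $\le \tau(1+(K+3)\tau)-\delta$; the first inequality then applies and gives the bound $\delta$. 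The only delicate point is the boundary case where $u_0$ lies near $\partial B^d(0,2\tau)$, but this is handled by the routine geometric fact that a fixed fraction of $B(u_0,r)$ always remains inside $B^d(0,2\tau)$, which is why the volume-to-integral step only costs an extra dimensional constant.
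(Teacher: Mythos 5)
Your proof is correct and follows essentially the same strategy as the paper's: assume a pointwise failure, upgrade it to a ball of radius $\sim\delta$ via the $n$-independent Lipschitz bounds on $g_i^1$ and $\varphi_j$, observe the $\Gamma$-weight is $1$ on that ball, and lower-bound $\mathcal{C}(g,\varphi)$ by $c\,\delta^{d+1}$ to derive a contradiction; the treatment of the second inequality via the explicit factorization of $g_i^1(u)-F^{g,\varphi}_j(g_i^1(u))$ and the dichotomy on $\Gamma$ also matches the paper's argument.
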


\begin{proof}
Suppose there exists  $u\in B^d(0,2\tau)$ and $i,j\in \{1,...,m\}$ such that $$\|g_i^1(u)-g_j^1\circ\varphi_j\circ g_i^1(u)\|\mathds{1}_{\{\|g_i^1(u)-g_j^1(0)\|\leq \tau(1+(K+3)\tau)-\delta\}}> \delta.$$ Take $R>0$ and $v\in B^d(0,2\tau)\cap B^d(u,\frac{\delta}{R})$, we have 
\begin{align*}
    \|g_i^1(v)-g_j^1\circ\varphi_j\circ g_i^1(v)\| & \geq \|g_i^1(u)-g_j^1\circ\varphi_j\circ g_i^1(u)\| -\|g_i^1(u)-g_i^1(v)\| - \|g_j^1\circ\varphi_j\circ g_i^1(v)-g_j^1\circ\varphi_j\circ g_i^1(u)\|\\
    & > \delta/2,
\end{align*}
taking $R>0$ large enough and recalling that $g_i^1,g_j^1$ and $\varphi_j$ are Lipschitz from Proposition \ref{prop:regularityfandg}. Then, recalling the definition of the smooth cutoff function $\Gamma$ in \eqref{eq:Gamma}, we obtain 
\begin{align*}
    \mathcal{C}(g,\varphi)& =\sum_{a,b=1}^m\mathbb{E}_{U\sim \mathcal{U}(B^d(0,2\tau))}\Big[\|g_{a}^1(U)-g_b^1\circ\varphi_b\circ g_{a}^1(U)\|\Gamma\big((\|g_a^1(U) - g_b^1(0)\|-\tau^2)\vee 0\big)\Big]\\
    & \geq C^{-1}\int_{B^d(0,2\tau)\cap B^d(u,\frac{\delta}{R})}  \|g_i^1(v)-g_j^1\circ\varphi_j\circ g_i^1(v)\|\mathds{1}_{\{\|g_i^1(v)-g_j^1(0)\|\leq \tau(1+(K+3)\tau)\}} d\lambda^d(v)\\
    & \geq C^{-1}R^{-d}\delta^{d+1}\\
    &\geq C_\star^{-1}\delta^{d+1},
\end{align*}
for a certain constant $C_\star>0$ independent of $\delta$.
We deduce that $\mathcal{C}(g,\varphi)\leq C_\star^{-1}\delta^{d+1}$ implies that
$$
\sup_{u\in B^d(0,2\tau)}\|g_i^1(u)-g_j^1\circ\varphi_j\circ g_i^1(u)\|\mathds{1}_{\{\|g_i^1(u)-g_j^1(0)\|\leq \tau(1+(K+3)\tau)-\delta\}}\leq \delta.
$$

Therefore, it also implies that for all $u\in B^d(0,2\tau)$,
\begin{align*}
    \|g_i^1(u)-F^{g,\varphi}_{j}\circ g_i^1(u)\| & = \Gamma\big(\|g_i^1(u)-g_j^1(0)\|\big) \|g_j^1\circ \varphi_j(g_i^1(u))-g_i^1(u)\|\\
    & \leq \delta.
\end{align*}
Indeed, if $\|g_i^1(u)-g_j^1(0)\|\leq \tau(1+(K+3)\tau)-\delta$ then $\|g_j^1\circ \varphi_j(g_i^1(u))-g_i^1(u)\|\leq \delta$ and otherwise, taking $\delta\leq C_\star^{-1}$ for $C_\star>0$ large enough, we have $$\Gamma\big(\|g_i^1(u)-g_j^1(0)\|\big) =0,$$  
as $\Gamma([\tau(1+(K+2)\tau)+\epsilon_\Gamma,\infty))=\{0\}$ and $\epsilon_\Gamma\in (0,\tau^2/4)$. 
\end{proof}

From Proposition \ref{prop:regularityfandg}, we have that $F^{g,\varphi}_{k}$ is Lipschitz so under the assumption of  Lemma \ref{lemma:secondr}, as
\begin{align*}
    \| g_i^1(u)-F^{g,\varphi}_{k}\circ F^{g,\varphi}_{j} \circ g_i^1(u)\|& \leq \|g_i^1(u)-F^{g,\varphi}_{k}\circ g_i^1(u)\| + \|F^{g,\varphi}_{k}\circ  g_i^1(u)-F^{g,\varphi}_{k}\circ F^{g,\varphi}_{j} \circ g_i^1(u)\|\\
    & \leq \delta+ C\delta,
\end{align*}
we deduce by induction the following result.

 \begin{lemma}\label{lemma:secondrbis} There exists $C_\star>0$ such that for all $\delta\in (0,C_\star^{-1})$, $(g,\varphi)\in \mathcal{G}\times  \Phi$, if $\mathcal{C}(g,\varphi)\leq C_\star^{-1}\delta^{d+1}$, then  
    for all $u\in B^d(0,\tau)$ and $i,j_1,...,j_k\in \{1,...,m\}$, with $k\in \{1,...,m\}$ we have
    $$\|g_i^1(u)-F^{g,\varphi}_{j_k}\circ...\circ F^{g,\varphi}_{j_1} \circ g_i(u)\|
    \leq \delta.$$
\end{lemma}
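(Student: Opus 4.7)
The plan is to prove the lemma by a straightforward induction on $k$, exactly mimicking the two-step computation given just before the statement. The two ingredients are (i) Lemma \ref{lemma:secondr}, which controls a single application of the gluing map, and (ii) a uniform Lipschitz bound on each $F^{g,\varphi}_{j}$ over $(g,\varphi)\in\mathcal{G}\times\Phi$, which is delivered by Proposition \ref{prop:regularityfandg} together with the $\mathcal{H}^{\beta+1}_C$-regularity of the cutoff $\Gamma$ used in the definition \eqref{eq:Fi}.

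For the base case $k=1$, the desired inequality is precisely the last bound in Lemma \ref{lemma:secondr}. For the inductive step, assuming the result at order $k-1$, I would set $G_{k-1}:=F^{g,\varphi}_{j_{k-1}}\circ\cdots\circ F^{g,\varphi}_{j_1}\circ g_i^1$ and split by the triangle inequality,
\begin{align*}
\|g_i^1(u)-F^{g,\varphi}_{j_k}\circ G_{k-1}(u)\|
&\leq \|g_i^1(u)-F^{g,\varphi}_{j_k}\circ g_i^1(u)\|\\
&\quad+\|F^{g,\varphi}_{j_k}\circ g_i^1(u)-F^{g,\varphi}_{j_k}\circ G_{k-1}(u)\|.
\end{align*}
The first summand is bounded by $\delta$ thanks to Lemma \ref{lemma:secondr}. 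The second summand is bounded by the Lipschitz constant $L$ of $F^{g,\varphi}_{j_k}$ times $\|g_i^1(u)-G_{k-1}(u)\|$, to which the inductive hypothesis applies. Telescoping, one obtains a bound of the form $(1+L+\cdots+L^{k-1})\delta\leq C^{k}\delta$, as already sketched in the two-step computation of the excerpt.

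The only cosmetic subtlety is converting this $C^{k}\delta$ into the clean $\delta$ stated in the lemma. Since $k\leq m$ and $m$ depends only on $p,d,\beta,K$, the amplification factor $C^{m}$ is itself a constant; running the same argument at the rescaled scale $\delta/C^{m}$ requires the hypothesis $\mathcal{C}(g,\varphi)\leq C_\star^{-1}(\delta/C^{m})^{d+1}$, and redefining $C_\star$ to absorb the factor $C^{m(d+1)}$ yields exactly the stated hypothesis $\mathcal{C}(g,\varphi)\leq C_\star^{-1}\delta^{d+1}$. I do not anticipate any real obstacle: the proof is essentially bookkeeping. The only non-trivial verification is that the Lipschitz constants of the $F^{g,\varphi}_{j}$ are uniform in $(g,\varphi)\in\mathcal{G}\times\Phi$, which follows because both $g^1_j\circ\varphi_j$ and the cutoff $\Gamma$ lie in fixed Hölder balls by Proposition \ref{prop:regularityfandg} and \eqref{eq:Gamma}.
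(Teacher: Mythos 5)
Your proof is correct and follows exactly the paper's own argument: the paper proves the $k=2$ case just before the lemma via triangle inequality (base case from Lemma~\ref{lemma:secondr}, inductive step via the uniform Lipschitz constant of $F^{g,\varphi}_{j}$) and then states "we deduce by induction," leaving the $C^{m}$ constant absorption implicit, which you correctly make explicit via the rescaling of $\delta$ and enlargement of $C_\star$.
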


Let us now show that having $R(g)=0$ implies that the differentials of the $g_i^1$'s have upper and lower bounded eigenvalues.
\begin{lemma}\label{lemma:thirdr} There exists $C_\star>1$ such that for $\epsilon_\Gamma \in (C_\star^{-2},C_\star^{-1})$, $g\in \mathcal{G}$ satisfying $R(g)=0$ (with $R$ depending on $\epsilon_\Gamma$ defined in \eqref{eq:R}) and
     $u,v\in B^d(0,\tau)$ and $i\in \{1,...,m\}$, we have
    $$1-(K+1)\tau\leq \|\nabla g_i^1(u)\frac{v}{\|v\|}\|\leq 1+(K+1)\tau.$$
   Likewise, for all $x\in B^d(0,\log(n)^{1/2}+1)$ we have
   $$C^{-1}\leq \|\nabla g_i^2(x)\frac{v}{\|v\|}\|\leq C.$$
\end{lemma}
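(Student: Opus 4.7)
The plan is to deduce both Jacobian bounds by combining the discretization inequalities provided by $R(g)=0$ with a second-order Taylor expansion of the relevant generator at the point of interest. Unpacking~\eqref{eq:R} in line with the motivation stated right after it, $R(g)=0$ says that every pair $(z_1,z_2)\in\mathcal{Z}^1$ satisfies $(1-(K+\tfrac12)\tau)\|z_1-z_2\|\leq \|g_i^1(z_1)-g_i^1(z_2)\|\leq (1+(K+\tfrac12)\tau)\|z_1-z_2\|$, and every pair in $\mathcal{Z}^2$ satisfies $\epsilon_\Gamma^{1/2}\|z_1-z_2\|\leq \|g_i^2(z_1)-g_i^2(z_2)\|\leq \epsilon_\Gamma^{-1/2}\|z_1-z_2\|$.

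For the first bound, fix $u\in B^d(0,\tau)$, a unit vector $w$, and a step $t>0$ to be optimized. After a $O(t)$-perturbation of $u$ one may assume $u+tw\in B^d(0,\tau)$; the resulting shift in $\nabla g_i^1$ is harmless since Proposition~\ref{prop:regularityfandg} bounds the Lipschitz constant of $\nabla g_i^1$. From the same proposition, $g_i^1\in\mathcal{H}^{\beta+1}_C$ up to a $\log(n)^2$ factor when $\beta\in\mathbb{N}$, so the Taylor remainder is $\|g_i^1(u+tw)-g_i^1(u)-t\nabla g_i^1(u)w\|\leq Ct^2$. Picking $z_1,z_2\in\mathcal{Z}^1$ with $\|z_1-u\|,\|z_2-(u+tw)\|\leq\epsilon_\Gamma$ and using the Lipschitzness of $g_i^1$ transfers the $R(g)=0$ inequality on $(z_1,z_2)$ to $(u,u+tw)$ with an additive error $O(\epsilon_\Gamma)$, while $\|z_2-z_1\|=t+O(\epsilon_\Gamma)$. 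Dividing by $t$ sandwiches $\|\nabla g_i^1(u)w\|$ between $1\pm(K+\tfrac12)\tau\pm O(t+\epsilon_\Gamma/t)$. Taking $t=\epsilon_\Gamma^{1/2}$ reduces the error to $O(\epsilon_\Gamma^{1/2})$, and imposing $C_\star$ large enough so that $\epsilon_\Gamma\leq C_\star^{-1}$ forces this below $\tau/2$, yielding the announced sandwich $1-(K+1)\tau\leq\|\nabla g_i^1(u)w\|\leq 1+(K+1)\tau$.

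For the second bound the same discretize-and-Taylor strategy applies to $\mathcal{Z}^2$; the bi-Lipschitz constants coming from $R(g)=0$ are the $n$-independent numbers $\epsilon_\Gamma^{\pm 1/2}$. The difficulty is that Proposition~\ref{prop:reguofG} only gives $\|g_i^2\|_{\mathcal{H}^{\beta+1}}=O(\log(n)^{1/2})$ on $B^d(0,2\log(n)^{1/2})$, so both the Taylor remainder and the Lipschitz transfer from $(z_1,z_2)$ to $(x,x+tw)$ carry a $\log(n)^{1/2}$ factor, and a naive balance of $t$ yields an error diverging with $n$. This is the main obstacle; resolving it uses the band-limited structure of $\hat{\mathcal{F}}_{\delta_N,4\log(n)^{1/2}}^{d,\beta+1}$ through a Bernstein-type inequality at the fine scale $\delta_N\ll\epsilon_\Gamma$, or equivalently a chaining argument along short cover intervals, so that the derivative can be approximated at a scale fine enough to render the Taylor error $O(1)$. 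One then reads off $C^{-1}\leq\|\nabla g_i^2(x)w\|\leq C$ with $C$ depending only on $p,d,\beta,K$ through $\epsilon_\Gamma$.
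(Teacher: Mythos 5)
Your direct discretize-and-Taylor argument for the $g_i^1$ bound is correct and is essentially the same argument the paper runs (there by contradiction: assume $\|\nabla g_i^1(u)v\|\geq 1+(K+1)\tau$, compare the finite difference over a step of size $\sqrt{\epsilon_\Gamma}$ against the derivative using the $\mathcal{H}^2_C$ regularity from Proposition~\ref{prop:regularityfandg}, snap the endpoints to the cover $\mathcal{Z}^1$, and contradict $R(g)=0$ once $\epsilon_\Gamma$ is small). Your balance of the Lipschitz-transfer error $O(\epsilon_\Gamma/t)$ against the Taylor error $O(t)$ at $t=\epsilon_\Gamma^{1/2}$ reproduces the paper's choice of step and the same absorption of the $O(\epsilon_\Gamma^{1/2})$ slack into the gap between $(K+\tfrac12)\tau$ and $(K+1)\tau$. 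That part is fine.

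The $g_i^2$ part, however, is not a complete argument. You have correctly put your finger on the real obstruction: for $g_i^2\in\hat{\mathcal{F}}_{\delta_N,4\log(n)^{1/2}}^{d,\beta+1}$ the support radius is $R=4\log(n)^{1/2}$, and Proposition~\ref{prop:reguofG} only gives $\hat{\mathcal{F}}_{\delta_N,R}^{d,\beta+1}\subset\mathcal{H}^\gamma_{C_\gamma R}$, so the Lipschitz and second-derivative bounds on $g_i^2$ can grow like $\log(n)^{1/2}$. This makes both the Taylor remainder and the cover-snap error pick up a $\log(n)^{1/2}$ factor, and the optimal $t=\epsilon_\Gamma^{1/2}$ leaves a divergent error of order $\log(n)^{1/2}\epsilon_\Gamma^{1/2}$ with $\epsilon_\Gamma$ fixed. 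Your proposed resolution, though, does not work as stated: a Bernstein-type inequality for band-limited functions goes the wrong way (it bounds $\|\nabla^2 g_i^2\|_\infty$ by $\delta_N^{-1}\|\nabla g_i^2\|_\infty$, making the second derivative worse, not better), and there is nothing to ``chain along'' at scale $\delta_N$, because the constraint $R(g)=0$ is only imposed on the $\epsilon_\Gamma$-cover $\mathcal{Z}^2$ and gives no information at finer scales. So the second bound is left open in your proposal. It is worth noting that the paper's own proof simply says the $g_i^2$ bounds ``follow the same strategy,'' without acknowledging that the $O(1)$ Hölder constant used for $g_i^1$ becomes $O(\log(n)^{1/2})$ for $g_i^2$; you have in fact exposed a step the paper leaves unjustified, but the gap in your write-up remains a gap.
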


\begin{proof}
We give the proof of the bounds on $\nabla g_i^1$, the bounds on $\nabla g_i^2$ follow the same strategy.
Suppose that there exist $u\in B^d(0,\tau)$, $v\in \mathbb{S}^{d-1}$ and $i\in \{1,...,m\}$,  such that 
    $\|\nabla g_i^1(u)v\|\geq 1+(K+1)\tau.$ From Proposition \ref{prop:regularityfandg} we have $g_i^1\in \mathcal{H}^2_C$,   so for $x=u -2\sqrt{\epsilon_\Gamma} \frac{u }{\|u\|}$ we obtain
    \begin{align*}
        \|g_i^1(x)-g_i^1(x+ 2\sqrt{\epsilon_\Gamma} v)\| &=2\sqrt{\epsilon_\Gamma}\|\int_0^1 \nabla g_i^1(x+2s\sqrt{\epsilon_\Gamma} v) vds\|\\
        & = 2\sqrt{\epsilon_\Gamma}\|\left( \nabla g_i^1(u)+\int_0^1\int_0^1 \nabla^2 g_i^1(u+t(x+s\sqrt{\epsilon_\Gamma} v-u))(x+2s\sqrt{\epsilon_\Gamma} v-u) dt ds\right) v\|\\
        & \geq 2\sqrt{\epsilon_\Gamma}( 1+(K+1)\tau -C \sqrt{\epsilon_\Gamma}).
    \end{align*}
    
    As the set $\mathcal{A}$ from \eqref{eq:R} is an $\epsilon_\Gamma$ covering of $B^d(0,\tau)$, there exists $z_1,z_2\in \mathcal{A}$ such that $\|z_1-x\|+\|z_2-x+ 2\sqrt{\epsilon_\Gamma} v\|< 2\epsilon_\Gamma$. Therefore, 
\begin{align*}
\|g_i^1(z_1)-g_i^1(z_2)\| & \geq \|g_i^1(x)-g_i^1(x+2 \epsilon_\Gamma v)\| -C\epsilon_\Gamma \geq 2\sqrt{\epsilon_\Gamma}( 1+(K+1)\tau -C \sqrt{\epsilon_\Gamma})\\
& \geq (1+(K+1)\tau)(\|z_1-z_2\|-2\epsilon_\Gamma) -C \epsilon_\Gamma.
\end{align*}
Taking $\epsilon_\Gamma>0$ small enough, we get a contradiction with $R(g)=0$. The other inequality follows the same reasoning.
\end{proof}

From Lemma \ref{lemma:thirdr} we deduce a lower bound on the gradient of $F^{g,\varphi}\circ g_i^1$.

\begin{proposition}\label{prop:diffeofrondg}
    There exists $C_1,C_2>0$ such that for $\epsilon_\Gamma \in (0,C_1^{-1})$ and $(g,\varphi)\in \mathcal{G}\times  \Phi$, if $ \mathcal{C}(g,\varphi)+R(g)\leq C_2^{-1}$, then for all $i,j_1,...,j_k\in \{1,...,m\}$ with $k\in \{1,...,m\}$, we have that $F^{g,\varphi}_{j_k}\circ...\circ F^{g,\varphi}_{j_1} \circ g_i^1$ is a $C^{\beta+1}$ diffeomorphism from $B^d(0,\tau)$ onto its image satisfying 
    $$
    \min_{u\in B^d(0,\tau)}\min_{v\in \mathbb{S}^{d-1}} \|\nabla \big(F^{g,\varphi}_{j_k}\circ...\circ F^{g,\varphi}_{j_1} \circ g_i^1\big)(u)v\|\geq 1-(K+2)\tau.
    $$ 
\end{proposition}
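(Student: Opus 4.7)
\emph{Strategy.} I proceed by induction on $k \in \{0, 1, \ldots, m\}$, writing $\Phi_k := F^{g,\varphi}_{j_k}\circ\cdots\circ F^{g,\varphi}_{j_1}\circ g_i^1$ (with $\Phi_0 = g_i^1$). The base case is Lemma~\ref{lemma:thirdr}, which applies because $R$ is integer-valued and $R(g)\leq C_2^{-1} < 1$ forces $R(g)=0$; it yields the \emph{strict} lower bound $\|\nabla g_i^1(u)v\|\geq 1-(K+1)\tau$. The inductive step will show that each post-composition by $F^{g,\varphi}_{j_{k+1}}$ degrades this lower bound by a multiplicative factor no worse than $1-\tau/(2m)$. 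Since $m$ depends only on $(p,d,\beta,K)$, the total loss after at most $m$ compositions fits within the slack $\tau$ between the base value $1-(K+1)\tau$ and the target $1-(K+2)\tau$.

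\emph{Per-step Jacobian estimate.} Differentiating the definition \eqref{eq:Fi} of the gluing map yields
\[
\nabla F_j^{g,\varphi}(x) - I = \Gamma(\|x-g_j^1(0)\|)\bigl(\nabla(g_j^1\circ\varphi_j)(x) - I\bigr) + \bigl(g_j^1\circ\varphi_j(x) - x\bigr)\otimes\nabla\Gamma(\|x-g_j^1(0)\|),
\]
which I refer to as the chart and transition contributions. For the transition term, $\|\nabla\Gamma\|_\infty\leq C\epsilon_\Gamma^{-1}$ and the control $\|g_j^1\circ\varphi_j(x)-x\|\leq\delta$ on the support of $\Gamma$ (from Lemma~\ref{lemma:secondr}) give an operator norm $\leq C\delta/\epsilon_\Gamma$. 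For the chart term, $g_j^1\circ\varphi_j$ is \emph{not} close to $\mathrm{id}$ in ambient $\mathbb{R}^p$ (its image is $d$-dimensional), so the estimate must be obtained intrinsically: on $B^d(0,2\tau)$ the function $h(u):=g_j^1\circ\varphi_j\circ g_i^1(u)-g_i^1(u)$ satisfies $\|h\|_\infty\leq\delta$ (Lemma~\ref{lemma:secondr}) and $\|h\|_{\mathcal{H}^2}\leq C\log(n)^{C_2}$ (Proposition~\ref{prop:regularityfandg}), so the Landau--Kolmogorov interpolation in $d$ dimensions yields $\|\nabla h\|_\infty\leq C\delta^{1/2}\log(n)^{C_2}$. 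Since $\nabla h(u)=(\nabla(g_j^1\circ\varphi_j)(g_i^1(u))-I)\nabla g_i^1(u)$, this controls the chart term along tangent directions to the $i$-th chart at $g_i^1(u)$. A second application of Landau--Kolmogorov to $\Phi_k-g_i^1$ (whose $L^\infty$ norm is $\leq\delta$ by Lemma~\ref{lemma:secondrbis}) transfers the estimate to tangent directions $w=\nabla\Phi_k(u)v$ at the base point $\Phi_k(u)$, incurring only an $O(\delta^{1/2})$ loss.

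\emph{Induction, diffeomorphism, and main obstacle.} Choosing $\epsilon_\Gamma\leq C_1^{-1}$ and $\delta$ small enough in polynomial relation to both $\epsilon_\Gamma$ and $\tau/m$ (feasible via Lemma~\ref{lemma:secondr} by taking $\mathcal{C}(g,\varphi)\leq C_2^{-1}$ sufficiently small), both contributions above are $\leq\tau/(2m)$ in operator norm on the relevant tangent directions. The chain rule then gives
\[
\|\nabla\Phi_{k+1}(u)v\| \geq \bigl(1-\tau/(2m)\bigr)\|\nabla\Phi_k(u)v\| \geq \bigl(1-(K+1)\tau\bigr)\bigl(1-\tau/(2m)\bigr)^{k+1}\|v\| \geq \bigl(1-(K+2)\tau\bigr)\|v\|,
\]
for $\tau=1/(8K)$ and $k+1\leq m$, establishing the required lower bound. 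Combined with the $\mathcal{H}^{\beta+1}$ regularity of $\Phi_m$ (Proposition~\ref{prop:regularityfandg}), the inverse function theorem yields local $C^{\beta+1}$ invertibility; global injectivity on the convex ball $B^d(0,\tau)$ follows from a Taylor expansion $\|\Phi_m(u_2)-\Phi_m(u_1)\|\geq\|\nabla\Phi_m(u_1)(u_2-u_1)\|-C\|u_2-u_1\|^2\geq (1-(K+2)\tau-C\tau)\|u_2-u_1\|>0$, strictly positive since $\tau=1/(8K)$ is small. The main obstacle is precisely the intrinsic/extrinsic tension: $F_j^{g,\varphi}$ is not a small ambient perturbation of the identity but only along the tangent directions to the approximated chart, which forces the Jacobian analysis to be carried out intrinsically and ties the admissible values of $\delta$, $\epsilon_\Gamma$, and $\tau$ together via the constants $C_1, C_2$ in the statement.
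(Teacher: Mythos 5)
Your proof is correct, but it takes a noticeably more circuitous route than the paper's. The paper avoids differentiating the gluing formula \eqref{eq:Fi} altogether: it treats the full composition $G_{k,i}=F^{g,\varphi}_{j_k}\circ\cdots\circ F^{g,\varphi}_{j_1}\circ g_i^1$ as a black box, appeals to Lemma~\ref{lemma:secondrbis} for $\|g_i^1-G_{k,i}\|_\infty\leq\delta$ and to Proposition~\ref{prop:regularityfandg} for $g_i^1-G_{k,i}\in\mathcal{H}^2_C$, and then runs a one-shot contradiction argument (a Taylor expansion at step scale $A^{-2}$) that is exactly the Landau--Kolmogorov interpolation you invoke. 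The gradient bound on $G_{k,i}$ then follows from Lemma~\ref{lemma:thirdr} by a single triangle inequality, with no induction and no constant tracking across $m$ compositions. Your ``second application of Landau--Kolmogorov to $\Phi_k-g_i^1$'' is essentially the paper's entire argument; the per-step Jacobian estimate and the $(1-\tau/(2m))^m$ bookkeeping sit on top of it and are logically redundant. Both proofs rest on the same core mechanism, but the paper's is the parsimonious version.

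One imprecision worth flagging: you write $\|\nabla\Gamma\|_\infty\leq C\epsilon_\Gamma^{-1}$ and then couple $\delta$ to $\epsilon_\Gamma$ to tame the transition term, which would make $C_2$ depend on $\epsilon_\Gamma$ and potentially break the uniformity claimed in the statement. In fact, since $\Gamma\in\mathcal{H}^{\beta+1}_C$, one has $\|\nabla\Gamma\|_\infty\leq C$ outright (and this implicitly forces $\epsilon_\Gamma\gtrsim C^{-1}$, which is consistent with the restriction $\epsilon_\Gamma\in(C_1^{-2},C_1^{-1})$ in Propositions~\ref{prop:isamanifold} and~\ref{prop:densitychecked}); the transition term is then $O(\delta)$ with no $\epsilon_\Gamma$ in sight, the polynomial coupling you impose is unnecessary, and $C_2$ can be chosen uniformly. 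The paper sidesteps the whole question by never differentiating $\Gamma$.
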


\begin{proof} Let us write $G_{k,i}=F^{g,\varphi}_{j_k}\circ...\circ F^{g,\varphi}_{j_1} \circ g_i^1$, take $A>0$ and suppose that there exists $u\in B^d(0,\tau)$ and $v\in \mathbb{S}^{d-1}$ such that
$$
\|\nabla (g_i^1-G_{k,i})(u)\frac{v}{\|v\|}\|\geq A^{-1}.
$$
Then as from Proposition \ref{prop:regularityfandg}, $g_i^1$ and $G_{k,i}$ belong to $\mathcal{H}^1_C$, for $x=u-\frac{u}{A^2\|u\|}$ we obtain
\begin{align*}
    \|g_i^1(x)-G_{k,i}(x)\|& \geq \|g_i^1(x)-G_{k,i}(x)-(g_i^1(x+A^2v)-G_{k,i}(x+A^2v))\|- \|g_i^1(x+A^2v)-G_{k,i}(x+A^2v)\|\\
    & \geq \|\int_0^1 \nabla (g_i^1-G_{k,i})(x+tA^{-2}v)A^{-2}vdt\|- A^{-4},
\end{align*}
supposing $ \mathcal{C}(g,\varphi)\leq C_2^{-1}$ with $C_2>0$ large enough and using Lemma \ref{lemma:secondrbis}.
Furthermore, as $g_i^1-G_{k,i}\in \mathcal{H}^2_C$ we have
\begin{align*}
    \|\int_0^1 \nabla (g_i^1-G_{k,i})(x+tA^{-2}v)A^{-2}vdt\| & \geq A^{-2}\|\nabla (g_i^1-G_{k,i})(u)\frac{v}{\|v\|}\| - CA^{-4}\\
    \geq A^{-3}-CA^{-4},
\end{align*}
so we deduce that 
$$\|g_i^1(x)-G_{k,i}(x)\|\geq A^{-3}-CA^{-4}.$$
Taking $A>0$ and $C_2>0$ large enough, this is a contradiction with the result of Lemma \ref{lemma:secondrbis}.

 Therefore,
\begin{align*}
\inf_{u\in B^d(0,\tau)}\inf_{v\in \mathbb{R}^d}\|\nabla G_{k,i}(u)\frac{v}{\|v\|}\|&\geq\inf_{u\in B^d(0,\tau)}\inf_{v\in \mathbb{R}^d}\|\nabla g_i^1(u)\frac{v}{\|v\|}\|- A^{-1}\\
&\geq 1-(K+2)\tau,
\end{align*}
taking $A,C_1,C_2>0$ large enough and using Lemma \ref{lemma:thirdr}.
\end{proof}

As the class $\mathcal{D}$ from \eqref{eq:classesofdis}, well approximates the class $\mathcal{H}^{d/2}_1$, we then obtain a bound on the quantity $d_{\mathcal{H}^{d/2}_1}(\mu_1,\mu_2)-d_{\mathcal{D}}(\mu_1,\mu_2)$.

\begin{lemma}\label{lemma:Landdhgamma} For $\mu_1,\mu_2$ two measures on $\mathbb{R}^d$ and $\delta_n:=n^{-\frac{1}{2\beta+d}}$, we have 
$$d_{\mathcal{D}}(\mu_1,\mu_2)\geq d_{\mathcal{H}^{d/2}_1}(\mu_1,\mu_2)-C\Big(\mu_1(\mathbb{R}^d)+\mu_2(\mathbb{R}^d)\Big) \delta_n^{d/2}\log(\delta_n^{-1})^2.$$
\end{lemma}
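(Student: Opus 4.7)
The plan is a direct wavelet-based approximation argument. Given any $h\in\mathcal{H}^{d/2}_1(\mathbb{R}^p,\mathbb{R})$ nearly attaining the supremum defining $d_{\mathcal{H}^{d/2}_1}(\mu_1,\mu_2)$, I will construct $\bar h \in \mathcal{D}=\hat{\mathcal{F}}^{p,d/2}_{\delta_n,K}$ approximating $h$ in $L^\infty$ on $B^p(0,K)$, and then integrate the difference $h-\bar h$ against $\mu_1-\mu_2$. Since, in the setting where this lemma is applied, both measures are effectively supported inside $B^p(0,K)$, the resulting error is bounded by the total masses times the $L^\infty$-approximation error, which is exactly the form of the bound claimed.

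The key input is Proposition~\ref{prop:qualityofapproxfketa}, which I apply with $k=p$, $\eta=d/2$, $\gamma=0$, $q_1=q_2=\infty$, $R=K$, $\delta=\delta_n$, and $b=2$. It produces $\bar h\in \hat{\mathcal{F}}^{p,d/2}_{\delta_n,K}=\mathcal{D}$ with
$$\|h-\bar h\|_{\mathcal{B}^{0,2}_{\infty,\infty}(B^p(0,K))}\leq C\bigl(n^{-1/2}+\log(\delta_n^{-1})^{2}\delta_n^{d/2}\bigr)\leq C\log(\delta_n^{-1})^{2}\delta_n^{d/2},$$
where the second inequality uses $\delta_n^{d/2}=n^{-d/(2(2\beta+d))}\geq n^{-1/2}$ because $\beta\geq 0$. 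Combined with the continuous embedding $\mathcal{B}^{0,1+\epsilon}_{\infty,\infty}\hookrightarrow L^\infty$ provided by Lemma~\ref{lemma:inclusions}, valid for any $\epsilon>0$ (and in particular covered by $b=2$), this upgrades the previous estimate to
$$\|h-\bar h\|_{L^\infty(B^p(0,K))}\leq C\log(\delta_n^{-1})^{2}\delta_n^{d/2}.$$

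To conclude, I split
$$\int h\,d(\mu_1-\mu_2)=\int\bar h\,d(\mu_1-\mu_2)+\int(h-\bar h)\,d(\mu_1-\mu_2),$$
and bound the second term using the $L^\infty$ estimate together with the total masses, producing an error of order $C(\mu_1(\mathbb{R}^p)+\mu_2(\mathbb{R}^p))\log(\delta_n^{-1})^{2}\delta_n^{d/2}$. Since $\bar h\in\mathcal{D}$ contributes to the supremum defining $d_{\mathcal{D}}(\mu_1,\mu_2)$, taking the supremum over $h$ and sending the $\varepsilon$-slack to zero yields the claim.

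The main subtlety to verify is that the $\bar h$ output by Proposition~\ref{prop:qualityofapproxfketa} truly lies in the discriminator class $\mathcal{D}$; this relies on the embedding $\mathcal{H}^{d/2}_1\hookrightarrow \mathcal{B}^{d/2}_{\infty,\infty}(C_{d/2}K)$ that is encoded in the choice of the constant $C_\eta$ defining $\hat{\mathcal{F}}^{k,\eta}_{\delta,R}$ in \eqref{eq:fdeltaperapprox}, and which holds regardless of whether $d/2$ is an integer. In the integer case, the parameter $b=2$ (rather than $b=0$) is precisely what guarantees the $L^\infty$ control via Lemma~\ref{lemma:inclusions}, which is the origin of the extra $\log(\delta_n^{-1})^{2}$ factor appearing in the final bound.
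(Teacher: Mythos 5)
Your proof is correct and follows essentially the same route as the paper's: the paper also takes an optimal discriminator $D^\star\in\mathcal{H}^{d/2}_1$, forms its truncated approximate-wavelet expansion $\overline{D}\in\mathcal{D}$, bounds $\|\overline{D}-D^\star\|_\infty$ by the $\mathcal{B}^{0,2}_{\infty,\infty}$ norm (this is where the $\log(\delta_n^{-1})^2$ factor enters), and then applies Lemma~\ref{lemma:bouundcoefwav} for the coefficient-level bound; you simply invoke the packaged version of that argument, Proposition~\ref{prop:qualityofapproxfketa} with $(\gamma,b,q_1,q_2)=(0,2,\infty,\infty)$, which yields the same estimate. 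The check that $\delta_n^{d/2}\geq n^{-1/2}$ and the $\mathcal{B}^{0,1+\epsilon}_{\infty,\infty}\hookrightarrow L^\infty$ embedding from Lemma~\ref{lemma:inclusions} are used implicitly in the paper's proof in exactly the same way, so there is no substantive difference.
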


\begin{proof}
   Let $$D^\star \in \argmax_{D\in \mathcal{H}_1^{d/2}}\int D(x)d\mu_1(x)-\int D(x)d\mu_2(x)$$
and 
$$\overline{D}=\sum \limits_{j=0}^{\log_2(\delta_n^{-1})} \sum \limits_{l=1}^{2^p} \sum \limits_{z\in \{-K2^j,...,K2^j\}^p} \alpha_{D^\star}(j,l,w) \hat{\psi}^p_{jlw}\in \mathcal{D}.$$
Using Lemma \ref{lemma:bouundcoefwav}, we obtain that 
\begin{align*}
\|\overline{D}-D^\star\|_{\infty}&\leq \|\overline{D}-D^\star\|_{\mathcal{B}^{0,2}_{\infty,\infty}}=\sup_{j,l,z} |\alpha_{D^\star}(j,l,z)-\alpha_{\overline{D}}(j,l,z)|2^{jp/2}j^2\\
& \leq C \delta_n^{d/2}\log(\delta_n^{-1})^2.
\end{align*}
Then
\begin{align*}
    d_{\mathcal{D}}(\mu_1,\mu_2)&\geq \int \overline{D}(x)d\mu_1(x)-\int \overline{D}(x)d\mu_2(x)\\
    &\geq d_{\mathcal{H}^{d/2}_1}(\mu_1,\mu_2)-C\Big(\mu_1(\mathbb{R}^d)+\mu_2(\mathbb{R}^d)\Big) \delta_n^{d/2}\log(\delta_n^{-1})^2.
\end{align*}
\end{proof}

\subsubsection{Properties arising from a bound on $L(g,\varphi,\alpha,D)$, $\mathcal{C}(g,\varphi)$ and $R(g)$}
Let us start by giving an estimate on the tails of the densities of the measures  $(\Psi^{-1}\circ g_j^2)_{\#} \gamma_d^n$.
\begin{lemma}\label{lemma:boundetaj}
For all $g\in \mathcal{G}$ and $j\in \{1,...,m\}$ let us write $\zeta_j^g$ for the density of the probability measure $(\Psi^{-1}\circ g_j^2)_{\#} \gamma_d^n$. Then, supposing that $R(g)=0$, we have that the restriction of $g_j^2$ to $B^d(0,\log(n)^{1/2}+1)$ is a diffeomorphism and $\zeta_j^g\in \mathcal{H}^\beta_{C\log(n)^{C_2\mathds{1}_{\beta=\lfloor \beta \rfloor}}}$. Furthermore,  for all $u\in B^d(0,\tau)$  we have
$$\zeta_j^g(u)\leq C\exp(-C^{-1}(\tau-\|u\|)^{-2})$$
and there exists $C_\star>0$ such that if $\|\psi(u)\|\leq C_\star^{-1}\log(n)^{1/2}$, we have
$$C^{-1}\exp(-C(\tau-\|u\|)^{-2})\leq \zeta_j^g(u).$$
\end{lemma}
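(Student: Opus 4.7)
The plan is to express $\zeta_j^g$ through the change of variables formula for $\psi := \Psi^{-1}\circ g_j^2$, and then read off all four assertions by separately tracking (i) the pointwise bi-Lipschitz bounds on $\nabla g_j^2$ supplied by Lemma~\ref{lemma:thirdr}, (ii) the explicit blow-up of $|\det \nabla \Psi(u)|$ at $\partial B^d(0,\tau)$, and (iii) the Gaussian decay of $\gamma_d^n$.

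First I would upgrade the pointwise control on $\nabla g_j^2$ to a genuine diffeomorphism statement on $B^d(0,\log(n)^{1/2}+1)$. Lemma~\ref{lemma:thirdr} applied with $R(g)=0$ already gives $C^{-1}\leq \|\nabla g_j^2(x) v\|/\|v\|\leq C$ at every point; combined with the covering-based bi-Lipschitz term baked into $R(g)$ on the $\epsilon_\Gamma$-net $\mathcal{Z}^2$, one promotes this to a continuous bi-Lipschitz bound on the entire ball by approximating arbitrary $y_1,y_2$ by nearby covering points and using the global Lipschitz control of $g_j^2$. This yields global injectivity together with a $C^{\beta+1}$ inverse, whose Hölder norm is controlled through Proposition~\ref{prop:reguofG} with an extra polylog factor exactly when $\beta\in\mathbb{N}$.

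The change of variables formula then gives, for $u \in \psi(\mathrm{supp}(\gamma_d^n))$,
\[
\zeta_j^g(u) \;=\; \gamma_d^n(y)\,\frac{|\det \nabla \Psi(u)|}{|\det \nabla g_j^2(y)|}, \qquad y=(g_j^2)^{-1}(\Psi(u)),
\]
and $\zeta_j^g \equiv 0$ otherwise. A direct computation yields $|\det \nabla \Psi(u)| = (\tau^2+\|u\|^2)/(\tau^2-\|u\|^2)^{d+1}$, which is $C^\infty$ on $B^d(0,\tau)$ and uniformly $\geq \tau^{-2d}$. Combining this with the smoothness of $\gamma_d^n$, the $\mathcal{H}^\beta$-regularity and boundedness of $|\det \nabla g_j^2|$, and the $C^{\beta+1}$ regularity of $(g_j^2)^{-1}$ gives $\zeta_j^g \in \mathcal{H}^\beta_{C\log(n)^{C_2 \mathds{1}_{\beta=\lfloor\beta\rfloor}}}$ on the interior of $B^d(0,\tau)$. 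Both tail bounds then follow by substituting into this formula. Near the boundary one has $\|\Psi(u)\| \asymp (\tau-\|u\|)^{-1}$, and the lower bi-Lipschitz estimate on $g_j^2$ gives $\|y\| \geq C^{-1}\|\Psi(u)\|-C$, hence $\|y\|^2 \geq C^{-1}(\tau-\|u\|)^{-2}-C$; combining $\gamma_d^n(y)\leq C\exp(-\|y\|^2/2)$ with $|\det \nabla \Psi(u)| \leq C(\tau-\|u\|)^{-(d+1)}$ and absorbing the polynomial factor into a slight reduction of the exponent delivers the upper bound (the case $u\notin\psi(\mathrm{supp}(\gamma_d^n))$ is trivial). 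For the lower bound, the hypothesis $\|\Psi(u)\| \leq C_\star^{-1}\log(n)^{1/2}$ combined with the upper bi-Lipschitz $\|y\| \leq C\|\Psi(u)\|+C$ forces $\|y\| \leq \log(n)^{1/2}$ for $C_\star$ large, placing $y$ in the flat part of $\gamma_d^n$ where $\gamma_d^n(y) \geq C^{-1}\exp(-\|y\|^2/2)$; together with $|\det \nabla \Psi(u)| \geq \tau^{-2d}$ and $\|y\|^2 \leq C(\tau-\|u\|)^{-2}$ this yields $\zeta_j^g(u) \geq C^{-1}\exp(-C(\tau-\|u\|)^{-2})$.

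The main obstacle is the global diffeomorphism/image step: pointwise invertibility of $\nabla g_j^2$ does not force global injectivity on a convex domain, so the covering part of $R(g)$ must really be exploited and upgraded from a finite-pair statement to a continuous bi-Lipschitz estimate. A related subtlety in the lower bound is verifying that every $u$ with $\|\Psi(u)\| \leq C_\star^{-1}\log(n)^{1/2}$ actually lies in $\psi(\mathrm{supp}(\gamma_d^n))$; this amounts to a quantitative surjectivity of $g_j^2$ onto a ball of radius $\sim \log(n)^{1/2}$, which I would extract from the bi-Lipschitz bounds together with a Brouwer-degree argument.
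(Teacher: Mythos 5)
You take essentially the same route as the paper: write $\zeta_j^g$ via the change-of-variables formula for $(\Psi^{-1}\circ g_j^2)_{\#}\gamma_d^n$, control $\nabla g_j^2$ through Lemma~\ref{lemma:thirdr} and the $R(g)=0$ constraint, and read off both tail bounds by separating the explicit $\det\nabla\Psi$ factor from the Gaussian. Your extra care about global injectivity of $g_j^2$ (rather than only pointwise nondegeneracy) and about surjectivity onto a ball of radius $\sim\log(n)^{1/2}$ via a degree argument is a legitimate concern the paper waves through; both are needed and can indeed be extracted from the covering-based bi-Lipschitz term in $R(g)$ together with the global Lipschitz bound on $g_j^2$ from Proposition~\ref{prop:reguofG}.

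There are, however, two points you pass over that require an actual argument and are carried out in the paper's proof. First, the estimate $\|y\|\geq C^{-1}\|\Psi(u)\|-C$ with $y=(g_j^2)^{-1}(\Psi(u))$ is not a direct consequence of bi-Lipschitzness alone: the bi-Lipschitz bound gives $\|y\|\geq C^{-1}\|\Psi(u)-g_j^2(0)\|$, and to absorb the offset into a fixed constant you need $\|g_j^2(0)\|=O(1)$ uniformly over $\mathcal{G}$. This is not automatic for a function with bounded derivatives on a ball of radius $4\log(n)^{1/2}$; the paper derives it from the structure of $\hat{\mathcal{F}}_{\delta_N,4\log(n)^{1/2}}^{d,\beta+1}$ by noting that $g_j^2(0)$ only sees the wavelets whose support contains the origin, for which the coefficient bound $|\hat{\alpha}(j,l,w)|\lesssim 2^{-j(\beta+1+d/2)}$ forces $\|g_j^2(0)\|\leq C$. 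You should make this explicit rather than bundling it into ``$-C$''. The same $O(1)$ control on $g_j^2(0)$ is also what you implicitly need in the lower bound to guarantee $\|y\|\leq\log(n)^{1/2}$ when $\|\Psi(u)\|\leq C_\star^{-1}\log(n)^{1/2}$.

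Second, your claim that $\zeta_j^g\in\mathcal{H}^\beta_{C\log(n)^{C_2\mathds{1}_{\beta=\lfloor\beta\rfloor}}}$ follows from ``smoothness of the components'' elides the genuine work at the boundary $\partial B^d(0,\tau)$. The factor $\det\nabla\Psi(u)\asymp(\tau^2-\|u\|^2)^{-(d+1)}$ and each additional derivative of $\Psi$ cost an extra power of $(\tau^2-\|u\|^2)^{-1}$, so $\nabla^i\zeta_j^g$ has a polynomial blow-up that must be offset against the Gaussian decay $\gamma_d^n((g_j^2)^{-1}\circ\Psi(u))\lesssim\exp(-C^{-1}(\tau^2-\|u\|^2)^{-2})$; the paper's proof carries out this balance explicitly to show each derivative (and the final Hölder seminorm) stays bounded by $C\log(n)^{C_2\mathds{1}_{\beta=\lfloor\beta\rfloor}}$. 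Without this step the $\mathcal{H}^\beta$ bound on the full ball (including what happens as $\zeta_j^g\to 0$ at the boundary and beyond) is not established.
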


\begin{proof}
The fact that the restriction of $g_j^2$ to $B^d(0,\log(n)^{1/2}+1)$ is a diffeomorphism on its image is an immediate consequence of Lemma \ref{lemma:thirdr}. 

Let us now show the upper bound on $\zeta_j^g$. In the case where $\Psi(u)\notin g_j^2(B^d(0,\log(n)^{1/2}+1))$, we have $\zeta_j^g(u)=0$ so let us focus on the case where $\Psi(u)$ belongs to $g_j^2(B^d(0,\log(n)^{1/2}+1))$. Recalling that
$$\Psi(u):= \frac{u}{\tau^2-\|u\|^2},$$
we have
\begin{align}\label{align:cvghjhgfghbp}
    \zeta_j (u)=&|\det\big(\nabla( (g_j^2)^{-1}\circ \Psi) (u)\big)|\gamma_d^n\big((g_j^2)^{-1}\circ \Psi) (u)\big)\nonumber\\
    \leq & C\frac{1}{(\tau^2-\|u\|^2)^{2d}} \gamma_d^n\big((g_j^2)^{-1} (\frac{u}{\tau^2-\|u\|^2})\big).
\end{align}
Now, as $g_j^{2}\in \left(\hat{\mathcal{F}}_{\delta_N,4\log(n)^{1/2}}^{d,\beta+1}\right)^{d}$ we have that there exist $(\hat{\alpha}(j,l,w)_i)_{j,l,w,i}$ such that for all $i\in \{1,...,d\}$
$$(g_j^2)_i=\sum \limits_{j=0}^{\log_2(\delta_N^{-1})} \sum \limits_{l=1}^{2^d} \sum \limits_{w\in \{-4\log(n)^{1/2}2^j,...,4\log(n)^{1/2}2^j\}^d} \hat{\alpha}(j,l,w)_i \hat{\psi}^d_{jlw}$$
and
$$|\hat{\alpha}(j,l,w)_i|\leq C_{\beta+1} K(1+\|z\|)2^{-j(\beta+1+d/2)}.$$
Then, for all $z\in \{-4\log(n)^{1/2}2^j,...,4\log(n)^{1/2}2^j\}^d$ such that $0 \in supp(\hat{\psi}^d_{jlz})$, we have
$$|\hat{\alpha}(j,l,z)_i|\leq CC_{\beta+1} K2^{-j(\beta+1+d/2)}$$
so we deduce that
$$\|g_j^2(0)\|\leq C.$$
Then, we obtain from Lemma  \ref{lemma:thirdr} that
\begin{align*}
    \|(g_j^2)^{-1} (0)\|=&\|(g_j^2)^{-1} (0)-(g_j^2)^{-1}\circ g_j^2 (0)\|\\
    \leq & C\|0-g_j^2 (0)\|\\
    \leq & C.
\end{align*}
Therefore, we have
\begin{align}\label{align:bvghnbghjklsop}
    \|(g_j^2)^{-1} (\frac{u}{\tau^2-\|u\|^2})\|&\geq \|(g_j^2)^{-1} (\frac{u}{\tau^2-\|u\|^2})-(g_j^2)^{-1} (0)\|-\|(g_j^2)^{-1} (0)\|\nonumber\\
    &\geq C^{-1}\|\frac{u}{\tau^2-\|u\|^2}\|-C,
\end{align}
so from \eqref{align:cvghjhgfghbp} we have
\begin{align*}
        \zeta_j^g (u)
    \leq & C\frac{1}{(\tau^2-\|u\|^2)^{2d}} \gamma_d^n\big((g_j^2)^{-1} (\frac{u}{\tau^2-\|u\|^2})\big)\\
    \leq & C\frac{1}{(\tau^2-\|u\|^2)^{2d}} \exp\big(-C^{-1}\|\frac{u}{\tau^2-\|u\|^2}\|^2\big)\\
    \leq & C \exp\big(-C^{-1}(\tau^2-\|u\|^2)^{-2}\big).
\end{align*}
The proof of the lower bound on \( \zeta_j^g(u) \) follows a similar line of reasoning, focusing on points \( u \in B^d(0, \tau) \) such that \( \|\psi(u)\| \leq C_\star^{-1} \log(n)^{1/2} \), where \( C_\star > 0 \) is chosen large enough to ensure that \( \Psi(u) \in g_j^2(B^d(0, \log(n)^{1/2})) \). This condition guarantees that
\[
\gamma_d^n\left((g_j^2)^{-1} \circ \Psi(u)\right) = \gamma_d\left((g_j^2)^{-1} \circ \Psi(u)\right),
\]
allowing the bound to hold within the truncated Gaussian domain.

Let us now compute the estimate on the regularity of $\zeta_j^g$.
The map $g_2^j$ belongs to $\dot{\mathcal{H}}^{\beta}_C$ and  we have from Lemma  \ref{lemma:thirdr} that for all $x,y\in B^d(0,\log(n)^{1/2}+1)$,
$$\|\nabla g_j^2(x)\frac{y}{\|y\|}\|\geq C^{-1},$$
so we deduce that $\zeta^g_j\in \mathcal{H}^\beta$. Furthermore, using \eqref{align:bvghnbghjklsop} we obtain that for all $i\in \{0,...,\lfloor \beta\rfloor\}$,
\begin{align*}
\|\nabla^i \zeta_j^g(u)\|& = \|\nabla^i\Big(y\mapsto |\det\big(\nabla( (g_j^2)^{-1}\circ \Psi) (y)\big)|\gamma_d^n\big((g_j^2)^{-1}\circ \Psi) (y)\big)\Big)(u)\|\\
&  \leq C\log(n)^{2C_2\mathds{1}_{\beta=\lfloor \beta \rfloor}}
    ((\tau^2-\|u\|^2)^{-i-1}\vee 1 )^d\gamma_d^n\big((g_j^2)^{-1} \circ\Psi(u)\big)\\
    &  \leq C\log(n)^{2C_2\mathds{1}_{\beta=\lfloor \beta \rfloor}}
    ((\tau^2-\|u\|^2)^{-i-1}\vee 1 )^d\sup_{v\in \mathbb{S}^{d-1}}\gamma_d^n\big(C^{-1}\|\frac{u}{\tau^2-\|u\|^2}\|v\big)\\
    &\leq C\log(n)^{C_2\mathds{1}_{\beta=\lfloor \beta \rfloor}},
\end{align*}
so we deduce that $\zeta^g_j\in \mathcal{H}^{\lfloor \beta \rfloor}_{C\log(n)^{C_2\mathds{1}_{\beta=\lfloor \beta \rfloor}}}.$ Similarly, we obtain
\begin{align*}
\|\nabla^i \zeta_j^g(u)-\nabla^i \zeta_j^g(v)\|
    &\leq C\log(n)^{C_2\mathds{1}_{\beta=\lfloor \beta \rfloor}}\|u-v\|^{\beta-\lfloor \beta \rfloor},
\end{align*}
so we conclude that $\zeta^g_j\in \mathcal{H}^{ \beta }_{C\log(n)^{C_2\mathds{1}_{\beta=\lfloor \beta \rfloor}}}.$

\end{proof}

Let us now show that we can obtain a bound on the Hausdorff distance between the support of the target measure and the support of the estimator.

\begin{proposition}\label{prop:distanceofthees}
There exists $C_1,C_2>0$ such that for $(g,\varphi,\alpha)\in \mathcal{G}\times  \Phi\times \mathcal{A}$, if $\sup_{D\in \mathcal{D}} \ L(g,\varphi,\alpha,D)+\mathcal{C}(g,\varphi)\leq C_2^{-1}$, we have
$$\mathbb{H}\left(\bigcup_{i=1}^m F^{g,\varphi}\circ g_i^1(B^d(0,\tau)),\mathcal{M}^\star\right)\leq C_1\log\Big(C_1^{-1}\Big(C_2^{-1}+\delta_n^{d/2}\log(\delta_n^{-1})^2\Big)^{-1}\Big)^{-1/2}\vee (C\log(n)^{-1/2}).$$
\end{proposition}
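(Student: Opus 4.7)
The plan is to turn the adversarial loss bound into an $\mathcal{H}^{d/2}_1$-IPM bound between the rescaled generated measure and $\mu^\star$, and then to convert this IPM bound into a Hausdorff bound via a bump function argument in both directions, the logarithm in the final rate emerging from the exponential tail bounds on the pushforward Gaussian density provided by Lemma~\ref{lemma:boundetaj}.

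\textbf{Loss to IPM.} Writing $\tilde{\mu}:=\sum_{i=1}^m\frac{\alpha_i}{m}(F^{g,\varphi}\circ g_i)_{\#}\gamma_d^n$, whose support is contained in $\hat{\mathcal{M}}:=\bigcup_i F^{g,\varphi}\circ g_i^1(B^d(0,\tau))$ since $\Psi^{-1}(\mathbb{R}^d)\subset B^d(0,\tau)$, the symmetry of $\mathcal{D}$ under $D\mapsto -D$ gives $\sup_{D\in\mathcal{D}}L(g,\varphi,\alpha,D)=d_{\mathcal{D}}(\tilde{\mu},\mu^\star)$. Applying Lemma~\ref{lemma:Landdhgamma} yields
\[
d_{\mathcal{H}^{d/2}_1}(\tilde{\mu},\mu^\star)\leq C\varepsilon,\qquad \varepsilon:=C_2^{-1}+\delta_n^{d/2}\log(\delta_n^{-1})^2.
\]

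\textbf{Bump argument.} For a fixed smooth radial cutoff $\chi$ with $\chi(0)=1$ and $\mathrm{supp}(\chi)\subset B^p(0,1)$, a scaling check using the definition of $\mathcal{H}^{d/2}$ in the excerpt shows that $D_{x,r}(y):=r^{d/2}\chi((y-x)/r)$ lies in $\mathcal{H}^{d/2}_c(\mathbb{R}^p,\mathbb{R})$ for a constant $c$ depending only on $\chi$, uniformly in $r\leq 1$. In the direction $\mathcal{M}^\star\to\hat{\mathcal{M}}$, given $x^\star\in\mathcal{M}^\star$ at distance $r$ from $\hat{\mathcal{M}}$, the bump $D_{x^\star,r/2}$ vanishes on $\mathrm{supp}(\tilde{\mu})$, while the $(\beta+1,K)$-manifold and $(\beta,K)$-density conditions on $\mu^\star$ give $\int D_{x^\star,r/2}\,d\mu^\star\gtrsim r^{3d/2}$ (the $r^{d/2}$ from the height times the $r^d$ volume of $B_{\mathcal{M}^\star}(x^\star,r/4)$ guaranteed by the reach bound), and so the IPM bound forces $r\lesssim\varepsilon^{2/(3d)}$. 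In the opposite direction, given $x_0=F^{g,\varphi}\circ g_i^1(u_0)\in\hat{\mathcal{M}}$ set $s:=\tau-\|u_0\|$ and $R:=c_1\log(\varepsilon^{-1})^{-1/2}\vee c_2\log(n)^{-1/2}$. When $s\geq R$, Lemma~\ref{lemma:boundetaj} combined with the Jacobian lower bound of Proposition~\ref{prop:diffeofrondg} (available under the standard optimization constraint $R(g)\leq C_2^{-1}$ inherited from the feasible set of \eqref{eq:opticonstrain}) implies that $\tilde{\mu}$ admits a surface density on $\hat{\mathcal{M}}$ bounded below by $\exp(-Cs^{-2})/C\gtrsim\varepsilon^{C'}$ in a neighborhood of $x_0$, with $C'<1$ once $c_1$ is chosen large enough. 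The bump test then forces $d(x_0,\mathcal{M}^\star)\lesssim\varepsilon^{(1-C')\cdot 2/(3d)}$, which is dominated by $R$ for $\varepsilon$ small. When $s<R$, the Lipschitz continuity of $F^{g,\varphi}\circ g_i^1$ coming from Proposition~\ref{prop:regularityfandg} allows me to displace $u_0$ radially inward to some $u_0'$ with $\tau-\|u_0'\|\geq R$ at Euclidean cost $\|F^{g,\varphi}\circ g_i^1(u_0)-F^{g,\varphi}\circ g_i^1(u_0')\|\leq CR$, and applying the previous case to $F^{g,\varphi}\circ g_i^1(u_0')$ yields $d(x_0,\mathcal{M}^\star)\leq C R$. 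Combining both directions gives the claimed Hausdorff bound.

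\textbf{Main obstacle.} The hard part is the direction $\hat{\mathcal{M}}\to\mathcal{M}^\star$: because $\zeta_i^g(u)\asymp\exp(-C(\tau-\|u\|)^{-2})$ near the boundary of $B^d(0,\tau)$, one can only hope for a density lower bound useful for the bump argument when $u_0$ is at radial distance at least $R$ from $\partial B^d(0,\tau)$, which is precisely why the logarithmic scale $R\asymp\log(\varepsilon^{-1})^{-1/2}\vee\log(n)^{-1/2}$ appears in the statement. A secondary subtlety is to pick the constant $c_1$ in $R$ (hence the constant $C_1$ in the statement) large enough that the exponent $C'$ in the density lower bound is bounded away from $1$, so that the polynomial-in-$\varepsilon$ interior rate $\varepsilon^{(1-C')\cdot 2/(3d)}$ is genuinely subdominant to $R$ once $\varepsilon$ is below a threshold determined by $(K,d,\beta)$. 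The $\log(n)^{-1/2}$ floor is unavoidable and reflects that the truncation scale of $\gamma_d^n$ caps the range over which the lower bound in Lemma~\ref{lemma:boundetaj} applies.
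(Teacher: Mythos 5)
Your proof is correct and close in spirit to the paper's, but takes a slightly different and arguably cleaner route, so let me flag the differences. You and the paper both reduce the Hausdorff bound to a bump-function test against the $\mathcal{H}^{d/2}_1$-IPM of $\tilde{\mu}$ versus $\mu^\star$, and both get the $\log(\cdot)^{-1/2}$ rate from the $\exp(-c(\tau-\|u\|)^{-2})$ tail of Lemma~\ref{lemma:boundetaj}. The paper, however, uses a $1$-Lipschitz bump to lower-bound $W_1$, then passes to $d_{\mathcal{H}^{d/2}_1}$ via the external interpolation result (Corollary~20 of the cited reference) before applying Lemma~\ref{lemma:Landdhgamma}; you avoid this intermediate step by using $\mathcal{H}^{d/2}$-scaled bumps directly, which is self-contained and a little more transparent. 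The paper also steps inward by an $r$-dependent amount $w=(1-\frac{r\wedge 4}{4A})u$ and solves $\exp(-Cr^{-2})\lesssim\varepsilon$ for $r$ directly, whereas you introduce a fixed threshold $R\asymp\log(\varepsilon^{-1})^{-1/2}\vee\log(n)^{-1/2}$ and a case split $s\gtrless R$; these are equivalent in outcome but structured differently. A genuine addition on your part: the paper's written proof only establishes the one-sided distance $\sup_{x\in\hat{\mathcal{M}}}d(x,\mathcal{M}^\star)$ (its concluding equality $\mathbb{H}(\cdots)=r$ implicitly identifies $\mathbb{H}$ with that one-sided quantity), and that is the only direction used downstream in Lemma~\ref{lemma:firstr} and Proposition~\ref{prop:densitychecked}. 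Your treatment of the direction $\mathcal{M}^\star\to\hat{\mathcal{M}}$ — giving the stronger polynomial rate $\varepsilon^{2/(3d)}$ — fills this gap. Finally, you correctly note that the density lower bound in Lemma~\ref{lemma:boundetaj} requires $R(g)=0$, which is not stated among the hypotheses of Proposition~\ref{prop:distanceofthees}; the paper's proof cites Lemma~\ref{lemma:boundetaj} under the same unstated assumption, so you are simply matching the paper's implicit use of the feasibility constraint $R(g)\leq\epsilon_\Gamma<1$, which forces $R(g)=0$ since $R$ is integer-valued.
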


\begin{proof}
Let us take
$$u \in \argmax_{v\in B^d(0,\tau)} \max_{i\in \{1,...,m\}}d(F^{g,\varphi}\circ g_i^1(v),\mathcal{M}^\star)$$
and 
$$r=d(F^{g,\varphi}\circ g_i^1(u),\mathcal{M}^\star).$$
From Proposition \ref{prop:regularityfandg} there exists $A\geq 1$ such that $F^{g,\varphi} \circ g_i^1\in \mathcal{H}^{1}_{A}(B^d(0,2\tau),\mathbb{R}^p)$, so taking $w=(1-\frac{r\wedge 4}{4A})u$, we have for all $v\in B^d(w,\frac{r\wedge 4\tau}{4A})$, $\min_{z\in \mathcal{M}^\star} \|F^{g,\varphi}\circ g_i^1(v)-z\|\geq r/2$. Writing $\zeta_j$ for the density of the measure $(\Psi^{-1}\circ g_j^2)_{\#} \gamma_d^n$ and using the dual formulation of the $1$-Wasserstein distance we have 
\begin{align}\label{align:jdhdfgfizpodf}
    &W_1((F^{g,\varphi}\circ g)_{\# \alpha \gamma_d^n},\mu^\star) \nonumber \\
    & =\sup_{f\in \text{Lip}_1}\int f(x)d(F^{g,\varphi}\circ g)_{\# \alpha \gamma_d^n}(x)-\int f(x)d\mu^\star(x)\nonumber\\
    & \geq \int 0\vee (r/2-\|F^{g,\varphi} \circ g_i^1(w)-x\|)d(F^{g,\varphi}\circ g)_{\# \alpha \gamma_d^n}(x)-\int 0\vee (r/2-\|F^{g,\varphi} \circ g_i^1(w)-x\|)d\mu^\star(x)\nonumber\\
    & \geq C^{-1} \frac{r}{4}\sum_{j=1}^m\int \mathds{1}_{\{\|F^{g,\varphi} \circ g_i^1(w)-F^{g,\varphi}\circ g_j^1(v)\|\leq r/4\}}\zeta_j(v)d\lambda^d(v)\nonumber\\
    & \geq C^{-1}\min_{s\in B^d(0,1-1\frac{r\wedge 4}{4A})}\sum_{j=1}^m\zeta_j(\tau s) \frac{r}{4}\int_{B^d(0,1-\frac{r\wedge 4}{4A})} \mathds{1}_{\{\|F^{g,\varphi} \circ g_i^1(w)-F^{g,\varphi}\circ g_i^1(v)\|\leq r/4\}}d\lambda^d(v).
\end{align}
Let $C_\star>0$ given by Lemma \ref{lemma:boundetaj} such that for $u\in B^d(0,\tau)$ satisfying $\|\psi(u)\|\leq C_\star^{-1}\log(n)^{1/2}$, we have
$$C^{-1}\exp(-C(\tau-\|u\|)^{-2})\leq \zeta_j^g(u).$$
Now, for $v\in B^d(0,\tau-\tau\frac{r\wedge 4}{4A})$ we have
\begin{align*}
    \|\Psi(v)\|=&\|\frac{v}{\tau^2-\|v\|^2}\|\\
    \leq & C(1-(1-\frac{r\wedge 4}{4A})^2)^{-1}\\
    \leq & C \left(\frac{r\wedge 4}{A}\right)^{-1},
\end{align*}
so there exists $C_{\star,2}>0$ such that $r\geq C_{\star,2}\log(n)^{-1/2}$ implies that for all $v\in B^d(0,\tau-\tau\frac{r\wedge 4}{4A})$ we have $\|\psi(v)\|\leq C_\star^{-1}\log(n)^{1/2}$. First, in the case where $r\leq C_{\star,2}\log(n)^{-1/2}$, recalling that
$$u \in \argmax_{v\in B^d(0,\tau)} \max_{i\in \{1,...,m\}}d(F^{g,\varphi}\circ g_i^1(v),\mathcal{M}^\star),$$
we directly obtain that
$$\mathbb{H}\left(\bigcup_{i=1}^m F^{g,\varphi}\circ g_i^1(B^d(0,\tau)),\mathcal{M}^\star\right)=r\leq C_{\star,2}\log(n)^{-1/2}.
$$
Now in the case $r\geq C_{\star,2}\log(n)^{-1/2}$, we obtain applying Lemma \ref{lemma:boundetaj} in the lower bound \eqref{align:jdhdfgfizpodf} that
\begin{align*}
    W_1((F^{g,\varphi}\circ g)_{\# \alpha \gamma_d^n},\mu^\star)
    & \geq C^{-1}\exp(-C^{-1}A^2r^{-2})\frac{r}{4}\int_{B^d(0,1-\frac{r\wedge 4}{4A})} \mathds{1}_{\{\|w-v\|\leq r/(4A)\}}d\lambda^d(v)\\
    & \geq C^{-1} \exp(-C^{-1}A^2r^{-2})A^{-d}r^{d+1}.
\end{align*}
 Then, from Corollary 20 in \cite{stephanovitch2024ipm} we have that
\begin{align*}
    d_{\mathcal{H}_1^{d/2}}\left((F^{g,\varphi}\circ g)_{\# \zeta^z},\mu^\star\right) \geq C^{-1} \exp(-C^{-1}r^{-2}).
\end{align*}
Therefore, using Lemma \ref{lemma:Landdhgamma} we conclude that 

\begin{align*}
    \sup_{D\in \mathcal{D}} \ L(g,\varphi,\alpha,D)\geq C^{-1} \exp(-Cr^{-2})-C \delta_n^{d/2}\log(\delta_n^{-1})^2.
\end{align*}
Supposing that $\sup_{D\in \mathcal{D}} \ L(g,\varphi,\alpha,D)\leq C_2^{-1}$, we finally obtain
$$r\leq C\log\Big(C^{-1}\Big(C_2^{-1}+\delta_n^{d/2}\log(\delta_n^{-1})^2\Big)^{-1}\Big)^{-1/2}.$$
\end{proof}

Finally, let us give a key technical lemma for the reconstruction of the manifold.
 \begin{lemma}\label{lemma:firstr} There exists $C_1,C_2>0$ such that for $\epsilon_\Gamma \in (C_1^{-2},C_1^{-1})$, $(g,\varphi,\alpha)\in \mathcal{G}\times  \Phi\times \mathcal{A}$, if $\sup_{D\in \mathcal{D}} \ L(g,\varphi,\alpha,D)+\mathcal{C}(g,\varphi)+R(g)\leq C_2^{-1}$, then  
    for all $u\in B^d(0,\tau)$ and $i\in \{1,...,m\}$, there exists $j\in \{1,...,m\}$ such that
    $$
    \|g_i^1(u)-g_j^1(0)\|\leq \tau(1+(K+1)\tau)\quad \text{ and }\quad \|\varphi_j(g_i^1(u))\|\leq \tau -\epsilon_\Gamma.
    $$
\end{lemma}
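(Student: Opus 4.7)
The plan is an argument by contradiction: assume there exist $u_0 \in B^d(0,\tau)$ and $i_0 \in \{1,\ldots,m\}$ such that for every $j$, either $\|g_{i_0}^1(u_0) - g_j^1(0)\| > \tau(1+(K+1)\tau)$ or $\|\varphi_j(g_{i_0}^1(u_0))\| > \tau - \epsilon_\Gamma$. Set $p_0 := g_{i_0}^1(u_0)$. I would first observe that the Lipschitz bound $\|g_{i_0}^1(u_0) - g_{i_0}^1(0)\| \leq (1+(K+1)\tau)\|u_0\|$ coming from Lemma \ref{lemma:thirdr}, combined with the consistency estimate $\|\varphi_{i_0}(g_{i_0}^1(u_0)) - u_0\| = o(1)$ from Lemma \ref{lemma:secondr}, already makes $j = i_0$ satisfy both inequalities whenever $\|u_0\| \leq \tau - 2\epsilon_\Gamma$. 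Hence the only nontrivial case is $\|u_0\|$ close to $\tau$, where one must produce a neighboring chart $j^\star \neq i_0$.

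To obtain such a $j^\star$, I would invoke the ground-truth covering structure. Proposition \ref{prop:decompphi} applied to $\mu^\star$ yields charts $(g_k^{1,\star})_{k=1}^m$ whose centers $g_k^{1,\star}(0)$ form a Fefferman-style net on $\mathcal{M}^\star$, with a universal overlap constant $c_0 < 1 - 2\epsilon_\Gamma/\tau$ such that every $p^\star \in \mathcal{M}^\star$ admits a $k^\star$ with $\|p^\star - g_{k^\star}^{1,\star}(0)\| \leq c_0 \tau$. Combining Lemma \ref{lemma:secondrbis} (which gives $\|F^{g,\varphi}(p_0)-p_0\|=o(1)$) with Proposition \ref{prop:distanceofthees} then shows that $p_0$ lies within $o(1)$ of some $p^\star \in \mathcal{M}^\star$, and I fix this $p^\star$ together with the associated $k^\star$.

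The hard part will be to exhibit an estimated index $j^\star$ such that $\|g_{j^\star}^1(0) - g_{k^\star}^{1,\star}(0)\| \leq c_1\tau$ for some small $c_1$. My plan is to exploit the adversarial constraint: $\sup_{D\in\mathcal{D}} L(g,\varphi,\alpha,D) \leq C_2^{-1}$ combined with Lemma \ref{lemma:Landdhgamma} yields $d_{\mathcal{H}_1^{d/2}}((F^{g,\varphi}\circ g)_\#\alpha\gamma_d^n,\mu^\star)=o(1)$. Each ground-truth chart contributes mass $\alpha_k^\star \in [C^{-1},C]$ localized in a ball of radius $O(\tau)$ around $g_k^{1,\star}(0)$; by Lemma \ref{lemma:thirdr} and Lemma \ref{lemma:boundetaj}, each estimated chart contributes mass $\alpha_j \in [C^{-1},C]$ localized similarly around $g_j^1(0)$. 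A pigeonhole/optimal-transport argument at scale $\tau$ comparing the two $m$-term decompositions then forces a correspondence $k \mapsto j$ with the desired proximity, producing $j^\star$ when applied to $k^\star$. This matching step is the main obstacle, since it translates a global IPM bound into a local combinatorial control over center locations.

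Given such $j^\star$, I close the argument via two triangle-type estimates. First, $\|p_0 - g_{j^\star}^1(0)\| \leq \|p_0-p^\star\| + \|p^\star - g_{k^\star}^{1,\star}(0)\| + \|g_{k^\star}^{1,\star}(0) - g_{j^\star}^1(0)\| \leq (c_0+c_1)\tau + o(1)$, which is strictly less than $\tau(1+(K+1)\tau)$ for the chosen constants. Second, the eigenvalue lower bound $\|\nabla g_{j^\star}^1(u) v\| \geq (1-(K+1)\tau)\|v\|$ from Lemma \ref{lemma:thirdr} together with the consistency Lemma \ref{lemma:secondr} applied to $\varphi_{j^\star}$ yields $\|\varphi_{j^\star}(p_0)\| \leq \|p_0 - g_{j^\star}^1(0)\|/(1-(K+1)\tau) + o(1)$, which is bounded by $\tau - \epsilon_\Gamma$ for $\epsilon_\Gamma \in (C_1^{-2},C_1^{-1})$ with $C_1$ large. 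Both estimates contradict the standing assumption, which completes the proof.
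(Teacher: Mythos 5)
Your proposal correctly identifies that the proof should be by contradiction and correctly invokes the right auxiliary lemmas (\ref{lemma:secondr}, \ref{lemma:secondrbis}, \ref{lemma:thirdr}, \ref{lemma:Landdhgamma}, Proposition \ref{prop:distanceofthees}), and your preliminary observation that $j=i_0$ already works when $\|u_0\|$ is bounded away from $\tau$ is sound. However, the core of your argument—the construction of $j^\star$ via a ``pigeonhole/optimal-transport'' matching between ground-truth chart centers $g_k^{1,\star}(0)$ and estimated chart centers $g_j^1(0)$—is left entirely unproved, and you yourself flag it as the main obstacle. This is a genuine gap, not a technicality. The IPM bound $\sup_D L(g,\varphi,\alpha,D)=o(1)$ is a single global constraint on measures; it does not directly produce a combinatorial bijection-like correspondence between the two chart families. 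Nothing in the hypotheses prevents, say, several estimated charts from clustering near one region of $\mathcal{M}^\star$ while the point $p_0$ falls into a gap left by all of them, nor do the weights $\alpha_j$ need to match $\alpha_k^\star$ chart-by-chart. Turning a small $d_{\mathcal{H}_1^{d/2}}$ discrepancy into per-chart center proximity appears to be at least as hard as the lemma you are trying to prove.

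The paper avoids this matching entirely. Under the contradiction hypothesis, it first locates a point $y\in\mathcal{M}^\star$ within $\epsilon_\Gamma$ of $g_i^1(u)$ (via Proposition \ref{prop:distanceofthees} and Lemma \ref{lemma:secondrbis}) and then tests the Wasserstein distance against the bump $x\mapsto 0\vee(\epsilon_\Gamma-\|y-x\|)$: the $\mu^\star$-side contributes at least $C^{-1}\epsilon_\Gamma^{d+1}$ because $\mu^\star$ has density bounded below near $y$, while for \emph{every} $j$ the $\hat\mu$-side contributes only $O(\exp(-C^{-1}\epsilon_\Gamma^{-1}))$ because the dichotomy in the contradiction hypothesis (center-far vs.\ $\varphi_j$-far) forces the pre-image under $g_j^1$ to land near the boundary of $B^d(0,\tau)$, where $\zeta_j$ has Gaussian-type tails (Lemma \ref{lemma:boundetaj}). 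This yields a lower bound on $W_1$, hence on $d_{\mathcal{H}_1^{d/2}}$ (via Corollary 20 of \cite{stephanovitch2024ipm}), hence on $\sup_D L$ (via Lemma \ref{lemma:Landdhgamma})—contradicting $\sup_D L\leq C_2^{-1}$. Note that this argument is \emph{non-constructive}: it never exhibits the index $j$, it only shows that the assumed non-existence of a good $j$ is impossible. That is the idea missing from your sketch, and without it the proof does not go through.
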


\begin{proof}
Let $C_2>0$ and suppose that 
\begin{equation}\label{eq:bghidodhzpfjdoss}
\sup_{D\in \mathcal{D}} \ L(g,\varphi,\alpha,D)+\mathcal{C}(g,\varphi)+R(g)\leq C_2^{-1}
\end{equation}
and that there exists $u\in B^d(0,\tau)$, $i\in \{1,...,m\}$ such that for all $j\in \{1,...,m\}$ we have either
    $$\|g_i^1(u)-g_j^1(0)\|> \tau(1+(K+1)\tau) \quad \text{ or }\quad \|\varphi_j(g_i^1(u))\|> \tau -\epsilon_\Gamma.$$
Taking $C_2>0$ large enough, we are going to show that the Wasserstein distance between $(F^{g,\varphi}\circ g)_{\# \alpha \gamma_d^n}$ and $\mu^\star$ is bounded below which will give us by Corollary 20 in \cite{stephanovitch2024ipm} that the IPM $d_{\mathcal{H}_1^{d/2}}$ is also bounded below. Finally using Lemma \ref{lemma:Landdhgamma}, we will get that the IPM $d_{\mathcal{D}}$ is also bounded below, which is a contradiction.

First, supposing $C_2>0$ large enough, we have by Proposition \ref{prop:distanceofthees} and  Lemma \ref{lemma:secondrbis} that $\min_{z\in \mathcal{M}^\star} \|g_i^1(u)-z\|\leq \epsilon_\Gamma$. Let $y\in \argmin_{z\in \mathcal{M}^\star} \|g_i^1(u)-z\|$, and write $\zeta_j$ for the density of the measure $(\Psi^{-1}\circ g_j^2)_{\#} \gamma_d^n$.
Using the dual formulation of the Wasserstein distance we get
\begin{align}\label{align:w&boundbelow}
W_1((F^{g,\varphi}\circ g)_{\# \alpha \gamma_d^n},\mu^\star) & \geq \int 0\vee (\epsilon_\Gamma-\|y-x\|)d\mu^\star(x)-\int 0\vee (\epsilon_\Gamma-\|y-x\|)d(F^{g,\varphi}\circ g)_{\# \alpha \gamma_d^n}(x)\nonumber\\
    & \geq C^{-1}\epsilon_\Gamma^{d+1}-C\epsilon_\Gamma\sum_{j=1}^m\int \mathds{1}_{\{\|y-F^{g,\varphi}\circ g_j^1(v)\|\leq \epsilon_\Gamma\}}\zeta_j(v)d\lambda^d(v).
\end{align}
Supposing $C_2>0$ large enough, from Lemma \ref{lemma:secondrbis} we have for all $v\in B^d(0,\tau)$ that $\|g_j^1(v)-F^{g,\varphi}\circ g_j^1(v)\|\leq  \epsilon_\Gamma$ so 
\begin{align*}
    \int \mathds{1}_{\{\|y-F^{g,\varphi}\circ g_j^1(v)\|\leq \epsilon_\Gamma\}}\zeta_j(v)d\lambda^d(v)&\leq \int \mathds{1}_{\{\|y-g_j^1(v)\|\leq 2\epsilon_\Gamma\}}\zeta_j(v)d\lambda^d(v)\\
    &\leq \int \mathds{1}_{\{\|g_i^1(u)-g_j^1(v)\|\leq 3\epsilon_\Gamma\}}\zeta_j(v)d\lambda^d(v).
\end{align*}
Let us treat separately the cases $\|g_i^1(u)-g_j^1(0)\|> \tau(1+(K+1)\tau)$ or not.
\begin{itemize}
    \item In the case $\|g_i^1(u)-g_j^1(0)\|> \tau(1+(K+1)\tau)$, using Lemma \ref{lemma:thirdr} we obtain 
\begin{align*}
    \int \mathds{1}_{\{\|g_i^1(u)-g_j^1(v)\|\leq 3\epsilon_\Gamma\}}\zeta_j(v)d\lambda^d(v)&= \int \mathds{1}_{\{\|g_i^1(u)-(g_j^1(0)+\int_0^1\nabla g_j^1(tv)vdt)\|\leq 3\epsilon_\Gamma\}}\zeta_j(v)d\lambda^d(v)\\
    &\leq \int \mathds{1}_{\{\|\int_0^1\nabla g_j^1(tv)vdt)\|\geq \tau(1+(K+1)\tau)-3\epsilon_\Gamma\}}\zeta_j(v)d\lambda^d(v)
    \\
    &\leq \int \mathds{1}_{\{\|v\|\geq \tau-\frac{3\epsilon_\Gamma}{1+(K+1)\tau}\}}\zeta_j(v)d\lambda^d(v)\\
    & \leq C\max_{v\in B^d(0,\tau-\frac{3\epsilon_\Gamma}{1+(K+1)\tau})^c}\zeta_j (v),
\end{align*}
so from Lemma \ref{lemma:boundetaj} we have 
$$\int \mathds{1}_{\{\|g_i^1(u)-g_j^1(v)\|\leq 3\epsilon_\Gamma\}}\zeta_j(v)d\lambda^d(v)\leq C\exp(-C^{-1}\epsilon_\Gamma^{-1}).$$
\item
In the case $\|\varphi_j(g_i^1(u))\| > \tau -\epsilon_\Gamma$ and $\|g_i^1(u)-g_j^1(0)\|\leq \tau(1+(K+1)\tau)$ we have from Lemma \ref{lemma:secondr} that
\begin{align*}
    \int \mathds{1}_{\{\|g_i^1(u)-g_j^1(v)\|\leq 3\epsilon_\Gamma\}}\zeta_j(v)d\lambda^d(v)& \leq \int \mathds{1}_{\{\|g_j^1\circ \varphi_j(g_i^1(u))-g_j^1(v)\|\leq 3\epsilon_\Gamma+\|g_i^1(u)-g_j^1\circ \varphi_j(g_i^1(u))\|\}}\zeta_j(v)d\lambda^d(v)\\
    & \leq \int \mathds{1}_{\{\|g_j^1\circ \varphi_j(g_i^1(u))-g_j^1(v)\|\leq 4\epsilon_\Gamma\}}\zeta_j(v)d\lambda^d(v)\\
    & \leq \int \mathds{1}_{\{\|\varphi_j(g_i^1(u))-v\|\leq C\epsilon_\Gamma\}}\zeta_j(v)d\lambda^d(v),
\end{align*}
as $R(g)=0$ taking $C_2>1$ in \eqref{eq:bghidodhzpfjdoss}. Using the fact that $\|\varphi_j(g_i^1(u))\| > \tau -\epsilon_\Gamma$, we obtain
$$\int \mathds{1}_{\{\|\varphi_j(g_i^1(u))-v\|\leq C\epsilon_\Gamma\}}\zeta_j(v)d\lambda^d(v)\leq C\exp(-C^{-1}\epsilon_\Gamma^{-1}),$$
which also gives 
$$\int \mathds{1}_{\{\|g_i^1(u)-g_j^1(v)\|\leq 3\epsilon_\Gamma\}}\zeta_j(v)d\lambda^d(v)\leq C\exp(-C^{-1}\epsilon_\Gamma^{-1}).$$
\end{itemize}
Finally, in both cases, from \eqref{align:w&boundbelow} we get

\begin{align}\label{align:boundwassepsi}
    W_1((F^{g,\varphi}\circ g)_{\# \alpha \gamma_d^n},\mu^\star)
    & \geq C^{-1}\epsilon_\Gamma^{d+1}-\epsilon_\Gamma\sum_{j=1}^m\int \mathds{1}_{\{\|y-F^{g,\varphi}\circ g_j^1(v)\|\leq \epsilon_\Gamma\}}\zeta_j(v)d\lambda^d(v)\nonumber\\
    & \geq C^{-1}\epsilon_\Gamma^{d+1}-C\epsilon_\Gamma\exp(-C^{-1}\epsilon_\Gamma^{-1}).
\end{align}
Using Corollary 20 in \cite{stephanovitch2024ipm} and Lemma \ref{lemma:Landdhgamma}, we obtain a lower bound on $\sup_{D\in \mathcal{D}} \ L(g,\varphi,\alpha,D).$ 
We deduce 
that this is not possible for $\sup_{D\in \mathcal{D}} \ L(g,\varphi,\alpha,D)\leq C_2^{-1}$ with $C_2>0$ large enough depending on $\epsilon_\Gamma$.
\end{proof}

\subsection{Proofs of  the regularity properties}
In this section we give the proofs of the results of Sections \ref{sec:proptarget}, \ref{sec:Feffe} and  \ref{sec:reguofes}. let us start by giving a first property of the measures satisfying Assumption \ref{assump:model}.
\begin{proposition}\label{prop:keydecompunedeplus}
    Let $\mu$ a probability measure satisfying Assumption \ref{assump:model} on a manifold $\mathcal{M}$. Then, there exists  a collection of maps $(\phi_i)_{i=1,...,m}\in\mathcal{H}^{\beta+1}_{C}(B^d(0,\tau),\mathbb{R}^p)^m$ satisfying $\forall u,v\in B^d(0,\tau)$, $i\in \{1,...,m\}$, 
    $1-K\tau\leq \|\nabla \phi_i(u)\frac{v}{\|v\|}\|\leq 1+K\tau$ and there exists non-negative functions $(\zeta_i)_{i=1,...,m}\in\mathcal{H}^{\beta}_{C}(\mathbb{R}^d,\mathbb{R})^m$ with $supp(\zeta_i)\subset B^d(0,\tau)$, such that for all bounded continuous function $h:\mathbb{R}^p\rightarrow \mathbb{R}$ we have 
    \begin{equation*}
        \int_\mathcal{M} h(x)d\mu(x)= \sum \limits_{i=1}^m \int_{\mathbb{R}^d}h(\phi_i(u))\zeta_i(u)d\lambda^d(u).
    \end{equation*}
\end{proposition}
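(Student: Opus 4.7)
The plan is to build the decomposition by constructing local charts from the manifold condition, extracting a finite cover by compactness of $\mathcal{M}$, and transferring the integral against $\mu$ to parameter space using a partition of unity combined with the change-of-variables formula.

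First I would unpack the manifold condition. For each $x \in \mathcal{M}^\star$, Definition~\ref{def:manifoldcond} provides the projection chart $\phi_x: U_x \to \mathcal{T}_x(\mathcal{M})$ with $\phi_x^{-1} \in \mathcal{H}_K^{\beta+1}(\phi_x(U_x), U_x)$. After picking an orthonormal basis of $\mathcal{T}_x(\mathcal{M})$, this identifies $\mathcal{T}_x(\mathcal{M})$ with $\mathbb{R}^d$ and yields a smooth parametrization $\tilde\phi_x: W_x \to \mathcal{M}$ with $W_x \subset \mathbb{R}^d$ open. A short Taylor estimate shows that $W_x$ contains $B^d(0, 2\tau)$ when $\tau = 1/(8K)$: indeed, $\nabla \tilde\phi_x(0)$ is the orthogonal embedding of $\mathbb{R}^d \hookrightarrow \mathbb{R}^p$, so $\|\nabla \tilde\phi_x(0) v\| = \|v\|$, and since $\tilde\phi_x \in \mathcal{H}^{\beta+1}_K \subset \mathcal{H}^2_K$, the differential is $K$-Lipschitz. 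This immediately gives, for all $u \in B^d(0,\tau)$ and $v \neq 0$,
\begin{equation*}
    1 - K\tau \;\leq\; \bigl\|\nabla \tilde\phi_x(u)\tfrac{v}{\|v\|}\bigr\| \;\leq\; 1 + K\tau,
\end{equation*}
which is the gradient bound required in the statement. By compactness of $\mathcal{M}^\star$, I can extract finitely many points $x_1,\dots,x_m$ so that the sets $\tilde\phi_{x_i}(B^d(0,\tau/4))$ already cover $\mathcal{M}^\star$, and then set $\phi_i := \tilde\phi_{x_i}\vert_{B^d(0,\tau)}$, which lies in $\mathcal{H}^{\beta+1}_C(B^d(0,\tau),\mathbb{R}^p)$ with a constant depending only on $d,p,\beta,K$.

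Next I would construct a partition of unity $(\chi_i)_{i=1,\dots,m}$ on $\mathcal{M}^\star$ subordinate to the cover, with $\chi_i \circ \phi_i$ supported in $B^d(0, \tau - \epsilon)$ for some small $\epsilon > 0$, and of sufficient smoothness so that $\chi_i \circ \phi_i \in \mathcal{H}^{\beta+1}_C(\mathbb{R}^d,\mathbb{R})$. This is standard: start from a fixed compactly supported bump on $B^d(0,\tau/2)$, pull it onto $\mathcal{M}^\star$ via each $\phi_i$, and normalize by the pointwise sum, which is bounded below thanks to the covering by $\tilde\phi_{x_i}(B^d(0,\tau/4))$. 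Defining then
\begin{equation*}
    \zeta_i(u) := \chi_i(\phi_i(u))\, f_\mu(\phi_i(u))\, J_i(u)\, \mathbf{1}_{B^d(0,\tau)}(u), \quad J_i(u) := \sqrt{\det\bigl((\nabla\phi_i(u))^\top \nabla\phi_i(u)\bigr)},
\end{equation*}
the chain and product rules together with $\phi_i \in \mathcal{H}^{\beta+1}_C$, $f_\mu \in \mathcal{H}^\beta_K$, and the gradient bound above (which keeps $J_i$ bounded away from $0$ and $\infty$) give $\zeta_i \in \mathcal{H}^\beta_C(\mathbb{R}^d,\mathbb{R})$ with the prescribed support. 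The decomposition identity then follows by writing
\begin{equation*}
    \int_{\mathcal{M}^\star} h\, d\mu = \sum_{i=1}^m \int_{\mathcal{M}^\star} h(x)\chi_i(x) f_\mu(x)\, d\lambda_{\mathcal{M}^\star}(x) = \sum_{i=1}^m \int_{\mathbb{R}^d} h(\phi_i(u))\, \zeta_i(u)\, d\lambda^d(u),
\end{equation*}
using the partition of unity property in the first equality and the change of variables formula for the area integral in the second.

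The main obstacle, in my view, will not be the integral identity itself — that is essentially a bookkeeping computation — but rather the control on the constants. One has to verify that a single constant $C$ depending only on $d,p,\beta,K$ governs simultaneously: the Hölder norm of $\phi_i$ (needing the $\mathcal{H}^{\beta+1}_K$ bound on $\phi_x^{-1}$ after identifying $\mathcal{T}_x(\mathcal{M})$ with $\mathbb{R}^d$), the number $m$ of charts (needing a covering-number estimate for $\mathcal{M}^\star$ in terms of its reach-type parameter $K$), the regularity of $\chi_i \circ \phi_i$ (needing a careful choice of normalizing denominator for the partition of unity), and the Hölder norm of $\zeta_i$ (combining all of the above with the product/chain rule in Hölder classes). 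None of these is delicate individually, but ensuring they collapse into the single constant $C = C(d,p,\beta,K)$ asserted by the statement is where the bookkeeping must be carried out carefully.
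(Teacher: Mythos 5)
Your proof follows essentially the same route as the paper's: a finite cover of $\mathcal{M}$ by projection charts, a subordinated partition of unity, the change-of-variables formula to pull the integral against $\mu$ back to parameter space, and a Taylor estimate on $\nabla\phi_x^{-1}$ to get the $1\pm K\tau$ gradient bound. The only cosmetic difference is that you write the density factor explicitly as $f_\mu(\phi_i(u))\,J_i(u)$, whereas the paper writes the equivalent $f_{\phi_{x_i\#\mu}}(u)$ (density of the pushforward measure) and otherwise defers the bulk of the verification to Proposition 4 of \cite{stephanovitch2024ipm}.
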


\begin{proof}
    The proof follows exactly the proof of Proposition 4 in \cite{stephanovitch2024ipm} until a slight change at the end. The change is the following:
    Let $(x_i)_{i=1,...,m}$ be a finite sequence such that $\mathcal{M}\subset \bigcup \limits_{i=1}^m B^p(x_i,\frac{\tau}{2})$ and $(\rho_i)_{i=1,...,m}$ an approximation of unity subordinated to $(\phi_{x_i}^{-1}(B^d(0,\tau)))_i$. We have 
    \begin{align*}
        \mathbb{E}_{\mu}[h(X)] & =\sum \limits_{i=1}^m\mathbb{E}_{\mu}[h(X)\rho_i(X)]=\sum \limits_{i=1}^m\int_{B^d(0,\tau)}h(\phi_{x_i}^{-1}(u))\rho_i(\phi_{x_i}^{-1}(u))d\phi_{x_i\# \mu}(u)\\
        & =\sum \limits_{i=1}^m\int_{B^d(0,\tau)}h(\phi_{x_i}^{-1}(u))\rho_i(\phi_{x_i}^{-1}(u))f_{\phi_{x_i\# \mu}}(u)d\lambda^d(u),
    \end{align*}
    for $f_{\phi_{x_i\# \mu}}$ the density of $\phi_{x_i\# \mu}$ with respect to the Lebesgue measure. Let us take $\phi_i(u)=\phi_{x_i}^{-1}(u)$ and $\zeta_i(u)=\rho_i(\phi_{x_i}^{-1}(u))f_{\phi_{x_i\# \mu}}(u)$. As $\nabla \phi_{x_i}(x_i)$ is equal to the identity on the tangent space of $\mathcal{M}$ at $x_i$, for all $u\in B^d(0,\tau)$ and $v\in \mathbb{R}^d$ we have 
$$\|\nabla \phi_i^{-1}(u)v\|=\|\nabla \phi_i^{-1}(0)v+\int_0^1 \nabla^2 \phi_i^{-1}(tu)uvdt\|\geq \|v\|-K\|u\| \|v\|\geq (1-K\tau)\|v\|$$
and likewise 
$$\|\nabla \phi_i^{-1}(u)v\|\leq (1+K\tau)\|v\|.$$
Finally, as $\phi_i\in \mathcal{H}^{\beta+1}_C$ and $\zeta_i\in \mathcal{H}^{\beta}_C$, we have the result.
\end{proof}

\subsubsection{Proof of Proposition \ref{prop:keydecomp}}\label{sec:prop:keydecomp}
\begin{proof}
\textbf{Decomposition through charts}\\
Let us fix $\tau = \frac{1}{8K}$. Let $(x_i)_{i=1,...,m}$ be a finite sequence such that $\mathcal{M}^\star\subset \bigcup \limits_{i=1}^m B^p(x_i,\frac{\tau}{2})$ and let $(\rho_i)_{i=1,...,m}\in \mathcal{H}^{\beta+1}_C(\mathbb{R}^p,\mathbb{R})$ an approximation of unity defined by
$$\rho_i(x)=\chi_i(x)/\sum_{j=1}^m\chi_j(x),$$
with
$$\chi_i(x) = \det\big(\nabla \Psi(\phi_{x_i}(x))\big) \gamma_d\big(\Psi(\phi_{x_i}(x)\big)\mathds{1}_{\{\|\phi_{x_i}(x)\|\leq \tau\}},$$
for $\phi_x$ the orthogonal projection on $\mathcal{T}_x$ defined in \eqref{eq:phix} and $\Psi:B^d(0,\tau)\rightarrow \mathbb{R}^d$ defined by 
    $\Psi(u):= \frac{u}{\tau^2-\|u\|^2}.$ Having taken the sequence $(x_j)_j$ satisfying that for all $y\in \mathcal{M}^\star$, there exists $i_y\in \{1,...,m\}$ such that $\|y-x_{i_y}\|\leq \tau/2$, we have in particular $\|\phi_{x_{i_y}}(y)\|\leq \tau/2$ so
\begin{align}\label{eq:cjisiqjdikdd}
    \sum_{j=1}^m\chi_j(y)&\geq \chi_{i_y}(y)= \det\big(\nabla \Psi(\phi_{x_{i_y}}(y))\big) \gamma_d\big(\Psi(\phi_{x_{i_y}}(y)\big)\nonumber\\
    & \geq C^{-1}\inf_{\|v\|=\tau/2}\gamma_d\left(\frac{v}{\tau^2-(\tau/2)^2}\right)\nonumber\\
   & \geq C^{-1}.
\end{align}
    Using the approximation of unity $(\rho_i)_i$, we can write 
    \begin{align}\label{align:cfvghuefji}
       \mathbb{E}_{\mu^\star}[h(X)] & =\sum \limits_{i=1}^m \mathbb{E}_{\mu^\star}[h(X)\rho_i(X)]=\sum \limits_{i=1}^m\int_{B^d(0,\tau)}h(\phi_{x_i}^{-1}(u))\rho_i(\phi_{x_i}^{-1}(u))d\phi_{x_i\# \mu^\star}(u)\\\nonumber
        & =\sum \limits_{i=1}^m\int_{B^d(0,\tau)}h(\phi_{x_i}^{-1}(u))\rho_i(\phi_{x_i}^{-1}(u))f_{\phi_{x_i\# \mu^\star}}(u)d\lambda^d(u),
    \end{align}
    for $f_{\phi_{x_i\# \mu^\star}}$ the density of $\phi_{x_i\# \mu^\star}$ with respect to the Lebesgue measure. Let us take 
    $$g_i^1(u)=\phi_{x_i}^{-1}(u) \quad \text{ and }\quad \zeta_i(u)=\rho_i(\phi_{x_i}^{-1}(u))f_{\phi_{x_i\# \mu^\star}}(u),$$
    so \eqref{align:cfvghuefji} can be written as
\begin{align*}
 \mathbb{E}_{\mu^\star}[h(X)]  &=\sum \limits_{i=1}^m\int_{B^d(0,\tau)}h(g_i^1(u))\zeta_i(u)d\lambda^d(u)\\
 &=\sum \limits_{i=1}^m \alpha_i\int_{B^d(0,\tau)}h(g_i^1(u))\bar{\zeta}_i(u)d\lambda^d(u),
\end{align*}
with $\alpha_i>0$ a normalizing constant such that the function
\begin{align}\label{eq:fnkidjdojdjdnbsos}
\bar{\zeta}_i(u)&=\rho_i(\phi_{x_i}^{-1}(u))f_{\phi_{x_i\# \mu^\star}}(u)\alpha_i^{-1}\nonumber\\
&=\det\big(\nabla \Psi(u)\big)\gamma_d( \Psi(u))\frac{f_{\phi_{x_i\# \mu^\star}}(u)}{\alpha_i\sum_{j=1}^m\chi_j(g_i^1(u))},
\end{align}
is a probability density.

\textbf{Regularity properties of $g_i^1$}\\
As $\nabla \phi_{x_i}(x_i)$ is equal to the identity on the tangent space of $\mathcal{M}^\star$ at $x_i$, for all $u\in B^d(0,\tau)$ and $v\in \mathbb{R}^d$ we have 
$$\|\nabla (g_i^1)^{-1}(u)v\|=\|\nabla (g_i^1)^{-1}(0)v+\int_0^1 \nabla^2 (g_i^1)^{-1}(tu)uvdt\|\geq \|v\|-K\|u\| \|v\|\geq (1-K\tau)\|v\|$$
and likewise 
$$\|\nabla (g_i^1)^{-1}(u)v\|\leq (1+K\tau)\|v\|.$$

In particular, it gives us that $f_{\phi_{x_i\# \mu^\star}}(u)\leq C$ as $\mu^\star$ is bounded on $\mathcal{M}^\star$. Then, using \eqref{eq:cjisiqjdikdd} we obtain $$\alpha_i=\int_{B^d(0,\tau)} \zeta_i(u)d\lambda^d(u)
\leq C.$$ On the other hand, as $\mu^\star$ is bounded below on $\mathcal{M}^\star$ we also have that $\alpha_i\geq C^{-1}$.

\textbf{Existence of a transport map from $\gamma_d$ to $\bar{\zeta}_i$}\\
From \eqref{eq:fnkidjdojdjdnbsos}, we can rewrite the probability density $\bar{\zeta}_i$ as
$$\bar{\zeta}_i(u)=\det\big(\nabla \Psi(u)\big)\gamma_d( \Psi(u))\exp(z_i(u))\alpha_i^{-1},$$
with
$$
z_i(u)=\log\left(f_{\phi_{x_i\# \mu^\star}}(u)/\sum_{j=1}^m\chi_j(g_i^1(u))\right)
$$
which satisfies $z_i\in \mathcal{H}^\beta_C(B^d(0,\tau))$ as $\mu^\star$ satisfies the density regularity condition (Definition \ref{def:densitycond}) and both $f_{\phi_{x_i\# \mu^\star}}$ and $\sum_{j=1}^m\chi_j(g_j^1(\cdot))$ are bounded above and below on $B^d(0,\tau)$. 
 Then, taking $$\xi_i=\Psi_{\#} \bar{\zeta}_i,$$ we have that its probability density is equal to
$\xi_i(x)=\gamma_d(x)\exp(z_i(\Psi^{-1}(x))\alpha_i^{-1},$
so from Corollary 1 in \cite{stephanovitch2024smooth}, there exists a map $g_i^2\in \mathcal{H}^{\beta+1}_C(\mathbb{R}^d, \mathbb{R}^d)$  such that $$(g_i^2)_{\# }\gamma_d=\xi_i\quad \text{ and }\quad \lambda_{\min}(\nabla g_i^2)\geq C^{-1}.$$ Therefore, we have
\begin{align*}
 \mathbb{E}_{\mu^\star}[h(X)]&=\sum \limits_{i=1}^m \alpha_i\int_{B^d(0,\tau)}h(g_i^1(u))\bar{\zeta}_i(u)d\lambda^d(u)\\
 &=\sum \limits_{i=1}^m \alpha_i\int_{\mathbb{R}^d}h(g_i^1\circ \Psi^{-1}(y))d\xi_i(y)\\
&=\sum \limits_{i=1}^m\alpha_i\int_{\mathbb{R}^d} h(g_i^1\circ \Psi^{-1}\circ g_i^2(y))d\gamma_d(y).
\end{align*}
\end{proof}

\subsubsection{Proof of Proposition \ref{prop:decompphi}}\label{sec:prop:decompphi}
The proof is identical to the proof of Proposition \ref{prop:keydecomp} in Section \ref{sec:prop:keydecomp}. 

In particular,
the existence of the maps $g_i^{1}$ is given in the paragraph "\textbf{Decomposition through charts}", the existence the maps $g_i^{2}$ is given in the paragraph "\textbf{Existence of a transport map}".

\subsubsection{Proof of Proposition \ref{prop:qualityofapproxfketa}}\label{sec:prop:qualityofapproxfketa}
Let us first show the following lemma.
\begin{lemma}\label{lemma:bouundcoefwav}
    Under the conditions of Proposition \ref{prop:qualityofapproxfketa}, we have for all
    $j\in \mathbb{N},l\in \{0,...,2^k-1\},z\in \mathbb{Z}^k$ that 
$$
|\alpha_{\overline{f}}(j,l,z)-\alpha_{f}(j,l,z)\mathds{1}_{j\leq \log_2(\delta^{-1})}|\leq Cn^{-1/2}2^{-j(\eta+k/2)}.
$$

\end{lemma}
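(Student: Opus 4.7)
The plan is to expand the difference as an explicit sum via orthonormality of the exact wavelet basis, then estimate each term through a scale-dependent inner-product bound. Setting $J:=\log_2(\delta^{-1})$ and $f_J:=\sum_{j'\le J, l', w'}\alpha_f(j',l',w')\psi^k_{j'l'w'}$, orthonormality of $\{\psi^k_{j'l'w'}\}$ yields $\langle f_J,\psi^k_{jlz}\rangle=\alpha_f(j,l,z)\mathds{1}_{\{j\le J\}}$, so the error reduces to
\[ E := \alpha_{\overline f}(j,l,z) - \alpha_f(j,l,z)\mathds{1}_{\{j\le J\}} = \langle \overline f - f_J,\psi^k_{jlz}\rangle = \sum_{j'\le J,l',w'}\alpha_f(j',l',w')\,\mathcal{I}_{j'l'w'}, \]
where $\mathcal{I}_{j'l'w'}:=\langle(\hat\psi-\psi)^k_{j'l'w'},\psi^k_{jlz}\rangle$.

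For the coefficients, Lemma~\ref{lemma:inclusions} combined with $f\in\mathcal{H}^\eta_1\subset\mathcal{B}^\eta_{\infty,\infty}$ gives $|\alpha_f(j',l',w')|\le C2^{-j'(\eta+k/2)}$. For the perturbation of the wavelets, the approximation bound \eqref{eq:approxipsipaper2} combined with dilation and tensorization yields $\|(\hat\psi-\psi)^k_{j'l'w'}\|_{C^M}\le C2^{j'(k/2+M)}n^{-1/2}$ for any integer $M\le\lfloor d/2\rfloor+2$. Fix such an $M$ with $M>\eta$, which is compatible since $\eta\le d/2<\lfloor d/2\rfloor+2$.

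The core of the argument is to split the sum according to whether $j'\le j$ or $j'>j$. For $j'\le j$, the target wavelet $\psi^k_{jlz}$ has at least $M$ vanishing moments (by the Daubechies construction), so replacing $(\hat\psi-\psi)^k_{j'l'w'}$ by its order-$M$ Taylor polynomial about the centre of the support of $\psi^k_{jlz}$ and controlling the remainder via the $C^M$-bound above gives $|\mathcal{I}_{j'l'w'}|\le Cn^{-1/2}2^{(j'-j)(M+k/2)}$, with only $O(1)$ overlapping indices $(l',w')$. For $j'>j$, the crude H\"older estimate $|\mathcal{I}_{j'l'w'}|\le\|(\hat\psi-\psi)^k_{j'l'w'}\|_{L^1}\|\psi^k_{jlz}\|_\infty\le Cn^{-1/2}2^{(j-j')k/2}$ applies, with $O(2^{(j'-j)k})$ overlapping indices.

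Assembling the two contributions, the $j'\le j$ part is bounded by $Cn^{-1/2}2^{-j(M+k/2)}\sum_{j'\le j}2^{j'(M-\eta)}\le Cn^{-1/2}2^{-j(\eta+k/2)}$ using $M>\eta$, while the $j'>j$ part is bounded by $Cn^{-1/2}2^{-jk/2}\sum_{j<j'\le J}2^{-j'\eta}\le Cn^{-1/2}2^{-j(\eta+k/2)}$ using $\eta>0$. The case $j>J$ of the lemma invokes only the $j'<j$ estimate, and the factor $2^{(J-j)(M-\eta)}$ appearing there is bounded by a constant since $j>J$ and $M>\eta$. The principal technical point is meeting simultaneously the smoothness constraint $M\le\lfloor d/2\rfloor+2$ and the summability constraint $M>\eta$: this is precisely why the approximation of the scaling function is quantified in the $\mathcal{H}^{\lfloor d/2\rfloor+2}$ norm in \eqref{eq:approxipsipaper2}.
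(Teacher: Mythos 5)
Your proof is correct and follows essentially the same strategy as the paper's: expand $\alpha_{\overline f}(j,l,z)-\alpha_f(j,l,z)\mathds{1}_{j\le J}$ as the inner product $\langle\overline f-f_J,\psi^k_{jlz}\rangle$, write it as a sum of wavelet perturbation terms weighted by $\alpha_f(j',l',w')$, split according to $j'\le j$ versus $j'>j$ with the respective scale-dependent estimates and overlap counts, and sum the geometric series using $M>\eta$ and $\eta>0$. The only cosmetic difference is how the $j'\le j$ estimate is obtained: the paper routes it through the embedding $\mathcal{H}^{\lfloor d/2\rfloor+2}\hookrightarrow\mathcal{B}^{\lfloor d/2\rfloor+2}_{\infty,\infty}$ of Lemma \ref{lemma:inclusions} and reads the decay off the Besov norm definition, whereas you rederive it directly from vanishing moments and a Taylor remainder; these are the same mechanism expressed at different levels of abstraction.
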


\begin{proof}
Let $(j,l,z)\in \{1,...,\log(\delta^{-1})\}\times \{1,...,2^k\}\times \{-R2^{j},...,R2^j\}^k$, we have 
\begin{align*}
    \|\hat{\psi}_{jlw}-\psi_{jlw}\|_{\mathcal{H}^{\lfloor d/2 \rfloor +2}} & \geq C \|\hat{\psi}_{jlw}-\psi_{jlw}\|_{\mathcal{B}^{\lfloor d/2 \rfloor +2}_{\infty,\infty}}\\
    & = C\sup \limits_{(j^{'},l^{'},w^{'})}2^{j^{'}(\lfloor d/2 \rfloor +2+k/2)}|\alpha_{\hat{\psi}_{jlw}}(j^{'},l^{'},w^{'})-\mathds{1}_{\{(j^{'},l^{'},w^{'})=(j,l,w)\}}|.
\end{align*}
Furthermore
\begin{align*}
    \|& \hat{\psi}_{jlw}-\psi_{jlw}\|_{\mathcal{H}^{\lfloor d/2 \rfloor +2}} \\
    &=2^{jk/2}\|\prod \limits_{i=1}^k \hat{\psi}_{l_i}(2^j\cdot_i-w_i)-\prod \limits_{i=1}^p \psi_{l_i}(2^j\cdot_i-w_i)\|_{\mathcal{H}^{\lfloor d/2 \rfloor +2}}\\
    & =2^{jp/2}\|\sum \limits_{i=1}^p (\hat{\psi}_{l_i}(2^j\cdot_i-w_i)-\psi_{l_i}(2^j\cdot_i-w_i))\prod \limits_{r>i}^p \hat{\psi}_{l_r}(2^j\cdot_r-w_r)\prod \limits_{s<i}^p \psi_{l_s}(2^j\cdot_s-w_s)\|_{\mathcal{H}^{\lfloor d/2 \rfloor +2}}\\
    & \leq 2^{j(\lfloor d/2 \rfloor +2+k/2)}\sum \limits_{i=1}^p \| (\hat{\psi}_{l_i}(\cdot_i-w_i)-\psi_{l_i}(\cdot_i-w_i))\prod \limits_{r>i}^p \hat{\psi}_{l_r}(\cdot_r-w_r)\prod \limits_{s<i}^p \psi_{l_s}(\cdot_s-w_s)\|_{\mathcal{H}^{\lfloor d/2 \rfloor +2}}\\
    & \leq C 2^{j(\lfloor d/2 \rfloor +2+k/2)}n^{-1/2},
\end{align*}
as $\|\hat{\psi}_{l_i}-\psi_{l_i}\|_{\mathcal{H}^{\lfloor d/2 \rfloor +2}} \leq Cn^{-1/2}$ for $l_i\in \{0,1\}$.
We deduce that for all $(j^{'},l^{'},w^{'})\neq(j,l,w)$ we have
$$
|\alpha_{\hat{\psi}_{jlw}}(j^{'},l^{'},w^{'})|\leq C2^{(j-j^{'})(\lfloor d/2 \rfloor +2+k/2)}n^{-1/2}
$$
and 
$$
|\alpha_{\hat{\psi}_{jlw}}(j,l,w)-1|\leq Cn^{-1/2}.
$$
Furthermore, for $(j^{'},l^{'},w^{'})\neq(j,l,w)$ we have
\begin{align*}
    |\alpha_{\hat{\psi}_{jlw}}(j^{'},l^{'},w^{'})|=&|\int \langle \hat{\psi}_{jlw}(x)-\psi_{jlw}(x),\psi_{j^{'}l^{'}w^{'}}(x) \rangle d\lambda^k(x) |\\
    \leq & 2^{j^{'}k/2}\int \| \hat{\psi}_{jlw}(x)-\psi_{jlw}(x)\| \mathds{1}_{\{x\in supp(\psi_{jlw})\cup supp(\hat{\psi}_{jlw}) \}}d\lambda^k(x) \\
    \leq & 2^{k/2(j^{'}-j)}\left(\int \| \hat{\psi}_{jlw}(x)-\psi_{jlw}(x)\|^2d\lambda^k(x)\right)^{1/2} \\
    \leq & 2^{k/2(j^{'}-j)}\left(\int \| \hat{\psi}_{0lw}(x)-\psi_{0lw}(x)\|^2d\lambda^k(x)\right)^{1/2}  \\
    \leq & C 2^{k/2(j^{'}-j)}n^{-1/2}.
\end{align*}

Now
taking $\alpha_f(j,l,w):=0$ for $j> \log_2(\delta^{-1})$, we have 
\begin{align*}
    |&\alpha_{\overline{f}}(j^{'},l^{'},w^{'})-\alpha_f(j^{'},l^{'},w^{'})| \\
    = & | \sum \limits_{j=0}^{\log_2(\delta^{-1})} \sum \limits_{l=1}^{2^p} \sum \limits_{w\in \{-K2^{j},...,K2^j\}^p} \alpha_f(j,l,w) \langle\hat{\psi}_{jlw},\psi_{j^{'}l^{'}w^{'}}\rangle-\alpha_f(j^{'},l^{'},w^{'})|\\
     \leq & | \sum \limits_{(j,l,w)\neq (j^{'},l^{'},w^{'})} |\alpha_f(j,l,w)\langle \hat{\psi}_{jlw},\psi_{j^{'}l^{'}w^{'}}\rangle|\\
    & +|\alpha_f(j^{'},l^{'},w^{'})||\langle \hat{\psi}_{j^{'}l^{'}w^{'}} , \psi_{j^{'}l^{'}w^{'}}\rangle-1|\mathds{1}_{\{j^{'}\leq\log_2(\delta^{-1})\}}\\
     \leq & \sum \limits_{\substack{(j,l,w)\neq (j^{'},l^{'},w^{'})\\ supp(\hat{\psi}_{jlw})\cap supp(\psi_{j^{'}l^{'}w^{'}})\neq \varnothing}} C2^{-j(\eta+k/2)}n^{-1/2}\Big(2^{(j-j^{'})(\lfloor d/2 \rfloor +2+k/2)}\mathds{1}_{\{j\leq j^{'}\}}+2^{k/2(j^{'}-j)}\mathds{1}_{\{j> j^{'}\}}\Big)\\
     & + C2^{-j^{'}(\eta+k/2)}n^{-1/2}\mathds{1}_{\{j^{'}\leq\log_2(\delta^{-1})\}}.
\end{align*}
Now,
\begin{align*}
      &\sum \limits_{\substack{(j,l,w)\in \{1,...,\log_2(\delta^{-1})\}\times \{1,...,2^k\}\times \{-R2^{j},...,R2^j\}^k\\ supp(\hat{\psi}_{jlw})\cap supp(\psi_{j^{'}l^{'}w^{'}})\neq \varnothing}} 2^{-j(\eta+k/2)}\Big(2^{(j-j^{'})(\lfloor d/2 \rfloor +2+k/2)}\mathds{1}_{\{j\leq j^{'}\}}+2^{k/2(j^{'}-j)}\mathds{1}_{\{j> j^{'}\}}\Big)\\
    &\leq C\sum_{j=1}^{j^{'}} 2^{-j(\eta+k/2)}2^{(j-j^{'})(\lfloor d/2 \rfloor +2+k/2)}+C\sum_{j=j^{'}+1}^{\log(\delta^{-1})} 2^{-j(\eta+k/2)}2^{k/2(j^{'}-j)}2^{k(j-j^{'})}
    \\
    &\leq C 2^{-j^{'}(\lfloor d/2 \rfloor +2+k/2)}\sum_{j=1}^{j^{'}} 2^{j(\lfloor d/2 \rfloor +2-\eta)}+C 2^{-j^{'}k/2}\sum_{j=j^{'}+1}^{\log(\delta^{-1})} 2^{-j\eta}
    \\
    &\leq C 2^{-j^{'}(\eta+k/2)}.
\end{align*}
\end{proof}

We can now give the Proof of Proposition \ref{prop:qualityofapproxfketa}.

\begin{proof}[Proof of Proposition \ref{prop:qualityofapproxfketa}]
Using Lemma \ref{lemma:bouundcoefwav} we have, 
\begin{align*}
    \|f-\overline{f}\|_{\mathcal{B}^{\gamma,b}_{\infty,\infty}}  =& \sup_{j,l,z} 2^{j(\gamma+k/2)}j^b|\alpha_{\overline{f}}(j,l,z)-\alpha_{f}(j,l,z)|\\
     \leq &\sup_{j\leq \log_2(\delta^{-1}),l,z} 2^{j(\gamma+k/2)}j^b|\alpha_{\overline{f}}(j,l,z)-\alpha_{f}(j,l,z)|+\sup_{j>\log_2(\delta^{-1}),l,z} 2^{j(\gamma+k/2)}j^b(|\alpha_{\overline{f}}(j,l,z)|+|\alpha_{f}(j,l,z)|)\\
     \leq& \sup_{j\leq \log_2(\delta^{-1}),l,z} 2^{j(\gamma+k/2)}j^bCn^{-1/2}2^{-j(\eta+k/2)} + \sup_{j> \log_2(\delta^{-1}),l,z} 2^{j(\gamma+k/2)}j^bC2^{-j(\eta+k/2)}(1+n^{-1/2})\\
     \leq & C \big(n^{-1/2}+\log(\delta^{-1})^b\delta^{\eta-\gamma}\big).
\end{align*}
On the other hand,
\begin{align*}
    \|f-\overline{f}\|_{\mathcal{B}^{\gamma,b}_{q_1,q_2}}^{q_2}  =& \sum_{j,l} 2^{jq_2(\gamma+k/2-k/q_1)}j^{bq_2}\left(\sum_{z}|\alpha_{\overline{f}}(j,l,z)-\alpha_{f}(j,l,z)|^{q_1}\right)^{q_2/q_1}\\
    \leq & C\sum_{j\leq \log_2(\delta^{-1}) } 2^{jq_2(\gamma+k/2-k/q_1)}j^{bq_2}\left(\sum_{z}n^{-q_1/2}2^{-jq_1(\eta+k/2)}\right)^{q_2/q_1}\\
    &+ C\sum_{j> \log_2(\delta^{-1}) } 2^{jq_2(\gamma+k/2-k/q_1)}j^{bq_2}\left(\sum_{z}2^{-jq_1(\eta+k/2)}\right)^{q_2/q_1}
    \\
    \leq & Cn^{-q_2/2}\log(\delta^{-1})^{bq_2}\sum_{j\leq \log_2(\delta^{-1}) } 2^{jq_2(\gamma+k/2-k/q_1)}\left(R^k2^{jk}2^{-jq_1(\eta+k/2)}\right)^{q_2/q_1}\\
    &+ C\sum_{j> \log_2(\delta^{-1}) } 2^{jq_2(\gamma+k/2-k/q_1)}j^{bq_2}\left(R^k2^{jk}2^{-jq_1(\eta+k/2)}\right)^{q_2/q_1}\\
    \leq &  CR^{kq_2/q_1}n^{-q_2/2}\sum_{j\leq \log_2(\delta^{-1}) } 2^{jq_2(\gamma-\eta)}j^{bq_2}+ CR^{kq_2/q_1}\sum_{j> \log_2(\delta^{-1}) } 2^{jq_2(\gamma-\eta)}j^{bq_2}\\
    \leq & CR^{kq_2/q_1}n^{-q_2/2}+ CR^{kq_2/q_1} \log(\delta^{-1})^{bq_2}\delta^{q_2(\eta-\gamma)}.
\end{align*}
We then deduce that
\begin{align*}
    \|f-\overline{f}\|_{\mathcal{B}^{\gamma,b}_{q_1,q_2}(B^k(0,R))}\leq CR^{k/q_1}(n^{-1/2}+  \log(\delta^{-1})^{b}\delta^{\eta-\gamma})
\end{align*}
\end{proof}

\subsubsection{Proof of Proposition \ref{prop:regularityfandg}}\label{sec:prop:regularityfandg}
\begin{proof}
     Let $\xi\in (\eta,\beta+1)$ a non integer. From Proposition \ref{prop:reguofG}, we have $F^{g,\varphi}_{j_k}\circ...\circ F^{g,\varphi}_{j_1} \circ g_{i}\in \mathcal{H}^{\xi}_{C_\xi}(B^d(0,2\tau),\mathbb{R}^p) \subset \mathcal{H}^{\eta}_{C_\xi}(B^d(0,2\tau),\mathbb{R}^p)$. Furthermore, we have $G_{k,i}:=F^{g,\varphi}_{j_k}\circ...\circ F^{g,\varphi}_{j_1} \circ g_{i}\in \mathcal{H}^{\beta+1}_C(B^d(0,2\tau),\mathbb{R}^p)$ if $\beta$ is not an integer and $G_{k,i}\in \mathcal{H}^{\beta+1}_{C\log(n)^2}(B^d(0,2\tau),\mathbb{R}^p)$ if $\beta$ is an integer.
\end{proof}

\subsubsection{Proof of Proposition \ref{prop:isamanifold}}\label{sec:prop:isamanifold}
\begin{proof}
Let $C_2>0$ and $(g,\varphi,\alpha)\in \mathcal{G}\times  \Phi\times \mathcal{A}$ such that \begin{equation}\label{eq:ghdheozhnfhyudeikhds}
\sup_{D\in \mathcal{D}} \ L(g,\varphi,\alpha,D)+\mathcal{C}(g,\varphi)+R(g)\leq C_2^{-1}.
\end{equation}
Taking $x=F^{g,\varphi}\circ g_{i}(y)\in \mathcal{M}$ and supposing $C_2>0$ large enough, we have from Lemma \ref{lemma:firstr} that there exists $j\in \{1,...,m\}$ such that
$$
\|g_i(y)-g_j^1(0)\|\leq \tau(1+(K+1)\tau),\quad \|\varphi_j(g_i(y))\|\leq \tau -\epsilon_\Gamma
$$
and from Lemma \ref{lemma:secondrbis}  we have that
\begin{align}\label{eq:distxgj}
\|x-g_j^1(0)\|& \leq  \|g_i(y)-g_j^1(0)\|+ \|x-g_i(y)\|\nonumber \\
& \leq \tau(1+(K+1)\tau)+\frac{\epsilon_\Gamma}{4\|\varphi_j\|_{\mathcal{H}^1}},
\end{align}
taking $C_2>0$ large enough in \eqref{eq:ghdheozhnfhyudeikhds} such that
$$\|x-g_i(y)\|\leq \frac{1}{4C_3} \epsilon_\Gamma\leq  \frac{\epsilon_\Gamma}{4\|\varphi_j\|_{\mathcal{H}^1}},$$
using that $\varphi_j\in \left(\hat{\mathcal{F}}_{\delta_N,K}^{p,\beta+1}\right)^{d} \subset \mathcal{H}^1_{C_3}$ from Proposition \ref{prop:reguofG}.
Let us show that $F^{g,\varphi}_{m}\circ...\circ F^{g,\varphi}_{j+1}\circ g_j$ is a $C^{\beta+1}$ diffeomorphism between an open subset of $\mathbb{R}^d$ and $\mathcal{M}\cap B^p(x,(4C_3)^{-1}\epsilon_\Gamma)$. 
Let $w\in \mathcal{M}\cap B^p(x,(4C_3)^{-1}\epsilon_\Gamma)$, $l\in \{1,...,m\}$  and $v\in B^d(0,\tau)$ such that $ w=F^{g,\varphi}\circ g_{l}^1(v) $. From \eqref{eq:distxgj}  we have 
\begin{align*}
\|F^{g,\varphi}\circ g_{l}^1(v)-g_j^1(0)\|& \leq \|F^{g,\varphi}\circ g_{l}^1(v)-x\| + \|x-g_j^1(0)\| \\
& \leq \tau(1+(K+1)\tau)+\frac{\epsilon_\Gamma}{2\|\varphi_j\|_{\mathcal{H}^1}}
\end{align*}
and from  Lemma \ref{lemma:secondrbis} 
\begin{align}\label{algin:fjfhfkdejhdfkhfddd}
\|F^{g,\varphi}_{j-1}\circ...\circ F^{g,\varphi}_{1}\circ g_{l}^1(v)-g_j^1(0)\| &\leq C \|F^{g,\varphi}_{j-1}\circ...\circ F^{g,\varphi}_{1}\circ g_{l}^1(v)-F^{g,\varphi}\circ g_{l}^1(v)\| +\|F^{g,\varphi}\circ g_{l}^1(v)-g_j^1(0)\|\nonumber\\
&\leq \tau(1+(K+1)\tau)+\frac{3\epsilon_\Gamma}{4\|\varphi_j\|_{\mathcal{H}^1}}.
\end{align}
On the other hand, taking $C_2>0$ large enough in \eqref{eq:ghdheozhnfhyudeikhds} we have
\begin{align*}
    \|\varphi_j \circ F^{g,\varphi}_{j-1}\circ...\circ F^{g,\varphi}_{1}\circ g_{l}(v)\|\leq &\|\varphi_j(g_i(y))\| + \|\varphi_j \circ F^{g,\varphi}_{j-1}\circ...\circ F^{g,\varphi}_{1}\circ g_{l}^1(v)-\varphi_j(g_i(y))\| \\
    \leq &\|\varphi_j(g_i(y))\| + \|\varphi_j\|_{\mathcal{H}^1}\| F^{g,\varphi}_{j-1}\circ...\circ F^{g,\varphi}_{1}\circ g_{l}^1(v)-g_i(y)\| \\
    \leq &\|\varphi_j(g_i(y))\| + \|\varphi_j\|_{\mathcal{H}^1}\| F^{g,\varphi}_{j-1}\circ...\circ F^{g,\varphi}_{1}\circ g_{l}^1(v)-F^{g,\varphi}\circ g_{l}^1(v)\| \\
    & +\|\varphi_j\|_{\mathcal{H}^1}\|F^{g,\varphi}\circ g_{l}^1(v)-F^{g,\varphi}\circ g_i(y)\|+\|\varphi_j\|_{\mathcal{H}^1}\|F^{g,\varphi}\circ g_i(y)-g_i(y)\|\\
     \leq & \tau-\epsilon_\Gamma +\epsilon_\Gamma/4+\epsilon_\Gamma/4+\epsilon_\Gamma/4\\
     \leq & \tau -\epsilon_\Gamma/4.
\end{align*}
Now, recalling the Definition of the smooth cutoff function $\Gamma>0$ from \eqref{eq:Gamma}, for $\epsilon_\Gamma>0$ small enough we get from \eqref{algin:fjfhfkdejhdfkhfddd} that
$$\Gamma(\|F^{g,\varphi}_{j-1}\circ...\circ F^{g,\varphi}_{1}\circ g_{l}^1(v)-g_j^1(0)\|)=1,$$
which gives
\begin{align*}
w=&F^{g,\varphi}_{m}\circ...\circ F^{g,\varphi}_{j+1}\big(F^{g,\varphi}_{j}\circ...\circ F^{g,\varphi}_{1}\circ g_{l}^1(v) \big)\\
=& F^{g,\varphi}_{m}\circ...\circ F^{g,\varphi}_{j+1}\big(g_j^1\circ\varphi_j \circ F^{g,\varphi}_{j-1}\circ...\circ F^{g,\varphi}_{1}\circ g_{l}^1(v)\big),
\end{align*}
so in particular $$w\in F^{g,\varphi}_{m}\circ...\circ F^{g,\varphi}_{j+1}\circ g_j^1(B^d(0,\tau-\epsilon_\Gamma/4)).$$

Now supposing taking $C_2>0$ large enough in \eqref{eq:ghdheozhnfhyudeikhds} and taking $\epsilon_\Gamma>0$ small enough, we get from Proposition \ref{prop:diffeofrondg}  that $F^{g,\varphi}_{m}\circ...\circ F^{g,\varphi}_{j+1}\circ g_j^1$ is a $C^{\beta+1}$ diffeomorphism from $B^d(0,\tau)$ onto its image and from Proposition \ref{prop:regularityfandg} we have that it belongs to $\mathcal{H}^{\beta+1}_{C\log(n)^{2\mathds{1}_{\beta=\lfloor \beta \rfloor}}}$, which gives the result.

\end{proof}

\subsubsection{Proof of Proposition \ref{prop:densitychecked}}\label{sec:prop:densitychecked}
Let us write $T_j:=F^{g,\varphi}\circ g_j^1$ and $\zeta_j$ for the density of the probability measure $(\Psi^{-1}\circ g_j^2)_{\#} \gamma_d^n$. We have from Proposition \ref{prop:diffeofrondg} that for all $j\in \{1,...,m\}$, $T_j$ is a diffeomorphism from $B^d(0,\tau)$ onto its image. Then, for $x\in \mathcal{M}$, we have 
\begin{equation}
    f_{\hat{\mu}}(x)=\sum_{j=1}^m \frac{\alpha_j}{\sum_{l} \alpha_l} \zeta_j(T_j^{-1}(x))\text{det}\Big(\big(\nabla T_j(T_j^{-1}(x)\big)^\top\nabla T_j(T_j^{-1}(x))\Big)^{-1/2}\mathds{1}_{\{x\in T_j(B^d(0,\tau))\}}.
\end{equation}
Now, from Lemma \ref{lemma:boundetaj} we have that $\zeta_j$ belongs to $\mathcal{H}^{\beta}_C$ if $\beta$ is not an integer and belongs to $\mathcal{H}^{\beta}_{C\log(n)^{C_2}}$ otherwise. Furthermore,  combining Propositions \ref{prop:regularityfandg} and \ref{prop:diffeofrondg} we obtain that $T_j,T_j^{-1}\in \mathcal{H}^{\beta+1}_{C\log(n)^{C_2\mathds{1}_{\beta=\lfloor \beta \rfloor}}}$ and as 
$$\min_{u\in B^d(0,\tau)}\min_{v\in \mathbb{S}^{d-1}} \|\nabla T_j(u)v\|\geq 1-(K+2)\tau,$$
we conclude that $f_{\hat{\mu}}$ belongs to $\mathcal{H}^{\beta}_{C\log(n)^{C_2\mathds{1}_{\beta=\lfloor \beta \rfloor}}}$.

Let us now show the lower bound on the density. Let $\theta>0$ and suppose that there exist $w=F^{g,\varphi}\circ g_i(y)\in \mathcal{M}$ such that $f_{\hat{\mu}}(w)\leq \theta$. For $A\geq 4K$, supposing $C_2>0$ large enough we have by Proposition \ref{prop:distanceofthees} that $\min_{z\in \mathcal{M}^\star} \|x-z\|\leq A^{-1}.$ Then, as $\beta>1$, we have $f_{\hat{\mu}}\in \mathcal{H}^1_C$ so using the dual formulation of the Wasserstein distance and the fact that $\mathcal{M}$ and $\mathcal{M}^\star$ satisfy the $(2,C)$-manifold condition, we get
\begin{align}\label{align:w&boundbelowaa}
W_1((F^{g,\varphi}\circ g)_{\# \alpha \gamma_d^n},\mu^\star) & \geq \int 0\vee (\frac{1}{2A}-\|w-x\|)d\mu^\star(x)-\int 0\vee (\frac{1}{2A}-\|w-x\|)d(F^{g,\varphi}\circ g)_{\# \alpha \gamma_d^n}(x)\nonumber\\
    & \geq C^{-1}A^{-d}-C\theta A^{-d},
\end{align}
for $A>0$ small enough.
Using Corollary 20 in \cite{stephanovitch2024ipm} and Lemma \ref{lemma:Landdhgamma}, $\eqref{align:w&boundbelowaa}$ gives a lower bound on the IPM $d_{\mathcal{D}}((F^{g,\varphi}\circ g)_{\# \alpha \gamma_d^n},\mu^\star)$
which is in contradiction with $\sup_{D\in \mathcal{D}} L(g,\varphi,\alpha,D)\leq C_2^{-1}$ for $C_2$ and $\theta^{-1}$ large enough.

\section{Proofs of the statistical results}
\subsection{Bias variance decomposition}
\subsubsection{Rates of approximation of empirical means}
Let us give a first general concentration bound that will be used several times.
\begin{lemma}\label{lemma:keyboundclass}
 Let $\eta>0$, $A\geq 1$, $k\in \mathbb{N}_{>0}$,  $\zeta:B^d(0,A)\rightarrow \mathbb{R}$ a probability density, $\Lambda\in \mathcal{H}^1_A(B^k(0,A),\mathbb{R})$ such that $\|\Lambda(x)\|\leq A \|x\|$  and $\mathcal{F}\subset \mathcal{H}^\eta_A(B^d(0,A),\mathbb{R}^k)$. Then, for all $\delta>0$ and $U_1,...,U_n\sim \zeta$ iid random variables, we have
 \begin{align*}
    &\mathbb{E}\Big[\sup \limits_{\substack{f\in \mathcal{F} \\ \|F\|_\infty\leq \delta}}  \frac{1}{n}\sum_{j=1}^n  \int_{B^d(0,A)} \Lambda(f(u))\zeta(u) d\lambda^d(u)-\Lambda(f(U_j))\Big]+\frac{16}{\sqrt{n}}\int_{\delta/4}^{2}\sqrt{\log(|(\mathcal{F})_\epsilon|)}d\epsilon\\
 &\leq \frac{C_k}{\sqrt{n}}\left(A\sqrt{\frac{(\delta+1/n)^2\log(|\mathcal{F}_{1/n}|n)}{n} }
  +A^{C_2}\big(1+\delta^{(1-\frac{d}{2\eta})}+\log(\delta^{-1}/4)\mathds{1}_{\{2\eta= d\}}\big)\right).\end{align*}
\end{lemma}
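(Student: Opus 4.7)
\textbf{Proof plan for Lemma \ref{lemma:keyboundclass}.} I read the display as asserting that the expected supremum is bounded by the Dudley entropy integral (chaining), which in turn is bounded by the explicit expression on the right-hand side (evaluation of the integral together with a base case). The plan is therefore a standard symmetrization$+$chaining argument, with careful bookkeeping of the Lipschitz factor $A$ coming from $\Lambda$ and of the envelope $A\delta$ coming from the constraint $\|f\|_\infty\le\delta$.

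\emph{Step 1 (reduction to a Rademacher process).} Write $Z_f := \frac{1}{n}\sum_j\big(\mathbb{E}\Lambda(f(U))-\Lambda(f(U_j))\big)$ and apply the classical symmetrization inequality to replace $\mathbb{E}\sup_f Z_f$ by twice the Rademacher complexity of the class $\Lambda\circ\mathcal{F}_\delta := \{\Lambda\circ f : f\in\mathcal{F},\ \|f\|_\infty\le\delta\}$. Because $\Lambda\in\mathcal{H}^1_A$ and $\|\Lambda(x)\|\le A\|x\|$, every element of $\Lambda\circ\mathcal{F}_\delta$ is bounded by $A\delta$ and Lipschitz-in-coefficient with constant $A$. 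In particular, $\|\Lambda\circ f-\Lambda\circ g\|_\infty \le A\|f-g\|_\infty$, so any $\epsilon/A$-covering of $\mathcal{F}$ induces an $\epsilon$-covering of $\Lambda\circ\mathcal{F}_\delta$.

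\emph{Step 2 (chaining from scale $1/n$ to scale $2$).} Apply Dudley's generic chaining to the Rademacher process indexed by $\Lambda\circ\mathcal{F}_\delta$ under the sup-norm (which dominates the subgaussian metric of the process). Choose as base scale $1/n$, writing
\[
\mathbb{E}\sup_f Z_f \le \mathbb{E}\max_{f\in(\mathcal{F}_\delta)_{1/n}} Z_f \;+\; \frac{16}{\sqrt{n}}\int_{1/n}^{CA\delta}\sqrt{\log|(\Lambda\circ\mathcal{F}_\delta)_\epsilon|}\,d\epsilon.
\]
After the change of variables $\epsilon\mapsto \epsilon/A$, the covering-number comparison from Step 1, and using that the integrand vanishes above the envelope, the integral reduces, up to a constant, to $\int_{\delta/4}^{2}\sqrt{\log|\mathcal{F}_\epsilon|}\,d\epsilon$; this is exactly the middle term in the displayed inequality.

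\emph{Step 3 (base case via Bernstein).} For each fixed $f$ in the $1/n$-covering, $\Lambda(f(U_j))$ is bounded by $A\delta$ and has variance at most $A^2(\delta+1/n)^2$ (the $1/n$ absorbs the discretization error). A standard maximal Bernstein inequality over a net of size $|\mathcal{F}_{1/n}|$ yields
\[
\mathbb{E}\max_{f\in(\mathcal{F}_\delta)_{1/n}} Z_f \;\lesssim\; A\sqrt{\tfrac{(\delta+1/n)^2\log(|\mathcal{F}_{1/n}|n)}{n}} \;+\; \tfrac{A\delta\log(|\mathcal{F}_{1/n}|n)}{n},
\]
and the second summand is absorbed into the first thanks to $\delta\le 2$. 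This delivers the first term of the right-hand side.

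\emph{Step 4 (evaluation of the entropy integral).} Use the classical covering estimate $\log|\mathcal{H}^\eta_A(B^d(0,A))_\epsilon|\le C_k A^{d/\eta}\epsilon^{-d/\eta}$, whence
\[
\int_{\delta/4}^{2}\sqrt{\log|\mathcal{F}_\epsilon|}\,d\epsilon \;\le\; C_k A^{d/(2\eta)}\int_{\delta/4}^{2}\epsilon^{-d/(2\eta)}\,d\epsilon.
\]
Splitting into the three regimes $2\eta>d$, $2\eta=d$, $2\eta<d$ gives respectively a bounded contribution, a $\log(\delta^{-1}/4)$ contribution, and a $\delta^{1-d/(2\eta)}$ contribution, yielding the $A^{C_2}\big(1+\delta^{1-d/(2\eta)}+\log(\delta^{-1}/4)\mathds{1}_{\{2\eta=d\}}\big)$ factor.

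\emph{Main obstacle.} The delicate point is not the chaining itself but matching the exponents: one must pick the base scale (here $1/n$) and the top scale of chaining (here $\asymp A\delta$) so that the resulting Bernstein term has variance proxy $(\delta+1/n)^2$ rather than $\delta^2$ alone, and so that the truncation of the Dudley integral at $2$ instead of $A\delta$ is legitimate (which uses that above the envelope $A\delta$ the entropy contributes nothing, and that the bound we write is monotone enough in the upper limit). A second subtlety is the composition with $\Lambda$: $\Lambda$ need not be injective, so covering $\mathcal{F}$ in sup-norm at scale $\epsilon/A$ is the correct way to cover $\Lambda\circ\mathcal{F}_\delta$ at scale $\epsilon$, and this rescaling is what produces the final $A^{d/(2\eta)}$ (absorbed into $A^{C_2}$) prefactor in the entropy term.
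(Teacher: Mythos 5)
The paper's proof is considerably more elementary than what you propose, and the divergence begins with a misreading of what the lemma asserts. The display is of the form $A_1+A_2\le B_1+B_2$: the expected supremum and the Dudley integral $\frac{16}{\sqrt n}\int_{\delta/4}^2\sqrt{\log|\mathcal{F}_\epsilon|}\,d\epsilon$ both sit on the \emph{left-hand side} as two addends, and the proof bounds them \emph{separately}, the first by the Hoeffding-type term and the second by the explicit evaluation. You instead describe the Dudley integral as something that \emph{bounds} the expected supremum from above (``the expected supremum is bounded by the Dudley entropy integral \dots this is exactly the middle term''). That is a different inequality, and as stated your plan would deliver $\mathbb{E}\sup\le B_1+B_2$, not $\mathbb{E}\sup+A_2\le B_1+B_2$.

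Concretely, the paper's argument is: take the minimal $1/n$-net of $\mathcal F$ for $\|\cdot\|_\infty$, pass from $\sup$ over $\mathcal F$ to $\max$ over the net at the cost of a $2A/n$ bias (here the Lipschitz property of $\Lambda$ and the envelope $\delta$ enter), apply Hoeffding plus a union bound to get a sub-Gaussian tail with variance proxy $A^2(\delta+1/n)^2$, and integrate the tail with $\tau=1/n$ to recover the first term of the right-hand side. There is no symmetrization and no chaining: the remaining term $\frac{16}{\sqrt n}\int_{\delta/4}^2\sqrt{\log|\mathcal F_\epsilon|}\,d\epsilon$ is not produced by the argument — it is already present on the left, and is bounded purely analytically using $\log|\mathcal F_\epsilon|\lesssim A^{C_2}\epsilon^{-d/\eta}$ and the three regimes of the exponent, exactly as in your Step 4.

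Your symmetrization + chaining route could be made to work (set the base scale at $1/n$, control the discrete max by a maximal Bernstein/Hoeffding bound, bound both your chaining integral and the lemma's $A_2$ by the explicit evaluation, and absorb the resulting $(1+c)$ factor into $C_k$), but as written the plan does not establish the stated inequality. Two further loose ends: after the change of variables your chaining integral runs from $1/(An)$ to $C\delta$, which does not reduce ``up to a constant'' to $\int_{\delta/4}^2$ — the stretch from $1/(An)$ to $\delta/4$ is extra and has to be dominated by the Bernstein term or argued away; and the upper endpoint $C\delta$ versus $2$ needs the harmless but unstated observation that enlarging the domain of integration only helps.
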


\begin{proof}
Let us take care of the first term. Let $(\mathcal{F})_{1/n}$ be a minimal $1/n$ covering of the class $\mathcal{F}$ for the distance $\|\cdot\|_\infty$. Therefore, for any $f \in \mathcal{F}$, there exists $f_n\in (\mathcal{F})_{1/n}$ such that $\|f_n-f\|_\infty\leq \frac{1}{n}$. We have for $\alpha>2A/n$ 
\begin{align}\label{align:boundonprobacovering}
     & \mathbb{P}\Big(\sup \limits_{\substack{f\in \mathcal{F} \\ \|f\|_\infty\leq \delta}}  \frac{1}{n}\sum_{j=1}^n  \int_{\mathbb{R}^d}\Lambda(f(u))\zeta(u)d\lambda^d(u)-\Lambda(f(U_j)) \geq \alpha \Big)\nonumber\\
     & \leq \mathbb{P}\Big( \sup \limits_{\substack{f\in \mathcal{F} \\ \|f\|_\infty\leq \delta}}  \frac{1}{n}\sum_{j=1}^n  \int_{\mathbb{R}^d}\Lambda(f_n(u))\zeta(u)d\lambda^d(u)-\Lambda(f_n(U_j)) \geq \alpha -\frac{2A}{n} \Big)\nonumber\\
    & \leq \mathbb{P}\Big( \substack{\exists f_n\in (\mathcal{F})_{1/n}\\ \|f_n \|_\infty\leq \delta+\frac{1}{n}} \text{ with } \frac{1}{n}\sum_{j=1}^n  \int_{\mathbb{R}^d}\Lambda(f_n(u))\zeta(u)d\lambda^d(u)-\Lambda(f_n(U_j)) \geq \alpha -\frac{2A}{n} \Big)\nonumber\\
    & \leq \sum \limits_{\substack{f_n\in (\mathcal{F})_{1/n}\\ \|f_n\|_\infty\leq \delta+\frac{1}{n}}}  \mathbb{P}\Big( \frac{1}{n}\sum_{j=1}^n  \int_{\mathbb{R}^d}\Lambda(f_n(u))\zeta(u)d\lambda^d(u)-\Lambda(f_n(U_j)) \geq \alpha -\frac{2A}{n} \Big)\nonumber\\
    & \leq |(\mathcal{F})_{1/n}|  \exp\left(-\frac{n(\alpha -\frac{2A}{n})^2}{4A^2(\delta+\frac{1}{n})^2}\right),
\end{align}
applying Hoeffding's inequality to the random variables $\Lambda(f_n(U_j)) $, $j=1,...,n$. Then for $\tau \in (0,1]$, solving $|(\mathcal{F})_{1/n}|  \exp\left(-\frac{n(\alpha -\frac{2A}{n})^2}{4A^2(\delta+\frac{1}{n})^2}\right)=\tau$ with respect to $\alpha$, we obtain that with probability at least $1-\tau$
\begin{align*}
& \sup \limits_{\substack{f\in \mathcal{F} \\ \|f\|_\infty\leq \delta}}  \frac{1}{n}\sum_{j=1}^n  \int_{\mathbb{R}^d}\Lambda(f(u))\zeta(u)d\lambda^d(u)-f(U_j) \\
& \leq CA\left(\sqrt{\frac{(\delta+1/n)^2\log(|(\mathcal{F})_{1/n}|/\tau)}{n} }+\frac{1}{n}\right).
\end{align*}
Taking $\tau=1/n$ we deduce that
\begin{align*}
& \mathbb{E}\Big[\sup \limits_{\substack{f\in \mathcal{F} \\ \|f\|_\infty\leq \delta}}  \frac{1}{n}\sum_{j=1}^n  \int_{\mathbb{R}^d}\Lambda(f(u))\zeta(u)d\lambda^d(u)-f(U_j) \Big]\\
& \leq CA\left(\sqrt{\frac{(\delta+1/n)^2\log(|(\mathcal{F})_{1/n}|n)}{n} }+\frac{1}{n}\right).
\end{align*}
Now, for the second term, as $\mathcal{F}\subset\mathcal{B}^{ \eta}_{\infty,\infty}(B^d(0,A),\mathbb{R},A)$ we have $\log(|(\mathcal{F})_\epsilon|)\leq CA^{C_2} \log(|(\mathcal{B}^{ \eta}_{\infty,\infty}([0,1]^d,\mathbb{R},1)_\epsilon|)$ and as  $\log(|(\mathcal{B}^{ \eta}_{\infty,\infty}([0,1]^d,\mathbb{R},1)_\epsilon|)\leq C \epsilon ^{-\frac{d}{ \eta}}$ we deduce that
\begin{align*}
\frac{16}{\sqrt{n}}\int_{\delta/4}^{2}\sqrt{\log(|(\mathcal{F})_\epsilon|)}d\epsilon \leq & \frac{CA^{C_2}}{\sqrt{n}}\int_{\delta/4}^{2}\epsilon ^{-\frac{d}{2 \eta}}d\epsilon\\
 = & \frac{CA^{C_2}\mathds{1}_{\{2 \eta\neq d\}}}{\sqrt{n}(1-\frac{d}{2 \eta})}\left(2^{1-\frac{d}{2 \eta}}-\left(\frac{\delta}{4}\right)^{(1-\frac{d}{2 \eta})}\right) +\frac{CA^{C_2}\mathds{1}_{\{2 \eta= d\}}}{\sqrt{n}}(\log(2)-\log(\delta/4)).
\end{align*}
Therefore we have $$\frac{16}{\sqrt{n}}\int_{\delta/4}^2\sqrt{\log(|(\mathcal{F})_\epsilon|)}d\epsilon\leq  \frac{CA^{C_2}}{\sqrt{n}}(1+\delta^{(1-\frac{d}{2 \eta})}+\log(\delta^{-1}/4)\mathds{1}_{\{2 \eta= d\}}).$$
\end{proof}

Using this result, let us now give a bound on the expected difference between $\mathbb{E}_{X\sim \mu^\star}[D(X)]$ and its empirical approximation $\frac{1}{n}\sum \limits_{i=1}^n D(X_j)$ (with $X_j\sim \mu^\star$), for all  $D\in \mathcal{D}$.
\begin{proposition}\label{prop:statisticalerrorformu} Let $n,A\in \mathbb{N}_{>0}$, $\mathcal{M}$ satisfying the $(\beta+1,K)$-manifold condition, $\mu$ a probability measure satisfying the $(\beta,K)$-density condition on $\mathcal{M}$ and $(X_1,...,X_n)$ an i.i.d. sample of law $\mu$. Then for all $\gamma\geq 1$ and $\mathcal{D}\subset \mathcal{H}_A^\gamma(\mathbb{R}^p,\mathbb{R})$, we have
\begin{align*}
\mathbb{E}&\Big[\sup_{D\in \mathcal{D}} \mathbb{E}_{X\sim \mu}[ D(X)]-\frac{1}{n} \sum_{j=1}^n D(X_j)\Big]\\
& \leq  C \min \limits_{\delta \in [0,1]}  \Biggl\{A \sqrt{\frac{(\delta+1/n)^2\log(n|(\mathcal{D}\circ \phi)_{1/n}|)}{n}}+\frac{A^{C_2}}{\sqrt{n}}\Big(1+\delta^{(1-\frac{d}{2(\gamma\wedge(\beta+1))})}+\log(\delta^{-1})\mathds{1}_{\{2(\gamma\wedge(\beta+1))= d\}}\Big)\Biggl\},
\end{align*}
for a certain $\phi\in \mathcal{H}^{\beta+1}_C(B^d(0,\tau),\mathbb{R}^p)$.
\end{proposition}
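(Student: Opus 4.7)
The plan is to reduce the $p$-dimensional empirical process against $\mu$ to a finite collection of $d$-dimensional empirical processes on chart domains, and then apply Lemma~\ref{lemma:keyboundclass} to each piece. First, I invoke Proposition~\ref{prop:keydecompunedeplus}, which furnishes local charts $\phi_i \in \mathcal{H}^{\beta+1}_C(B^d(0,\tau),\mathbb{R}^p)$ and nonnegative densities $\zeta_i \in \mathcal{H}^\beta_C$ supported in $B^d(0,\tau)$ so that $\int h\, d\mu = \sum_{i=1}^m \int h(\phi_i(u))\zeta_i(u)\, d\lambda^d(u)$. Setting $p_i := \int \zeta_i\, d\lambda^d$ and $\tilde\zeta_i := \zeta_i/p_i$ exhibits $\mu$ as a finite mixture of the probability measures $\mu_i := (\phi_i)_\#(\tilde\zeta_i \lambda^d)$.

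Next, I re-couple the i.i.d.\ sample $(X_j)_{j=1}^n$ with latent variables $(I_j,U_j)$, where $I_j \sim \textbf{Mult}((p_i)_i)$ are i.i.d., $U_j \mid I_j = i \sim \tilde\zeta_i$, and $X_j = \phi_{I_j}(U_j)$. Writing $N_i := |\{j : I_j = i\}|$ yields the identity
\begin{align*}
\mathbb{E}_\mu[D] - \frac{1}{n}\sum_{j=1}^n D(X_j) = \sum_{i=1}^m \Big(p_i - \frac{N_i}{n}\Big)\mathbb{E}_{\mu_i}[D] + \sum_{i=1}^m \frac{N_i}{n}\Big(\mathbb{E}_{\mu_i}[D] - \frac{1}{N_i}\sum_{j : I_j = i} D(\phi_i(U_j))\Big).
\end{align*}
After taking $\sup_{D\in\mathcal{D}}$, the first sum is a vector multinomial deviation of order $A\sqrt{m/n}$ (using $\|D\|_\infty \leq A$), while the second, conditional on $(I_j)_j$, splits into $m$ empirical processes over $\mathcal{D}\circ\phi_i$, which lies in $\mathcal{H}^{\gamma\wedge(\beta+1)}_{CA}(B^d(0,\tau),\mathbb{R})$ by composition and Proposition~\ref{prop:reguofG}. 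Applying Lemma~\ref{lemma:keyboundclass} chartwise with regularity $\eta := \gamma\wedge(\beta+1)$, density $\tilde\zeta_i$, sample size $N_i$, and $\Lambda$ the identity gives each term a Hoeffding piece driven by $\log|(\mathcal{D}\circ\phi_i)_{1/N_i}|$ and a Dudley piece scaling as $\delta^{1 - d/(2\eta)}$ (with the logarithmic correction when $2\eta = d$) in the \emph{intrinsic} dimension $d$. Averaging over the binomial $N_i$ (mean $np_i \geq C^{-1}n$, so $\mathbb{E}[N_i^{-1/2}] \lesssim n^{-1/2}$), summing the $m = O(1)$ chart contributions, and optimizing over $\delta \in [0,1]$ yields the stated bound.

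The main obstacle is the bookkeeping around the random partition $(N_i)_i$: one must verify that conditioning on $N_i$ and then averaging preserves the $1/\sqrt{n}$ scaling, and that the chart-dependent covering numbers $|(\mathcal{D}\circ\phi_i)_\epsilon|$ can be collapsed to a single uniform quantity $|(\mathcal{D}\circ\phi)_\epsilon|$ as written in the statement. The latter is harmless because all $\phi_i$ lie in the common Hölder ball $\mathcal{H}^{\beta+1}_C$, so the restricted classes $\mathcal{D}\circ\phi_i$ are contained in a single Hölder ball on $B^d(0,\tau)$ whose $\epsilon$-covering upper-bounds each individual one up to a multiplicative constant depending only on $p,d,\beta,K$.
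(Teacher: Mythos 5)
Your approach is essentially the same as the paper's: both rest on the chart decomposition of Proposition~\ref{prop:keydecompunedeplus}, observe that $\mathcal{D}\circ\phi_i\subset\mathcal{H}^{\gamma\wedge(\beta+1)}_{CA}(B^d(0,\tau),\mathbb{R})$, and then apply the chaining/Hoeffding bound of Lemma~\ref{lemma:keyboundclass} chartwise in the intrinsic dimension $d$. The difference is in how you handle the mixture randomness. You split off a separate multinomial-deviation term $\sum_i(p_i-N_i/n)\mathbb{E}_{\mu_i}[D]$ and then, conditionally on the partition counts $N_i$, run $m$ empirical processes of random sample size $N_i$. The paper sidesteps this entirely by writing, for each $i$, the zero-mean $n$-sample process
\begin{align*}
\frac{1}{n}\sum_{j=1}^n\Big(\int D(\phi_i(u))\zeta_i(u)\,d\lambda^d(u)-D(\phi_i(Y_j))\mathds{1}_{\{T_j=i\}}\Big),
\end{align*}
so the indicator $\mathds{1}_{\{T_j=i\}}$ is absorbed into the summand, the sample size stays deterministically equal to $n$, and both the mean-matching and the envelope bound $\|D\circ\phi_i\cdot\mathds{1}\|_\infty\leq\|D\circ\phi_i\|_\infty$ hold without conditioning. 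This avoids your extra bookkeeping around the random $N_i$ and the separate multinomial term, and lets Lemma~\ref{lemma:keyboundclass} be applied once per chart with a single scale $n$.

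Two small points worth fixing if you write this out. First, you assert $\mathbb{E}[N_i^{-1/2}]\lesssim n^{-1/2}$, but $N_i^{-1/2}$ is infinite with positive probability; what you actually need (and what holds) is $\mathbb{E}[\sqrt{N_i}]/n\lesssim n^{-1/2}$ together with the observation that the $i$-th summand vanishes identically on $\{N_i=0\}$, or simply the paper's indicator formulation which renders the issue moot. Second, you also need to pass the covering number $\log|(\mathcal{D}\circ\phi_i)_{1/N_i}|$ through the averaging over $N_i$; this is harmless (it is monotone in $N_i$ and the statement uses $1/n$), but it is a step your sketch glosses over, and the paper's version with fixed sample size $n$ does not have to address it. Your remark about collapsing the per-chart covering numbers into a single $|(\mathcal{D}\circ\phi)_{1/n}|$ via the common Hölder ball is correct and matches the intent of the statement.
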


\begin{proof}
Let us first remark that using Proposition \ref{prop:keydecompunedeplus}, there exists a collection of maps $(\phi_i)_{i=1,...,m}\in\mathcal{H}^{\beta+1}_{C}(\mathbb{R}^d,\mathbb{R}^p)^m$ and non-negative weight functions $(\zeta)_{i=1,...,m}\in\mathcal{H}^{\beta+1}_{C}(B^d(0,\tau),\mathbb{R})^m$ such that
$$
\frac{1}{n}\sum_{j=1}^n \mathbb{E}_{X\sim \mu}[ D(X)]-  D(X_j)=\frac{1}{n}\sum \limits_{i=1}^m\sum_{j=1}^n  \int_{\mathbb{R}^d}D(\phi_i(u))\zeta_i(u)d\lambda^d(u)-D(\phi_i(Y_j))\mathds{1}_{\{T_j=i\}},
$$
with $T_1, ..., T_n$ iid sample of multinomial law $\textbf{Mult}(\{\int \zeta_1,...,\int\zeta_m\})$ and $Y_j$ of law $\zeta_{T_j}/{\int \zeta_{T_j}}$.

We are going to treat each case $i=1,...,m$ separately using Dudley's inequality bound (\cite{vaart2023empirical}, chapter 2). Let us take $$\mathcal{D}_{\phi_i}=\{D\circ \phi_i \ | \ D\in \mathcal{D}\}$$ equipped with the metric $M=\|\cdot\|_\infty$. For $\theta:=f \in \mathcal{D}_{\phi_i}$, we define the zero-mean sub-Gaussian process (e.g. \citep{Van2016prob}, chapter 5) as $$
X_\theta=\frac{1}{\sqrt{n}}\sum_{j=1}^n  \int_{\mathbb{R}^d}f(u)\zeta_i(u)d\lambda^d(u)-f(Y_j)\mathds{1}_{\{T_j=i\}}.
$$
 Then, as $\sup \limits_{f,f^{'}\in \mathcal{f}_{\phi_i}} \|f-f^{'}\|_\infty\leq 2$, using Dudley's bound we obtain for all $ \delta \in [0,2]$,
\begin{align}\label{eq:dudleyf1}
\mathbb{E}&\Big[\sup \limits_{D\in \mathcal{D}} \frac{1}{n}\sum_{j=1}^n  \int_{\mathbb{R}^d}D(\phi_i(u))\zeta_i(u)d\lambda^d(u)-D(\phi_i(Y_j))\mathds{1}_{\{T_j=i\}} \Big]\nonumber\\
\leq & 2 \mathbb{E}\Big[\sup \limits_{\substack{D\in \mathcal{D} \\ \|D\circ \phi_i\|_\infty\leq \delta}}  \frac{1}{n}\sum_{j=1}^n  \int_{\mathbb{R}^d}D(\phi_i(u))\zeta_i(u)d\lambda^d(u)-D(\phi_i(Y_j))\mathds{1}_{\{T_j=i\}}\Big]+\frac{16}{\sqrt{n}}\int_{\delta/4}^{2}\sqrt{\log(|(\mathcal{D}_{\phi_i})_\epsilon|)}d\epsilon.
\end{align}
Finally, as  $\mathcal{D}_{\phi_i}\subset \mathcal{H}^{(\beta+1)\wedge \gamma}_{CA}(B^d(0,\tau),\mathbb{R})$,
applying Lemma \ref{lemma:keyboundclass} to the class $\mathcal{D}_{\phi_i}$ we get the result.

\end{proof}

\begin{proposition}\label{prop:statisticalerrorC} Let $\mathcal{G}$ and $\Phi$ the classes of functions defined in \eqref{eq:classofgene} and \eqref{eq:classeofinv} respectively. Then, for independent random variables  $U_1,...,U_N\sim \mathcal{U}(B^d(0,\tau))$, $T_1,...,T_N\sim \mathcal{U}_m$, we have
\begin{align*}
& \mathbb{E}\Big[\sup_{(g,\varphi)\in \mathcal{G}\times  \Phi} \mathcal{C}(g,\varphi)-C_N(g,\varphi)\Big]\\
 \leq &  C\log(N)^{C_2}\min \limits_{\delta \in [0,1]}  \Biggl\{ \sqrt{\frac{(\delta_1+1/N)^2\log(N|\mathcal{G}_{1/N}||(\Phi \circ \mathcal{G})_{1/N}|)}{N}}+\frac{1}{\sqrt{N}}(1+\delta^{(1-\frac{d}{2(\beta+1)})}+\log(\delta^{-1})\mathds{1}_{\{2(\beta+1)= d\}})\Biggl\}.
\end{align*}
\end{proposition}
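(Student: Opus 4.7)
The plan is to view $\mathcal{C}(g,\varphi) - \mathcal{C}_N(g,\varphi)$ as an empirical process indexed by $(g,\varphi) \in \mathcal{G}\times\Phi$ and to bound it by Dudley chaining combined with a Hoeffding-type concentration estimate, in the same spirit as Lemma \ref{lemma:keyboundclass} and its use in the proof of Proposition \ref{prop:statisticalerrorformu}. Writing
$$h^{g,\varphi}_{i,l}(u) := \|g_l^1(u) - g_i^1\circ \varphi_i\circ g_l^1(u)\|\,\Gamma\!\Big((\|g_l^1(u) - g_i^1(0)\| - \tau^2)\vee 0\Big),$$
we have $\mathcal{C}(g,\varphi) - \mathcal{C}_N(g,\varphi) = \sum_{i,l=1}^m\big(\mathbb{E}_U[h^{g,\varphi}_{i,l}(U)] - N^{-1}\sum_{j=1}^N h^{g,\varphi}_{i,l}(U_j)\big)$. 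Since $m$ depends only on $p,d,\beta,K$, it suffices to control each of the $m^2$ empirical processes separately.

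The next step is to estimate the sup-norm metric entropy of $\mathcal{H}_{i,l} := \{h^{g,\varphi}_{i,l} : (g,\varphi) \in \mathcal{G}\times\Phi\}$. By Proposition \ref{prop:regularityfandg}, the maps $g_l^1$ and $g_i^1\circ\varphi_i$ are uniformly Lipschitz on the relevant domains; combined with the $1$-Lipschitz property of $\|\cdot\|$ and of $\Gamma$, this shows that an $\epsilon$-perturbation of $g$ in $\mathcal{G}$ and of $\varphi\circ g$ in $\Phi\circ\mathcal{G}$ induces a $C\epsilon$-perturbation of $h^{g,\varphi}_{i,l}$, whence
$$\log|(\mathcal{H}_{i,l})_{C\epsilon}| \le \log|\mathcal{G}_\epsilon| + \log|(\Phi\circ\mathcal{G})_\epsilon|,$$
which is precisely the product that appears inside the logarithm of the target bound. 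Moreover, since $\mathcal{G}$ embeds into $\mathcal{H}^{\beta+1}_{C\log(N)^{C_2}}$ by Proposition \ref{prop:reguofG} (and similarly $\Phi$), a standard Kolmogorov--Tikhomirov estimate yields $\log|(\mathcal{H}_{i,l})_\epsilon| \le C\log(N)^{C_2}\epsilon^{-d/(\beta+1)}$, which is the regularity that drives the Dudley integral.

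With these two entropy estimates in hand, I would transcribe the proof of Lemma \ref{lemma:keyboundclass} with $\mathcal{F} = \mathcal{H}_{i,l}$, $\Lambda = \mathrm{Id}$, $\zeta$ the density of $\mathcal{U}(B^d(0,\tau))$, $\eta = \beta+1$ and $A = C\log(N)^{C_2}$. For any $\delta \in [0,1]$, this splits $\sup_{\mathcal{H}_{i,l}}$ into a Hoeffding union bound over a $\delta$-covering of the localized class $\{h \in \mathcal{H}_{i,l} : \|h\|_\infty \le \delta\}$ (contributing the first term involving $\log(N|\mathcal{G}_{1/N}||(\Phi\circ\mathcal{G})_{1/N}|)$) plus a chaining integral $N^{-1/2}\int_{\delta/4}^{2}\sqrt{\log|(\mathcal{H}_{i,l})_\epsilon|}\,d\epsilon$, whose evaluation with $\eta = \beta+1$ reproduces $1+\delta^{1-d/(2(\beta+1))}+\log(\delta^{-1})\mathds{1}_{\{2(\beta+1)=d\}}$. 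Taking the infimum over $\delta$ yields the claim. The main obstacle is the regularity of $h^{g,\varphi}_{i,l}$: because $\|\cdot\|$ is not smooth at the origin, a direct $\mathcal{H}^{\beta+1}$ estimate for $h^{g,\varphi}_{i,l}$ would require showing that $g_l^1 - g_i^1\circ\varphi_i\circ g_l^1$ does not vanish on the support of the cutoff, which is not available a priori. The clean workaround is to bypass any Hölder estimate for $h^{g,\varphi}_{i,l}$ itself and work entirely at the level of covering numbers, transferring the $\mathcal{H}^{\beta+1}$-entropy bound from $\mathcal{G}$ and $\Phi\circ\mathcal{G}$ through the Lipschitz stability described above.
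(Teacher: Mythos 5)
Your proposal is correct and follows essentially the same route as the paper. The paper also treats each pair $(i,l)$ as a separate empirical process and invokes Dudley chaining plus Lemma~\ref{lemma:keyboundclass}; the only cosmetic difference is in how the two proofs package the non-smoothness of $\|\cdot\|$. The paper applies Lemma~\ref{lemma:keyboundclass} \emph{as stated} to the vector-valued index class $\mathcal{A}_{i,k}=\{(g_k^1,g_i^1,\varphi_i\circ g_k^1)\mid g\in\mathcal{G},\varphi\in\Phi\}$, which \emph{is} a subset of a $\mathcal{H}^{\beta+1}$ ball by Proposition~\ref{prop:reguofG}, and lets the $\Lambda$ of the lemma be the Lipschitz functional $\Lambda(x_1,x_2,x_3)=\|x_1-x_2\|\,\Gamma\big((\|x_1-x_3\|-\tau^2)\vee 0\big)$ — so the non-smooth operation $\|\cdot\|$ is absorbed into $\Lambda$, for which only Lipschitzness and linear growth $|\Lambda(x)|\le A\|x\|$ are required. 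You instead post-compose first, take $\Lambda=\mathrm{Id}$ on the scalar class $\mathcal{H}_{i,l}$, and transfer the two entropy bounds (the parametric $\log|\mathcal{G}_{1/N}|+\log|(\Phi\circ\mathcal{G})_{1/N}|$ for the Hoeffding union term and the nonparametric $\epsilon^{-d/(\beta+1)}$ for the Dudley integral) through the Lipschitz maps. This requires a slight rewording of Lemma~\ref{lemma:keyboundclass}, since the scalar class $\mathcal{H}_{i,l}$ itself is not in a Hölder ball; you correctly identify this obstacle and observe that only the covering numbers enter, which is exactly what the Hölder hypothesis is used for in that lemma. Both routes give the identical chain of inequalities, so the argument is sound; the paper's choice is marginally cleaner because no modification of the auxiliary lemma is needed. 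One trivial remark: the sampling measure for $\mathcal{C}_N$ and $\mathcal{C}$ in the paper is $\mathcal{U}(B^d(0,2\tau))$, not $\mathcal{U}(B^d(0,\tau))$ as you wrote (the proposition statement itself has the same typo), but this has no bearing on the proof.
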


\begin{proof}
For $(g,\varphi)\in \mathcal{G}\times \Phi$ and   independent random variables  $U_1,...,U_N\sim \mathcal{U}(B^d(0,2\tau))$, we have
\begin{align*}
    \mathcal{C}(g,\varphi)-C_N(g,\varphi) = &\frac{1}{N} \sum_{j=1}^N  \sum_{i=1}^m\sum_{k=1}^m \mathbb{E}_{U\sim \mathcal{U}(B^d(0,2\tau))}[\|g_{k}^1(U)-g_i^1\circ \varphi_i\circ g_{k}^1(U)\|\Gamma\big((\|g_k^1(U) - g_i^1(0)\|-\tau^2)\vee 0\big)] \\
    & -  \|g_{k}^1(U_j)-g_i^1\circ \varphi_i\circ g_{k}^1(U_j)\|\Gamma\big((\|g_k^1(U_j) - g_i^1(0)\|-\tau^2)\vee 0\big).
\end{align*}
We are going to use Dudley's inequality bound (\cite{vaart2023empirical}, chapter 2) like in Proposition \ref{prop:statisticalerrorformu}. Define
$$
\mathcal{A}_{i,k}=\{(g_k^1,g_i^1,\varphi_i\circ g_k^1)| \ g\in \mathcal{G},\varphi \in \Phi\},
$$
equipped with the infinite norm.
For $\theta=(g_k^1,g_i^1,\varphi_i\circ g_k^1) \in \mathcal{A}_{i,k}$, we define the zero-mean sub-Gaussian process  as 
\begin{align*}
X_\theta=&\frac{1}{\sqrt{N}}\sum_{j=1}^N \mathbb{E}_{U\sim \mathcal{U}(B^d(0,2\tau))}[\|g_{k}^1(U)-g_i^1\circ \varphi_i\circ g_{k}^1(U)\|\Gamma\big((\|g_k^1(U) - g_i^1(0)\|-\tau^2)\vee 0\big)] \\
    & -  \|g_{k}^1(U_j)-g_i^1\circ \varphi_i\circ g_{k}^1(U_j)\|\Gamma\big((\|g_k^1(U_j) - g_i^1(0)\|-\tau^2)\vee 0\big). 
\end{align*}
 Then, using Dudley's bound we obtain for all $ \delta \in [0,2]$,
\begin{align*}
\mathbb{E}&\Big[\sup \limits_{(g_k^1,g_i^1,\varphi_i\circ g_k^1)\in\mathcal{A}_{i,k}} \frac{1}{N}\sum_{j=1}^N  \int_{B^d(0,2\tau)}\|g_{k}^1(u)-g_i^1\circ \varphi_i\circ g_{k}^1(u)\|\Gamma\big((\|g_k^1(u) - g_i^1(0)\|-\tau^2)\vee 0\big)d\lambda^d(u)\\
&-\|g_{k}^1(U_j)-g_i^1\circ \varphi_i\circ g_{k}^1(U_j)\|\Gamma\big((\|g_k^1(U_j) - g_i^1(0)\|-\tau^2)\vee 0\big)) \Big]\nonumber\\
\leq & 2 \mathbb{E}\Big[\sup \limits_{\substack{(g_k^1,g_i^1,\varphi_i\circ g_k^1)\in\mathcal{A}_{i,k} \\ \|(g_k^1,g_i^1,\varphi_i\circ g_k^1)\|_\infty\leq \delta}}  \frac{1}{N}\sum_{j=1}^N  \int_{B^d(0,2\tau)}\|g_{k}^1(u)-g_i^1\circ \varphi_i\circ g_{k}^1(u)\|\Gamma\big((\|g_k^1(u) - g_i^1(0)\|-\tau^2)\vee 0\big)d\lambda^d(u)\\
&-\|g_{k}^1(U_j)-g_i^1\circ \varphi_i\circ g_{k}^1(U_j)\|\Gamma\big((\|g_k^1(U_j) - g_i^1(0)\|-\tau^2)\vee 0\big))\Big]+\frac{16}{\sqrt{N}}\int_{\delta/4}^{2}\sqrt{\log(|(\mathcal{A}_{i,k})_\epsilon|)}d\epsilon.
\end{align*}
Finally, from Proposition \ref{prop:reguofG} we have $\Phi \subset \mathcal{H}^{\beta+1}_{C\log(N)^2}(B^p(0,K),B^d(0,C))$ and for all $g\in \mathcal{G}$ and $i\in \{1,....,m\}$ we have $g_i^1\in \mathcal{H}^{\beta+1}_{C\log(N)^2}(B^d(0,2\tau),B^p(0,C))$. Then, applying Lemma \ref{lemma:keyboundclass} to the class $\mathcal{A}_{i,k}$ we get the result.
\end{proof}
Using this result and  applying the same proof technique, we obtain a bound on the difference between the empirical loss and the true loss. Let
\begin{equation}\label{eq:fgphi2}
    \mathcal{F}^{\mathcal{G},\Phi}:=\{F^{g,\varphi}|g\in \mathcal{G}, \varphi \in \Phi\}.
\end{equation}

\begin{proposition}\label{prop:statisticalerror} Let 
$\mathcal{G},
\Phi,
\mathcal{A},
\mathcal{D}$ be defined in \eqref{eq:classofgene}, \eqref{eq:classeofinv}, \eqref{eq:A} and \eqref{eq:classesofdis} respectively. Then, for iid independent random variables  $Y_1,...,Y_N\sim \gamma_d^n$, $\omega_1,...,\omega_N\sim \mathcal{U}_m$, $X_1,...,X_n\sim \mu^\star$, we have
\begin{align*}
& \mathbb{E}\Big[\sup_{(g,\varphi,\alpha)\in \mathcal{G}\times  \Phi\times \mathcal{A},D\in \mathcal{D}} L(g,\varphi,\alpha,D)-L_{N,n}(g,\varphi,\alpha,D)\Big]\\
 \leq &  C\log(N)^{C_2} \min \limits_{\delta_1 [0,1]}  \Biggl\{ \sqrt{\frac{(\delta_1+1/N)^2\log(N|(\mathcal{D}\circ\mathcal{F}^{\mathcal{G},\Phi}\circ \mathcal{G})_{1/N}||\mathcal{A}_{1/N}|)}{N}}\\
 &+\frac{1}{\sqrt{N}}(1+\delta_1^{(1-\frac{d}{2(\frac{d}{2}\wedge(\beta+1))})}+\log(\delta^{-1}_1)\mathds{1}_{\{\frac{d}{2}\wedge(\beta+1)= \frac{d}{2}\}})\Biggl\}\\
& + C\log(n)^{C_2}\min \limits_{\delta_2 \in [0,1/2]}  \Biggl\{  \sqrt{\frac{(\delta_2+1/n)^2\log(n|(\mathcal{D} \circ \phi_1)_{1/n}|)}{n}}+\frac{C}{\sqrt{n}}\log(\delta_2^{-1})\Biggl\}.
\end{align*}
\end{proposition}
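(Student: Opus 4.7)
The strategy is to mimic the approach of Propositions \ref{prop:statisticalerrorformu} and \ref{prop:statisticalerrorC}: split $L - L_{N,n}$ into a fake-sample process (driven by $(Y_j,\omega_j)_{j=1}^N$) and a real-sample process (driven by $(X_j)_{j=1}^n$), then apply Dudley's chaining to each, with the tail bound at each chaining scale obtained by Hoeffding on a minimal covering of the relevant function class, as in \eqref{align:boundonprobacovering}. Concretely, I would write
\begin{align*}
 L(g,\varphi,\alpha,D) - L_{N,n}(g,\varphi,\alpha,D) =& \underbrace{\tfrac{1}{m}\sum_{i=1}^m \alpha_i \mathbb{E}_{Y\sim\gamma_d^n}\!\big[D\!\circ\! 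F^{g,\varphi}\!\circ\! g_i(Y)\big] - \tfrac{1}{N}\sum_{j=1}^N \alpha_{\omega_j} D\!\circ\! F^{g,\varphi}\!\circ\! g_{\omega_j}(Y_j)}_{=:A_N(g,\varphi,\alpha,D)} \\
 & \;+\; \underbrace{\tfrac{1}{n}\sum_{j=1}^n D(X_j) - \mathbb{E}_{X\sim\mu^\star}[D(X)]}_{=:B_n(D)}.
\end{align*}

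The real-sample term $B_n(D)$ depends only on $D \in \mathcal{D} \subset \mathcal{H}^{d/2}$, so the second line of the claimed bound follows by direct application of Proposition \ref{prop:statisticalerrorformu} with $\gamma = d/2$ and $A = C$: the chart $\phi_1 \in \mathcal{H}^{\beta+1}_C(B^d(0,\tau),\mathbb{R}^p)$ is furnished by the decomposition of $\mu^\star$ supplied in Proposition \ref{prop:keydecompunedeplus}, and the $\log(\delta_2^{-1})$ term arises as the critical case $2(\gamma\wedge(\beta+1)) = d$ in the covering integral (which is exactly what is written, with the $\delta_2^{1-d/(2(\gamma\wedge(\beta+1)))}$ term absorbed into the constant when $\beta+1\geq d/2$ or swallowed by the Dudley integral when $\beta+1 < d/2$). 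For the fake-sample term $A_N$, the key observation is that each summand $\alpha_{\omega_j} \cdot (D\circ F^{g,\varphi}\circ g_{\omega_j})(Y_j)$ is a bounded scalar function of the single draw $(Y_j,\omega_j)$, with uniform bound $\|D\circ F^{g,\varphi}\circ g_{\omega_j}\|_\infty \cdot \|\alpha\|_\infty \leq C$. Covering the product class $(\mathcal{D}\circ\mathcal{F}^{\mathcal{G},\Phi}\circ \mathcal{G}) \times \mathcal{A}$ at scale $1/N$ replaces every index $(g,\varphi,\alpha,D)$ by a nearest representative up to an $O(1/N)$ error, and Hoeffding applied to each representative—exactly the computation \eqref{align:boundonprobacovering} with $2A$ replaced by an $O(1)$ Lipschitz constant—yields the tail bound at any fixed scale $\delta_1$. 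Plugging this into Dudley's chaining up to a truncation level $\delta_1$, I would invoke Lemma \ref{lemma:keyboundclass} with $\eta = d/2\wedge(\beta+1)$, using that $D\circ F^{g,\varphi}\circ g_i \in \mathcal{H}^{d/2\wedge(\beta+1)}_{C\log(N)^{C_2}}$ by Proposition \ref{prop:regularityfandg} combined with $\mathcal{D}\subset \mathcal{H}^{d/2}_C$. This produces the first line of the announced bound.

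The main obstacle is the joint indexing by $(g,\varphi,\alpha)$ together with $D$, and the extra multinomial randomness of $\omega_j$. The former is handled by choosing a single composite covering of $(\mathcal{D}\circ\mathcal{F}^{\mathcal{G},\Phi}\circ\mathcal{G}) \times \mathcal{A}$ (rather than tensorizing $\mathcal{G}, \Phi, \mathcal{A}, \mathcal{D}$ separately), so that the effective log-cardinality entering Hoeffding is exactly $\log(|(\mathcal{D}\circ\mathcal{F}^{\mathcal{G},\Phi}\circ \mathcal{G})_{1/N}| \cdot |\mathcal{A}_{1/N}|)$. The latter is painless: since the class $\mathcal{D}\circ\mathcal{F}^{\mathcal{G},\Phi}\circ\mathcal{G}$ already ranges over all compositions with $g_1,\dots,g_m$, the draw of $\omega_j$ only selects one representative per index, and no extra factor of $m$ enters the covering count. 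Combining the two controls by the triangle inequality and optimizing over $\delta_1 \in [0,1]$ and $\delta_2 \in [0,1/2]$ yields the stated estimate.
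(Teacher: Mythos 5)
Your proposal matches the paper's proof in all essentials: the same split of $L - L_{N,n}$ into a fake-sample process and a real-sample process, the same direct appeal to Proposition \ref{prop:statisticalerrorformu} for the latter (with $\phi_1$ drawn from the decomposition of $\mu^\star$), and the same Dudley chaining plus Hoeffding-on-coverings argument (encapsulated in Lemma \ref{lemma:keyboundclass}) for the former, indexed by the composite class whose covering number is $|(\mathcal{D}\circ\mathcal{F}^{\mathcal{G},\Phi}\circ\mathcal{G})_{1/N}||\mathcal{A}_{1/N}|$ and with regularity $\mathcal{H}^{(d/2)\wedge(\beta+1)}_{C\log(N)^{C_2}}$ supplied by Propositions \ref{prop:regularityfandg} and \ref{prop:reguofG}. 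The one point I would sharpen: the paper organizes the fake-sample chaining by defining, for each $i\in\{1,\dots,m\}$, the scalar class $\mathcal{W}_i=\{\alpha_i\,(D\circ F^{g,\varphi}\circ g_i)\}$ and treating the multinomial selector via the decomposition $\sum_i \mathds{1}_{\{\omega_j=i\}}$, which makes the bounded-increments condition for Hoeffding transparent; your formulation of the same step (covering the product class and noting that $\omega_j$ "only selects one representative per index") is correct but skips writing out that per-$i$ decomposition, which is where the factor $1/m$ in $L$ is actually consumed and where the $\log(N)^{C_2}$ Hölder-norm inflation of Proposition \ref{prop:reguofG} enters the constant.
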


\begin{proof}
Let us first remark that
\begin{align*}
\mathbb{E}&\Big[\sup_{(g,\varphi,\alpha)\in \mathcal{G}\times  \Phi\times \mathcal{A},D\in \mathcal{D}} L(g,\varphi,\alpha,D)-L_{N,n}(g,\varphi,\alpha,D)]\\
& -\mathbb{E}_{X\sim \mu^\star}[ D(X)]-\left(\frac{1}{N} \sum_{j=1}^N \alpha_{\omega_j}\  D\Big(F^{g,\varphi}\circ g_{\omega_j}(Y_j)\Big)\right) \Big]\\
\leq &\mathbb{E}\Big[\sup_{\substack{g\in \mathcal{G},\varphi\in \Phi,\\\alpha\in \mathcal{A},D\in \mathcal{D}}} \frac{1}{m}\sum_{i=1}^m \alpha_i \mathbb{E}_{Y\sim \gamma_d^n}[ D(F^{g,\varphi}\circ g_{i}(Y))]-\frac{1}{N} \sum_{j=1}^N \alpha_{\omega_j}\  D\Big(F^{g,\varphi}\circ g_{\omega_j}(Y_j)\Big) \Big]\\
&+\mathbb{E}\Big[\sup_{D\in \mathcal{D}} \mathbb{E}_{X\sim \mu^\star}[ D(X)]-\frac{1}{n} \sum_{j=1}^n D(X_j)\Big].
\end{align*}
Now, the second term can be controlled by Proposition \ref{prop:statisticalerrorformu} as from Proposition \ref{prop:reguofG} we have $\mathcal{D}\subset \mathcal{H}^{\beta+1}_{C\log(n)^2}(B^p(0,K),\mathbb{R})$. To bound the first term, we are going to use Dudley's inequality bound (\cite{vaart2023empirical}, chapter 2) like in Proposition \ref{prop:statisticalerrorformu}. Define
$$
\mathcal{W}_i=\{ \alpha_i(D\circ F^{g,\varphi}\circ g_i)\ | \ D\in \mathcal{D},g\in \mathcal{G},\varphi \in \Phi,\alpha\in \mathcal{A}\},
$$
equipped with the infinite norm.
For $\theta=\alpha_i (D\circ F^{g,\varphi}\circ g_i) \in \mathcal{W}_i$, we define the zero-mean sub-Gaussian process  as $$
X_\theta=\frac{\alpha_i}{\sqrt{N}}\sum_{j=1}^N \int_{\mathbb{R}^d}D(F^{g,\varphi}\circ g_i(y))d\gamma_d^n(u)- D(F^{g,\varphi}\circ g_{i}(Y_j))\mathds{1}_{\{\omega_j=i\}}
$$
with $\omega_j\sim \mathcal{U}_m$ and $Y_j\sim \gamma_d^n$ independent. Then, applying Dudley's bound we get for all $\delta\in [0,2]$ that
\begin{align*}
&\mathbb{E}\Big[\sup_{(g,\varphi,\alpha)\in \mathcal{G}\times  \Phi\times \mathcal{A},D\in \mathcal{D}} L(g,\varphi,\alpha,D)-L_{N,n}(g,\varphi,\alpha,D)\Big]\\
& \leq \mathbb{E}\Big[\sup_{\substack{f\in \mathcal{W}_i\\\|f\|_{\infty\leq \delta}}}\frac{\alpha_i}{\sqrt{N}}\sum_{j=1}^N \int_{\mathbb{R}^d}D(F^{g,\varphi}\circ g_i(y))d\gamma_d^n(u)- D(F^{g,\varphi}\circ g_{i}(Y_j))\mathds{1}_{\{\omega_j=i\}} \Big]+\frac{16}{\sqrt{N}}\int_{\delta/4}^{2}\sqrt{\log(|(\mathcal{W}_{i})_\epsilon|)}d\epsilon.
\end{align*}
    Finally, from Proposition \ref{prop:reguofG} we have $\mathcal{W}_i \subset \mathcal{H}^{\frac{d}{2}\wedge(\beta+1)}_{C\log(N)^{C_2}}(B^d(0,2\log(n)^{1/2}),\mathbb{R})$ so applying Lemma \ref{lemma:keyboundclass} to the class $\mathcal{W}_{i}$ we get the result.
\end{proof}

Let us now give a first bias-variance decomposition bound on the error.
\begin{lemma}\label{lemma:decomptheo}
Under the assumptions of Theorem \ref{theo:biaisvariancedec},   supposing that there exist $(g,\varphi)\in \mathcal{G}\times  \Phi$ satisfying the constraint \eqref{eq:opticonstrain}, we have
    $$d_{\mathcal{H}^{d/2}_1}(\hat{\mu},\mu^\star)\leq \Delta_{\mathcal{D}_{\hat{\mu}}}+ \Delta_{\mathcal{G}\times \Phi \times \mathcal{A}} + 2\sup_{(g,\varphi,\alpha)\in \mathcal{G}\times  \Phi\times \mathcal{A},D\in \mathcal{D}} L(g,\varphi,\alpha,D)-L_{N,n}(g,\varphi,\alpha,D),    $$
    with $\Delta_{\mathcal{G}\times \Phi \times \mathcal{A}}$ and $\Delta_{\mathcal{D}_{\hat{\mu}}}$ defined in \eqref{eq:deltaG} and \eqref{eq:deltaD} respectively.
\end{lemma}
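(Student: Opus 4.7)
The plan is the standard GAN-style bias-variance decomposition, which splits the target $d_{\mathcal{H}^{d/2}_1}(\hat{\mu},\mu^\star)$ into (i) the discriminator approximation error $\Delta_{\mathcal{D}_{\hat{\mu}}}$, (ii) a generator approximation error $\Delta_{\mathcal{G}\times\Phi\times\mathcal{A}}$, and (iii) twice an empirical-process error, the factor $2$ coming from ``passing'' from $L$ to $L_{N,n}$ and back.

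First, by the very definition of $\Delta_{\mathcal{D}_{\hat{\mu}}}$ in \eqref{eq:deltaD},
\[
d_{\mathcal{H}^{d/2}_1}(\hat{\mu},\mu^\star)=\Delta_{\mathcal{D}_{\hat{\mu}}}+d_{\mathcal{D}}(\hat{\mu},\mu^\star),
\]
so it suffices to bound $d_{\mathcal{D}}(\hat{\mu},\mu^\star)$. Using the definition of the pushforward in \eqref{eq:push} (with the appropriate normalization of the weights $\hat{\alpha}$, which one has $\sum_j \hat\alpha_j = m$ in the relevant regime), one identifies
\[
d_{\mathcal{D}}(\hat{\mu},\mu^\star)=\sup_{D\in\mathcal{D}} L(\hat{g},\hat{\varphi},\hat{\alpha},D).
\]
Then I add and subtract $L_{N,n}$: writing $E_{N,n} := \sup_{(g,\varphi,\alpha)\in\mathcal{G}\times\Phi\times\mathcal{A},\,D\in\mathcal{D}} (L-L_{N,n})(g,\varphi,\alpha,D)$,
\[
\sup_{D}L(\hat g,\hat\varphi,\hat\alpha,D)\leq \sup_{D}L_{N,n}(\hat g,\hat\varphi,\hat\alpha,D)+E_{N,n}.
\]

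Next I exploit the optimality of $(\hat g,\hat\varphi,\hat\alpha)$: by the definition \eqref{eq:estimator} of the estimator, for every $(g,\varphi,\alpha)\in\mathcal{G}\times\Phi\times\mathcal{A}$ satisfying the constraint \eqref{eq:opticonstrain},
\[
\sup_{D}L_{N,n}(\hat g,\hat\varphi,\hat\alpha,D)\leq \sup_{D}L_{N,n}(g,\varphi,\alpha,D)\leq \sup_{D}L(g,\varphi,\alpha,D)+E_{N,n},
\]
using the same add/subtract trick and the symmetry of $E_{N,n}$ in sign (which one may arrange by taking the supremum over $\pm D$, or equivalently by symmetrizing the definition; since $\mathcal{D}$ may be taken symmetric this is immediate). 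Finally, since $\mathcal{D}\subset\mathcal{H}^{d/2}_1$ up to the constants implicit in the discriminator class,
\[
\sup_{D\in\mathcal{D}}L(g,\varphi,\alpha,D)\leq d_{\mathcal{H}^{d/2}_1}\!\bigl((F^{g,\varphi}\circ g)_{\#\alpha\gamma_d^n},\mu^\star\bigr),
\]
and taking the infimum over $(g,\varphi,\alpha)$ in the feasible set gives $\Delta_{\mathcal{G}\times\Phi\times\mathcal{A}}$.

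The main obstacle—the only non-mechanical step—is the mismatch between the infimum defining $\Delta_{\mathcal{G}\times\Phi\times\mathcal{A}}$ (taken over all of $\mathcal{G}\times\Phi\times\mathcal{A}$) and the infimum that naturally appears in step~3 (restricted to the constraint set $\mathcal{C}_N(g,\varphi)+R(g)\leq\epsilon_\Gamma$). The resolution is that a near-minimizer of $d_{\mathcal{H}^{d/2}_1}(\cdot,\mu^\star)$ can be chosen from the local-chart decomposition of Propositions~\ref{prop:keydecomp}--\ref{prop:decompphi}, whose wavelet truncations lie in the feasible set by Lemma~\ref{lemma:genenonempty}; thus the two infima coincide up to errors already absorbed in the approximation analysis of Proposition~\ref{prop:deltaG}. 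A secondary, purely bookkeeping issue is the weight normalization between $L$ and the pushforward, which is handled by observing that the feasibility constraint forces $\sum_j\alpha_j$ to be comparable to $m$ and can be rescaled without changing the supremum over $D$.
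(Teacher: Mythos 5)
Your proposal follows essentially the same telescoping decomposition as the paper's proof: split off $\Delta_{\mathcal{D}_{\hat\mu}}$, pass from $L$ to $L_{N,n}$, invoke the constrained optimality of $(\hat g,\hat\varphi,\hat\alpha)$ against a feasible comparison point, pass back from $L_{N,n}$ to $L$, and close with $\sup_{D\in\mathcal{D}} L \le d_{\mathcal{H}^{d/2}_1}$. The caveats you flag are all real, and the paper itself elides them: it compares $L_{N,n}(\hat g,\hat\varphi,\hat\alpha,\cdot)$ against the \emph{unconstrained} minimizer $(\bar g,\bar\varphi,\bar z)$ of $d_{\mathcal{H}^{d/2}_1}$ without verifying that it lies in the feasible set defined by \eqref{eq:opticonstrain}, it silently uses the symmetry of $\mathcal{D}$ when bounding $L_{N,n}-L$ by $\sup(L-L_{N,n})$, and it identifies $\frac{1}{m}\sum_i\hat\alpha_i\,\mathbb{E}[D(\cdot)]$ with $\int D\,d\hat\mu$ even though \eqref{eq:push} normalizes by $\sum_j\hat\alpha_j$ rather than $m$.
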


\begin{proof}

   Define
   $$(\overline{g},\overline{\varphi},\overline{z}) \in \argmin \limits_{(g,\varphi,\alpha)\in \mathcal{G}\times  \Phi\times \mathcal{A}} d_{\mathcal{H}_1^{d/2}}((F^{g,\varphi}\circ g)_{\# \alpha \gamma_d^n},\mu^\star)$$
   and for all $\mu \in \mathcal{F}$
$$h_\mu \in \argmax_{h\in \mathcal{H}^{d/2}_1} \int h(x)d\mu(x)-\int h(x)d\mu^\star(x),\quad D_\mu \in \argmax_{D\in \mathcal{D}} \int D(x)d\mu(x)-\int D(x)d\mu^\star(x)$$
and
$$h_\mu^n \in \argmax_{h\in \mathcal{H}^{d/2}_1} \int h(x)d\mu(x)-\frac{1}{n}\sum_{i=1}^n h(X_i),\quad D_\mu^n \in \argmax_{h\in \mathcal{D}} \int D(x)d\mu(x)-\frac{1}{n}\sum_{i=1}^n D(X_i).$$
   We have
\begin{align*}
    d_{\mathcal{H}^{d/2}_1}(\hat{\mu},\mu^\star) =& \int h_{\hat{\mu}}(x)d\hat{\mu}(x)-\int h_{\hat{\mu}}(x)d\mu^\star(x)\\
    \leq & \int D_{\hat{\mu}}(x)d\hat{\mu}(x)-\int D_{\hat{\mu}}(x)d\mu^\star(x) +\Delta_{\mathcal{D}_{\hat{\mu}}}
    \end{align*}
    and 
\begin{align*} 
 \int D_{\hat{\mu}}(x)d\hat{\mu}(x)-\int D_{\hat{\mu}}(x)d\mu^\star(x)    = &  \int D_{\hat{\mu}}(x)d\hat{\mu}(x)-\frac{1}{N} \sum_{j=1}^N \hat{\alpha}_{\omega_j} D_{\hat{\mu}}(F^{\hat{g},\hat{\varphi}}\circ \hat{g}_{\omega_j}(Y_j))  \\
     &+  \frac{1}{N} \sum_{j=1}^N \hat{\alpha}_{\omega_j} D_{\hat{\mu}}(F^{\hat{g},\hat{\varphi}}\circ \hat{g}_{\omega_j}(Y_j))-\frac{1}{n}\sum_{i=1}^n D_{\hat{\mu}}(X_i)  \\
     & +  \frac{1}{n}\sum_{i=1}^n D_{\hat{\mu}}(X_i)-\int D_{\hat{\mu}}(x)d\mu^\star(x)  \\
     \leq &  \sup_{(g,\varphi,\alpha)\in \mathcal{G}\times  \Phi\times \mathcal{A},D\in \mathcal{D}} L(g,\varphi,\alpha,D)-L_{N,n}(g,\varphi,\alpha,D)  \\
     &+  \frac{1}{N} \sum_{j=1}^N \hat{\alpha}_{\omega_j} D_{\hat{\mu}}(F^{\hat{g},\hat{\varphi}}\circ \hat{g}_{\omega_j}(Y_j))-\frac{1}{n}\sum_{i=1}^n D_{\hat{\mu}}(X_i)  .
\end{align*}
Furthermore, writing $\overline{\mu}=(F^{\overline{g},\overline{\varphi}}\circ \overline{g})_{\# \overline{\alpha}\gamma_d^n}$, by definition of $\hat{\mu}$ we have 
\begin{align*}
   & \frac{1}{N} \sum_{j=1}^N \hat{\alpha}_{\omega_j} D_{\hat{\mu}}(F^{\hat{g},\hat{\varphi}}\circ \hat{g}_{\omega_j}(Y_j))-\frac{1}{n}\sum_{i=1}^n D_{\hat{\mu}}(X_i)  \\
   &\leq  \frac{1}{N} \sum_{j=1}^N \bar{\alpha}_{\omega_j} D_{\bar{\mu}}(F^{\bar{g},\bar{\varphi}}\circ \bar{g}_{\omega_j}(Y_j))-\frac{1}{n}\sum_{i=1}^n D_{\overline{\mu}}(X_i)  \\
    & \leq   \sup_{(g,\varphi,\alpha)\in \mathcal{G}\times  \Phi\times \mathcal{A},D\in \mathcal{D}} L(g,\varphi,\alpha,D)-L_{N,n}(g,\varphi,\alpha,D)   +  \int D_{\overline{\mu}}(x)d\overline{\mu}(x)-\int D_{\overline{\mu}}(x)d\mu^\star(x)   \\
    & \leq  \sup_{(g,\varphi,\alpha)\in \mathcal{G}\times  \Phi\times \mathcal{A},D\in \mathcal{D}} L(g,\varphi,\alpha,D)-L_{N,n}(g,\varphi,\alpha,D)   + \Delta_{\mathcal{G}\times \Phi \times \mathcal{A}}  .
\end{align*}
\end{proof}

\subsubsection{Proof of Lemma \ref{lemma:genenonempty}}\label{sec:lemma:genenonempty}
\begin{proof}
    Let $(g_i^1)_{i=1,...,m}\in\mathcal{H}^{\beta+1}_{C}(B^d(0,2\tau),\mathbb{R}^p)^m$, $(g_i^2)_{i=1,...,m}\in\dot{\mathcal{H}}^{\beta+1}_{C}(\mathbb{R}^d,\mathbb{R}^d)^m$ given by the decomposition of Propositions \ref{prop:keydecomp} and \ref{prop:decompphi}. In particular, we have that $g_i^1=\phi_{g_i^1(0)}^{-1}$ the inverse of the orthogonal projection on the tangent space $\mathcal{T}_{g_i^1(0)}$ of $\mathcal{M}^\star$ defined in \eqref{eq:phix}. Let $\pi_i$ be the projection on the tangent space of $\mathcal{M}^\star$ at the point $g_i^1(0)$ and $\Phi: \pi_i \circ g_i^1(B^d(0,2\tau))\rightarrow B^d(0,2\tau)$ defined by 
    $$\Phi(u)=(\pi_i \circ g_i^1)^{-1}(u).$$
    Then, from Theorem 4 in chapter 6 of \cite{SingularStein}, $\Phi_i$ can be extended into a map $\overline{\Phi}_i\in \mathcal{H}^{\beta+1}_C(\mathbb{R}^d,\mathbb{R}^d)$. Likewise, $g_i^1$ can be extended into a map $\overline{g}_i^1\in \mathcal{H}^{\beta+1}_C(\mathbb{R}^d,\mathbb{R}^p)$. Let $C_{\beta+1}>0$  such that $\mathcal{H}_1^{\beta+1}(\mathbb{R}^d,\mathbb{R})\subset \mathcal{B}_{\infty,\infty}^{\beta+1}(\mathbb{R}^d,\mathbb{R},C_{\beta+1})$ and $\mathcal{H}_1^{\beta+1}(\mathbb{R}^p,\mathbb{R})\subset \mathcal{B}_{\infty,\infty}^{\beta+1}(\mathbb{R}^p,\mathbb{R},C_{\beta+1})$. We have that there exists  $$|\alpha_{(\overline{g}_i^1)_k}(j,l,w)|\leq KC_{\beta+1} 2^{-j(\beta+1+d/2)}\quad \text{ with }\quad k\in \{1,...,p\},$$ $$|\alpha_{(\overline{\Phi}_i\circ\pi_i)_s}(j,l,w)|\leq KC_{\beta+1} 2^{-j(\beta+1+p/2)}\quad \text{ with }\quad s\in \{1,...,d\}$$ and $$|\alpha_{(g_i^2)_r}(j,l,w)|\leq KC_{\beta+1} 2^{-j(\beta+1+d/2)}\quad \text{ with }\quad r\in \{1,...,d\}$$
    such that
for all $u\in B^d(0,2\tau)$,
    $$\overline{g}_i^1(u)_k=\sum \limits_{j=0}^{\infty} \sum \limits_{l=1}^{2^d} \sum \limits_{w\in \{-(2K+C)2^j,...,(2K+C)2^j\}^d} \alpha_{(\overline{g}_i^1)_k}(j,l,w)\psi^d_{jlw}(u),$$
    
for all $y\in B^d(0,2\log(n)^{1/2})$,
    $$g_i^2(y)_r=\sum \limits_{j=0}^{\infty} \sum \limits_{l=1}^{2^d} \sum \limits_{w\in \{-4\log(n)^{1/2}2^j,...,4\log(n)^{1/2}2^j\}^d} \alpha_{(g_i^2)_r}(j,l,w)\psi^d_{jlw}(y),$$

and for all $x\in B^p(\overline{g}_i^1(0),2\tau)$,
$$\overline{\Phi}_i\circ\pi_i(x)_s=\sum \limits_{j=0}^{\infty} \sum \limits_{l=1}^{2^p} \sum \limits_{w\in \{-(2K+C)2^j,...,(2K+C)2^j\}^p} \alpha_{(\overline{\Phi}_i\circ\pi_i)_s}(j,l,w)\psi^p_{jlw}(x).$$

Let us now take
    $$\tilde{g}_i^1(u)_k=\sum \limits_{j=0}^{\log_2(\delta^{-1}_N)} \sum \limits_{l=1}^{2^d} \sum \limits_{w\in \{-(2K+C)2^j,...,(2K+C)2^j\}^d} \alpha_{(\overline{g}_i^1)_k}(j,l,w)\hat{\psi}^d_{jlw}(u),$$
        $$\tilde{g}_i^2(y)_r=\sum \limits_{j=0}^{\log_2(\delta^{-1}_N)} \sum \limits_{l=1}^{2^d} \sum \limits_{w\in \{-4\log(n)^{1/2}2^j,...,4\log(n)^{1/2}2^j\}^d} \alpha_{(g_i^2)_r}(j,l,w)\hat{\psi}^d_{jlw}(y)$$

and
    
\begin{equation*}
\varphi_i(x)_s=\sum \limits_{j=0}^{\log_2(\delta^{-1}_N)} \sum \limits_{l=1}^{2^d} \sum \limits_{w\in \{-(2K+C)2^j,...,(2K+C)2^j\}^p} \alpha_{(\overline{\Phi}_i\circ\pi_i)_s}(j,l,w)\hat{\psi}^p_{jlw}(x),
\end{equation*}
the low frequencies approximation of $\bar{g}_i^1,g_i^2$ and $\overline{\Phi}_i\circ\pi_i$ using the approximated wavelets $\hat{\psi}$.
Then, from Proposition \ref{prop:qualityofapproxfketa}, we have for all $u\in B^d(0,2\tau)$, $y\in B^d(0,2\log(n)^{1/2})$ and $x\in B^p(g_i^1(0),3\tau)$ that
$$\|\varphi_i(x)-\pi\circ\overline{\Phi}(x)\| + \|\tilde{g}_i^1(u)-g_i^1(u)\|+ \|\tilde{g}_i^2(y)-g_i^2(y)\|\leq C\left(n^{-1/2}+\delta_N^{\beta+1}\log(N)^2\right)$$
and
$$\|\nabla \tilde{g}_i^1(u)-\nabla g_i^1(u)\|+\|\nabla \tilde{g}_i^2(y)-\nabla g_i^2(y)\|\leq C\left(n^{-1/2}+\delta_N^{\beta}\log(N)^2\right).$$
Then, as from Proposition \ref{prop:decompphi} we have $\mathcal{C}(g^1,\pi\circ\overline{\Phi})=0$,  $1-K\tau\leq \|\nabla g_i^1(u)\frac{v}{\|v\|}\|\leq 1+K\tau$ and $C^{-1}\leq \|\nabla g_i^2(y)\frac{v}{\|v\|}\|\leq C$, we deduce that for $N$ large enough 
 $$\mathcal{C}(g,\varphi) + \mathcal{C}_N(g,\varphi) +R(g)\leq \epsilon_\Gamma.$$
    
\end{proof}

\subsubsection{Proof of Theorem \ref{theo:biaisvariancedec}}\label{sec:theo:biaisvariancedec}

\begin{proof}
From Lemma \ref{lemma:genenonempty} we have that for $N\geq n$ large enough, there exist $(g,\varphi)\in \mathcal{G}\times  \Phi$ satisfying
    $$\mathcal{C}(g,\varphi) + \mathcal{C}_N(g,\varphi) +R(g)\leq \epsilon_\Gamma.$$ Then, putting together Lemma \ref{lemma:decomptheo} and Proposition \ref{prop:statisticalerror}, we obtain
   \begin{align*}
& \mathbb{E}_{X_1,...,X_n\sim \mu^\star}[d_{\mathcal{H}^{d/2}_1}(\hat{\mu},\mu^\star)-\Delta_{\mathcal{D}_{\hat{\mu}}}-\Delta_{\mathcal{G}\times \Phi \times \mathcal{A}}]\\
 \leq &  C\log(N)^{C_2} \min \limits_{\delta_1 [0,1]}  \Biggl\{ \sqrt{\frac{(\delta_1+1/N)^2\log(N|(\mathcal{D}\circ\mathcal{F}^{\mathcal{G},\Phi}\circ \mathcal{G})_{1/N}||\mathcal{A}_{1/N}|)}{N}}\\
 &+\frac{1}{\sqrt{N}}(1+\delta_1^{(1-\frac{d}{2(\frac{d}{2}\wedge(\beta+1))})}+\log(\delta^{-1}_1)\mathds{1}_{\{\frac{d}{2}\wedge(\beta+1)= \frac{d}{2}\}})\Biggl\}\\
& + C\log(n)^{C_2}\min \limits_{\delta_2 \in [0,1]}  \Biggl\{  \sqrt{\frac{(\delta_2+1/n)^2\log(n|(\mathcal{D} \circ \phi_1)_{1/n}|)}{n}}+\frac{C}{\sqrt{n}}\log(\delta_2^{-1})\Biggl\}.
\end{align*}

Now, taking $\delta_2=n^{-\frac{d/2}{2\beta+d}}$
we get the result.

\end{proof}

\subsection{Proofs of the bounds on the approximation errors}

\subsubsection{Proof of Proposition \ref{prop:deltaG}}\label{sec:prop:deltaG}

\begin{proof}
    Let $(g_i^1)_{i=1,...,m}\in\mathcal{H}^{\beta+1}_{C}(B^d(0,2\tau),\mathbb{R}^p)^m$, $(g_i^2)_{i=1,...,m}\in\mathcal{H}^{\beta}_{C}(\mathbb{R}^d,\mathbb{R}^d)^m$ given by the decomposition of Propositions \ref{prop:keydecomp} and \ref{prop:decompphi}. In particular, we have that $g_i^1=\phi_{g_i^1(0)}^{-1}$ is the inverse of the orthogonal projection on the tangent space $\mathcal{T}_{g_i^1(0)}$ of $\mathcal{M}^\star$ defined in \eqref{eq:phix}. Let $\pi_i$ be the projection on the tangent space of $\mathcal{M}^\star$ at the point $g_i^1(0)$ and $\Phi_i: \pi_i \circ g_i^1(B^d(0,2\tau))\rightarrow B^d(0,2\tau)$ defined by 
    $$\Phi_i(x)=(\pi_i \circ g_i^1)^{-1}(x).$$
    Then, from Theorem 4 in chapter 6 of \cite{SingularStein}, $\Phi_i$ can be extended into a map $\overline{\Phi}_i\in \mathcal{H}^{\beta+1}_C(\mathbb{R}^d,\mathbb{R}^d)$ and $g_i^1$ can be extended into a map $\overline{g}_i^1\in \mathcal{H}^{\beta+1}_C(\mathbb{R}^d,\mathbb{R}^p)$. We have that there exists, $|\alpha_{g_i^2}(j,l,w)|\leq KC_{\beta+1} 2^{-j(\beta+d/2)}$, $|\alpha_{(\overline{g}_i^1)_k}(j,l,w)|\leq KC_{\beta+1} 2^{-j(\beta+1+d/2)}$ with $k\in \{1,...,p\}$, $|\alpha_{(\overline{\Phi}_i\circ\pi_i)_s}(j,l,w)|\leq KC_{\beta+1} 2^{-j(\beta+1+d/2)}$ with $s\in \{1,...,d\}$, such that
for all $u\in B^d(0,2\tau)$
    $$\overline{g}_i^1(u)_k=\sum \limits_{j=0}^{\infty} \sum \limits_{l=1}^{2^d} \sum \limits_{w\in \{-K2^j,...,K2^j\}^d} \alpha_{(\overline{g}_i^1)_k}(j,l,w)\psi^d_{jlw}(u),$$
for all $y\in B^d(0,2\log(n)^{1/2})$
    $$g_i^2(y)_r=\sum \limits_{j=0}^{\infty} \sum \limits_{l=1}^{2^d} \sum \limits_{w\in \{-4\log(n)^{1/2}2^j,...,4\log(n)^{1/2}2^j\}^d} \alpha_{(g_i^2)_r}(j,l,w)\psi^d_{jlw}(y),$$
and for all $x\in B^p(\overline{g}_i^1(0),3\tau)$
$$\overline{\Phi}_i\circ\pi_i(x)_s=\sum \limits_{j=0}^{\infty} \sum \limits_{l=1}^{2^d} \sum \limits_{w\in \{-K2^j,...,K2^j\}^p} \alpha_{(\overline{\Phi}_i\circ\pi_i)_s}(j,l,w)\psi^p_{jlw}(x).$$

Let us now take
    $$\tilde{g}_i^1(u)_k=\sum \limits_{j=0}^{\log_2(\delta^{-1}_N)} \sum \limits_{l=1}^{2^d} \sum \limits_{w\in \{-(2K+C)2^j,...,(2K+C)2^j\}^d} \alpha_{(\overline{g}_i^1)_k}(j,l,w)\hat{\psi}^d_{jlw}(u),$$
        $$\tilde{g}_i^2(y)_r=\sum \limits_{j=0}^{\log_2(\delta^{-1}_N)} \sum \limits_{l=1}^{2^d} \sum \limits_{w\in \{-4\log(n)^{1/2}2^j,...,4\log(n)^{1/2}2^j\}^d} \alpha_{(g_i^2)_r}(j,l,w)\hat{\psi}^d_{jlw}(y)$$

and
    
\begin{equation*}
\varphi_i(x)_s=\sum \limits_{j=0}^{\log_2(\delta^{-1}_N)} \sum \limits_{l=1}^{2^d} \sum \limits_{w\in \{-(2K+C)2^j,...,(2K+C)2^j\}^p} \alpha_{(\overline{\Phi}_i\circ\pi_i)_s}(j,l,w)\hat{\psi}^p_{jlw}(x).
\end{equation*}

Then from Lemma \ref{lemma:bouundcoefwav}, for all $ j\leq \log_2(\delta^{-1}_N)$, we have that 
$$
|\alpha_{(\tilde{g}_i^1)_k}(j,l,w)-\alpha_{(\overline{g}_i^1)_k}(j,l,w)\mathds{1}_{j\leq \log_2(\delta^{-1})}|\leq Cn^{-1/2}2^{-j(\beta+1+d/2)},
$$
$$
|\alpha_{(\tilde{g}_i^2)_k}(j,l,w)-\alpha_{(g_i^2)_k}(j,l,w)\mathds{1}_{j\leq \log_2(\delta^{-1})}|\leq Cn^{-1/2}2^{-j(\beta+1+d/2)},
$$
$$
|\alpha_{\varphi_i}(j,l,w)-\alpha_{(\overline{\Phi}_i\circ\pi_i)_s}(j,l,w)\mathds{1}_{j\leq \log_2(\delta^{-1})}|\leq Cn^{-1/2}2^{-j(\beta+1+p/2)}.
$$
Furthermore, taking $g=\tilde{g}_i^1\circ \Psi^{-1}\circ \tilde{g}_i^2$,  we have that 
\begin{equation}\label{eq:hfoehdkdhhdburbfur}
\mathcal{C}(g,\varphi) + \mathcal{C}_N(g,\varphi) +R(g)\leq \epsilon_\Gamma,
\end{equation}
as $(g,\varphi)$ are the function used in the proof of Lemma \ref{lemma:genenonempty} Section \ref{sec:lemma:genenonempty}  to show the existence of a couple satisfying the constraint \eqref{eq:hfoehdkdhhdburbfur}.

Let us now give bound on the approximation error. Writing, $g^\star=\overline{g}_i^1\circ \Psi^{-1}\circ g_i^2$, we have for all $x\in \mathcal{M}^\star$ that $F^{g^\star,\overline{\Phi}_i\circ \pi_i}(x)=x$, so we deduce that
\begin{align}\label{align:cfghjgyujpmlk}
    &\mathbb{E}_{\substack{\omega\sim \mathcal{U}_m,X\sim \mu^\star\\Y\sim \gamma_d^n}}[ D(F^{g,\varphi}\circ g_{\omega}(Y))-D(X)]\nonumber\\
     = &\sum_{i=1}^m  \int_{\mathbb{R}^d}D\big(F^{g,\varphi}\circ \tilde{g}_i^1\circ \Psi^{-1}\circ \tilde{g}_i^2(y)\big)d\gamma_d^n(y) - \int_{\mathbb{R}^d}D\big(F^{g^\star,\overline{\Phi}_i\circ \pi_i}\circ \overline{g}_i^1\circ \Psi^{-1}\circ g_i^2(y)\big)d\gamma_d(y)\nonumber\\
     \leq &Cn^{-1}+\sum_{i=1}^m  \int_{\mathbb{R}^d}D\big(F^{g,\varphi}\circ \tilde{g}_i^1\circ \Psi^{-1}\circ \tilde{g}_i^2(y)\big)d\gamma_d^n(y) - \int_{\mathbb{R}^d}D\big(F^{g^\star,\overline{\Phi}_i\circ \pi_i}\circ \overline{g}_i^1\circ \Psi^{-1}\circ g_i^2(y)\big)d\gamma_d^n(y)\nonumber\\
    \leq &Cn^{-1}+ \sum_{i=1}^m \int_{\mathbb{R}^d}\Biggl(D\big(F^{g,\varphi}\circ \tilde{g}_i^1\circ \Psi^{-1}\circ \tilde{g}_i^2(y)\big)-D\big(F^{g,\varphi}\circ \tilde{g}_i^1\circ \Psi^{-1}\circ g_i^2(y)\big)\nonumber\\
    &+D\big(F^{g,\varphi}\circ \tilde{g}_i^1\circ \Psi^{-1}\circ g_i^2(y)\big)- D\big(F^{g^\star,\overline{\Phi}_i\circ \pi_i}\circ \overline{g}_i^1\circ \Psi^{-1}\circ g_i^2(y)\big)\Biggl)d\gamma_d^n(y).
\end{align}
On one hand we have
\begin{align*}
    &\int_{\mathbb{R}^d}\Big(D\big(F^{g,\varphi}\circ \tilde{g}_i^1\circ \Psi^{-1}\circ \tilde{g}_i^2(y)\big)-D\big(F^{g,\varphi}\circ \tilde{g}_i^1\circ \Psi^{-1}\circ g_i^2(y))\Big)  \gamma_d^n(y)d\lambda^d(y)\\
    = &\int_{\mathbb{R}^d}\int_0^1\Big\langle \nabla \big(D\circ F^{g,\varphi}\circ \tilde{g}_i^1\circ \Psi^{-1}\big)\big(g_i^2(y)+t(\tilde{g}_i^2(y)-g_i^2(y))\big), \tilde{g}_i^2(y)-g_i^2(y)\Big\rangle dt  \gamma_d^n(y)d\lambda^d(y)
    \\
    =  & \sum_{j=1}^p\int_0^1\int_{\mathbb{R}^d} \partial_j \big(D\circ F^{g,\varphi}\circ \tilde{g}_i^1\circ \Psi^{-1}\big)\big(g_i^2(y)+t(\tilde{g}_i^2(y)-g_i^2(y))\big)\gamma_d^n(y) (\tilde{g}_i^2(y)-g_i^2(y))_jd\lambda^d(y)dt.
\end{align*}
Fixing $t\in [0,1]$ and writing $(\alpha_{\bar{D}_{j}}(k,l,z))_{k,l,z}$ for the wavelet coefficients of the function 
$$D_{j}(y):= \partial_j \big(D\circ F^{g,\varphi}\circ \tilde{g}_i^1\circ \Psi^{-1}\big)\big(g_i^2(y)+t(\tilde{g}_i^2(y)-g_i^2(y))\big)\gamma_d^n(y),$$ 
we have
\begin{align}\label{align:mlkiuhbvghjh}
   & \int_{\mathbb{R}^d} \partial_j \big(D\circ F^{g,\varphi}\circ \tilde{g}_i^1\circ \Psi^{-1}\big)\big(g_i^2(y)+t(\tilde{g}_i^2(y)-g_i^2(y))\big)\gamma_d^n(y) (\tilde{g}_i^2(y)-g_i^2(y))_jd\lambda^d(y)\nonumber\\
   &= \sum \limits_{k=0}^{\infty} \sum \limits_{l=1}^{2^d} \sum \limits_{z\in \{-4\log(n)^{1/2}2^k,...,4\log(n)^{1/2}2^k\}^d} \alpha_{\bar{D}_{j}}(k,l,z)(\alpha_{(\tilde{g}_i^2)_{j}}(k,l,z)-\alpha_{(g_i^2)_{j}}(j,l,z))\nonumber\\
   &\leq \|\tilde{g}_i^2-g_i^2\|_{\mathcal{B}^{-((\gamma-1)\wedge\beta)}_{1,1}}\nonumber\\
   & \leq C\log(n)^{C_2} (\delta_N^{(\gamma-1)\wedge\beta+\beta+1}+n^{-1/2}),
\end{align}
using that $D_j\in \mathcal{H}^{(\gamma-1)\wedge\beta}_{C\log(n)^{2}}$ by Proposition \ref{prop:regularityfandg} and using Proposition \ref{prop:qualityofapproxfketa} to obtain the bound on the Besov norm. Therefore, we conclude that
\begin{align*}
    \int_{\mathbb{R}^d}\Big(D\big(F^{g,\varphi}\circ \tilde{g}_i^1\circ \Psi^{-1}\circ \tilde{g}_i^2(y)\big)-D\big(F^{g,\varphi}\circ \tilde{g}_i^1\circ \Psi^{-1}\circ g_i^2(y))\Big)  \gamma_d^n(y)d\lambda^d(y)\leq C\log(n)^2 (\delta_N^{(\gamma-1)\wedge\beta+\beta+1}+n^{-1/2}).
\end{align*}
Let us now bound the second term of \eqref{align:cfghjgyujpmlk}. We have
\begin{align*}
   & \int_{\mathbb{R}^d}\Big(D\big(F^{g,\varphi}\circ \tilde{g}_i^1\circ \Psi^{-1}\circ g_i^2(y)\big)- D\big(F^{g^\star,\overline{\Phi}_i\circ \pi_i}\circ \overline{g}_i^1\circ \Psi^{-1}\circ g_i^2(y)\big)\Big)d\gamma_d^n(y)\\
    &= \int_{B^d(0,\tau)}\Big(D\big(F^{g,\varphi}\circ \tilde{g}_i^1(u)\big)- D\big(F^{g^\star,\overline{\Phi}_i\circ \pi_i}\circ \overline{g}_i^1 (u)\big)\Big)\zeta_i(u)d\lambda^d(u)
\end{align*}
with $\zeta_i\in \mathcal{H}^\beta_C(\mathbb{R}^d,\mathbb{R})$ the density of the probability measure $(\Psi^{-1}\circ g_i^2)_{\#}\gamma_d^n$. Then, doing like for the other term, we have
\begin{align*}
    &\int_{\mathbb{R}^d}\Big(D\big(F^{g,\varphi}\circ \tilde{g}_i^1(u)\big)- D\big(F^{g^\star,\overline{\Phi}_i\circ \pi_i}\circ \overline{g}_i^1 (u)\Big)  \zeta_i(u)d\lambda^d(u)\\
    = &\int_{\mathbb{R}^d}\int_0^1\Big\langle \nabla D\Big(F^{g^\star,\overline{\Phi}_i\circ \pi_i}\circ \overline{g}_i^1(u)+ t(F^{g,\varphi}\circ \tilde{g}_i^1(u)-F^{g^\star,\overline{\Phi}_i\circ \pi_i}\circ \overline{g}_i^1(u))\Big),F^{g,\varphi}\circ \tilde{g}_i^1(u)-F^{g^\star,\overline{\Phi}_i\circ \pi_i}\circ \overline{g}_i^1(u)\Big\rangle dt  \zeta_i(u)d\lambda^d(u)
    \\
    =  & \sum_{j=1}^p\int_0^1\int_{\mathbb{R}^d} \partial_j D\Big(F^{g^\star,\overline{\Phi}_i\circ \pi_i}\circ \overline{g}_i^1(u)+ t(F^{g,\varphi}\circ \tilde{g}_i^1(u)-F^{g^\star,\overline{\Phi}_i\circ \pi_i}\circ \overline{g}_i^1(u))\Big)\zeta_i(u)\\
    & \times (F^{g,\varphi}\circ \tilde{g}_i^1(u)-F^{g,\varphi}\circ g_i^1(u)+F^{g,\varphi}\circ g_i^1(u)-F^{g^\star,\overline{\Phi}_i\circ \pi_i}\circ g_i^1(u))_jd\lambda^d(u)dt\\
    =  &  \sum_{j_1,j_2=1}^p\int_0^1\int_0^1\int_{\mathbb{R}^d} \partial_{j_1} D\Big(F^{g^\star,\overline{\Phi}_i\circ \pi_i}\circ \overline{g}_i^1(u)+ t(F^{g,\varphi}\circ \tilde{g}_i^1(u)-F^{g^\star,\overline{\Phi}_i\circ \pi_i}\circ \overline{g}_i^1(u))\Big)\zeta_i(u)\\
    &\times \partial_{j_2} (F^{g,\varphi})_{j_1}\Big( \tilde{g}_i^1(u)+s(g_i^1(u)-\tilde{g}_i^1(u)\Big)(\tilde{g}_i^1(u)-g_i^1(u))_{j_2}d\lambda^d(u)dsdt\\
   & +   \sum_{j=1}^p\int_0^1\int_{\mathbb{R}^d} \partial_j D\Big(F^{g^\star,\overline{\Phi}_i\circ \pi_i}\circ \overline{g}_i^1(u)+ t(F^{g,\varphi}\circ \tilde{g}_i^1(u)-F^{g^\star,\overline{\Phi}_i\circ \pi_i}\circ \overline{g}_i^1(u))\Big)\zeta_i(u)(F^{g,\varphi}\circ g_i^1(u)-F^{g^\star,\overline{\Phi}_i\circ \pi_i}\circ g_i^1(u))_jd\lambda^d(u)dt.
\end{align*}
Now both these terms can be treated like \eqref{align:mlkiuhbvghjh} by writing the $L^2$ scalar product as the sum of the product of the wavelet coefficients and using the regularity of each function.

\end{proof}

\subsubsection{Proof of Proposition \ref{prop:DeltaD}}\label{sec:prop:DeltaD}

\begin{proof}
Let $\pi_{\hat{\mu}}$ be an optimal transport plan for the Euclidean distance between $\hat{\mu}=(F^{\hat{g},\hat{\varphi}}\circ \hat{g})_{\# \hat{\alpha}\gamma_d^n}$  and $\mu^\star$. For
$D \in \mathcal{D}$
we have
\begin{align*}
    &\mathbb{E}_{\substack{Y\sim \hat{\mu}\\
    X\sim \mu^\star}}[h_{\hat{\mu}}(Y)-h_{\hat{\mu}}(X)-(D(Y)-D(X))]\\
    & = \int_{\mathbb{R}^p\times \mathbb{R}^p} \left(h_{\hat{\mu}}(Y)-h_{\hat{\mu}}(X)-(D(Y)-D(X))\right)d\pi_{\hat{\mu}}(x,y)\\
    & = \int_{\mathbb{R}^p\times \mathbb{R}^p}  \int_0^1\Big\langle \nabla h_{\hat{\mu}}(y+t(x-y))-\nabla D(y+t(x-y)),x-y\Big\rangle dtd\pi_{\hat{\mu}}(x,y)\\
    & \leq \|\nabla h_{\hat{\mu}}-\nabla D\|_\infty \int_{\mathbb{R}^p\times \mathbb{R}^p}  \|x-y\| d\pi_{\hat{\mu}}(x,y)\\
    &  = \|\nabla h_{\hat{\mu}}-\nabla D\|_\infty \sup \limits_{f\in \text{Lip}_1}\mathbb{E}_{\substack{Y\sim \hat{\mu}\\
    X\sim \mu^\star}}[  f(Y)-f(X)].
\end{align*}
On the other hand, from Lemma \ref{lemma:firstbadbound} and Proposition \ref{prop:statisticalerrorC} we have for $n\in \mathbb{N}_{>0}$ large enough, 
\[
\sup_{D \in \mathcal{D}} L(\hat{g}, \hat{\varphi}, \hat{\alpha}, D) + \mathcal{C}(\hat{g}, \hat{\varphi}) + R(\hat{g}) \leq C_2^{-1},
\]
with $C_2>0$ given by Proposition \ref{prop:densitychecked}. Then, applying Proposition \ref{prop:densitychecked}, we have that with probability at least $1-1/n$, for  $\hat{\mathcal{M}}=\bigcup_{i=1}^m F^{\hat{g},\hat{\varphi}}\circ \hat{g}_{i}(B^d(0,\log(n)^{1/2}+1))$, $\hat{\mu}$  satisfies the $(\beta,K)$-density condition on $\hat{\mathcal{M}}$. Therefore, using Theorem \ref{theo:theineqGAW} we get
\begin{align*}
\sup \limits_{f\in \text{Lip}_1}\mathbb{E}_{\substack{Y\sim \hat{\mu}\\
    X\sim \mu^\star}}[  f(Y)-f(X)]
&\leq (2K+1)\sup \limits_{f\in \mathcal{H}^1_1}\mathbb{E}_{\substack{Y\sim \hat{\mu}\\
    X\sim \mu^\star}}[  f(Y)-f(X)]\\
& \leq C\log\left(1+d_{\mathcal{H}^{\gamma}_1}(\hat{\mu},\mu^\star)^{-1}\right)^{C_2} d_{\mathcal{H}^{\gamma}_1}(\hat{\mu},\mu^\star)^{\frac{\beta+1}{\beta+\gamma}}.
\end{align*}    
\end{proof}

\subsection{Minimax bounds}

\subsubsection{A first suboptimal bound}
In order to apply the geometric and regularity results from Section~\ref{sec:reguofes}, we require the estimator \( \hat{\mu} \) to already be sufficiently close to the target distribution \( \mu^\star \). To this end, we first establish a preliminary bound showing that \( \hat{\mu} \) achieves a sub-optimal rate of convergence under the \( d_{\mathcal{H}^{d/2}_1} \) IPM.

\begin{lemma}\label{lemma:firstbadbound}
 Let $\mu^\star$ a probability measure satisfying Assumption \ref{assump:model}. Then, with probability at least $1-1/n$ we have
    $$d_{\mathcal{H}^{d/2}_1}(\hat{\mu},\mu^\star)\leq C\log(n)^{C_2} n^{-\frac{\beta\wedge d/2}{2\beta+d}}.$$
\end{lemma}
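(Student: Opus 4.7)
The plan is to mimic the proof of Theorem \ref{theo:biaisvariancedec} with two modifications: first, replace the expectation bounds by their high-probability analogues at confidence $1-1/n$; second, bound $\Delta_{\mathcal{D}_{\hat{\mu}}}$ using only Lemma \ref{lemma:Landdhgamma}, which holds unconditionally and does not require $\hat{\mu}$ to satisfy Assumption \ref{assump:model}. This second point is essential, because the sharper interpolation estimate of Proposition \ref{prop:DeltaD} depends on $\hat{\mu}$ being regular through Proposition \ref{prop:densitychecked}, and Proposition \ref{prop:densitychecked} in turn requires the very type of smallness we are trying to establish. The coarser control on $\Delta_{\mathcal{D}_{\hat{\mu}}}$ is what forces the saturation at $n^{-\frac{d/2}{2\beta+d}}$, and hence the $\beta\wedge d/2$ exponent in the statement.

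Concretely, I would start from the pathwise decomposition of Lemma \ref{lemma:decomptheo}, valid as soon as the constrained optimisation defining $(\hat{g},\hat{\varphi},\hat{\alpha})$ has non-empty feasible set (Lemma \ref{lemma:genenonempty}):
\[
d_{\mathcal{H}^{d/2}_1}(\hat{\mu},\mu^\star)\leq \Delta_{\mathcal{D}_{\hat{\mu}}}+ \Delta_{\mathcal{G}\times \Phi \times \mathcal{A}} + 2\sup_{(g,\varphi,\alpha,D)\in \mathcal{G}\times\Phi\times\mathcal{A}\times\mathcal{D}} L(g,\varphi,\alpha,D)-L_{N,n}(g,\varphi,\alpha,D).
\]
The first term is controlled directly by Lemma \ref{lemma:Landdhgamma} by $C\log(n)^{2}\, n^{-\frac{d/2}{2\beta+d}}$. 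The approximation term $\Delta_{\mathcal{G}\times \Phi \times \mathcal{A}}$ is deterministic and is handled by Proposition \ref{prop:deltaG} taken at $\gamma=d/2$: since $N\geq n$ and $\delta_N=N^{-1/(2\beta+d)}$, one verifies that in both regimes $N=n$ and $N=n^{(2\beta+d)/(4\beta+2)}$ the exponent $(\beta+d/2)\wedge(2\beta+1)$ produces exactly the rate $n^{-1/2}$, so that $\Delta_{\mathcal{G}\times \Phi \times \mathcal{A}}\leq C\log(n)^{C_2} n^{-1/2}$.

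The remaining statistical fluctuation $\sup L-L_{N,n}$ is handled via a high-probability version of Proposition \ref{prop:statisticalerror}. This requires essentially no new argument: the proof of Proposition \ref{prop:statisticalerrorformu} already establishes a Hoeffding tail inequality at \eqref{align:boundonprobacovering} with a free confidence level $\tau$, and the Dudley chaining performed afterwards produces a sub-Gaussian concentration of matching scale. Choosing $\tau=1/n$ throughout (rather than integrating over $\tau$ to recover an expectation) yields, with probability at least $1-1/n$, a bound of order $\log(n)^{C_2}\, n^{-1/2}$ for this term. Summing the three pieces gives, with probability at least $1-1/n$,
\[
d_{\mathcal{H}^{d/2}_1}(\hat{\mu},\mu^\star)\leq C\log(n)^{C_2}\, n^{-\frac{d/2}{2\beta+d}}\leq C\log(n)^{C_2}\, n^{-\frac{\beta\wedge d/2}{2\beta+d}},
\]
since $d/2\geq \beta\wedge d/2$. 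The main conceptual obstacle is recognising that Proposition \ref{prop:DeltaD} must be bypassed here and that Lemma \ref{lemma:Landdhgamma} alone suffices to close the bootstrap; the remaining effort is purely bookkeeping, upgrading expectation bounds to their high-probability counterparts with no change in rate.
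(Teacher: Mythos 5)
Your structural instinct is right and matches the paper's: the crux is that $\Delta_{\mathcal{D}_{\hat{\mu}}}$ must be bounded by the deterministic approximation quality of $\mathcal{D}$ in $\mathcal{H}^{d/2}_1$ (i.e., essentially Lemma~\ref{lemma:Landdhgamma}, or equivalently $\sup_{D^\star\in\mathcal{H}_1^{d/2}}\inf_{D\in\mathcal{D}}\|D-D^\star\|_\infty\lesssim \log(n)^2\, n^{-d/2/(2\beta+d)}$ via Proposition~\ref{prop:qualityofapproxfketa}), rather than by Proposition~\ref{prop:DeltaD}, which would trigger the circularity you identify. The paper does exactly this.

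However, your treatment of the statistical fluctuation term has a genuine gap, and your diagnosis of where the $\beta\wedge d/2$ comes from is wrong because of it. You claim that the Dudley-chaining argument behind Proposition~\ref{prop:statisticalerror} "produces a sub-Gaussian concentration of matching scale," so that choosing $\tau=1/n$ yields $\log(n)^{C_2}n^{-1/2}$ with probability $1-1/n$. But the Dudley inequality invoked in \eqref{eq:dudleyf1} bounds an \emph{expectation} of the supremum in terms of a localized expected supremum plus a deterministic entropy integral; it does not produce a pathwise or tail bound for the unlocalized supremum. To upgrade the expectation bound to a high-probability bound of the same order $n^{-1/2}$ you would need a Talagrand/Bousquet-type concentration inequality for empirical processes, which the paper never invokes and which is not "purely bookkeeping." What the paper actually does in the proof of Lemma~\ref{lemma:firstbadbound} (see the step around \eqref{align:petitsupprob}) is apply Hoeffding directly to the $1/n$-covering net, \emph{without} Dudley localization. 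This costs you: with $\log|(\mathcal{D}\circ\phi)_{1/n}|\lesssim \log(n)^2 n^{d/(2\beta+d)}$, the Hoeffding bound gives a fluctuation term of order $\sqrt{\log|(\mathcal{D}\circ\phi)_{1/n}|/n}\sim \log(n)\,n^{-\beta/(2\beta+d)}$, which is strictly slower than $n^{-1/2}$ and, when $\beta<d/2$, slower than the $n^{-d/2/(2\beta+d)}$ rate from $\Delta_{\mathcal{D}_{\hat\mu}}$. So the $\beta$ in the $\beta\wedge d/2$ exponent is not "forced by $\Delta_{\mathcal{D}_{\hat\mu}}$" as you write; it comes from the unlocalized fluctuation bound, while the $d/2$ regime (for $\beta>d/2$) is the one governed by $\Delta_{\mathcal{D}_{\hat\mu}}$. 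Your proposed argument would, if it worked, prove a strictly stronger statement ($n^{-d/2/(2\beta+d)}$ throughout), which should have been a warning sign that a nontrivial step was being elided.
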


This intermediate result ensures that the estimator lies within a controlled neighborhood of \( \mu^\star \), enabling the application of the regularity-based interpolation arguments of Theorem \ref{theo:theineqGAW}. 

\begin{proof}[Proof of lemma \ref{lemma:firstbadbound}]
Let $A>0$ be the constant given by Lemma \ref{lemma:genenonempty} such that if $N\geq A$, there exists $(g,\varphi)\in \mathcal{G}\times  \Phi$ satisfying the constraint \eqref{eq:estimator}. Supposing that $\mu^\star$ satisfies Assumptions  \ref{assump:model}, in the case $N<A$, we have 
\begin{align}\label{align:trivresult}
    d_{\mathcal{H}^{d/2}_1}(\hat{\mu},\mu^\star)\leq 2 \leq 2AN^{-1},
\end{align}
so we have the result. Let us then focus on the case $N\geq A$. From Lemma \ref{lemma:decomptheo} we have
\begin{align*}
    \mathbb{P}\Big(d_{\mathcal{H}^{d/2}_1}(\hat{\mu},\mu)\geq \alpha \Big)\leq& \mathbb{P}\Big(\Delta_{\mathcal{D}_{\hat{\mu}}}\geq \alpha/3 \Big)+\mathbb{P}\Big(\Delta_{\mathcal{G}\times \Phi \times \mathcal{A}}\geq \alpha/3 \Big)\\    &+\mathbb{P}\Big(\sup_{(g,\varphi,\alpha)\in \mathcal{G}\times  \Phi\times \mathcal{A},D\in \mathcal{D}} L(g,\varphi,\alpha,D)-L_{N,n}(g,\varphi,\alpha,D)\geq  \alpha/6 \Big).
\end{align*}
Now,
\begin{align*}
    \mathbb{P}\Big(\Delta_{\mathcal{D}_{\hat{\mu}}}\geq \alpha/3 \Big)& \leq \mathbb{P}\Big(\sup \limits_{(g,\varphi,\alpha)\in \mathcal{G}\times  \Phi\times \mathcal{A}} \bigl\{ d_{\mathcal{H}_1^{d/2}}((F^{g,\varphi}\circ g)_{\# \alpha \gamma_d^n},\mu^\star)-d_{\mathcal{D}}((F^{g,\varphi}\circ g)_{\# \alpha \gamma_d^n},\mu^\star)\bigl\}\geq \alpha/3 \Big)\\
    & \leq \mathbb{P}\Big(2\sup_{D^\star\in \mathcal{H}_1^{d/2}(B^p(0,K),\mathbb{R})} \inf\limits_{D\in \mathcal{D}} \|D-D^\star\|_\infty\geq \alpha/3 \Big)\\
    & \leq \mathbb{P}\Big(2\sup_{D^\star\in \mathcal{H}_1^{d/2}(B^p(0,K),\mathbb{R})} \inf\limits_{D\in \mathcal{D}} \|D-D^\star\|_{\mathcal{B}^{0,2}_{\infty,\infty}}\geq \alpha/3 \Big)\\
    & \leq \mathbb{P}\Big( C \log(n)^2 n^{-\frac{d/2}{2\beta+d}}\geq \alpha/3 \Big),
\end{align*}
using Proposition \ref{prop:qualityofapproxfketa} to obtain the bound on the Besov norm. so for $\alpha=C\log(n)^{C_2} n^{-\frac{d/2}{2\beta+d}}$, we have with probability $1$ that $\Delta_{\mathcal{D}_{\hat{\mu}}}\leq \alpha/3$. On the other hand, from Proposition \ref{prop:deltaG} we have 
\begin{equation}\label{eq:approxGprecise}
\Delta_{\mathcal{G}\times \Phi \times \mathcal{A}}\leq C\log(N)^2N^{-\frac{(\beta + d/2)\wedge (2\beta+1)}{2\beta+d}}.
\end{equation}
Now, from Proposition \ref{prop:keydecompunedeplus}, there exists a collection of maps $(\phi_i)_{i=1,...,m}\in\mathcal{H}^{\beta+1}_{C}(\mathbb{R}^d,\mathbb{R}^p)^m$, non-negative weight functions $(\zeta)_{i=1,...,m}\in\mathcal{H}^{\beta+1}_{C}(B^d(0,\tau),\mathbb{R})^m$ such that
$$
\frac{1}{n}\sum_{j=1}^n \mathbb{E}_{X\sim \mu}[ D(X)]-  D(X_j)=\frac{1}{n}\sum \limits_{i=1}^m\sum_{j=1}^n  \int_{\mathbb{R}^d}D(\phi_i(u))\zeta_i(u)d\lambda^d(u)-D(\phi_i(Y_j))\mathds{1}_{\{T_j=i\}},
$$
with $T_1, ..., T_n$ iid sample of multinomial distribution $\textbf{Mult}(\{\int \zeta_1,...,\int\zeta_m\})$ and $Y_j$ of law $\zeta_{T_j}/{\int \zeta_{T_j}}$. Then, doing as for \eqref{align:boundonprobacovering}, we have 
\begin{align}\label{align:petitsupprob}
    \mathbb{P}&\Big(\sup_{(g,\varphi,\alpha)\in \mathcal{G}\times  \Phi\times \mathcal{A},D\in \mathcal{D}} L(g,\varphi,\alpha,D)-L_{N,n}(g,\varphi,\alpha,D)\geq  \alpha/6 \Big)\nonumber\\
    \leq &  \mathbb{P}\Big(\sup \limits_{D\in \mathcal{D} }  \frac{1}{n}\sum_{j=1}^n  \sum_{i=1}^m\int_{\mathbb{R}^d}D(\phi_i(u))\zeta_i(u)d\lambda^d(u)-D(\phi_i(Y_j))\mathds{1}_{\{T_j=i\}}\geq \alpha/12 \Big)\nonumber\\
    &+ \mathbb{P}\Big(\sup_{\substack{g\in \mathcal{G},\varphi\in \Phi,\\\alpha\in \mathcal{A},D\in \mathcal{D}}} \frac{1}{N} \sum_{j=1}^N \sum_{i=1}^m \int_{\mathbb{R}^d} D(F^{g,\varphi}\circ g_{i}(y))d\gamma_d^n(u)- D(F^{g,\varphi}\circ g_{i}(Y_j))\mathds{1}_{\{\omega_j=i\}}\geq \alpha/12 \Big)\nonumber\\
    \leq & |(\mathcal{D}_{\phi})_{1/n}|  \exp\left(-\frac{n(\alpha/12 -\frac{2}{n})^2}{2(1+\frac{2}{n})^2}\right) +|(\mathcal{D}\circ\mathcal{F}^{\mathcal{G},\Phi}\circ \mathcal{G})_{1/N}||\mathcal{A}_{1/N}|\exp\left(-\frac{N(\alpha/12 -\frac{2}{N})^2}{2(1+\frac{2}{N})^2}\right),
\end{align}
for $$\mathcal{D}_{\phi}=\big\{D\circ \phi_i|i\in \{1,....,m\},D\in \mathcal{D}\big\}.$$

Then, solving \eqref{align:petitsupprob}$=1/n$ with respect to $\alpha$ and recalling that $N\geq n$, we obtain that with probability at least $1-1/n$
\begin{align*}
&\sup_{(g,\varphi,\alpha)\in \mathcal{G}\times  \Phi\times \mathcal{A},D\in \mathcal{D}} L(g,\varphi,\alpha,D)-L_{N,n}(g,\varphi,\alpha,D)\\
& \leq C\log(n)^{C_2}\left(\sqrt{\frac{(1+1/n)^2\log(|((\mathcal{D}\circ \phi)_{1/n}|)}{n} }+\sqrt{\frac{(1+1/N)^2\log(|(\mathcal{D}\circ\mathcal{F}^{\mathcal{G},\Phi}\circ \mathcal{G})_{1/N}||\mathcal{A}_{1/N}|N)}{N} }\right).
\end{align*}

From Proposition 4.3 in \cite{stephanovitch2023wasserstein} we have 
$$\log(| \hat{\mathcal{F}}^{k,\eta}_\delta|_\epsilon)\leq C\delta^{-k}\log( \delta^{-1})\log(\epsilon^{-1}),$$
so using Lemma \ref{lemma:coveringd} we deduce that
\begin{equation}\label{eq:coveringprecise}
\log(|\mathcal{G}_{1/N}||(\Phi \circ \mathcal{G})_{1/N}||\mathcal{A}_{1/N}|)\leq C \log(N)^2 N^{\frac{d}{2\beta+d}}.
\end{equation}

On the other hand, from Lemma \ref{lemma:coveringd} we have 
\begin{equation}\label{eq:coveringpreciseD}
\log(|(\mathcal{D}\circ \phi)_{1/n}|)\leq  C \log(n)^2 n^{\frac{d}{2\beta+d}},
\end{equation}
so we deduce that
\begin{align}\label{align:boundcoverdcircf}
    \log(|(\mathcal{D}\circ\mathcal{F}^{\mathcal{G},\Phi}\circ \mathcal{G})_{1/N}||\mathcal{A}_{1/N}|) \leq&  C \log(N)^2 N^{\frac{d}{2\beta+d}}+\log(|(\mathcal{F}^{\mathcal{G},\Phi}\circ \mathcal{G}||\mathcal{A}_{1/N}|)\nonumber\\
    \leq&  C \log(N)^2 N^{\frac{d}{2\beta+d}}+\log(|\mathcal{G}_{1/N}||(\Phi \circ \mathcal{G})_{1/N}||\mathcal{A}_{1/N}|)\nonumber\\
    \leq& C \log(N)^2 N^{\frac{d}{2\beta+d}}.
\end{align}
We deduce that with probability at least $1-1/n$
\begin{align*}
&\sup_{(g,\varphi,\alpha)\in \mathcal{G}\times  \Phi\times \mathcal{A},D\in \mathcal{D}} L(g,\varphi,\alpha,D)-L_{N,n}(g,\varphi,\alpha,D)\\
& \leq C\log(n)^{C_2}\left(\sqrt{\frac{\log(n)}{n}}\log(n)^2 n^{\frac{d}{2\beta+d}}+\sqrt{\frac{\log(N)}{N}}\log(N)^2 N^{\frac{d}{2\beta+d}}\right)\\
& \leq C\log(n)^{C_2} n^{-\frac{\beta\wedge d/2}{2\beta+d}}. 
\end{align*}

\end{proof}

\subsubsection{Proof of Theorem \ref{theo:boundonddsur2}}\label{sec:theo:boundonddsur2}

\begin{proof} As justified in \eqref{align:trivresult}, we can suppose that $N$ is large enough so that we can apply Lemma \ref{lemma:genenonempty}. We are going to give estimates on the different terms appearing in the bound given by Theorem \ref{theo:biaisvariancedec}.
Suppose first that $d/2\leq \beta+1$, then taking $N=n$ we have from \eqref{eq:coveringprecise} and \eqref{align:boundcoverdcircf} that
$$\log(|(\mathcal{D}\circ\mathcal{F}^{\mathcal{G},\Phi}\circ \mathcal{G})_{1/N}||\mathcal{A}_{1/N}|)\leq C \log(n)^2 n^{\frac{d}{2\beta+d}}$$
and from \eqref{eq:approxGprecise}
$$\mathbb{E}[\Delta_{\mathcal{G}\times \Phi \times \mathcal{A}}]\leq Cn^{-\frac{(\beta + d/2)\wedge (2\beta+1)}{2\beta+d}}=Cn^{-\frac{1}{2}}.$$

On the other hand, from Lemma \ref{lemma:coveringd} we have 
$$\log(|(\mathcal{D}\circ \phi)_{1/n}|)\leq  C \log(n)^2 n^{\frac{d}{2\beta+d}}.$$

Furthermore, from Proposition \ref{prop:DeltaD}, taking $D_{\hat{\mu}}\in \mathcal{D}$ as the low frequency wavelet approximation of the function
$$h_{\hat{\mu}} \in \argmax_{h\in \mathcal{H}^{d/2}_1} \int h(x)d\hat{\mu}(x)-\int h(x)d\mu^\star(x),$$
we have
\begin{align*}
    \mathbb{E}[\Delta_{\mathcal{D}_{\hat{\mu}}}]\leq & C\mathbb{E}[\|\nabla h_{\hat{\mu}}-\nabla D_{\hat{\mu}}\|_\infty\log\left(1+d_{\mathcal{H}^{d/2}_1}(\hat{\mu},\mu^\star)^{-1}\right)^{C_2} d_{\mathcal{H}^{d/2}_1}(\hat{\mu},\mu^\star)^{\frac{\beta+1}{\beta+d/2}}]+Cn^{-1}\\
    \leq & C\mathbb{E}[\|h_{\hat{\mu}}-D_{\hat{\mu}}\|_{\mathcal{B}^{1,2}_{\infty,\infty}}\log\left(n\right)^{C_2} d_{\mathcal{H}^{d/2}_1}(\hat{\mu},\mu^\star)^{\frac{\beta+1}{\beta+d/2}}]+Cn^{-1}
    \\
    \leq & C\mathbb{E}[\log\left(n\right)^{C_2}  n^{-\frac{d/2-1}{2\beta+d}}d_{\mathcal{H}^{d/2}_1}(\hat{\mu},\mu^\star)^{\frac{\beta+1}{\beta+d/2}}]+Cn^{-1}.
\end{align*}

Then, taking $\delta=n^{-\frac{d/2}{2\beta+d}}$, we get from Theorem \ref{theo:biaisvariancedec} that
\begin{align*}
    \mathbb{E}[d_{\mathcal{H}^{d/2}_1}(\hat{\mu},\mu)]\leq C\log(n)^{C_2}\left(n^{-\frac{1}{2}}+   n^{-\frac{d/2-1}{2\beta+d}}\mathbb{E}[d_{\mathcal{H}^{d/2}_1}(\hat{\mu},\mu^\star)^{\frac{\beta+1}{\beta+d/2}}]\right),
\end{align*}
so if $ n^{-\frac{d/2-1}{2\beta+d}}\mathbb{E}[d_{\mathcal{H}^{d/2}_1}(\hat{\mu},\mu^\star)^{\frac{\beta+1}{\beta+d/2}}]\leq n^{-\frac{1}{2}}$ we have the result. Otherwise, using Jensen's inequality we get
\begin{align*}
    \mathbb{E}[d_{\mathcal{H}^{d/2}_1}(\hat{\mu},\mu)]\leq& C\log(n)^{C_2} n^{-\frac{d/2-1}{2\beta+d}}\mathbb{E}[d_{\mathcal{H}^{d/2}_1}(\hat{\mu},\mu^\star)^{\frac{\beta+1}{\beta+d/2}}]\\
    \leq& C\log(n)^{C_2} n^{-\frac{d/2-1}{2\beta+d}}\mathbb{E}[d_{\mathcal{H}^{d/2}_1}(\hat{\mu},\mu^\star)]^{\frac{\beta+1}{\beta+d/2}},
\end{align*}
so
\begin{align*}
    \mathbb{E}[d_{\mathcal{H}^{d/2}_1}(\hat{\mu},\mu^\star)]^{\frac{d/2-1}{\beta+d/2}}\leq& C\log(n)^{C_2} n^{-\frac{d/2-1}{2\beta+d}},
\end{align*}
which finally gives 
\begin{align*}
    \mathbb{E}[d_{\mathcal{H}^{d/2}_1}(\hat{\mu},\mu^\star)]\leq& C\log(n)^{C_2} n^{-\frac{1}{2}}.
\end{align*}

Now in the case $d/2 \geq \beta+1$, taking $N=n^{\frac{2\beta+d}{4\beta+2}}$ and $\delta=n^{-\frac{\beta+1}{4\beta+2}}$ we get the same result.

\end{proof}

\subsubsection{Proof of Theorem \ref{theo:boundongamma}}\label{sec:theo:boundongamma}

\begin{proof}
From Lemma \ref{lemma:firstbadbound} we have that with probability at least $1-1/n$, there exists $(g,\varphi)\in \mathcal{G}\times  \Phi$ satisfying the constraint \eqref{eq:estimator} and 
    $$d_{\mathcal{H}^{d/2}_1}(\hat{\mu},\mu)\leq C\log(n)^{C_2} n^{-\frac{(2\beta+1)\wedge d/2}{2\beta+d}}.$$ 
    Then, from Proposition \ref{prop:densitychecked}, the measure $\hat{\mu}$ satisfies Assumption  \ref{assump:model}. Now, if $\gamma\geq d/2$, applying Theorem \ref{theo:boundonddsur2} we have the result. Let us then focus on $\gamma \in [1,d/2)$.
Applying Theorem \ref{theo:theineqGAW} we get
\begin{align}\label{align:trickfornonprob}
d_{\mathcal{H}^\gamma_1}(\hat{\mu},\mu^\star)\leq C\log\left(1+d_{\mathcal{H}^{d/2}_1}(\hat{\mu},\mu^\star)^{-1}\right)^{C_2} d_{\mathcal{H}^{d/2}_1}(\hat{\mu},\mu^\star)^{\frac{\beta+\gamma}{\beta+d/2}}.
        \end{align}

In the case, $d_{\mathcal{H}^{d/2}_1}(\hat{\mu},\mu^\star)^{\frac{\gamma}{\beta+d/2}}\leq 1/n$ we have 
\begin{align*}
    d_{\mathcal{H}^\gamma_1}(\hat{\mu},\mu^\star)&  \leq  Cn^{-1}\log\left(1+d_{\mathcal{H}^{d/2}_1}(\hat{\mu},\mu^\star)^{-1}\right)^{C_2} d_{\mathcal{H}^{d/2}_1}(\hat{\mu},\mu^\star)^{\frac{\beta}{\beta+d/2}}\\
    & \leq Cn^{-1}, 
\end{align*}
so we have the result.

In the case, $d_{\mathcal{H}^{d/2}_1}(\hat{\mu},\mu^\star)^{\frac{\beta+\gamma}{\beta+d/2}}> 1/n$ we have

\begin{align*}
    d_{\mathcal{H}^\gamma_1}(\hat{\mu},\mu^\star) 
    & \leq C\log\left(n\right)^{C_2} d_{\mathcal{H}^{d/2}_1}(\hat{\mu},\mu^\star)^{\frac{\beta+\gamma}{\beta+d/2}}.
\end{align*}

Let us denote $A$  the event that there exists $(g,\varphi)\in \mathcal{G}\times  \Phi$ satisfying the constraint \eqref{eq:estimator}. Then, using Jensen's inequality we get
\begin{align*}
    \mathbb{E}[d_{\mathcal{H}^{\gamma}_1}(\hat{\mu},\mu^\star)]=&\mathbb{E}[d_{\mathcal{H}^{\gamma}_1}(\hat{\mu},\mu^\star)(\mathds{1}_{A}+\mathds{1}_{A^c})]\\
    \leq & \frac{1}{n} + \mathbb{E}[C\log\left(n\right)^{C_2} d_{\mathcal{H}^{d/2}_1}(\hat{\mu},\mu^\star)^{\frac{\beta+\gamma}{\beta+d/2}}]
    \\
    \leq & \frac{1}{n} + C\log\left(n\right)^{C_2}\mathbb{E}[ d_{\mathcal{H}^{d/2}_1}(\hat{\mu},\mu^\star)]^{\frac{\beta+\gamma}{\beta+d/2}},
\end{align*}
so using Theorem \ref{theo:boundonddsur2} we finally get 
$$\mathbb{E}[d_{\mathcal{H}^{\gamma}_1}(\hat{\mu},\mu)]\leq C \log(n)^{C_2} n^{-\frac{\beta+\gamma}{2\beta+d}}.$$
\end{proof}

\end{document}